\definecolor{MyDarkBlue}{rgb}{0.15,0.25,0.45}
\let\fn\footnote
\renewcommand{\footnote}[1]{\linespread{1.1}\fn{#1}\linespread{1.29}}
\makeatletter\renewcommand{\section}{\@startsection
    {section}{1}{\z@}{-3.5ex plus -1ex minus
        -.2ex}{2.3ex plus .2ex}{\bf }}
\makeatletter\renewcommand{\subsection}{\@startsection{subsection}{2}{\z@}{-3.25ex
        plus -1ex minus
        -.2ex}{1.5ex plus .2ex}{\bf }}
\makeatletter\renewcommand{\subsubsection}{\@startsection{subsubsection}{3}{-2.45ex}{-3.25ex
        plus -1ex minus -.2ex}{1.5ex plus .2ex}{\it }}
\renewcommand{\thesection}{\arabic{section}}
\renewcommand{\thesubsection}{\arabic{section}.\arabic{subsection}}
\renewcommand{\@seccntformat}[1]{\@nameuse{the#1}.~~}
\renewcommand{\theequation}{\thesection.\arabic{equation}}
\makeatletter \@addtoreset{equation}{section}
\def\Ddots{\mathinner{\mkern1mu\raise\p@
        \vbox{\kern7\p@\hbox{.}}\mkern2mu
        \raise4\p@\hbox{.}\mkern2mu\raise7\p@\hbox{.}\mkern1mu}}
\newtheorem{thm}{Theorem}[section]
\renewcommand{\thethm}{\thesection.\arabic{thm}}
\newtheorem{lemma}[thm]{Lemma}
\newtheorem{definition}[thm]{Definition}
\newtheorem{theorem}[thm]{Theorem}
\newtheorem{proposition}[thm]{Proposition}
\newtheorem{corollary}[thm]{Corollary}
\newtheorem{remark}[thm]{Remark}
\newtheorem{example}[thm]{Example}
\renewcommand{\appendices}{
    \section*{Appendix}\label{appendices}\setcounter{subsection}{0}
    \addcontentsline{toc}{section}{Appendix}
    \setcounter{equation}{0}
    \crefalias{subsection}{appendix}
    \makeatletter
    \renewcommand{\theequation}{\Alph{subsection}.\arabic{equation}}
    \renewcommand{\thesubsection}{\Alph{subsection}}
    \renewcommand{\thethm}{\Alph{subsection}.\arabic{thm}}
    \@addtoreset{equation}{subsection}
    \@addtoreset{thm}{subsection}
    \makeatother
}
\let\oldref\ref
\newcommand\connectiononeform{\vartheta^{\mathrm{tot}}}
\newcommand{\makecommand}[3]{%
    \foreach \i in #3 {%
        \expandafter\xdef\csname #1\i\endcsname{\noexpand#2{\unexpanded\expandafter{\i}}}%
    }%
}
\newcommand{\latinalphabet}{A,a,B,b,C,c,d,D,E,e,F,f,G,g,H,h,I,i,J,j,K,k,L,l,M,m,N,n,O,o,P,p,Q,q,R,r,S,s,T,t,U,u,V,v,W,w,X,x,Y,y,Z,z}
\newcommand{\ihom}{\underline{\sfhom}}
\def\slasha#1{\setbox0=\hbox{$#1$}#1\hskip-\wd0\hbox to\wd0{\hss\sl/\/\hss}}
\def\periodb#1{\setbox0=\hbox{$#1$}#1\hskip-\wd0\hbox to\wd0{-}}
\newcommand{\eps}{\varepsilon}   			
\newcommand{\unit}{\mathbbm{1}}   			
\newcommand{\dpar}{\partial}
\newcommand{\CatNQMfd}{\mathsf{NQMfd}}
\newcommand{\acton}{\vartriangleright}     			
\newcommand\racton{\mathbin{\vartriangleleft}}
\newcommand\ractonB{\mathbin{\blacktriangleleft}}
\newcommand{\comment}[1]{}     				
\def\tyng(#1){\hbox{\tiny$\yng(#1)$}}			
\def\tyoung(#1){\hbox{\tiny$\young(#1)$}}			
\newcommand\fibtimes[2]{\mathbin{_{#1}\times_{#2}}}
\newcommand{\compconn}[1]{CONNECTION}
\newcommand{\catdgMfd}{\mathsf{dgMfd}}
\newcommand{\catdgGrpd}{\mathsf{dgGrpd}}
\newcommand{\CatdgGrpd}{\mathsf{dg2Grpd}}
\newcommand{\catSet}{\mathsf{Set}}
\newcommand{\CatCat}{\mathsf{Cat}}
\newcommand{\beq}{\begin{eqnarray}}
    \newcommand{\eeq}{\end{eqnarray}}
\newcommand{\eand}{~~~\mbox{and}~~~}
\newcommand{\ewith}{~~~\mbox{with}~~~}
\newcommand{\inv}{\mathsf{inv}}
\newcommand{\ourodot}{\left.\bigodot\right.}
\begin{document}
    \begin{titlepage}
        \vspace*{2cm}
        \begin{center}
            {\LARGE \bf
                Adjusted Connections I:
                \\[0.2cm]
                Differential Cocycles for\\[0.2cm]
                Principal Groupoid Bundles with Connection
            }
            \vskip1cm
            {\Large {Simon-Raphael \textsc{Fischer}\textsuperscript1\qquad Mehran \textsc{Jalali Farahani}\textsuperscript2\qquad Hyungrok \textsc{Kim} (\begin{CJK*}{UTF8}{bkai}金炯錄\end{CJK*})\textsuperscript3\qquad Christian \textsc{Saemann}\textsuperscript2}}\\[0.5cm]
            \vskip0.5cm
            {
                \textsuperscript1 National Center for Theoretical Sciences (\begin{CJK*}{UTF8}{bkai}國家理論科學研究中心\end{CJK*}),\\ Mathematics Division, National Taiwan University\\
                Room 407, Cosmology Building, No.\ 1, Sec.\ 4, Roosevelt Rd., Taipei City 106319, Taiwan\\
                \begin{CJK*}{UTF8}{bkai}106319 臺北市羅斯福路四段1號　(國立臺灣大學次震宇宙館407室)\end{CJK*}
                \\[0.3cm]
                \textsuperscript2 Maxwell Institute for Mathematical Sciences\\
                Department of Mathematics, Heriot--Watt University\\
                Colin Maclaurin Building, Riccarton, Edinburgh EH14 4AS, U.K.\\[0.3cm]
                \textsuperscript3
                Department of Mathematics and Theoretical Physics\\ University of Hertfordshire, Hatfield,
                Herts.\ AL10 9AB, U.K.}\\[0.3cm]
            {\href{mailto:sfischer@ncts.tw}{\texttt{sfischer@ncts.tw}}\qquad\href{mailto:mj2020@hw.ac.uk}{\texttt{mj2020@hw.ac.uk}}\\
                \href{mailto:h.kim2@herts.ac.uk}{\texttt{h.kim2@herts.ac.uk}}\qquad\href{mailto:c.saemann@hw.ac.uk}{\texttt{c.saemann@hw.ac.uk}}}
        \end{center}
        \vskip0.8cm
        \begin{center}
            \textbf{Abstract}
        \end{center}
        \begin{quote}
            We develop a new perspective on principal bundles with connection as morphisms from the tangent bundle of the underlying manifold to a classifying dg-Lie groupoid. This groupoid can be identified with a lift of the inner homomorphisms groupoid arising in Ševera's differentiation procedure of Lie quasi-groupoids. Our new perspective readily extends to principal groupoid bundles, but requires an adjustment, an additional datum familiar from higher gauge theory. We show that for Lie groupoids, the additional adjustment data amounts to a Cartan connection. The resulting adjusted connections naturally provide a global formulation of the kinematical data of curved Yang--Mills--Higgs theories as described by Kotov--Strobl~\cite{Kotov:2015nuz} and Fischer~\cite{Fischer:2021glc}.
        \end{quote}
    \end{titlepage}
    
    \tableofcontents
    
    \newpage
    
    \section{Introduction and results}
    
    \paragraph{Motivation.}
    In this paper, we provide a generalization of the notion of connection on a principal fiber bundle to a principal groupoid bundle\footnote{or {\em principal bundle over Lie groupoids} or {\em groupoid principal bundle}} as defined, e.g., in~\cite[Section 5.7]{Moerdijk:2003bb}. In this construction, we encounter similar features as in the context of higher gauge theory, which makes the construction non-trivial and mathematically interesting. 
    
    A key motivation for our discussion, however, stems directly from physics, and in particular from (quantum) field theory. Recall that {\em gauge field theories} underlie our modern understanding of particle physics. {\em Gauge fields}, which are part of the kinematical data of such theories, are connections on principal fiber bundles. In many gauge field theories, the kinematical data also includes {\em matter fields}, which are usually sections of associated vector bundles. 
    
    There are, however, more general situations, in which the matter fields do not take values in a vector space, but in a general manifold $M$, usually called the {\em target space}. If this manifold carries the action of a Lie group $\sfG\curvearrowright M$, then the matter fields can be {\em gauged}. That is, the gauge and matter field content are combined into a connection on a principal bundle with the action groupoid $M\times\sfG\rightrightarrows M$ as structure groupoid. There is no other reason than technical simplicity for restricting to action groupoids, and the technology developed in this paper allows indeed for very general\footnote{and there are good reasons to believe that there is no more general formulation} Lie groupoids as structure Lie groupoids. This leads directly to new forms of the famous Higgs mechanism that have so far been mathematically inaccessible and which may even have applications in phenomenology.
    
    \paragraph{The problem.} The construction of principal bundles with higher structure groups or even higher structure groupoids $\scG$ is straightforward. In particular, one encounters only technical but no fundamental difficulties in constructing the relevant classifying space $\sfB\scG$.
    
    Endowing such bundles with connections, however, is more subtle, even locally, i.e.~over a coordinate patch of the base manifold over which the bundle trivializes. Consider, for example, the local description of connections as morphisms of differential graded \mbox{(dg-)}commu\-tative algebras~\cite{Cartan:1949aaa,Cartan:1949aab,Kotov:2007nr} and in particular~\cite{Sati:2008eg}. For ordinary principal bundles, this description is obtained by restricting the global picture of a connection as a section of the Atiyah algebroid sequence~\cite{Atiyah:1957} to a patch $U$ of the base space. A connection is then a graded morphism $\caA\colon \rmT[1]U\rightarrow \frg[1]$ from the grade-shifted tangent bundle to the grade-shifted structure Lie algebra\footnote{See \ref{app:graded_manifolds} for our conventions and more details on graded manifolds}. Such morphisms are defined dually as morphisms of graded commutative algebras\footnote{Recall that the Chevalley--Eilenberg algebra of a Lie algebra $\frg$ is the differential graded commutative algebra $\sfCE(\frg)=\ourodot^\bullet\frg[1]^*$ with differential $\sfd a=-\tfrac12[a,a]$ for $a\in \frg[1]^*$, see also~\ref{ssec:dg-manifolds_and_Lie_algebroids}.} $\caA^\sharp\colon\sfCE(\frg)\rightarrow \Omega^\bullet(M)$. The curvature is then the failure of $\caA^\sharp$ to respect the differential, and dg-commutative algebra morphisms correspond to flat connections. 
    
    This construction generalizes to higher Lie algebras as well as (higher) Lie algebroids $\frg$, but in many of these cases, the closure of the Lie algebra of gauge transformations restricts the curvatures strongly, and both these restrictions as well as the Bianchi identities arising in this construction contradict the expectations from physics, see~e.g.~\cite{Borsten:2024gox} for a discussion.
    
    \paragraph{The solution.} In the case of higher structure groups, a solution to this problem has been found~\cite{Sati:2008eg,Fiorenza:2012tb,Saemann:2019dsl,Kim:2019owc,Rist:2022hci}: the definition of curvature has to be corrected by what has been called a {\em Chern--Simons term} or, more generally, an {\em adjustment}. Note that higher flat connections are readily defined by restricting the above mentioned map $\caA^\sharp:\sfCE(\frg)\rightarrow \Omega^\bullet(M)$ to a morphism of differential graded commutative algebras. The key observation is then that non-flat connections can be regarded as flat connections for a higher structure algebra, as pointed out for Lie 2-algebras in~\cite{Roberts:0708.1741}. Here, the curvature forms are identified with additional higher connection forms by the flatness condition. To a certain extent, this observation already underlies the use of the Weil algebra of a Lie algebra $\frg$ for ordinary gauge theories in~\cite{Cartan:1949aaa,Cartan:1949aab}, which is the Chevalley--Eilenberg algebra of the Lie 2-algebra of inner derivations of $\frg$. 
    
    In the case where $\frg$ is concentrated in more than one degree, e.g.~for $\frg$ a Lie algebroid or $\frg$ a higher Lie algebra, there is now an ambiguity in the identification of the generators that are mapped by $\caA^\sharp$ to the additional higher connection forms. This is crucial, because different choices yield different definitions of curvatures, which induce different forms of gauge transformations. An adjustment is now such a choice with particularly nice properties. Importantly, the gauge transformations close without the above mentioned strong constraints on the curvatures, and in many cases, the curvatures gauge-transform in a nice, ``covariant'' manner, which is important for the construction of action principles for field theories involving them.
    
    The full description of a higher principal bundle with connection is then obtained by rendering the above global. The main goal of the present paper is to achieve this for Lie algebroids that integrate to a Lie groupoid\footnote{Recall that not all Lie algebroids admit integrating Lie groupoids; one may have to extend to higher Lie groupoids.}.
    
    \paragraph{State of the art.} So far, adjustments have been given mostly locally, and then for particular examples of higher Lie algebras, such as string-like higher Lie algebras~\cite{Sati:2008eg}, their strict loop space models~\cite{Saemann:2019dsl}, twisted string and fivebrane structures~\cite{Sati:2009ic}, and the higher Lie algebras appearing in the tensor hierarchies of gauged supergravity~\cite{Borsten:2021ljb}. Global descriptions have been given either abstractly, see e.g.~\cite{Fiorenza:2012tb} for a weak string 2-group model, or explicitly, but only for particular examples, see~\cite{Kim:2019owc,Rist:2022hci} for a strict string 2-group model; see also~\cite{Borsten:2024gox} for more details.
    
    For principal groupoid bundles, the local description of adjusted connections has been first proposed (albeit in a slightly different language) in~\cite{Strobl:2004im}, with further understanding of the involved gauge transformations in~\cite{Bojowald:0406445, Mayer:2009wf}. Eventually, this type of local kinematical data for ``curved Yang--Mills--Higgs gauge theories'' was summarized and finalized in~\cite{Kotov:2015nuz}. In Fischer's Ph.D.\ thesis~\cite{Fischer:2021glc} these local connections were redeveloped in a coordinate-free manner, and their geometry was studied. This thesis also provides several new examples of adjusted connections and explains (using a different nomenclature) that locally, an adjusted connection is equivalent to a connection with flat adjustment if one assumes that $\frg$ is either a Lie algebra bundle or tangent algebroid; see \ref{sec:examples} for a detailed explanation and an extension of this statement.
    
    In the above mentioned literature, the starting point was the construction of a gauge invariant action functional that provides a dynamical principle for a Lie-algebroid-valued connection. To this end, one has to define a covariantly transforming curvature, which is essentially achieved by adjusting the local connections as discussed above. This local adjustment is obtained from a connection on the Lie algebroid which satisfies several \emph{compatibility conditions}, including a cohomological property of the connection and a certain curvature equation; flatness of the latter is essential for recovering ordinary gauge theory which is the reason why one speaks of a \emph{curved} gauge theory. 
    
    As pointed out in~\cite{Fischer:2020lri, Fischer:2021glc}\footnote{Later, Alexei Kotov and Thomas Strobl found another proof of the following argument about the exactness condition in the infinitesimal setting, which has not been published yet.}, the definition of such connections is indeed of a cohomological type: The curvature condition is in fact an exactness condition, and gauge covariance amounts to the connection being closed and its curvature being exact. This generalizes the Maurer--Cartan form and its flatness condition. The differential defining the relevant cohomology here is a natural simplicial differential on Lie groupoids as defined in~\cite[beginning of \S 1.2]{Crainic:2003:681-721}, which is an important notion for introducing Cartan, or more general, multiplicative connections. 
    
    Making use of this observation,~\cite{Fischer:2022sus} formulated a global version of local adjusted connections in the case of Lie algebroids that are Lie algebra bundles. This resulted in adjusted connections on principal groupoid bundles with structure Lie groupoids that are Lie group bundles, called \emph{(multiplicative) Yang--Mills connections}. These connections form the kinematical data of ``curved Yang--Mills gauge theory''.
    
    We note that the above mentioned simplicial differential is formulated on general Lie groupoids. Therefore, it should be straightforward to define Lie groupoid based connections, following similar constructions as in~\cite{Fischer:2022sus}, essentially integrating curved Yang--Mills--Higgs gauge theories. Furthermore,~\cite[Rem.\ 6.67]{Fischer:2022sus} points out that the compatibility conditions on the connection, which ensure the closure of gauge transformations, may in fact be related to the existence of the connection. We intend to discuss all this in a separate work~\cite{future:2024ac}.
    
    Laurent-Gengoux and Fischer already observed in~\cite{Fischer:2401.05966} that the above mentioned Yang--Mills connections help classifying (singular) foliations $\scF$. Here, these connections are induced as $\scF$-connections such that in the context of~\cite{Fischer:2401.05966} the compatibility conditions are indeed related to the existence of $\scF$. That is, the compatibility conditions are necessary and sufficient for the existence of a singular foliation in this context. In fact, as mentioned in~\cite{Fischer:2401.05966}, and as we will work out here in greater detail, the results of~\cite{Fischer:2401.05966} actually classify certain curved adjusted connections.
    
    One of the difficult tasks in defining generalizations of principal bundles with adjusted connections is the concrete descriptions of the classifying stack $\sfB\scG^{\rm conn}$ of higher principal $\scG$-bundles with connections. Usually, this stack is captured very indirectly and on a case-by-case basis, e.g.~in terms of differential cocycles\footnote{i.e.~transition functions and gluing prescriptions for local connection forms} as in~\cite{Rist:2022hci}, and it is our goal to remedy this situation.
    
    We note that there are also constructions of Lie groupoid gauge theories without the use of adjustments~\cite{Signori:2008jd}. Instead, a \emph{Moerdijk-Mrčun condition} is imposed, i.e.~the Ehresmann connection on the principal bundle as a horizontal distribution has to be in the kernel of what one calls the moment map $\Phi$.\footnote{The action $p \racton g$ of a Lie groupoid element $g$ on a principal bundle element $p$ is defined if $\Phi(p) = \sft(g)$, where $\sft$ is the target arrow of the Lie groupoid.} This condition, however, excludes a number of useful constructions. First, it excludes structural Lie group bundles as structural Lie groupoid as developed in~\cite{Fischer:2022sus} because $\Phi$ is often the bundle projection of the principal bundle itself in such situations. In particular, we would lose many of the examples we present in \ref{sec:examples}. The Moerdijk--Mrčun condition would also exclude the previously-mentioned idea of covariantising Yang--Mills gauge theory. Covariantisation here includes two steps: first observing that a theory with a structural Lie group is equivalent to a theory with structural trivial Lie group bundle over the space-time (not just over a point) equipped with trivial connection, and second, extending the theory to non-trivial bundle structures, usually by extending the description to a richer family of connections, not just flat ones on the trivial group bundle. In particular, Yang--Mills gauge theory should be equivalent to a description with a structural trivial Lie group bundle as provided in~\cite{Fischer:2022sus}, but such a description is also excluded for the same reasons as mentioned above. Last but not least, our notion of Ehresmann connection generalises the one in~\cite{Signori:2008jd}, so that we effectively enlarge known gauge theory.
    
    Furthermore, \cite{Chatterjee:2502.02284} has a similar approach introducing an adjustment to define connections on a principal groupoid-bundle. However, in their context they look at an invariance w.r.t.\ representations up to homotopy, essentially avoiding the curvature condition observed in curved Yang-Mills-Higgs theories. In a future work one might investigate whether the work in~\cite{Chatterjee:2502.02284} is a kind of ``curved Yang-Mills-Higgs theory up to homotopy'', similar to how representations up to homotopy are related to representations. (Observe that the same research team also works on higher gauge theory~\cite{Chatterjee:2021sif}).
    
    \paragraph{Results.} This paper is the first one in a series of papers giving an explicit and useful definition of the classifying stack $\sfB\scG^{\rm conn}$ of principal bundles with connections as a differential graded higher Lie groupoid. In particular, the cocycle of a principal $\scG$-bundle over a manifold $X$ subordinate to a surjective submersion $\sigma:Y\rightarrow X$ (e.g.~a suitable cover of $X$) endowed with higher adjusted connection is simply obtained as a higher functor from the grade-shifted tangent groupoid $\rmT[1]\check\scC(\sigma)$ of the Čech groupoid $\check\scC(\sigma)$ of $\sigma$ to $\sfB\scG^{\rm conn}$ in the category of differential graded higher Lie groupoids.
    
    In this paper, we present the definition of $\sfB\scG^{\rm conn}$ for $\scG$ a Lie groupoid. We show that the required adjustment datum, i.e.~the additional algebraic information necessary for the general definition of a connection, is given by a Cartan connection\footnote{see \ref{ssec:connections} for the definitions} on $\scG$. Using this adjustment datum, we then construct an adjusted inner action groupoid $\scA^\sfW(\scG)$, which takes the above mentioned role of $\sfB\scG^{\rm conn}$.
    
    We introduce three forms of adjustment: plain adjustments, covariant adjustments, and strict covariant adjustments, with the latter two requiring additional data. Covariant adjustments locally reproduces the notion of Lie algebroid-valued connections as developed in~\cite{Kotov:2015nuz} and~\cite{Fischer:2021glc}. As a new result at the local and infinitesimal level, we show that an adjustment here amounts to a Cartan connection on the relevant Lie algebroid.
    
    We also present the a collection of detailed, interesting, and concrete examples, focusing on fundamental Lie groupoids, Lie group bundles regarded as Lie groupoids with source and target both equal the projection, and action Lie groupoids. The last class contains all the examples known from the physics literature in the context of gauge-matter field theories. In particular, we present several examples which cannot be treated without introducing adjustments, such as \ref{ex:PairGroupoidsEx}, \ref{ex:Hopf}, and \ref{ex:Hopf2}.
    
    \paragraph{Outlook.} In future work~\cite{future:2024aa} we will discuss the extension of our definition of adjusted connections to higher bundles with a focus on principal 2-bundles. We also plan to address the total space perspective, including the notion of parallel transport for principal groupoid bundles with adjusted connection in~\cite{future:2024ac}.
    
    It is also clearly interesting to explore the phenomenological implications and use cases for our construction in more detail. We will give a first concrete application in~\cite{future:2024ab}. After this, we plan to explore if some of the hopes voiced in~\cite{Strobl:2004im} regarding phenomenological applications are justified.   
    
    \section{Lie groupoids, Lie algebroids, and differential graded manifolds}
    
    In the following, we briefly recall some definitions and results on Lie groupoids and Lie algebroids, and extend these to dg-Lie groupoids and dg-Lie algebroids. Particularly important to us is the differentiation procedure for quasi-groupoids proposed in~\cite{Severa:2006aa}. For more classical details on Lie groupoids and Lie algebroids, see also~\cite{Mackenzie:1987aa,0521499283,Moerdijk:2003bb}.
    
    \subsection{Lie groupoids and their morphisms}
    
    A \uline{groupoid} is a small category with invertible morphisms, and we write $\scG=(\sfG\rightrightarrows M)$ with $M$ and $\sfG$ the set of objects and morphisms, respectively. A \uline{Lie groupoid} is a groupoid $\scG$ with $M$ and $\sfG$ smooth manifolds, where the source and target maps $\sfs,\sft\colon\sfG\rightrightarrows M$ are surjective submersions, and the identity $\sfe\colon M\rightarrow \sfG$ and the composition $\circ\colon\sfG\fibtimes{\sfs}{\sft}\sfG\rightarrow \sfG$ are all smooth maps; as a consequence, the inverse $-^{-1}\colon\sfG\rightarrow \sfG$ is then a smooth map, too. In the following, all manifolds are smooth.
    
    \begin{example}[Čech groupoid]\label{ex:Cech_groupoid}
        An important example is the Čech groupoid $\check\scC(\sigma)$  of a surjective submersion $\sigma\colon Y\rightarrow X$ between manifolds $Y$ and $X$, which is the category $\check \scC(\sigma)=(Y^{[2]}\rightrightarrows Y)$, where 
        \begin{equation}
            Y^{[2]}\coloneqq\{(y_1,y_2)\in Y\times Y\,|\,\sigma(y_1)=\sigma(y_2)\}~,
        \end{equation}
        and the structure maps are given by 
        \begin{equation}
            \begin{gathered}
                \sft(y_1,y_2)=y_1~,~~~\sfs(y_1,y_2)=y_2~,~~~(y_1,y_2)\circ (y_2,y_3)=(y_1,y_3)~,
                \\
                \sfe_y=(y,y)~,~~~(y_1,y_2)^{-1}=(y_2,y_1)
            \end{gathered}
        \end{equation}
        for $(y_1,y_2),(y_2,y_3)\in Y^{[2]}$ and $y\in Y$.
    \end{example}
    
    \begin{example}[Pair groupoid]\label{ex:pair_groupoid}
        Another useful example is the pair groupoid
        \begin{equation}
            \scP\mathsf{air}(M)\coloneqq(M\times M\rightrightarrows M)
        \end{equation}
        of a manifold $M$ with the evident structure maps 
        \begin{equation}
            \begin{gathered}
                \sft(m_1,m_2)=m_1~,~~~\sfs(m_1,m_2)=m_2~,~~~(m_1,m_2)\circ (m_2,m_3)=(m_1,m_3)~,
                \\
                \sfe_m=(m,m)~,~~~(m_1,m_2)^{-1}=(m_2,m_1)
            \end{gathered}
        \end{equation}
        for $m,m_{1,2,3}\in M$. We have $\scP\mathsf{air}(M)=\check\scC(\sigma)$ for $\sigma=M\rightarrow *$.
    \end{example}
    
    \begin{example}[Tangent groupoid]\label{ex:tangent_groupoid}
        Given a groupoid $\scG=(\sfG\rightrightarrows M)$, we define the tangent groupoid, or rather the tangent prolongation Lie groupoid as the groupoid  
        \begin{equation}
            \rmT\scG=(\rmT\sfG\rightrightarrows\rmT M)~,
        \end{equation}
        where the structure maps are the differentials of the structure maps on $\scG$.
    \end{example}

    Recall that Lie groupoids are objects in a 2-category $\sfBibun$ of Lie groupoids, bibundles, and bibundle morphisms. A \uline{left-bibundle} between two Lie groupoids $\scG=(\sfG\rightrightarrows M)$ and $\scH=(\sfH\rightrightarrows N)$ is a manifold $B$ together with a smooth map $\tau\colon B\rightarrow M$ and a surjective submersion $\sigma\colon B\rightarrow N$, depicted as
    \begin{equation}
        \begin{tikzcd}
            \sfG \arrow[d,shift left]\arrow[d,shift right] & B \arrow[dl,"{\tau}"'] \arrow[dr,"{\sigma}"] & \sfH \arrow[d,shift left]\arrow[d,shift right]
            \\
            M & & N
        \end{tikzcd}
    \end{equation}
    together with mutually commuting left- and right-actions
    \begin{equation}
        \sfG\fibtimes{\sfs}{\tau}B\rightarrow B\eand B\fibtimes{\sigma}{\sft}\sfH\rightarrow B
    \end{equation}
    that respect morphism composition and identities in $\scG$ and $\scH$ and for which the left-action is principal, i.e.~the map $\sfG\fibtimes{\sfs}{\tau}B\rightarrow B\times_N B$, $(g,b)\mapsto (gb,b)$ is an isomorphism. Right-bibundles are defined analogously, cf., e.g.,~\cite{Metzler:0306176,Mrcun:1996aa,Blohmann:2007ez}. 
    
    The usual strict morphisms of Lie groupoids (i.e.~functors) can be lifted to bibundles. Bibundles are composable, but this composition is not associative, and therefore Lie groupoids, bibundles, and bibundle morphisms form the weak 2-category $\sfBibun$. The need for bibundles arises from the fact that the axiom of choice fails for Lie groupoids, i.e.~a fully faithful and essentially surjective functors between Lie groupoids can fail to give rise to equivalences~\cite{Roberts:1101.2363}. Generically in such situations, we work with anafunctors, and a bibundle manifests the ``irreducible information'' of such an anafunctor of Lie groupoids.
    
    If a bibundle is both a left- and right-bibundle, we call the bibundle \underline{invertible} and the two groupoids $\scG$ and $\scH$ \underline{Morita-equivalent}. An important example of a pair of Morita-equivalent groupoids is a manifold $X$, trivially regarded as the Lie groupoid $(X\rightrightarrows X)$, and the Čech groupoid $\check \scC(\sigma)$ for any surjective submersion $\sigma\colon Y\rightarrow X$. We will use this to switch between local and global descriptions of principal fiber bundles. Another example is that of the action groupoid $(\sfG\times \sfG\rightrightarrows \sfG)$ of a group $\sfG$ acting on itself by left-multiplication, which is Morita-equivalent to the trivial Lie groupoid $(*\rightrightarrows *)$. For further details, see again~\cite{Metzler:0306176,Mrcun:1996aa,Blohmann:2007ez}. 
    
    \subsection{Lie algebroids and differential graded manifolds}\label{ssec:dg-manifolds_and_Lie_algebroids}
    
    Consider now a Lie groupoid $\scG$. The kernel of the map $\rmd \sft\colon \rmT\sfG\rightarrow \rmT M$ is a vector bundle over $\sfG$, and the pullback along $\sfe$ yields a vector bundle $\sfLie(\scG)=\frg\rightarrow M$, the \uline{Lie algebroid} of $\scG$. The embedding of $\frg$ into $\rmT \sfG$ also induces a Lie bracket $[-,-]$ on sections of $\frg$, which satisfies the obvious Leibniz identity for products of functions and vector fields. Furthermore, we have the \uline{anchor map} $\rho$, which is given by the restriction of $\rmd \sfs$ to $\frg$, and which is a Lie algebra morphism. 
    
    As an example, consider the pair groupoid $\scP {\rm air}(M)$ from \ref{ex:pair_groupoid}. The Lie algebroid $\sfLie(\scP {\rm air}(M))$ is the tangent algebroid $\rmT M$ with trivial anchor map.
    
    The \uline{Chevalley--Eilenberg algebra} $\sfCE(\frg)$ of a Lie algebroid $\frg\rightarrow M$ is the free graded commutative algebra\footnote{We will always assume that the vector bundle $\frg$ allows for clear dualization, e.g., because it is finite-dimensional. For more general descriptions, see e.g.~\cite{Abad:0901.0322}.}
    \begin{equation}
        \ourodot^\bullet_{C^\infty(M)}\Gamma(\frg[1]^*)~,
    \end{equation}
    where $\frg[1]$ is the vector bundle $\frg$ with fiber elements having degree\footnote{See \ref{app:graded_manifolds} for our conventions on (differential) graded manifolds.} $-1$, together with the differential 
    \begin{equation}
        \begin{aligned}
            \sfd_\sfCE \omega(\nu_0,\ldots \nu_n)& \coloneqq \sum_{i=0}^n(-1)^i\rho(\nu_i)\omega(\nu_0,\ldots,\nu_{i-1},\hat \nu_i,\nu_{i+1}, \ldots,\nu_n)
            \\
            &\hspace{1cm}+ \sum_{i<j} (-1)^{i+j}\omega([\nu_i,\nu_j],\nu_0,\ldots,\hat \nu_i,\ldots, \hat \nu_j,\ldots, \nu_n)
        \end{aligned}
    \end{equation}
    for $\nu_{0,\ldots,n}\in \Gamma(\frg[1])$, where $\hat-$ denotes an omissions.
    
    In terms of some local trivialization over a patch $U\subset M$ given by coordinates $m^a$, $a=1,\ldots,\rmdim(M)$ on the base $M$ and $\xi^\alpha$, $\alpha=1,\ldots, \rmdim(\frg)-\rmdim(M)$, which freely generate the graded commutative algebra $\sfCE(\frg)$, the action of the Chevalley--Eilenberg differential $\sfd_{\sfCE}$ for a general Lie algebroid $\frg$ is given by\footnote{Here and in the following, we use the Einstein sum convention, i.e.~a sum over indices appearing twice is always implied.}
    \begin{equation}\label{eq:local_CE_algebra}
        \sfd_{\sfCE}f=\ttr^a_\alpha \xi^\alpha\frac{\dpar}{\dpar m^a} f\eand \sfd_{\sfCE} \xi^\alpha=-\tfrac12 \ttf^\alpha_{\beta\gamma}\xi^\beta\xi^\gamma
    \end{equation}
    for all $f\in C^\infty(M)$, where $\ttf^\alpha_{\beta\gamma},\ttr^a_\alpha\in C^\infty(M)$ are the structure functions of the Lie bracket on $\frg$ and the anchor map, respectively. The Leibniz rule then continues this differential uniquely to all of $C^\infty(\frg[1])$.
    
    The description~\eqref{eq:local_CE_algebra} also makes it clear that the Chevalley--Eilenberg differential can be seen as a nilquadratic vector field $Q$ of degree~$1$ on $\frg[1]$.
    
    Altogether, a Lie algebroid $\frg$ is conveniently regarded as differential graded manifold $\frg[1]$ concentrated in degrees $0$ and $-1$, see e.g.~\cite{16298602}, and we will often speak of the Lie algebroid $\frg[1]$. As a side remark, we note that dg-manifolds without this restriction correspond to $L_\infty$-algebroids.
    
    A useful and ubiquitous example is the following.
    \begin{example}[Tangent algebroid]\label{ex:tangent_algebroid}
        The tangent algebroid $\rmT M$ of some manifold $M$ is identified with the differential graded manifold $\rmT[1]M$, i.e.~the grade-shifted tangent bundle with fiber elements concentrated in degree~$-1$, which is endowed with the de~Rham differential on $M$. The Chevalley--Eilenberg algebra of $\rmT M$ is the de~Rham complex on $M$, $\sfCE(\rmT M)=(\Omega^\bullet(M),\rmd_M)$. 
    \end{example}

    \subsection{Connections on Lie groupoids and Lie algebroids}\label{ssec:connections}
    
    The definition of principal groupoid bundle connections and their infinitesimal form requires additional structure, namely connections on Lie algebroids and Lie groupoids.
    
    Concretely, we need the notion of Cartan connection on Lie groupoids, and we will follow closely~\cite{Crainic:1210.2277}, but exchange $\ker(\rmd \sfs)$ by $\ker(\rmd \sft)$; the list of literature for Cartan connections and their properties is quite advanced, and historically, Cartan connections were not always known as Cartan connections, see~\cite{Abad:0911.2859, Behrend:0410255, Blaom:0404313, Blaom:0509071, Blaom:1605.04365, Crainic:1307.7979, Tang:0405378}.
    
    \begin{definition}[Multiplicative distribution]
        A multiplicative distribution on a groupoid $\scG=(\sfG\rightrightarrows M)$ is a distribution $\caD\subset \rmT\sfG$ such that $(\caD\rightrightarrows \rmT M)$ is a subgroupoid of $\rmT\scG=(\rmT\sfG\rightrightarrows \rmT M)$.
    \end{definition}
    
    \begin{definition}[Cartan connection]\label{def:Cartan_connection}
        A Cartan connection on a Lie groupoid $\scG=(\sfG\rightrightarrows M)$ is a multiplicative distribution $\caH\subset \rmT\sfG$ which is complementary to $\ker(\rmd \sft)$, i.e.~at each $g\in \sfG$ we have $\rmT_g\sfG=\caH_g\oplus \ker(\rmd \sft)_g$.
    \end{definition}
    
    As usual, a connection defines a connection 1-form. Recall that for a groupoid $\scG=(\sfG\rightrightarrows M)$, left- and right-multiplication by an element $g\in \sfG$ form maps
    \begin{equation}
        L_g\colon\sfG^{\sfs(g)}\rightarrow \sfG^{\sft(g)}\eand R_g\colon\sfG_{\sft(g)}\rightarrow \sfG_{\sfs(g)}~,
    \end{equation}
    where $\sfG^m$ and $\sfG_m$ are the sets of elements in $\sfG$ with target and source $m$, respectively. The differentials of these maps act on the tangent spaces:
    \begin{equation}
        \begin{aligned}
            \rmd L_g\colon&&\rmT\left(\sfG^{\sfs(g)}\right)=\ker(\rmd \sft)|_{\sfG^{\sfs(g)}}&\rightarrow \rmT\left(\sfG^{\sft(g)}\right)=\ker(\rmd \sft)|_{\sfG^{\sft(g)}}~,
            \\
            \rmd R_g\colon&&\rmT\left(\sfG_{\sft(g)}\right)=\ker(\rmd \sfs)|_{\sfG_{\sft(g)}}&\rightarrow \rmT\left(\sfG_{\sfs(g)}\right)=\ker(\rmd \sfs)|_{\sfG_{\sfs(g)}}~.
        \end{aligned}
    \end{equation}
    This allows us to define the vector bundle isomorphisms\footnote{Here and in the following, $\sfp_M$ always denotes the projection onto the base in some fiber bundle $E\rightarrow M$. Also, we are always very explicit in our notation of points in fiber bundles, and e.g.~$(\sfs(g),\nu)$ is the point $\nu$ in the fiber over the base point $\sfs(g)$. This is to avoid confusion when the discussion gets more complicated later, where e.g.~iterated tangent bundles appear.}
    \begin{equation}
        \begin{aligned}
            \scL\colon\sfs^*\frg=\sfG\fibtimes{\sfs}{\sfp_M}\frg&\rightarrow \ker(\rmd \sft)
            ~,
            \\
            \bigl(g;\sfs(g),\nu\bigr)&\mapsto \rmd L_{g}\left(\sfe_{\sfs(g)},\nu\right)
            ~,
        \end{aligned}
    \end{equation}
    and
    \begin{equation}
        \begin{aligned}
            \scR\colon\sft^*\sfe^*{\rm ker}(\rmd \sfs)=\sfe^*{\rm ker}(\rmd \sfs)\fibtimes{\sfp_M}{\sft}\sfG&\rightarrow \ker(\rmd \sfs)~,
            \\
            \bigl(\sft(g),\nu;g\bigr)&\mapsto \rmd R_g\left(\sfe_{\sft(g)},\nu\right)~,
        \end{aligned}
    \end{equation}
    which, together with the maps
    \begin{equation}
        \begin{aligned}
            \widehat{\rmd\sft}\colon\rmT\sfG&\rightarrow \sft^*\rmT M
            ~,~~~&
            \widehat{\rmd\sfs}\colon\rmT\sfG&\rightarrow \sfs^*\rmT M~,
            \\
            (g,\gamma)&\mapsto \bigl(g, \rmd\sft(g,\gamma)\bigr)
            ~,~~~&
            (g,\gamma)&\mapsto \bigl(g, \rmd\sfs(g,\gamma)\bigr)~,
        \end{aligned}
    \end{equation}    
    fit into two short exact sequences of vector bundles over $\sfG$, 
    \begin{equation}\label{eq:ShortSequenceForHorLifts}
        \begin{tikzcd}[row sep = small]
            \sfs^*\frg= 
            \sfs^*\sfe^*\ker(\rmd \sft) \arrow[hook]{r}{\scL} 
            & \rmT\sfG\arrow[two heads]{r}{\widehat{\rmd \sft}} 
            & \sft^*\rmT M~,
            \\
            \phantom{\sfs^*\frg=} 
            \sft^*\sfe^*\ker(\rmd \sfs)\arrow[hook]{r}{\scR} 
            & \rmT\sfG\arrow[two heads]{r}{\widehat{\rmd \sfs}} 
            & \sfs^*\rmT M~,
        \end{tikzcd}
    \end{equation}
    cf.~\cite{Abad:0911.2859} as well as~\cite[Prop.~3.5.3]{0521499283}. We denote the inverse of the bundle isomorphism $\scL:\sfs^*\frg\rightarrow \ker(\rmd \sft)\subset \rmT\sfG$ by $\vartheta$:
    \begin{equation}\label{eq:def_vartheta}
        \begin{aligned}
            \vartheta\colon\ker(\rmd \sft)&\rightarrow \sfs^*\frg~,
            \\
            (g,\gamma)&\mapsto \vartheta(g,\gamma)\coloneqq\rmd L_{g^{-1}}\left( g, \gamma\right)~.
        \end{aligned}
    \end{equation}
    With the above, we can make the following definition.
    
    \begin{definition}[Connection 1-form on $\scG$]\label{def:Conn1FormOnGroupoid}
        Let $\scG=(\sfG\rightrightarrows M)$ be a Lie groupoid equipped with a Cartan connection $\caH$, and let $\frg$ be the associated Lie algebroid. The connection 1-form $\connectiononeform$ is the following map:
        \begin{equation}
            \begin{tikzcd}
                \sfs^*\frg & \ker(\rmd \sft) \lar[bend right, "\vartheta"'above] & \rmT\sfG \lar[bend right, "\sfp^{\rm vert}"'above] \ar[ll, bend left, "\connectiononeform"]
            \end{tikzcd}
        \end{equation}
        where $\sfp^{\rm vert} \colon \rmT\sfG \to \ker(\rmd \sft)$ is the projection to the vertical subbundle implied by $\caH$.
    \end{definition}
    
    For later purposes, we note the following.
    \begin{lemma}\label{lem:connection_one_form_properties}
        Consider a Cartan connection $\caH$ on a groupoid $\scG$ and let $\vartheta^{\rm tot}$ be the corresponding connection 1-form. Then $\vartheta^{\rm tot}$ is unital in the sense that $\vartheta^{\rm tot}(\rmd\sfe(m,\mu))=0$ for all $(m,\mu)\in \rmT M$.
    \end{lemma}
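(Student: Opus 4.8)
The plan is to reduce the statement to the observation that a Cartan connection, being a multiplicative distribution, contains the image of the identity bisection of the tangent prolongation groupoid. First I would unpack \ref{def:Conn1FormOnGroupoid}: the connection 1-form factors as $\connectiononeform = \vartheta \circ \sfp^{\rm vert}$, where $\sfp^{\rm vert}\colon \rmT\sfG \to \ker(\rmd\sft)$ is the projection along the horizontal distribution $\caH$, and $\vartheta\colon \ker(\rmd\sft) \to \sfs^*\frg$ is the bundle isomorphism \eqref{eq:def_vartheta}. Since $\vartheta$ is a vector bundle isomorphism and in particular fibrewise injective, and since $\sfp^{\rm vert}$ has kernel exactly $\caH$ (by the direct sum decomposition $\rmT_g\sfG = \caH_g \oplus \ker(\rmd\sft)_g$), the composite $\connectiononeform$ annihilates a tangent vector precisely when that vector lies in $\caH$. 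Hence the assertion $\connectiononeform(\rmd\sfe(m,\mu)) = 0$ for all $(m,\mu)\in\rmT M$ is equivalent to the inclusion $\rmd\sfe(\rmT M) \subseteq \caH$.

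To establish this inclusion, I would use that $\caH$ is a multiplicative distribution: by definition $(\caH \rightrightarrows \rmT M)$ is a subgroupoid of $\rmT\scG = (\rmT\sfG \rightrightarrows \rmT M)$ over the full object set $\rmT M$. Such a wide subgroupoid must contain every identity morphism of the ambient groupoid, and in $\rmT\scG$ the identity-assigning map is the differential $\rmd\sfe\colon \rmT M \to \rmT\sfG$ of the unit map $\sfe$ of $\scG$ (cf.\ \ref{ex:tangent_groupoid}). Therefore $\rmd\sfe(m,\mu) \in \caH_{\sfe_m}$ for all $(m,\mu)\in\rmT M$, so $\sfp^{\rm vert}(\rmd\sfe(m,\mu)) = 0$ and thus $\connectiononeform(\rmd\sfe(m,\mu)) = \vartheta(0) = 0$.

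I do not anticipate a genuine obstacle; the one step worth spelling out carefully is that the units of $\rmT\scG$ are exactly the image of $\rmd\sfe$ and that a subgroupoid on the full object set $\rmT M$ is forced to contain them. As a consistency check one can note that $\rmd\sft(\rmd\sfe(m,\mu)) = \rmd(\sft\circ\sfe)(m,\mu) = (m,\mu)$, so $\rmd\sfe(m,\mu)\notin\ker(\rmd\sft)_{\sfe_m}$ unless $\mu=0$; this confirms that $\rmd\sfe(m,\mu)$ genuinely sits in the horizontal complement rather than accidentally being vertical, but on its own it does not prove the claim, which is why the subgroupoid argument is the essential input.
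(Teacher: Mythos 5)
Your proof is correct and follows the same route as the paper: the paper's (one-line) proof likewise invokes that the multiplicative distribution $\caH$, being a subgroupoid of $\rmT\scG$ over all of $\rmT M$, contains the identities $\rmd\sfe(\rmT M)$, so that the vertical projection and hence $\vartheta^{\rm tot}$ annihilate them. Your write-up simply spells out the details (the factorization $\vartheta^{\rm tot}=\vartheta\circ\sfp^{\rm vert}$ and why a wide subgroupoid must contain the units) that the paper leaves implicit.
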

    \begin{proof}
        This is a direct consequence of $\caH$ being a multiplicative distribution, containing the identities on $\rmT M$. 
    \end{proof}
    
    Also, a connection on a groupoid defines an adjoint quasi-representation~\cite{Abad:0911.2859},\footnote{The reference speaks of quasi-actions, but these are actually linear in the reference's context, which is why we decided to write ``quasi-representation'' to put an emphasis on the linearity.} which is an ordinary representation for a Cartan connection.
    \begin{definition}[Adjoint action]\label{def:adjoint_action}
        Given a Cartan connection $\caH$ on a groupoid $\scG$ with Lie algebroid $\frg$, we define the adjoint action or representation as 
        \begin{equation}
            \begin{aligned}
                \rmAd_{\caH} \colon \sfs^*\frg &\rightarrow \sft^* \frg      \\
                \left(g,\hat \nu\right)&\mapsto -\left((\sfinv^{\star})^{-1}\circ \vartheta^{\rm tot}\circ \rmd\,\sfinv \circ \scL \right) \left(g, \hat \nu \right)~,
            \end{aligned}
        \end{equation}
        where $\sfinv\colon\sfG\rightarrow \sfG$ is the groupoid inverse $\sfinv(g)=g^{-1}$, $\sfinv^\star$ is the isomorphism $\sfinv^\star\colon\sft^* \frg \cong \sfs^* \frg$ induced by $\sfinv$. We may drop the subscript $\caH$ if it is clear by context or if the adjoint representation is independent of the Cartan connection as in the case of Lie group bundles.
    \end{definition}
    \noindent The connection, $\caH$, can also be introduced as a \uline{horizontal lift} $\sigma_\caH$, a splitting  of the short exact vector bundle sequence
    \begin{equation}
        \begin{tikzcd}
            \ker(\rmd\sft)\arrow[r] & \rmT \sfG \arrow[r] & \sft^*\rmT M \arrow[l,bend right,"\sigma_\caH"']~,
        \end{tikzcd}
    \end{equation}
    and as described in~\cite[Rem.~2.10]{Abad:0911.2859} one has simultaneously a splitting of the second sequence in~\eqref{eq:ShortSequenceForHorLifts}.
    The previously mentioned properties can then be expressed as
    \begin{equation}
        \begin{aligned}
            \sigma_\caH(g_1g_2;x, V)&=\sigma_\caH(g_1 ;x, V)\circ\sigma_\caH\bigl(g_2;\rmd \sfs(\sigma_\caH(g_1 ;x, V))\bigr)~,      \\
            \sigma_\caH(\sfe_x;x, V)&=\rmd \sfe(x, V)
        \end{aligned}
    \end{equation}
    for all $(g_1, g_2) \in \sfG \fibtimes\sfs\sft \sfG$ and $(x, V) \in \rmT M$ ($x = \sft(g_1)$ in the first equation). For any vector field $V$ on M, the corresponding \uline{projectable horizontal vector field} $\mathsf{hor}_{\caH}(V)$ is given by 
    \begin{equation}\label{eq:horizontal_lift}
        \left.\mathsf{hor}_{\caH}(V)\right|_g
        =
        \sigma_\caH\left(g; \sft(g) , V_{\sft(g)}\right)~.
    \end{equation}
    
    \begin{proposition}\label{eq:adjoint_simplified}
        We can write the adjoint action as 
        \begin{equation}
            \rmAd_{\caH}(g; \sfs(g), \nu)
            =
            \left(
            g;\sft(g), \rmd L_g \left(\sfe_{\sfs(g)}, \nu\right) \circ \sigma_\caH\left(g^{-1};\sfs(g), \rho(\nu)\right)
            \right)
        \end{equation}
        for all $(g; \sfs(g), \nu) \in \sfs^*\frg$.
    \end{proposition}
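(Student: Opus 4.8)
The plan is to peel \ref{def:adjoint_action} apart layer by layer and thereby reduce the claim to one identity inside the tangent groupoid $\rmT\scG=(\rmT\sfG\rightrightarrows\rmT M)$. Write $y\coloneqq\sfs(g)$, $x\coloneqq\sft(g)$, set $X\coloneqq\scL(g;y,\nu)=\rmd L_g(\sfe_y,\nu)\in\ker(\rmd\sft)_g$ and $W\coloneqq\sigma_\caH(g^{-1};y,\rho(\nu))\in\caH_{g^{-1}}$, and regard $\nu$ also as an element of $\frg_y=\ker(\rmd\sft)_{\sfe_y}\subset\rmT_{\sfe_y}\sfG$. Two elementary facts about composition in $\rmT\scG$ will be used throughout: (i) for $Z\in\ker(\rmd\sft)$ sitting over a point of $\sfG^{y}$ one has $(g,0)\circ Z=\rmd L_g(Z)$ (differentiate $L_g$ along a curve in $\sfG^{y}$), whence in particular $X=(g,0)\circ\nu$; and (ii) composition in $\rmT\scG$ is linear along each fibre of $\rmT\sfG\fibtimes{\rmd\sfs}{\rmd\sft}\rmT\sfG$ (being the tangent lift of the smooth composition of $\scG$), and $(\sfe_y,0)\circ Z=Z$ for any such $Z$. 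Here $(g,0),(\sfe_y,0),\dots$ denote zero vectors of the respective tangent spaces.

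The heart of the matter is the identity
\begin{equation*}
\rmd\sfinv(X)=W-\nu\circ W\qquad\text{in }\rmT_{g^{-1}}\sfG,
\end{equation*}
where $\nu\circ W\in\rmT_{g^{-1}}\sfG$ because $\sfe_y\cdot g^{-1}=g^{-1}$. I would prove it by checking that the right-hand side is the inverse of $X$ in the Lie groupoid $\rmT\scG$ and invoking uniqueness of inverses. The composite $X\circ(W-\nu\circ W)$ is well defined, and using $X=(g,0)\circ\nu$, associativity, and~(ii),
\begin{equation*}
\begin{aligned}
X\circ(W-\nu\circ W)
&=(g,0)\circ\bigl(\nu\circ(W-\nu\circ W)\bigr)
=(g,0)\circ\bigl(\nu\circ W+(\sfe_y,0)\circ(-\nu\circ W)\bigr)\\
&=(g,0)\circ\bigl(\nu\circ W-\nu\circ W\bigr)=(g,0)\circ(g^{-1},0)=(\sfe_x,0),
\end{aligned}
\end{equation*}
which is the unit of $\rmT\scG$ at the object $0_x\in\rmT_x M$; the splitting in the second equality is legitimate because $(\nu,W)$ and $\bigl((\sfe_y,0),-\nu\circ W\bigr)$ are each composable pairs, so~(ii) applies. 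Hence $W-\nu\circ W$ is a right inverse, and thus \emph{the} inverse, of $X$.

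It remains to substitute this into \ref{def:adjoint_action}. Recall $\connectiononeform=\vartheta\circ\sfp^{\rm vert}$, that for $Y\in\rmT_{g^{-1}}\sfG$ one has $\sfp^{\rm vert}(Y)=Y-\sigma_\caH\bigl(g^{-1};y,\rmd\sft(Y)\bigr)$, and that by~\eqref{eq:def_vartheta} the map $\vartheta=\scL^{-1}$ restricts at $g^{-1}$ to $\rmd L_g$ on $\ker(\rmd\sft)_{g^{-1}}$. Since $\rmd\sft(\rmd\sfinv(X))=\rmd\sfs(X)=\rho(\nu)$, the core identity gives $\sfp^{\rm vert}(\rmd\sfinv(X))=(W-\nu\circ W)-W=-\nu\circ W$, hence $\connectiononeform(\rmd\sfinv(X))=-\rmd L_g(\nu\circ W)$. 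Plugging this into \ref{def:adjoint_action} the two minus signs cancel, and since $\sfinv^\star$ acts as the identity on the fibre $\frg_{\sft(g)}$ (it merely moves the base point from $g^{-1}$ to $g$), one obtains $\rmAd(g;y,\nu)=\bigl(g;x,\rmd L_g(\nu\circ W)\bigr)$. Finally $\rmd L_g(\nu\circ W)=(g,0)\circ(\nu\circ W)=\bigl((g,0)\circ\nu\bigr)\circ W=X\circ W$ by~(i) and associativity, which is precisely $\rmAd(g;\sfs(g),\nu)=\bigl(g;\sft(g),\rmd L_g(\sfe_{\sfs(g)},\nu)\circ\sigma_\caH(g^{-1};\sfs(g),\rho(\nu))\bigr)$.

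The one genuinely delicate point is the base-point and composability bookkeeping in $\rmT\scG$: composition is linear only along the fibres of $\rmT\sfG\fibtimes{\rmd\sfs}{\rmd\sft}\rmT\sfG$, so a sum such as $\nu\circ(W-\nu\circ W)$ may be split into $\nu\circ W$ and $(\sfe_y,0)\circ(-\nu\circ W)$ only after one has checked that each summand is separately composable. It is also worth noting that no property of $\caH$ beyond its being complementary to $\ker(\rmd\sft)$ enters the argument, so the proposition is, in the end, just a repackaging of~\ref{def:adjoint_action}.
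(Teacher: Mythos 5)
Your proof is correct and takes essentially the same route as the paper's: both arguments come down to the splitting identity $\sfp^{\rm vert}+\sigma_\caH\circ\rmd\sft=\sfid_{\rmT\sfG}$ plus elementary tangent-groupoid algebra (fibrewise linearity of composition, unit laws), the only organizational difference being that you run the computation from the definition of $\rmAd$ toward the right-hand side, obtaining $\rmd\,\sfinv(\scL(g;\sfs(g),\nu))=W-\nu\circ W$ via uniqueness of inverses, whereas the paper applies $\scL\circ\sfinv^\star$ to the right-hand side and works with $\nu^{-1}\circ(g^{-1},0)$ directly. Your composability and base-point bookkeeping is handled correctly, so no gap remains.
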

    
    \begin{proof}
        The maps $\sigma_\caH$ and $\sfp^{\rm vert}$ are related by the equation
        \begin{equation}\label{eq:spliting}
            \sfp^{\rm vert} + \sigma_\caH \circ \rmd \sft = \sfid_{\rmT \sfG}~.
        \end{equation}
        Consider the isomorphism $\inv^\star \colon \sft^* \frg \cong \sfs^* \frg$ induced by $\sfinv$. Then we have 
        \begin{align}
            &(\scL \circ \inv^\star) \left( g;\sft(g), \rmd L_g \left(\sfe_{\sfs(g)}, \nu\right) \circ \sigma_\caH\left(g^{-1};\sfs(g), \rho(\nu)\right)\right)
            \nonumber\\
            &\hspace{1cm}=
            \left(\sfe_{\sfs(g)}, \nu\right)\circ \sigma_\caH \left(g^{-1}; \sfs(g), \rho(\nu)\right)\nonumber\\
            &\hspace{1cm}=
            \left(\sfe_{\sfs(g)},\nu\right) \circ \left( \left[\left(\sfe_{\sfs(g)},\nu\right)^{-1} \circ \left(g^{-1},0\right)\right]- \sfp^{\rm vert} \left[\left(\sfe_{\sfs(g)},\nu\right)^{-1} \circ \left(g^{-1},0\right)\right]\right)
            \nonumber\\
            &\hspace{1cm}=
            \left(\sfe_{\sfs(g)},\nu\right) \circ \left(\sfe_{\sfs(g)},\nu\right)^{-1} \circ \left(g^{-1},0\right)
            - \left(\sfe_{\sfs(g)},0\right) \circ  \sfp^{\rm vert} \left[\left(\sfe_{\sfs(g)},\nu\right)^{-1} \circ \left(g^{-1},0\right)\right]
            \nonumber\\
            &\hspace{1cm}=
            - \sfp^{\rm vert} \left[\left(\sfe_{\sfs(g)},\nu\right)^{-1} \circ \left(g^{-1},0\right)\right]
            \nonumber\\
            &\hspace{1cm}=
            \scL \circ \inv^\star\circ \rmAd_{\caH}(g, ;\sfs(g),\nu)~,
        \end{align}
        where we used the canonical tangent groupoid structure of the tangent groupoid $\rmT\scG$, cf.~\ref{ex:tangent_groupoid}, denoting its composition also by $\circ$, and we used~\eqref{eq:spliting} in the second equation. As $\scL$ and $\inv^\star$ are isomorphisms, this proves the proposition. 
    \end{proof}
    
    \begin{remark}[Simplified notation]
        Most of the time, the base point information in the pullback structures and of all the natural isomorphisms is implied by the context and the structure of the corresponding equation. In such a case one may simply write the adjoint action as 
        \begin{equation}
            \rmAd_{\caH}(g, \nu)
            =
            - \left(\left.\vartheta^{\rm tot}\right|_{g^{-1}} \circ \rmd R_{g^{-1}} \right)\bigl(\rmd\, \sfinv (\nu)\bigr)
            =
            - \left(\rmd L_{g} \circ \sfp^{\rm vert} \circ \rmd R_{g^{-1}} \right)\bigl(\rmd\, \sfinv (\nu)\bigr)
        \end{equation}
        for all $g \in \sfG$ and $\nu \in \frg_{\sfs(g)}$. Works like~\cite{Abad:0911.2859} also omit denoting $\rmd\, \sfinv$, and observe that one recovers the ordinary adjoint representation, if $\scG$ is a Lie group bundle, regardless of the chosen Cartan connection.
    \end{remark}
    
    A connection on a groupoid $\scG$ also induces a vector bundle connection on the corresponding Lie algebroid $\frg$. Recall that a vector bundle connection on a Lie algebroid $\frg$ is a bilinear map
    \begin{equation}
        \nabla:\frX(M) \times \Gamma(\frg)\rightarrow \Gamma(\frg)~, ~~~(V, \nu) \mapsto \nabla_V \nu~,
    \end{equation}
    such that the identities $\nabla_{fV} \nu=f\nabla_V \nu$ and $\nabla_{V} (f\nu) =f\nabla_V \nu+V(f) ~ \nu$ hold for any smooth function $f$ on $M$.
    
    This induces the \uline{basic $\frg$-connection on $\frg$}\footnote{This basic connection can be seen as the infinitesimal form of the adjoint representation of the integrating Lie groupoid $\scG$ on $\frg$.},
    \begin{equation}\label{eq:basic_connection_1}
        \nabla^{\rm bas}_{\nu_2}{\nu_1}\coloneqq\nabla_{\rho(\nu_1)}\nu_2+[\nu_2,\nu_1]~,
    \end{equation}
    as well as the \uline{basic $\frg$-connection on $\rmT M$}\footnote{This can be seen as the infinitesimal version of the adjoint representation of $\scG$ on $\rmT M$.},
    \begin{equation}
        \nabla_{\nu}^{\rm bas}V\coloneqq \rho(\nabla_V\nu)+[\rho(\nu),V]
    \end{equation}
    for $\nu,\nu_{1,2}\in \Gamma(\frg)$ and $V\in \frX(M)$. The \uline{basic curvature} of $\nabla$ is then defined as 
    \begin{equation}\label{eq:basic_curvature}
        R_\nabla^{\rm bas}(\nu_1,\nu_2)(V)\coloneqq \nabla_V([\nu_1,\nu_2])-[\nabla_V\nu_1,\nu_2]-[\nu_1,\nabla_V\nu_2]-\nabla_{\nabla_{\nu_2}^{\rm bas}V}\nu_1+\nabla_{\nabla_{\nu_1}^{\rm bas}V}\nu_2
    \end{equation}
    for $\nu_1,\nu_2\in \Gamma(\frg)$ and $V\in \frX (M)$, see~\cite[Def.~2.9]{Abad:0901.0319} and references therein for details.
    
    \begin{proposition}[{\cite[Thm.~3.11]{Crainic:1210.2277}}]\label{prop:associated_Lie_algebroid_connection}
        Any Cartan connection $\caH$ on a Lie groupoid $\scG$ induces a vector bundle connection on the Lie algebroid $\frg$ via
        \begin{equation}
            \left.\nabla_V \nu\right|_x\coloneqq \left.[\mathsf{hor}_\caH(V), \nu_l]\right|_{\sfe_x}
        \end{equation}
        for all $\nu \in \Gamma(\frg)$ and $V \in \frX(M)$, where $\nu_l$ is the left-invariant vector field on $\sfG$ induced by $\nu$, $\nu_l|_g = \scL(g, \nu_{\sfs(g)})$.
        This connection is Cartan, i.e.\ its basic curvature vanishes.
    \end{proposition}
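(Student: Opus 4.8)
The plan is to check in two steps that the prescription $\nabla_V\nu|_x\coloneqq[\mathsf{hor}_\caH(V),\nu_l]|_{\sfe_x}$ is well posed and defines a $\rmT M$-connection on $\frg\to M$, and then to establish the vanishing of the basic curvature.

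\textbf{Well-definedness and the connection axioms.} First I would observe that, since $\sigma_\caH$ splits $\widehat{\rmd\sft}$, the projectable horizontal field $\mathsf{hor}_\caH(V)$ of \eqref{eq:horizontal_lift} is $\sft$-related to $V$, whereas $\nu_l$ is a section of $\ker(\rmd\sft)$ and hence $\sft$-related to the zero field on $M$; consequently $[\mathsf{hor}_\caH(V),\nu_l]$ is $\sft$-related to $[V,0]=0$, i.e.\ it is pointwise $\sft$-vertical, and its restriction to $\sfe_x$ genuinely lies in $\ker(\rmd\sft)|_{\sfe_x}=\frg_x$. Next I would record the two elementary identities $\mathsf{hor}_\caH(fV)=(f\circ\sft)\,\mathsf{hor}_\caH(V)$ and $(f\nu)_l=(f\circ\sfs)\,\nu_l$, which are immediate from fibrewise linearity of $\sigma_\caH$ and $\scL$. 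Expanding the Leibniz rule for the Lie bracket on $\sfG$ then gives $[\mathsf{hor}_\caH(fV),\nu_l]=(f\circ\sft)[\mathsf{hor}_\caH(V),\nu_l]-\bigl(\nu_l(f\circ\sft)\bigr)\mathsf{hor}_\caH(V)$, and the last term vanishes because $\nu_l(f\circ\sft)=\bigl(\rmd\sft(\nu_l)\bigr)f=0$, so $\nabla_{fV}\nu=f\,\nabla_V\nu$; similarly $[\mathsf{hor}_\caH(V),(f\nu)_l]=(f\circ\sfs)[\mathsf{hor}_\caH(V),\nu_l]+\bigl(\mathsf{hor}_\caH(V)(f\circ\sfs)\bigr)\nu_l$. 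Evaluating the latter at $\sfe_x$, the unitality $\sigma_\caH(\sfe_x;x,V)=\rmd\sfe(x,V)$ together with $\sfs\circ\sfe=\sfid_M$ gives $\rmd\sfs\bigl(\mathsf{hor}_\caH(V)|_{\sfe_x}\bigr)=V_x$, hence $\bigl(\mathsf{hor}_\caH(V)(f\circ\sfs)\bigr)(\sfe_x)=V_x(f)$, while $(f\circ\sfs)(\sfe_x)=f(x)$ and $\nu_l|_{\sfe_x}=\nu_x$ since $L_{\sfe_x}=\sfid$; this yields the Leibniz rule $\nabla_V(f\nu)=(Vf)\,\nu+f\,\nabla_V\nu$. $\RR$-bilinearity is clear from bilinearity of the bracket and linearity of the two lifts, so $\nabla$ is a connection.

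\textbf{Vanishing of the basic curvature.} Recall (see below) that $R^{\mathrm{bas}}_\nabla$ is assembled from $\nabla$, the anchor $\rho$, the bracket of $\frg$ and the induced basic $\frg$-connection, and measures the failure of $\nabla$ to be compatible with the Lie algebroid structure (it is the $2$-cocycle component of the adjoint representation up to homotopy of $\frg$). The plan is to lift the whole expression $R^{\mathrm{bas}}_\nabla(\alpha,\beta)(V)$ to $\sfG$: the assignment $\alpha\mapsto\alpha_l$ identifies $\Gamma(\frg)$ with the left-invariant, $\sft$-vertical vector fields on $\sfG$ and satisfies $[\alpha_l,\beta_l]=[\alpha,\beta]_l$, so that $\nabla_V\alpha|_x=[\mathsf{hor}_\caH(V),\alpha_l]|_{\sfe_x}$ and $\rho$ is recovered from $\rmd\sfs$ at the units. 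The decisive input is multiplicativity of $\caH$, i.e.\ the composition law $\sigma_\caH(g_1g_2;x,V)=\sigma_\caH(g_1;x,V)\circ\sigma_\caH\bigl(g_2;\rmd\sfs(\sigma_\caH(g_1;x,V))\bigr)$: it forces $[\mathsf{hor}_\caH(V),\mathsf{hor}_\caH(W)]-\mathsf{hor}_\caH([V,W])$ to be a vertical, left-invariant-valued field (the curvature of $\caH$, a $\frg$-valued $2$-form on $M$) and controls the defect by which $[\mathsf{hor}_\caH(V),\alpha_l]$ fails to be left-invariant. I would then expand the triple brackets $[[\mathsf{hor}_\caH(V),\alpha_l],\beta_l]$ and $[[\mathsf{hor}_\caH(V),\beta_l],\alpha_l]$ on $\sfG$, apply the Jacobi identity, restrict to the units, and re-express everything through $\nabla$, $\rho$ and $[-,-]$: every term of $R^{\mathrm{bas}}_\nabla(\alpha,\beta)(V)$ appears, and the multiplicativity identities make them cancel in pairs, giving $R^{\mathrm{bas}}_\nabla=0$. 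Equivalently, one differentiates the composition law for $\sigma_\caH$ in the directions $\alpha_l$ and $\beta_l$ and reads $R^{\mathrm{bas}}_\nabla(\alpha,\beta)(V)$ off as the infinitesimal multiplicativity defect, which vanishes by hypothesis; this is precisely the content of \cite[Thm.~3.11]{Crainic:1210.2277}.

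\textbf{Main obstacle.} The connection-axiom check is routine; the substantive part is the basic-curvature computation, and the real work there is the bookkeeping of base points, pullbacks and the natural isomorphisms $\scL$, $\scR$ (together with $\rmd L_g$, $\rmd\sfe$, $\rmd\sfs$) needed to transport a threefold bracket on $\sfG$ back to a tensorial identity on $M$. This is exactly where multiplicativity of $\caH$, as opposed to an arbitrary Ehresmann-type complement to $\ker(\rmd\sft)$, enters essentially, which is also why invoking \cite[Thm.~3.11]{Crainic:1210.2277} is the most economical route.
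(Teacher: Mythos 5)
The paper offers no proof of this proposition at all: it is imported verbatim from Crainic--Salazar--Struchiner, and the only justification given is the citation to Thm.~3.11 there. Measured against that, the first half of your proposal is a genuine (and correct) addition: $\sft$-relatedness of $\mathsf{hor}_\caH(V)$ to $V$ and of $\nu_l$ to $0$ gives verticality of the bracket, and tensoriality in $V$ and the Leibniz rule in $\nu$ follow exactly as you write, the only non-obvious input being the unitality $\sigma_\caH(\sfe_x;x,V)=\rmd\sfe(x,V)$, which indeed comes from multiplicativity (cf.~\ref{lem:connection_one_form_properties}). So the statement that the formula defines a vector bundle connection is fully established by your argument.

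The second half, however, is a plan rather than a proof, and as a self-contained argument it has a gap. The decisive step --- that after expanding the triple brackets $[[\mathsf{hor}_\caH(V),\alpha_l],\beta_l]$, applying Jacobi and restricting to units, ``the multiplicativity identities make them cancel in pairs'' so that $R^{\rm bas}_\nabla=0$ as in \eqref{eq:basic_curvature} --- is asserted, not performed; nothing identifies which consequence of $\caH$ being a subgroupoid of $\rmT\scG$ kills which term, and that is precisely where the content of the theorem lies. Moreover your auxiliary claim that $[\mathsf{hor}_\caH(V),\mathsf{hor}_\caH(W)]-\mathsf{hor}_\caH([V,W])$ is a left-invariant vertical field coming from a $\frg$-valued $2$-form on $M$ is not correct for a general Cartan connection: this field vanishes along the unit section (the horizontal lifts are tangent to $\sfe(M)$ there), so left-invariance would force the connection to be flat; the curvature of a Cartan connection is instead a multiplicative object, $\rmAd$-twisted along composition, as the paper's later Lie-group-bundle formulas make explicit. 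Since you close by invoking the reference anyway --- whose actual proof proceeds via the infinitesimal (Spencer-operator) description of multiplicative distributions rather than a direct triple-bracket computation --- the consistent options are either to cite the theorem outright, as the paper does, or to genuinely carry out the cancellation; as written, the basic-curvature part should not be presented as proved.
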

    
    \begin{remark}
        Analogous to \cite{Fischer:2022sus}, one can express the invariance and the previous formulas also with the notion of the \uline{modified right-pushforward} which may be more suitable for the dynamics in the related gauge theory.
    \end{remark}
    
    \subsection{Ševera differentiation of quasi-groupoids}\label{ssec:Severa_differentiation}
    
    A crucial ingredient in our discussion is Ševera's differentiation of quasi-groupoids to $L_\infty$-algebroids~\cite{Severa:2006aa}, see also~\cite{Jurco:2016qwv} and in particular~\cite{Li:2014} for details. Recall that there is a general Lie theory for $\infty$- or quasi-Lie groupoids, i.e.~Kan simplicial manifolds, and the $L_\infty$-algebroid of a quasi-groupoid can be seen as a representation of the 1-jet of a simplicial manifold.
    
    Let us give the specialization of this differentiation procedure to the case of Lie group\-oids.\footnote{In the general construction, there is an issue regarding representability, which has been addressed in~\cite{Li:2014}. Moreover, in all our cases, we can explicitly see that the internal homomorphisms are representable.} This differentiation makes use of a special class of surjective submersions.
    \begin{definition}[Tangent submersion]\label{def:tangent_submersion}
        Let $\Theta\coloneqq\IR[-1]$. The tangent submersion $\sigma_\Theta(\caX)$ for $\caX$ a dg-manifold is the evident projection $\sigma_\Theta(\caX):\caX\times \Theta\rightarrow \caX$. We write $\sigma_\Theta(*)$ for the trivial projection $\Theta\rightarrow *$, where $*$ is the singleton set.
    \end{definition}
    
    The differentiation itself now works as follows.
    \begin{proposition}[\cite{Severa:2006aa}]\label{prop:Severa_diff}
        The grade-shifted Lie algebroid $\sfLie(\scG)[1]$ of a Lie groupoid $\scG$ is the internal hom\footnote{see \ref{app:graded_manifolds} for details} $\ihom(\check \scC(\sigma_\Theta(*)),\scG)$ in the category of graded manifolds, which carries naturally the structure of a differential graded manifold. The differential in the corresponding Chevalley--Eilenberg algebra is induced by the canonical action of $\ihom(\Theta,\Theta)$.
    \end{proposition}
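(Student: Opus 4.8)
The plan is to establish the isomorphism as graded manifolds by a Yoneda argument, and then to check that the canonical dg-structure on the right-hand side is the Chevalley--Eilenberg differential of $\sfLie(\scG)$. \textbf{Unwinding the internal hom.} By the defining adjunction of $\ihom$ in the category of graded manifolds it suffices to produce, naturally in a test graded manifold $\caZ$, a bijection between the functors $\check\scC(\sigma_\Theta(*))\times\caZ\to\scG$ of Lie-groupoid-objects in graded manifolds and the maps $\caZ\to\sfLie(\scG)[1]$, and then to match the differentials. Since $\sigma_\Theta(*)\colon\Theta\to *$, the Čech groupoid $\check\scC(\sigma_\Theta(*))$ is simply the pair groupoid $\scP\mathsf{air}(\Theta)=(\Theta\times\Theta\rightrightarrows\Theta)$ of \ref{ex:pair_groupoid}, so such a functor is a pair $(\phi_0\colon\Theta\times\caZ\to M,\ \phi_1\colon\Theta\times\Theta\times\caZ\to\sfG)$ intertwining source, target, units and composition.

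\textbf{Solving the cocycle conditions.} Because the pair groupoid has a unique arrow between any two objects, $(\theta_1,\theta_2)=(0,\theta_1)^{-1}\circ(0,\theta_2)$; hence $\phi_1$ is recovered from $g\coloneqq\phi_1(0,-,-)\colon\Theta\times\caZ\to\sfG$ via $\phi_1(\theta_1,\theta_2,z)=g(\theta_1,z)^{-1}\circ g(\theta_2,z)$, while $\phi_0=\sfs\circ g$ follows. The functoriality constraints then collapse to two conditions on $g$: (i) $\sft\circ g$ is independent of the $\Theta$-slot (this is exactly what makes $g(\theta_1,z)^{-1}\circ g(\theta_2,z)$ composable), and (ii) $g|_{\{0\}\times\caZ}$ factors through $\sfe\colon M\hookrightarrow\sfG$ (the normalization that fixes the residual left-translation ambiguity $g\mapsto g(0,-)^{-1}\cdot g$ under which $\phi_1$ is invariant). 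Conversely, any $g$ obeying (i)--(ii) gives a functor, and all of these operations are manifestly natural in $\caZ$, so one obtains a natural bijection $\{\text{functors}\}\cong\{g\colon(\mathrm{i}),(\mathrm{ii})\}$.

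\textbf{Reading off $\sfLie(\scG)[1]$.} Using the standard identification $\ihom(\IR[-1],\caX)\cong\rmT[1]\caX$ of graded manifolds (consistent with $\sfCE(\rmT M)=(\Omega^\bullet(M),\rmd_M)$ in \ref{ex:tangent_algebroid}), a map $g\colon\Theta\times\caZ\to\sfG$ is the same as a map $\caZ\to\rmT[1]\sfG$. Condition (ii) forces the underlying map $\caZ\to\sfG$ through $\sfe(M)$, so $g$ amounts to a map $\caZ\to(\sfe^*\rmT\sfG)[1]$; condition (i) further restricts the $\rmT\sfG$-component to $\ker(\rmd\sft)$, i.e.\ to a map $\caZ\to(\sfe^*\ker(\rmd\sft))[1]=\sfLie(\scG)[1]$. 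Therefore $\ihom(\check\scC(\sigma_\Theta(*)),\scG)$ and $\sfLie(\scG)[1]$ represent the same presheaf on graded manifolds --- in particular the internal hom is representable, as promised in the footnote to \ref{prop:Severa_diff} --- and hence are isomorphic as graded manifolds over $M$.

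\textbf{The differential, and the main obstacle.} The graded monoid $\ihom(\Theta,\Theta)$ (the ``$\tau\mapsto a\tau+b$'' endomorphism monoid of $\IR[-1]$) acts on $\ihom(\check\scC(\sigma_\Theta(*)),\scG)$ by precomposition on the $\Theta$-factors; its infinitesimal generator in the odd ``translation'' direction is the canonical degree-$1$ vector field $Q$, and on the simpler object $\ihom(\Theta,\caX)=\rmT[1]\caX$ it is precisely the de~Rham vector field. Transporting $Q$ along the isomorphism of the previous paragraph and using the cocycle formula $\phi_1(\theta_1,\theta_2,z)=g(\theta_1,z)^{-1}\circ g(\theta_2,z)$ --- so that the composition in $\scG$, i.e.\ ``the differential of $\scG$'', enters --- one computes $Q$ in a local trivialization of $\scG$ near $\sfe(M)$: the $\phi_0$-part yields the anchor term $\ttr^a_\alpha\xi^\alpha\tfrac{\dpar}{\dpar m^a}$ and the $g^{-1}\circ g$-part yields the bracket term $-\tfrac12\ttf^\alpha_{\beta\gamma}\xi^\beta\xi^\gamma$ of \eqref{eq:local_CE_algebra}, so $Q=\sfd_\sfCE$; equivalently, this last identification is the specialization of Ševera's general formula \cite{Severa:2006aa} (see also \cite{Li:2014}) from Kan simplicial manifolds to the nerve of a $1$-groupoid. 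Everything before this step is bookkeeping; the genuine work is here --- matching the induced vector field with $\sfd_\sfCE$ with the correct signs and degree conventions for $\Theta$, and noting that the $\ihom(\Theta,\Theta)$-action, a priori living on the presheaf of functors, descends to the representing graded manifold (which is automatic from the naturality in the second paragraph but should be recorded).
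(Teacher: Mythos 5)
Your proposal is correct and follows essentially the same route as the paper's own demonstration: unwinding functors out of the pair groupoid of $\Theta$ against a test graded manifold, normalizing via the slice at $\theta=0$ (your $g$ is the paper's $\check F(x,\theta)=F_1(x,0,\theta)$), identifying the result with $\sfe^*\ker(\rmd\sft)[1]=\sfLie(\scG)[1]$ via $\ihom(\IR[-1],-)\cong\rmT[1](-)$, and obtaining the differential from the diagonal translation action of $\ihom(\Theta,\Theta)$. The paper likewise leaves the detailed coordinate identification of this vector field with $\sfd_\sfCE$ to the literature (Ševera, Li), so your deferral of that step matches the level of rigor of the original argument.
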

    
    We briefly demonstrate this proposition by computation, as similar computations will become relevant later. We start by considering functors of graded Lie groupoids 
    \begin{equation}\label{eq:param_functors}
        \Phi:\check\scC(\sigma_\Theta(\caX))\rightarrow \scG=(\sfG\rightrightarrows M)
    \end{equation}
    for $\caX$ some graded manifold. Such functors contain maps\footnote{Later in \ref{sec:principal_groupoid_bundles}, we will identify these maps with the description of principal $\scG$-bundles over $\caX$ subordinate to $\sigma_\Theta(\caX)$ in terms of differential cocycles.} 
    \begin{equation}
        F_1:\caX\times \Theta\times \Theta \rightarrow \sfG
        \eand 
        F_0:\caX\times \Theta \rightarrow M
    \end{equation}
    such that 
    \begin{equation}
        \begin{gathered}
            F_1(x,\theta_1,\theta_3)=F_1(x,\theta_1,\theta_2)F_1(x,\theta_2,\theta_3)~,
            \\
            \sft(F_1(x,\theta_1,\theta_2))=F_0(x,\theta_1)~,\eand \sfs(F_1(x,\theta_1,\theta_2))=F_0(x,\theta_2)~.
        \end{gathered}
    \end{equation}
    As a consequence of these relations, we find that 
    \begin{equation}
        \begin{aligned}
            \sfe_{m(x)}&=F_1(x,0,0)=F_1(x,0,\theta)F_1(x,\theta,0)~,
            \\
            F_1(x,\theta_1,\theta_2)&=F_1(x,\theta_1,0)F_1(x,0,\theta_2)
        \end{aligned}
    \end{equation}
    for all $x\in \caX$ and $\theta_{1,2}\in \Theta$, where $m \in C^\infty(\caX; M)$, $x \mapsto F_0(x, 0)$. Therefore, the functor $\Phi$ is fully defined by a map
    \begin{equation}
        \check F:\caX\times \Theta\rightarrow \sfG
    \end{equation}
    with 
    \begin{equation}
        \check F(x,\theta)=F_1(x,0,\theta)~,
    \end{equation}
    which we can write suggestively as\footnote{That is, using the local diffeomorphism of $\sfG$ with $\rmT[1]_{\sfe_{\hat m(x)}}\sfG$ in a neighborhood of $\sfe_{\check m(x)}$; or alternatively for this case, making use of the local diffeomorphism induced by the exponential map for Lie groupoids as in~\cite[\S 3.6]{0521499283}.}
    \begin{equation}
        F_1(x,0,\theta)=\sfe_{m(x)}+\theta \nu(x)
    \end{equation}
    with $\nu (x)\in \rmT[1]_{\sfe_{\hat m(x, 0)}}\sfG$ and $\sfd\sft(\nu(x))=0$. This makes it clear that the functor $\Phi$ is representable, i.e.~it can be seen as a morphism of graded manifolds from $\caX$ to $\sfLie(\scG)[1]$. Moreover, the action of the Chevalley--Eilenberg differential is given by the component-wise action of the operation
    \begin{equation}\label{eq:differential}
        F_1(x,\theta_1,\theta_2)\mapsto \frac{\rmd}{\rmd \eps}F_1(x,\theta_1+\eps,\theta_2+\eps)~,
    \end{equation}
    and for the detailed, general computation in coordinates, see e.g.~\cite[Section 8.4.3]{Li:2014}. In this sense $\sfLie(\scG)[1]=\ihom(\check \scC(\sigma_\Theta(*)),\scG)$, and this generalizes in an evident manner to quasi-groupoids.
    
    \subsection{Differential graded Lie groupoids and Lie algebroids}
    
    For Ševera differentiation, it is very convenient to work directly in the category $\catdgMfd$ of differential graded (dg-)manifolds, because here, the differential is already included in the representation of the functor.\footnote{For a discussion of super, i.e.~$\IZ_2$-graded, Lie groupoids and Lie algebroids, see~\cite{Mehta:0605356}.} We briefly develop this picture in the following.
    \begin{definition}[Grade-shifted tangent bundle]\label{def:grade-shifted_tangent_bundle}
        The odd line $\Theta=\IR[-1]$, as a dg-manifold, is naturally endowed with the differential $\sfd$
        \begin{equation}\label{eq:theta_differential}
            \sfd(a+\theta b)\coloneqq b~,
        \end{equation}
        where $a,b\in \IR$ and $\theta\in \IR[-1]^*$. The grade-shifted tangent bundle $\rmT[1]\caM$ of a dg-manifold $\caM$ is a representation of the functor $\catdgMfd(-\times \Theta,\caM):\catdgMfd^{\rm op}\rightarrow \catSet$.
    \end{definition}
    \noindent Given a split dg-manifold $E=(E\rightarrow M)$, i.e.~a graded vector bundle $E$ over a manifold $M$, the grade-shifted tangent bundle $\rmT[1]E$ is indeed given by the tangent bundle of $E$ with the grading of the tangent fibers shifted appropriately. Moreover, the differential on $\rmT[1]E$ acts as the sum of the original differential on $E$ and the de~Rham differential on $E$, which continues uniquely to all of $\rmT[1]E$. 
    
    To work fully internally to the category $\catdgMfd$ of differential graded manifolds, we need the following definition.
    \begin{definition}[Dg-Lie groupoid and algebroid]\label{def:dgLiegroupoids}
        A dg-Lie groupoid is a groupoid internal to the category of dg-manifolds $\catdgMfd$ with source and target maps surjective submersions.\footnote{For the definition of surjective submersions for dg-manifolds, see e.g.~\cite{Vysoky:2021wox}.} A dg-Lie algebroid is a dg-vector bundle (in the evident sense) with fibers graded vector spaces concentrated in non-negative degrees. Together with the evident morphisms, dg-Lie groupoids form the category $\catdgGrpd$, and together with their weak and higher morphisms, the 2-category $\CatdgGrpd$.
    \end{definition}
    
    \ref{def:grade-shifted_tangent_bundle} can now be extended to dg-Lie groupoids.
    \begin{definition}[Tangent prolongation dg-Lie groupoid and Tangent prolongation dg-Lie algebroid]\label{def:tangent_dg_Lie_algebroid}
        Given a dg-Lie groupoid $\scG=(\caG\rightrightarrows \caM)$ with $\caG$ and $\caM$ dg-manifolds, its tangent prolongation dg-Lie groupoid $\rmT[1]\scG$ is the dg-Lie groupoid
        \begin{equation}
            \rmT[1]\scG=(\rmT[1]\caG\rightrightarrows \rmT[1]\caM)~,
        \end{equation}
        and the structure maps are the differentials of the structure maps in $\scG$.
        
        The tangent prolongation dg-Lie algebroid of a dg-Lie algebroid $\frg[1]$ over a dg-manifold $\caM$ is the dg-Lie algebroid\footnote{This graded manifold should be regarded as a Lie 2-algebroid, since a local coordinate description involves coordinate functions in degrees $0,1,2$. It describes the inner derivations of $\frg$.} $\rmT[1]\frg[1]$ over $\rmT[1] \caM$.
    \end{definition}
    
    \begin{remark}
        Following~\cite{0521499283}, we will usually just speak of a tangent groupoid $\rmT[1]\scG$, however, to avoid confusion with the tangent algebroid $\rmT[1]\frg[1] \to \frg[1]$ we will not omit ``prolongation'' when speaking of the algebroid structure of $\rmT[1]\frg[1]$ over $\rmT[1]\caM$.
    \end{remark}
    
    An example of a dg-Lie groupoid which is important for our discussion is the following.
    \begin{example}[Tangent Čech groupoid]\label{ex:tangent_Cech_groupoid}
        The tangent Čech groupoid of a surjective submersion $\sigma:Y\rightarrow X$ of ordinary manifolds is the dg-Lie groupoid
        \begin{equation}
            \rmT[1]\check \scC(\sigma)=\big(~\rmT[1]Y^{[2]}\rightrightarrows\rmT[1]Y~\big)~,
        \end{equation}
        where all the structure maps in $\rmT[1]\check \scC(\sigma)$ are simply the differentials of the structure maps in $\check \scC(\sigma)$.
    \end{example}
    Another useful example is the following.
    \begin{example}[Odd-line Čech groupoid]
        The differential~\eqref{eq:theta_differential} on $\Theta$ extends to $\Theta^{[2]}$ as 
        \begin{equation}
            \sfd (a_1+\theta b_1,a_2+\theta b_2)\coloneqq(b_1,b_2)~,
        \end{equation}
        cf.~\eqref{eq:differential}. This renders the Čech groupoid $\check\scC(\sigma_\Theta(*))$ a dg-Lie groupoid.
    \end{example}
    
    As one would expect, Ševera differentiation of Lie groupoids as discussed in \ref{ssec:Severa_differentiation} then straightforwardly extends to $\catdgGrpd$, the category of dg-Lie groupoids from \ref{def:dgLiegroupoids}.
    
    \begin{definition}[Dg-Lie algebroid of a dg-Lie groupoid]\label{def:dg-Lie-algebroid}
        The dg-Lie algebroid $\sfLie(\scG)[1]$ of a dg-Lie groupoid $\scG$ is the dg-manifold representing the internal homomorphisms\linebreak $\ihom(\check \scC(\sigma_\Theta(*)),\scG)$ in the category of dg-manifolds.
    \end{definition}
    \noindent In other words, $\sfLie(\scG)[1]$ is a representation of the functor 
    \begin{equation}
        \catdgGrpd(\check \scC(\sigma_\Theta(-)),\scG):\catdgMfd^{\rm op}\rightarrow \catSet~,
    \end{equation}
    sending a dg-manifold $\scM$ to the set of dg-functors from $\check \scC(\sigma_\Theta(\scM))$ to $\scG$.
    
    We then have the following, evident link between the tangent dg-Lie groupoid and the tangent prolongation Lie algebroid: 
    \begin{lemma}
        Consider a tangent dg-Lie groupoid $\rmT[1]\scG=(\rmT[1]\sfG\rightrightarrows \rmT[1]M)$. Its dg-Lie algebroid is $\sfLie(\rmT[1]\scG)[1]=\rmT[1]\sfLie(\scG)[1]$.
    \end{lemma}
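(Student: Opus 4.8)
The plan is to identify both sides with the same representing dg-manifold and then invoke the Yoneda lemma. By \ref{def:dg-Lie-algebroid}, $\sfLie(\rmT[1]\scG)[1]$ represents the functor $\catdgMfd^{\rm op}\to\catSet$ sending a dg-manifold $\scM$ to $\catdgGrpd(\check\scC(\sigma_\Theta(\scM)),\rmT[1]\scG)$, while by \ref{def:grade-shifted_tangent_bundle} together with \ref{def:dg-Lie-algebroid} the dg-manifold $\rmT[1]\sfLie(\scG)[1]$ represents $\scM\mapsto\catdgMfd(\scM\times\Theta,\sfLie(\scG)[1])\cong\catdgGrpd(\check\scC(\sigma_\Theta(\scM\times\Theta)),\scG)$. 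Hence it suffices to produce an isomorphism
\[
\catdgGrpd\bigl(\check\scC(\sigma_\Theta(\scM)),\rmT[1]\scG\bigr)\;\cong\;\catdgGrpd\bigl(\check\scC(\sigma_\Theta(\scM\times\Theta)),\scG\bigr)
\]
natural in $\scM$.

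The first ingredient is that the adjunction $(-\times\Theta)\dashv\rmT[1]$ on $\catdgMfd$ — which is exactly \ref{def:grade-shifted_tangent_bundle} — lifts to internal groupoids: for every dg-Lie groupoid $\scH=(\caH\rightrightarrows\caN)$ there is a natural bijection
\[
\catdgGrpd(\scH,\rmT[1]\scG)\;\cong\;\catdgGrpd(\scH\times\Theta,\scG)~,
\]
where $\scH\times\Theta$ denotes the product of $\scH$ with the constant dg-Lie groupoid $(\Theta\rightrightarrows\Theta)$. Indeed, a dg-functor $\scH\to\rmT[1]\scG$ consists of two dg-maps $\caH\to\rmT[1]\sfG$ and $\caN\to\rmT[1]M$ intertwining the structure maps; since by \ref{def:tangent_dg_Lie_algebroid} the structure maps of $\rmT[1]\scG$ are the images under $\rmT[1]$ of those of $\scG$, transposing these dg-maps across $(-\times\Theta)\dashv\rmT[1]$ and using naturality turns the intertwining conditions into precisely those defining a dg-functor $\scH\times\Theta\to\scG$.

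It then remains to specialize $\scH=\check\scC(\sigma_\Theta(\scM))$ and to observe the elementary identification of dg-Lie groupoids $\check\scC(\sigma_\Theta(\scM))\times\Theta\cong\check\scC(\sigma_\Theta(\scM\times\Theta))$: by \ref{ex:Cech_groupoid}, both have object dg-manifold $\scM\times\Theta\times\Theta$ and morphism dg-manifold $\scM\times\Theta\times\Theta\times\Theta$, and the isomorphism is the transposition exchanging the ``extra'' $\Theta$-factor with the $\sigma_\Theta$-factor — the $[2]$-fibre product couples only the $\scM$- and $\sigma_\Theta$-factors and is inert on the extra $\Theta$, and on each side the differential is the sum of the de~Rham differential $\sfd(a+\theta b)=b$ on the extra $\Theta$ (cf.~\eqref{eq:theta_differential}) and the Čech differential $\sfd(a_1+\theta b_1,a_2+\theta b_2)=(b_1,b_2)$ on the $\Theta^{[2]}$-factor. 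Composing these isomorphisms gives the required natural isomorphism, so Yoneda yields $\sfLie(\rmT[1]\scG)[1]\cong\rmT[1]\sfLie(\scG)[1]$; since every step is compatible with the differentials and with the structure maps — which throughout are built from differentials of maps in $\scG$ — this is an isomorphism of dg-Lie algebroids, identifying the tangent-prolongation dg-Lie algebroid structure on $\rmT[1]\sfLie(\scG)[1]$ with the dg-Lie algebroid of $\rmT[1]\scG$.

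The step I expect to demand the most care is the lift of the adjunction to internal groupoids, i.e.\ pinning down that a dg-functor \emph{into} $\rmT[1]\scG$ is the same datum as a dg-functor \emph{out of} $\scH\times\Theta$ into $\scG$. This is precisely where one uses that $\rmT[1]\scG$ is formed levelwise with structure maps given by the differentials, and, unravelled, that Ševera's differential on $\sfLie(\scG)[1]$ — induced by the $\ihom(\Theta,\Theta)$-action of \ref{prop:Severa_diff} — commutes with the de~Rham differential that $\rmT[1]$ introduces; this commutation is exactly the freedom to interchange the two $\Theta$-factors appearing above.
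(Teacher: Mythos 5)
Your argument is correct, and it is essentially the argument the paper leaves implicit: the lemma is stated there without proof as an evident consequence of the functor-of-points definitions, and your chain — representability of both sides, the adjunction $(-\times\Theta)\dashv\rmT[1]$ lifted to groupoids internal to $\catdgMfd$, and the exchange $\check\scC(\sigma_\Theta(\scM))\times\Theta\cong\check\scC(\sigma_\Theta(\scM\times\Theta))$ followed by Yoneda — is precisely the unpacking of that evidence. The step you flag is indeed the only one requiring care, and your justification (the structure maps of $\rmT[1]\scG$ are the differentials of those of $\scG$, and $\rmT[1]$, being a right adjoint, preserves the fibre products defining composability, while $-\times\Theta$ does so as well) is the right one.
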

    
    We will also need the notion of action of a dg-Lie groupoid onto a dg-manifold, which is the straightforward extension of a Lie groupoid action on a manifold, cf.~\cite[Def.~1.6.1]{0521499283}.
    \begin{definition}[Right groupoid action]\label{def:right_action}
        A right action of a dg-Lie groupoid \(\scG=(\sfG\rightrightarrows M)\) on a dg-manifold \(N\) along a dg-map \(\psi\colon N\to M\) is a dg-map
        \begin{subequations}\label{eq:relations_right_action}
            \begin{equation}
                \begin{aligned}
                    (-\racton-)\colon N\fibtimes\psi\sft\sfG &\to N \\
                    (p,g)&\mapsto p\racton g
                \end{aligned}
            \end{equation}
            such that
            \begin{equation}\label{eq:right_action_explicit_relations}
                \psi(p\racton g_1) = \sfs(g_1)
                ~,~~~
                p\racton\sfe_{\psi(p)} = p
                ~,~~~
                (p\racton g_1)\racton g_2=p\racton (g_1g_2)
            \end{equation}
        \end{subequations}        
        for all $(p,g_1,g_2)\in N\fibtimes\psi\sft\sfG\fibtimes\sfs\sft\sfG$.
    \end{definition}
    
    Given such a right action, we can construct the corresponding action groupoid, cf., e.g., \cite[Def.\ 1.6.10]{0521499283} for the case of ordinary groupoids. 
    \begin{definition}[Action groupoid]\label{def:action_groupoid}
        Consider a right action $(\racton,\psi)$ of a dg-Lie groupoid $\scG=(\sfG\rightrightarrows M)$ on a dg-manifold $N$. The corresponding action groupoid is given by 
        \begin{subequations}
            \begin{equation}
                \scK\coloneqq \Big(~N\fibtimes{\psi}{\sft}\sfG~\rightrightarrows~N~\Big)
            \end{equation}
            with structure maps
            \begin{equation}
                \begin{gathered}
                    \sft(n,g_1)=n~,~~~\sfs(n,g_1)=n\racton g_1~,~~~(n,g_1)\circ (n\racton g_1,g_2)=(n,g_1\circ g_2)~,
                    \\
                    \sfe_n=\left(n,\sfe_{\psi(n)}\right)~,~~~(n,g_1)^{-1}=\left(n \racton g_1,g^{-1}_1\right)
                \end{gathered}
            \end{equation}
            for $g_{1,2}\in \scG$ and $n\in N$ with $\psi(n)=\sft(g)$ and $\sfs(g_1)=\sft(g_2)$.            
        \end{subequations}
    \end{definition}
    \noindent Similarly, one can construct the action groupoid for left actions $(\acton,\psi)$. Infinitesimally, an action groupoid differentiates to an action Lie algebroid.

    \subsection{The Weil algebra of a Lie algebroid}\label{ssec:Weil_algebra}
    
    An important extension of the Chevalley--Eilenberg algebra $\sfCE(\frg)$ of a Lie algebroid $\frg$ is its \uline{Weil algebra} $\sfW(\frg)$, which is the Chevalley--Eilenberg algebra of the tangent prolongation Lie algebroid\footnote{as given in \ref{def:tangent_dg_Lie_algebroid}, for an ordinary Lie algebroid $\frg$ trivially regarded as a dg-Lie algebroid}:
    \begin{equation}
        \sfW(\frg)=\sfCE(\rmT[1]\frg)~.
    \end{equation}
    The latter is the algebra of functions $C^\infty(\rmT[1]\frg[1])$, together with the differential $\sfd_\sfW$ given by the sum of the Chevalley--Eilenberg and de~Rham differentials on $C^\infty(\frg)$, which uniquely extends to $C^\infty(\rmT[1]\frg[1])$. Explicitly, we have 
    \begin{equation}
        \begin{aligned}
            \sfd_\sfW f&=\rmd f \circ\rho+ \rmd f~,
            \\
            \sfd_\sfW \omega&=\sfd_{\sfCE} \omega+ \nabla^*\omega+ \omega[1]~,
            \\
            \sfd_\sfW (\rmd f)&=-\left(\rmd f \circ\rho\right)[1]- \nabla^*\left(\rmd f \circ\rho\right)~,
            \\
            \sfd_\sfW \omega[1]&= -\sfd_\sfW(\rmd_{\sfCE}\omega+\nabla^*\omega)~
        \end{aligned}
    \end{equation}
    for all $f \in C^\infty(M)$, $\omega\in \Gamma(\frg[1]^*)$, where $[1]$ is the degree shift $\Gamma(\frg[1]^*)\to \Gamma(\frg[2]^*)$ and $\nabla^*$ is the induced connection on the dual vector bundle defined by 
    \begin{equation}
        \left( \nabla_V^* \omega\right)(\alpha)= -\omega(\nabla_V\alpha)+V(\omega(\alpha))~,
    \end{equation}
    for vector fields $V$ and $\alpha \in \Gamma(\frg)$.
    
    A more detailed discussion is found in~\cite{Abad:0901.0322}, see also~\cite{Mehta:0605356,Lean:2001.01101} for Lie $n$-algebroids. 
    Truncating the latter description to $n=1$, we regard the graded commutative algebra $C^\infty(\rmT[1]\frg[1])$ as the algebra of differential forms $\Omega^\bullet(\frg[1])$. As explained above, the Chevalley--Eilenberg differential of $\frg$ can be regarded as a nilquadratic vector field $Q$ of degree~$1$ on $\frg[1]$, and the differential in $\sfW(\frg)$ is defined as the sum of the de~Rham differential on $\frg[1]$ and the Lie derivative $\caL_Q$.
    
    For our purposes, we will require the Weil algebra together with a splitting into Chevalley--Eilenberg and tangent generators. That is, we require an isomorphism
    \begin{equation}\label{eq:bundle_isomorphism}
        \rmT[1]\frg[1] \cong \rmT[1]M\oplus \frg[1]\oplus \frg[2]
    \end{equation}
    for the Lie algebroid $\frg=(\frg\rightarrow M)$, where we view $\rmT[1]\frg[1]$ as a bundle over $M$ with projection given by the composition of the projections $\rmT[1]\frg[1] \to \frg[1]$ and $\frg[1] \to M$, and such an isomorphism is provided by a connection $\nabla$ on $\frg$. More precisely, such a connection yields a splitting of the tangent bundle into horizontal and vertical bundles, $\rmT\frg=H\frg\oplus V\frg$. The connection $\nabla$ induces a vector bundle isomorphism $H\frg\cong \sfp_M^*\rmT M$, where $\sfp_M$ is the bundle projection $\frg \to M$; and because $\frg$ is a vector bundle, we have the canonical isomorphism $V\frg \cong \sfp_M^*\frg$. Thus, $\rmT\frg \cong \sfp_M^*\rmT M \oplus \sfp_M^*\frg$ via $\nabla$. Equivalently, $\rmT\frg \cong \rmT M \times_M \frg \times_M \frg$, the middle factor as the aligned base point factor of $\sfp_M^*\rmT M$ and $\sfp_M^*\frg$ over $\frg$, thus, with gradings $\rmT[1]\frg[1] \cong \rmT[1] M \times_M \frg[1] \times_M \frg[2]$. Now, the composition of the natural bundle projections $\rmT[1] \frg[1] \to \frg$ and $\frg[1] \to M$ equips $\rmT[1]\frg[1]$ with a canonical vector bundle structure over $M$ via that isomorphism induced by $\nabla$. By construction this aligns with the vector bundle structure of $\rmT[1]M\oplus \frg[1]\oplus \frg[2]$. For later purposes, we define the projection
    \begin{equation}\label{eq:p_curv}
        \sfp_{\rm curv}\colon\rmT[1]\frg[1]\rightarrow \rmT[1]M\oplus \frg[2]~.
    \end{equation}
    
    The Weil algebra $\sfW(\frg)$ can now be written with respect to sections of the dual bundle $\rmT[1]^*M\oplus \frg[1]^*\oplus\frg[2]^*$, as explained (without the use of graded manifolds) in~\cite{Abad:0901.0322}. Explicitly we have 
    \begin{equation}
        \sfW_\nabla(\frg)\cong \ourodot^\bullet\Gamma\Big(\rmT[1]^*M\oplus\frg[1]^*\oplus\frg[2]^*\Big)
    \end{equation}
    for the underlying graded commutative algebra.
    
    It is convenient to describe the differential $\sfd_\sfW$ in terms of local coordinates on a chart $U_M$ of $M$ over which the bundle $\rmT[1]\frg[1]$ trivializes. If $m^a$ are coordinates on $U_M$ and $\bar m^a$, $\xi^\alpha$, and $\tilde{\bar{\xi}}^\alpha$ are the coordinates in the fibers\footnote{For now we choose a canonical frame, that is, $\bar{m}^a$ related to $m^a$ in the usual way, and both, $\xi^a$ and $\tilde{\bar\xi}^a$, correspond to the same frame in $\frg$.} of $\rmT[1]M$, $\frg[1]$, and $\frg[2]$, respectively, then the differential reads as 
    \begin{subequations}\label{eq:coordinate_changed_pre_Weil}
        \begin{equation}
            \begin{aligned}
                \sfd_\sfW f&=\left(\ttr^a_\alpha \xi^\alpha+\bar{m}^a\right)\frac{\dpar}{\dpar m^a} f~,
                \\
                \sfd_\sfW \xi^\alpha&=-\tfrac12 \ttf^\alpha_{\beta\gamma}\xi^\beta\xi^\gamma+\tilde{\bar{\xi}}^\alpha-
                \omega_{a\beta}^\alpha \bar m^a \xi^\beta
                ~,
                \\
                \sfd_\sfW\bar m^a&=-\ttr^a_\alpha \left(\tilde{\bar{\xi}}^\alpha-
                \omega_{b\beta}^\alpha \bar m^b \xi^\beta\right)+\frac{\dpar\ttr^a_\alpha}{\dpar m^b} \xi^\alpha \bar m^b~,
                \\
                \sfd_\sfW \tilde{\bar{\xi}}^\alpha&=-\left(\ttf^\alpha_{\beta\gamma}+\ttr^a_\gamma \omega^\alpha_{a\beta}\right)\xi^\beta \tilde{\bar{\xi}}^\gamma-\omega^\alpha_{a\beta}\bar m^a \tilde{\bar{\xi}}^\beta+\tfrac12 \left(R_\nabla^{\rm bas}\right)^\alpha_{\beta\gamma a}\xi^\beta\xi^\gamma\bar m^a+\tfrac12 \left(R_\nabla\right)^\alpha_{ab\beta}\bar m^a\bar m^b\xi^\beta~
            \end{aligned}
        \end{equation}
        for all $f \in C^\infty(U_M)$,
        where $\ttr^a_\alpha$, $\ttf^\alpha_{\beta\gamma}$, and $\omega^\alpha_{a\gamma}$ are the structure functions of the anchor map $\rho$, the Lie bracket $[-,-]$, and the local connection one-form of $\nabla$, respectively. Moreover,
        \begin{equation}
            \begin{aligned}
                (R_\nabla)^\alpha_{ab\beta}
                &=
                \partial_a\omega^\alpha_{b \beta}
                - \partial_b\omega^\alpha_{a \beta}
                + \omega^\alpha_{a \gamma}\omega^\gamma_{b \beta} 
                - \omega^\alpha_{b \gamma}\omega^\gamma_{a \beta}~,
                \\
                \left(R^{\rm bas}_\nabla\right)^\alpha_{\beta\gamma a}
                &=
                \partial_a\ttf^\alpha_{\beta\gamma}
                + \ttf^\delta_{\beta\gamma} \omega_{a \delta}^\alpha 
                + 2\omega^\alpha_{b [\beta} \partial_a\ttr^b_{\gamma]} 
                - 2\omega^\alpha_{b [\beta}\omega_{a|\gamma]}^\delta\ttr^b_\delta  
                + 2\left(\partial_b\omega^\alpha_{a [\beta}\right)\ttr^b_{\gamma]}
                - 2\ttf^\alpha_{[\beta|\delta}\omega^\delta_{a|\gamma]}
            \end{aligned}
        \end{equation}
    \end{subequations}
    are the components of the curvature $R_\nabla$ and the basic curvature~\eqref{eq:basic_curvature} of $\nabla$.
    
    In the definition of the bundle isomorphism~\eqref{eq:bundle_isomorphism}, there is further freedom which we can use to make our later discussion more elegant. This freedom is parameterized by a $\frg$-valued 2-form $\zeta\in \Omega^2(M,\frg)$, and amounts to a coordinate change
    \begin{equation}
        \left(m^a,\bar m^a,\xi^\alpha,\tilde{\bar{\xi}}^\alpha\right)\mapsto \left(\tilde m^a,\tilde{\bar{m}}^a,\tilde \xi^\alpha,\bar{\xi}^\alpha\right)=\left(m^a,\bar m^a,\xi^\alpha,\tilde{\bar{\xi}}^\alpha+\tfrac{1}{2}\zeta^\alpha_{ab}\bar m^a\bar m^b\right)~.
    \end{equation}
    
    We denote the resulting form of the Weil algebra by $\sfW_{(\nabla,\zeta)}$, and its underlying commutative graded algebra agrees with that of $\sfW_\nabla$. The differential acts as follows:
    \begin{subequations}\label{eq:coordinate_changed_Weil}
        \begin{equation}
            \begin{aligned}
                \sfd_\sfW f&=\left(\ttr^a_\alpha \xi^\alpha+\bar{m}^a\right)\frac{\dpar}{\dpar m^a} f~,
                \\
                \sfd_\sfW  \xi^\alpha&=-\tfrac12 \ttf^\alpha_{\beta\gamma}\xi^\beta\xi^\gamma+\bar{\xi}^\alpha-
                \omega_{a\beta}^\alpha \bar m^a \xi^\beta-\tfrac12 \zeta^\alpha_{ab} \bar m^a \bar m^b
                ~,
                \\
                \sfd_\sfW \bar m^a&=-\ttr^a_\alpha \left(\bar{\xi}^\alpha-
                \omega_{b\beta}^\alpha \bar m^b \xi^\beta-\tfrac12 \zeta^\alpha_{bc} \bar m^b \bar m^c\right)+\frac{\dpar\ttr^a_\alpha}{\dpar m^b} \xi^\alpha \bar m^b~,
                \\
                \sfd_\sfW \bar{\xi}^\alpha&=-\left(\ttf^\alpha_{\beta\gamma}+\ttr^a_\gamma \omega^\alpha_{a\beta}\right)\xi^\beta \bar{\xi}^\gamma-\left(\omega^\alpha_{a\beta}-\zeta^\alpha_{ab}\ttr^b_\beta\right)\bar m^a \bar{\xi}^\beta
                +\tfrac12 \left(R_\nabla^{\rm bas}\right)^\alpha_{\beta\gamma a}\xi^\beta\xi^\gamma\bar m^a
                \\
                &~~~~+\left(\tfrac16\left(\rmd^\nabla\zeta\right)^\alpha_{abc}-\tfrac12 \zeta^\alpha_{ad}\ttr^d_\beta \zeta^\beta_{bc}\right)\bar m^a\bar m^b\bar m^c+\tfrac12 \left(R_\nabla+\nabla^{\rm bas}\zeta\right)^\alpha_{ab\beta}\bar m^a\bar m^b\xi^\beta
            \end{aligned}
        \end{equation}
        for $f\in C^\infty(M)$, where we used the same component functions as in~\eqref{eq:coordinate_changed_pre_Weil} as well as 
        \begin{equation}
            \begin{aligned}
                \left(\rmd^\nabla\zeta\right)^\alpha_{abc}
                &=
                \partial_a\zeta^\alpha_{bc}
                +\partial_b\zeta^\alpha_{ca}
                +\partial_c\zeta^\alpha_{ab}
                +\omega^\alpha_{a \beta}\zeta^{\beta}_{bc}
                +\omega^\alpha_{b \beta}\zeta^{\beta}_{ca}
                +\omega^\alpha_{c \beta}\zeta^{\beta}_{ab}~,
                \\
                \left(\nabla^\mathrm{bas}\zeta\right)^\alpha_{ab\beta}
                &=
                \ttf^\alpha_{\beta\gamma} \zeta^\gamma_{ab}
                + \rho_\beta^c \partial_c \zeta^\alpha_{ab}
                + \omega^\alpha_{c \beta} \rho^c_\gamma \zeta^\gamma_{ab}
                + 2 \zeta^\alpha_{[a|c} 
                \left( 
                \partial_{b]} \rho^c_\beta - \rho^c_\gamma \omega^\gamma_{b] \beta}
                \right)~,
            \end{aligned}
        \end{equation}
        where $\nabla^{\rm bas}$ denotes the basic connection~\eqref{eq:basic_connection_1}.
    \end{subequations}    
    
    There are now two extreme examples. First, a Lie algebroid $\frg$ over a point is a Lie algebra, and its Weil algebra is indeed the Weil algebra of this Lie algebra. Note that here, the connection $\nabla$ and $\zeta$ are necessarily trivial. Second, we have the case when the fiber has rank 0, which again forces $\nabla$ and $\zeta$ to be trivial. This is a rephrasing of \ref{ex:tangent_algebroid}:
    \begin{example}\label{ex:Weil_algebra_of_manifold}
        We can trivially regard a manifold $X$ as a rank 0 Lie algebroid $X$ over itself. Its Weil algebra $\sfW(X)$ is then the Chevalley--Eilenberg algebra of $\rmT[1]X$ and hence the de~Rham complex $(\Omega^\bullet(X),\rmd_X)$ on $X$.
    \end{example}
    
    Before we close this section, let us briefly explain the relation between the Chevalley--Eilenberg algebra and the Weil algebra. There are two natural bundle maps between $\rmT[1]\frg[1]$ and $\frg[1]$: the bundle projection and the embedding as zero section. The former is not an algebroid morphism, and therefore merely translates to a morphism of graded algebras $\sfCE(\frg)\hookrightarrow \sfW(\frg)$ which fails to respect the differentials. The latter is an algebroid morphism and dually, it leads to the natural projection of dg-commutative algebras
    \begin{equation}\label{eq:Weil_to_CE_projection}
        \sfpr:\sfW(\frg)\rightarrow \sfCE(\frg)~,
    \end{equation}
    by setting all generators not contained in $\Gamma(\frg[1]^*)$ to zero. This projection with $\frg$ an $L_\infty$-algebra is heavily used e.g.~in~\cite{Sati:2008eg}.
    
    \section{Local connections on principal groupoid bundles}\label{sec:local_description}
    
    Before introducing the global picture, it is helpful to reproduce the local picture developed in~\cite{Strobl:2004im} and~\cite{Kotov:2015nuz} from a slightly different point of view. This will provide a clear motivation and intuition of the global constructions of later sections. Moreover, this local description of connections, together with their infinitesimal gauge transformations, is sufficient for many physical models and their quantization.
    
    We follow mostly the ideas of~\cite{Cartan:1949aaa,Cartan:1949aab,Kotov:2007nr} and in particular~\cite{Sati:2008eg} and~\cite{Saemann:2019dsl}, and extend them in the evident way to the case of local connections on Lie groupoid bundles. As a new result, we show that adjustments amount to Cartan connections.
    
    \subsection{Flat and higher flat connections}
    
    Flat Lie algebroid-valued connections are readily defined as follows.
    \begin{definition}[Flat local $\frg$-valued connection]
        Consider a Lie algebroid $\frg=(\frg\rightarrow M)$. A flat local connection with values in $\frg$ on a patch $U_X\subset X$ of a manifold $X$ is given by an algebroid morphism 
        \begin{equation}
            \caA:\rmT[1]U_X\rightarrow \frg[1]~.
        \end{equation}
    \end{definition}
    \noindent Unpacking this definition, we obtain a smooth map $\phi$ together with a $\phi^*\frg$-valued $1$-form,
    \begin{subequations}\label{eq:flat_g-valued_connection}
        \begin{equation}
            \phi\in C^\infty(U_X,M)\eand A\in \Omega^1(U_X,\phi^*\frg)~.
        \end{equation}
        Compatibility with the differentials on $\rmT[1]U_X$ and $\frg[1]$ implies\footnote{Here, and in the following, $\rho$, $[-,-]$, $\nabla$, and $t_{\nabla_\rho}$ (defined below) may denote, in fact, the maps $\phi^*\rho$, $\phi^*[-,-]$, $\Phi^*\nabla$, and $\Phi^*t_{\nabla_\rho}$ respectively. Since this will be always clear from context, we allow ourselves to use this simpler but slightly imprecise notation.}
        \begin{equation}\label{eq:flat_connections_curvatures}
            \rmd \phi-\rho(A)=0\eand
            \mathrm{d}^{\nabla} A
            - \frac{1}{2} t_{\nabla_\rho}( A , A )
            =0~,
        \end{equation}
    \end{subequations}    
    where $\nabla$ is any vector bundle connection on $\frg$, $\rmd^{\nabla}$ denotes the exterior covariant derivative of $\nabla$, and $t_{\nabla_\rho}$ is the torsion of $\nabla_\rho$. That is,
    \begin{align}
        \left(\rmd^\nabla A\right)(V, W)
        &=
        \nabla_V \bigl( A(W) \bigr)
        - \nabla_W \bigl( A(V) \bigr)
        - A \bigl( [V, W] \bigr)~,
        \\
        t_{\nabla_\rho}(\mu, \nu)
        &=
        \nabla_{\rho(\mu)} \nu
        - \nabla_{\rho(\nu)} \mu
        - [\mu, \nu]
    \end{align}
    for all $V, W \in \frX(U_X)$ and $\mu, \nu \in \Gamma(\frg)$, where $\rho$ and $[-,-]$ are the anchor and the Lie bracket in $\frg$, respectively; observe, the second equation in~\eqref{eq:flat_connections_curvatures} is independent of the choice of $\nabla$, in particular, with respect to a frame given by a basis $(e_\alpha)$ in $\frg$ and making use of the first equation in~\eqref{eq:flat_connections_curvatures}, the second equation in~\eqref{eq:flat_connections_curvatures} reads
    \begin{equation}
        \rmd A^\alpha +\frac12 \ttf^\alpha_{\beta\gamma} A^\beta A^\gamma =0~.
    \end{equation}
    There is now an action of infinitesimal bundle isomorphisms on the flat $\frg$-valued connections, which are called infinitesimal gauge transformations in the physics literature. To expose also these, we construct the BRST complex following a construction familiar from the AKSZ-model~\cite{Alexandrov:1995kv}, see also~\cite[Section 4.2]{Saemann:2019dsl}.
    \begin{definition}[BRST complex for flat connections]\label{def:BRST-complex}
        Given a patch $U_X$ of a manifold $X$ and an N$Q$-manifold\footnote{i.e.~a dg-manifold concentrated in non-positive degrees, cf.~\ref{app:graded_manifolds}} $\caG$, the BRST complex $\frbrst(U_X,\caG)$ is given by the internal hom $\ihom(\rmT[1]U_X,\caG)$ in the category $\CatNQMfd$ of N$Q$-manifolds, where the degree of the contained morphisms is called the ghost degree.
    \end{definition}
    \noindent This definition assumes that $\ihom(\rmT[1]U_X,\caG)$ is representable, which is a consequence of $\caG$ being concentrated in non-positive degrees. In particular, we will calculate the corresponding dg-manifold below.
    
    In the case of flat local $\frg$-valued connections, we can now define infinitesimal gauge transformations as follows.
    \begin{definition}[Infinitesimal gauge transformations for flat local $\frg$-valued connections]
        Consider the BRST complex $\frbrst(U_X,\frg[1])$ for $\frg[1]$ a Lie algebroid. The generators of this complex are the fields (ghost degree~0) and the gauge parameters or ghosts (ghost degree~1). The infinitesimal gauge transformations are given by the action of the differential on the fields.
    \end{definition}
    
    Let us compute $\frbrst(U_X,\frg[1])$ explicitly. The dg-commutative algebra is generated by the duals of 
    \begin{equation}
        \phi\in C^\infty(U_X,M)
        ~,~~~
        A\in \Omega^1(U_X,\phi^*\frg)
        ~,\eand
        c\in \Omega^0(U_X,\phi^*\frg)~,
    \end{equation}
    which encode maps of degree~$0$, $0$, and~$-1$, respectively. With respect to a frame $(e_\alpha)$ in $\frg$, the differential $\sfd$ in $\frbrst(U_X,\frg[1])$ acts on these according to
    \begin{equation}
        \sfd \uline\phi=\rho(\uline c)~,~~~\sfd \uline A^\alpha=\rmd \uline c^\alpha+ \ttf^\alpha_{\beta \gamma} \uline A^\beta \uline c^\gamma~,\eand \sfd \uline c^\alpha=-\tfrac12 \ttf^\alpha_{\beta \gamma}\uline c^\beta\uline c^\gamma~,
    \end{equation}
    where $\uline \phi$, $\uline A$, and $\uline c$ are ``coordinate functions'' on the spaces that $\phi$, $A$, and $c$ take values in, respectively.\footnote{Note that the physics literature usually does not make this distinction and simply identifies $\uline c$ with $c$, etc.}
    
    We conclude that the infinitesimal gauge transformations are parameterized by $c\in \Omega^0(U_x,\phi^*\frg)$ and act on the fields $\phi$ and $A$ according to
    \begin{equation}
        \delta \phi=\rho(c)\eand \delta A^\alpha=\rmd c^\alpha+\ttf^\alpha_{\beta \gamma} A^\beta c^\gamma~.
    \end{equation}
    The differential on $c$ describes the Chevalley--Eilenberg algebra of the Lie algebra of infinitesimal gauge transformations.
    \begin{remark}[Coordinate-free expressions]
        Here and in the following, a coordinate-free version is not as straight-forward as before: many terms have values in $\phi^*\frg$ so that a gauge transformation of $\phi$ have a much wider effect than in ordinary gauge theory. For example, the one-form potential $A$ projects to $\phi$ (via $\frg \to M$) so that $\delta A$ projects to $\delta \phi$ via $\rmT \frg \to \rmT M$; in particular, $\delta A$ is not vertical in $\rmT\frg$ anymore such that it cannot have values in $\frg$ in general, it has instead values in the tangent prolongation of $\frg$. For technical explanations of this and related terms like the Lie algebra expression of $c$ see~\cite{Fischer:2021glc, Fischer:2021yoy}.
    \end{remark}
    
    Two remarks are in order.
    \begin{remark}
        Equivalently, one can also regard these infinitesimal gauge symmetries as infinitesimal homotopies between flat $\frg$-valued connections, cf.~e.g.~\cite{Sati:2008eg}.
    \end{remark}
    
    \begin{remark}
        For $M=*$, the Lie algebroid $\frg$ becomes a Lie algebra, and the above reproduces the usual description of an ordinary local flat Lie-algebra valued connection 1-form.
    \end{remark}

    As mentioned before, an observation that proved useful in higher gauge theory is that non-flat connections can be regarded as special flat higher connections. Here, we are interested in the dg-manifold $\rmT[1]\frg[1]$. 
    \begin{definition}[{Flat local $\rmT[1]\frg$-valued connection}]\label{eq:flat_local_Tg_connections}
        Given a Lie algebroid $\frg=(\frg\rightarrow M)$, a flat local connection with values in $\rmT[1]\frg$ on a patch $U_X\subset X$ of a manifold $X$ is given by an algebroid morphism 
        \begin{equation}\label{eq:t1g1-connection}
            \caA:\rmT[1]U_X\rightarrow \rmT[1]\frg[1]~.
        \end{equation}
    \end{definition}
    \noindent To unpack this definition, we assume for convenience that we have a connection $\nabla$ on $\frg$ together with a $\frg$-valued 2-form $\zeta\in \Omega^2(M,\frg)$, which allows us to work with the global Weil differential $\sfW_{(\nabla,\zeta)}(\frg)$ as discussed in \ref{ssec:Weil_algebra}. Note that we can always choose a connection on $\frg$ and put $\zeta=0$. 
    
    The map $\caA$ of graded manifolds is defined by a map $\phi\in C^\infty(U_X,M)$ between the bodies of both graded manifolds together with maps $C^\infty(\rmT[1]\frg[1])\rightarrow \phi_*\Omega^1(U_X)$, where $\phi_*$ is the direct image functor on sheaves. A $\rmT[1]\frg$-valued connection then amounts to the data
    \begin{subequations}
        \begin{equation}\label{eq:kinematical_data}
            \phi\in C^\infty(U_X,M)~,~~~A_M\in\Omega^1(U_X,\phi^*\rmT M)~,~~~A_\frg\in \Omega^1(U_X,\phi^*\frg)~,~~~B\in \Omega^2(U_X,\phi^*\frg)~.
        \end{equation}
        In terms of components with respect to the basis used in \ref{ssec:Weil_algebra}, these satisfy
        \begin{equation}\label{eq:flat_T1g1-connection}
            \begin{aligned}
                0&=(\rmd \phi)^a-\ttr^a_\alpha A_{\frg}^\alpha-A_{M}^a~,
                \\
                0&=\rmd A^a_M+\ttr^a_\alpha \left(B^\alpha-
                \omega_{a\beta}^\alpha  A_M^a  A_\frg^\beta-\tfrac12 \zeta^\alpha_{ab}  A_M^a  A_M^b\right)-\frac{\dpar\ttr^a_\alpha}{\dpar m^b}  A_\frg^\alpha  A_M^b~,
                \\
                0&=\rmd A^\alpha_\frg+\tfrac12 \ttf^\alpha_{\beta\gamma}A_\frg^\beta A_\frg^\gamma-B^\alpha+
                \omega_{a\beta}^\alpha A_M^a A_\frg^\beta+\tfrac12 \zeta^\alpha_{ab} A_M^a A_M^b~,
                \\
                0&=\rmd B^\alpha +\left(\ttf^\alpha_{\beta\gamma}+\ttr^a_\gamma \omega^\alpha_{a\beta}\right) A_\frg^\beta B^\gamma+\left(\omega^\alpha_{a\beta}-\zeta^\alpha_{ab}\ttr^b_\beta\right) A_M^a B^\beta
                -\tfrac12 \left(R_\nabla^{\rm bas}\right)^\alpha_{\beta\gamma a} A_\frg^\beta A_\frg^\gamma A_M^a
                \\
                &~~~~ -\left(\tfrac16\left(\rmd^\nabla\zeta\right)^\alpha_{abc}-\tfrac12 \zeta^\alpha_{ad}\ttr^d_\beta \zeta^\beta_{bc}\right)A_M^a A_M^b A_M^c-\tfrac12 \left(R_\nabla+\nabla^{\rm bas}\zeta\right)^\alpha_{ab\beta} A_M^a A_M^b A_\frg^\beta~,
            \end{aligned}
        \end{equation}
    \end{subequations}    
    which follows from $\caA$ respecting the differential.
    
    Infinitesimal gauge transformations are again captured by the BRST complex. 
    \begin{definition}[{Infinitesimal gauge transformations for local flat $\rmT[1]\frg$-valued connections}]\label{def:BRST_Tg}
        Consider the BRST complex $\frbrst(U_X,\rmT[1]\frg[1])$. The generators of this complex are the fields (ghost degree~0), the gauge parameters or ghosts (ghost degree~1), and the higher gauge parameters or ghosts-for-ghosts (ghost degree~2). The infinitesimal gauge transformations are given by the action of the differential on the fields. The infinitesimal higher gauge transformations are given by the action of the differential on the ghosts, truncated to terms containing ghosts-for-ghosts.
    \end{definition}
    \noindent In the dg-commutative algebra $\frbrst(U_X,\rmT[1]\frg[1])$, the fields are given by 
    \begin{equation}
        \phi\in C^\infty(U_X,M)~,~~~A_M\in\Omega^1(U_X,\phi^*\rmT M)~,~~~A_\frg\in \Omega^1(U_X,\phi^*\frg)~,~~~B\in \Omega^2(U_X,\phi^*\frg)~,
    \end{equation}
    the gauge parameters by 
    \begin{equation}
        c_M\in \Omega^0(U_X,\phi^*\rmT M)~,~~~c_\frg\in\Omega^0(U_X,\phi^*\frg)~,~~~\lambda\in\Omega^1(U_X,\phi^*\frg)~,
    \end{equation}
    and the higher gauge parameters by 
    \begin{equation}
        \chi\in \Omega^0(U_X,\phi^*\frg)~.
    \end{equation}
    
    Let us again work with the Weil algebra $\sfW_{(\nabla,\zeta)}$ and the coordinates introduced in \ref{ssec:Weil_algebra}. Then the gauge transformations, induced by the action of the differential $\sfd$ in $\frbrst(U_X,\rmT[1]\frg[1])$ on the fields, read as
    \begin{equation}\label{eq:flat_T1g1-connection_gauge_transformations}
        \begin{aligned}
            \delta \phi^a &=\ttr^a_\alpha c_{\frg}^\alpha+c_{M}^a~,
            \\
            \delta A^a_M&=\rmd c_M^a-\ttr^a_\alpha \left(\lambda^\alpha-
            \omega_{a\beta}^\alpha  \left(c_M^a  A_\frg^\beta-
            A_M^a  c_\frg^\beta\right)-\zeta^\alpha_{ab}  c_M^a  A_M^b\right)+\frac{\dpar\ttr^a_\alpha}{\dpar m^b}  \left(c_\frg^\alpha  A_M^b-A_\frg^\alpha  c_M^b\right)~,
            \\
            \delta A^\alpha_\frg &=\rmd c_\frg^\alpha+\ttf^\alpha_{\beta\gamma}A_\frg^\beta c_\frg^\gamma+\lambda^\alpha-
            \omega_{a\beta}^\alpha \left(c_M^a A_\frg^\beta-A_M^a c_\frg^\beta\right)-\zeta^\alpha_{ab} c_M^a A_M^b~,
            \\
            \delta B^\alpha &=\rmd \lambda^\alpha-\left(\ttf^\alpha_{\beta\gamma}+\ttr^a_\gamma \omega^\alpha_{a\beta}\right) \left(c_\frg^\beta B^\gamma-A_\frg^\beta \lambda^\gamma\right)-\left(\omega^\alpha_{a\beta}-\zeta^\alpha_{ab}\ttr^b_\beta\right) \left(c_M^a B^\beta-A_M^a \lambda^\beta\right)
            \\
            &~~~~+\left(\tfrac12\left(\rmd^\nabla\zeta\right)^\alpha_{abc}-\tfrac32 \zeta^\alpha_{ad}\ttr^d_\beta \zeta^\beta_{bc}\right)c_M^a A_M^b A_M^c+\left(R_\nabla^{\rm bas}\right)^\alpha_{\beta\gamma a} \left(c_\frg^\beta A_\frg^\gamma A_M^a+\tfrac12 A_\frg^\beta A_\frg^\gamma c_M^a\right)
            \\
            &~~~~+\left(R_\nabla+\nabla^{\rm bas}\zeta\right)^\alpha_{ab\beta} \left(c_M^a A_M^b A_\frg^\beta+\tfrac12 A_M^a A_M^b c_\frg^\beta\right)~.
        \end{aligned}
    \end{equation}
    
    The higher gauge transformations, induced by the action of the differential $\sfd$ on the ghosts, read as 
    \begin{equation}\label{eq:flat_higher_gauge_trafos}
        \begin{gathered}
            \delta c^a_M=-\ttr^a_\alpha  \chi^\alpha~,~~~       \delta c^\alpha_\frg=\chi^\alpha~,\\ 
            \delta \lambda^\alpha =-\rmd \chi^\alpha-
            \left(\ttf^\alpha_{\beta\gamma}+\ttr^a_\gamma \omega^\alpha_{a\beta}\right) A_\frg^\beta \chi^\gamma-\left(\omega^\alpha_{a\beta}-\zeta^\alpha_{ab}\ttr^b_\beta\right) A_M^a \chi^\beta~.
        \end{gathered}
    \end{equation}
    
    Again, these (higher) gauge transformations can be regarded as (higher) homotopies, and details are found, e.g., in~\cite[Sec.~4.1]{Jurco:2018sby}.
    
    \subsection{Non-flat connections as higher flat connections}\label{sec:Non-flat connections as higher flat connections}
    
    From~\eqref{eq:flat_T1g1-connection} and~\eqref{eq:flat_connections_curvatures}, it becomes clear that we can consider a non-flat $\frg$-valued connection as a flat $\rmT[1]\frg$-valued connection, where the additional potential components $A_M$ and $B$ adopt the role of the curvatures\footnote{We call the covariant derivative $\nabla \phi=\rmd \phi-\rho(A)\phi$ the curvature of $\phi$.} for $\phi$ and $A$. This interpretation, however, requires us to reduce the gauge symmetries to those of a $\frg$-valued connection, because as it stands, gauge transformations relate flat and non-flat connections.\footnote{More generally, one expects a generalized Poincar\'e lemma (cf.~e.g.~\cite{Demessie:2014ewa} for the higher gauge theoretic version), which allows to trivialize all of these flat $\rmT[1]\frg$-valued connections.}
    
    For higher gauge theory, the appropriate reduction was identified in~\cite[Def.~34]{Sati:2008eg} as partially flat homotopies, i.e.~homotopies for which the auxiliary potentials taking the role of the curvatures vanish in the homotopy directions. This ensures that gauge transformations map flat potentials to flat potentials. In other words, the curvature analogue of gauge transformations are zero or gauge transformations are flat.
    
    In the case of flat $\rmT[1]\frg$-valued connections, the partial flatness condition amounts to demanding
    \begin{equation}\label{eq:truncation_of_gauge_symmetry}
        c_M=0~,~~~\lambda=0~,~~~\chi=0
    \end{equation}
    of the gauge and higher gauge parameters, cf.~also the construction in~\cite[Section 4.2]{Saemann:2019dsl}. For ordinary gauge theories, i.e.~for gauge Lie algebroids $\frg=(\frg\rightarrow M)$ with $M=*$, this truncation can be implemented directly. For general Lie algebroids $\frg$, however, we encounter a consistency condition. In particular, we need to demand that the commutator of infinitesimal gauge transformations yields again an infinitesimal gauge transformation, so that these infinitesimal gauge transformations indeed generate finite gauge transformations that combine together appropriately. This is best studied, again, in the BRST-complex $\frbrst(U_X,\rmT[1]\frg[1])$ introduced above. In particular, we have the differential
    \begin{equation}\label{eq:BRST_on_lambda}
        \begin{aligned}
            \sfd \underline{\lambda}^\alpha &=-\rmd \underline{\chi}^\alpha-
            \left(\ttf^\alpha_{\beta\gamma}+\ttr^a_\gamma \underline{\omega}^\alpha_{a\beta}\right) \left(\underline{A}_\frg^\beta \underline{\chi}^\gamma+\underline{c}^\beta_\frg \underline{\lambda}^\gamma\right)-\left(\underline{\omega}^\alpha_{a\beta}-\zeta^\alpha_{ab}\ttr^b_\beta\right) \left(\underline{A}_M^a \underline{\chi}^\beta+\underline{c}^a_M\underline{\lambda}^\beta\right)\\
            &~~~~-\left(R_{\nabla}^{\rm bas}\right)^\alpha_{\beta\gamma a}\left(\underline{c}^\beta_\frg \underline{A}^\gamma_\frg \underline{c}^a_M -\frac{1}{2}\underline{c}^\beta_\frg \underline{c}^\gamma_\frg \underline{A}^a_M\right)+3\left(\frac{1}{6}\left(\rmd^\nabla \zeta\right)^\alpha_{abc}-\frac{1}{2}\zeta^\alpha_{ad}\ttr^d_\beta\zeta^\beta_{bc}\right)\underline{c}^a_M \underline{c}^b_M \underline{A}_M^c\\
            &~~~~-\left(R_\nabla+\nabla^{\rm bas}\zeta\right)^\alpha_{ab\beta}\left(\underline{c}_M^a \underline{A}_M^b c_\frg^\beta-\frac{1}{2}\underline{c}^a_M \underline{c}^b_M \underline{A}^\beta_\frg\right)~,
        \end{aligned}
    \end{equation}
    where underlined letters denote again coordinate functions. The terms linear in $\uline\chi$ describe the action of higher gauge transformations parameterized by $\chi$ on the gauge parameter $\lambda$. The remaining terms describe the higher algebra of the gauge parameters themselves. 
    
    We now note that consistency of the truncation~\eqref{eq:truncation_of_gauge_symmetry} implies $\uline\lambda^\alpha=0$ and hence $\delta\uline \lambda^\alpha=0$. We see that most of the terms on the right-hand side of~\eqref{eq:BRST_on_lambda} indeed vanish after imposing the truncation $\uline c^a_M=0$, $\uline \lambda^\alpha=0$ and $\uline \chi^\alpha=0$. The only remaining terms is
    \begin{equation}
        \tfrac12 \left(R_\nabla^{\rm bas}\right)^\alpha_{\beta\gamma a} \uline c_\frg^\beta \uline c_\frg^\gamma \uline A_M^a~,
    \end{equation}
    which needs to vanish.

    There are now two solutions to this. Either, we impose the condition that $A_M^a=0$ for all the connections we consider. This is the Lie algebroid analogue of the fake curvature condition of higher gauge theory, cf.~\cite{Saemann:2019dsl} and~\cite{Borsten:2024gox} for a discussion. In the case of higher gauge theory, this has already been shown to be too restrictive for many physical applications. Alternatively, we can demand that $\nabla$ is chosen such that $R_\nabla^{\rm bas}=0$. This is the Lie algebroid analogue of the notion of adjustment as defined in~\cite{Saemann:2019dsl}. This perspective proved to be the right one for important physical applications~\cite{Borsten:2024gox}, and it also leads to the correct differential refinement of string structures~\cite{Saemann:2019dsl,Kim:2019owc,Rist:2022hci} and~\cite{Tellez-Dominguez:2023wwr}. In the context of curved gauge theories it was also discovered that the closure of the commutator of infinitesimal gauge transformations requires such a condition, see~\cite{Bojowald:0406445, Mayer:2009wf, Fischer:2021glc, Fischer:2021yoy}. We therefore follow this approach.
    
    While the condition $\tfrac12 (R_\nabla^{\rm bas})^\alpha_{\beta\gamma a} \uline c_\frg^\beta \uline c_\frg^\gamma \uline A_M^a=0$ is required for consistency of a connection, we note that our description becomes more elegant under two further assumptions. First of all, we note in~\eqref{eq:flat_T1g1-connection_gauge_transformations} that a generic curvature $B$ transforms ``non-covariantly'', i.e.~non-linear under generic truncated gauge transformations $(c^a_M,c^\alpha_\frg,\lambda)=(0,c^\alpha_\frg,0)$, unless
    \begin{equation}\label{eq:condition_1}
        R_\nabla^{\rm bas}=0\eand R_\nabla+\nabla^{\rm bas}\zeta=0~.
    \end{equation}
    Such a covariance condition is usually the key condition imposed in physics discussions, e.g.~in the tensor hierarchy of gauged supergravity, see~\cite{Samtleben:2008pe} and references therein; see the introduction, or \cref{rem:ReferencesForAdjustment} later, for a history of these conditions in the context of curved gauge theory. This condition is also important in the formulation of action principles.
    
    Moreover, one may want $\rmT[1]\frg[1]$ to possess the structure of a Lie algebroid over $M$ (or, rather, a strict Lie 2-algebroid), for which all cubic terms in $\sfd_\sfW $ have to vanish. This leads to the conditions\footnote{The last relation may be required for writing down Lagrangian field theories, cf.~\ref{rem:strictness}.}
    \begin{equation}\label{eq:condition2}
        R_\nabla^{\rm bas}=0~,~~~R_\nabla+\nabla^{\rm bas}\zeta=0~,\eand \tfrac16\left(\rmd^\nabla\zeta\right)^\alpha_{abc}-\tfrac16 \zeta^\alpha_{[a|d}\ttr^d_\beta \zeta^\beta_{|bc]}=0~,
    \end{equation}
    where we anti-symmetrize over $a$, $b$, and $c$ in the second summand of the third equation, and $\rmd^\nabla$ is the common notation for the exterior covariant derivative w.r.t.~$\nabla$; as a reminder, in this case we have
    \begin{align*}
        \left(\rmd^\nabla\zeta\right)(V_1, V_2, V_3)
        &=
        \nabla_{V_1}\bigl( \zeta(V_2, V_3) \bigr)
        - \nabla_{V_2}\bigl( \zeta(V_1, V_3) \bigr)
        + \nabla_{V_3}\bigl( \zeta(V_1, V_2) \bigr)
        \\&\hspace{1cm}
        - \zeta\bigl([V_1, V_2], V_3\bigr)
        + \zeta\bigl([V_1, V_3], V_2\bigr)
        - \zeta\bigl([V_2, V_3], V_1\bigr)
    \end{align*}
    for all $V_1, V_2, V_3 \in \frX(M)$. A special case of such conditions appeared in Mackenzie's book~\cite[Sec.~7.2]{0521499283}. He also imposed the condition that there is a Lie algebroid structure on the restriction 
    \begin{equation}
        \left.\rmT[1]\frg[1]\right|_{M} \cong \rmT[1]M\oplus \frg[2]~,
    \end{equation}
    where $\rmT[1]\frg[1]$ is now viewed as a bundle over $\frg[1]$ and the restriction to $M$ is w.r.t.~the canonical embedding into $\frg[1]$ via the 0-section.
    This requires that~\eqref{eq:condition2} is satisfied. Moreover, Mackenzie considers Lie algebra bundles in place of $\frg[2]$ and general Lie algebroids in place of $\rmT[1]M$, and Mackenzie shows that the third equation in~\eqref{eq:condition2} can be studied with a cohomology leading to a cohomology class encoding the existence of such $\zeta$, called the Mackenzie obstruction class; this is a natural invariant of the field redefinitions, see~\cite{Fischer:2020lri, Fischer:2021glc} for further details on Mackenzie's constructions and their relations to adjusted connections.
    
    We note that in many cases, there will be coordinate changes on the Weil algebra preserving the condition~\eqref{eq:condition_1} or even the condition~\eqref{eq:condition2}, but simplifying the Weil algebra further, e.g.~removing terms in~\eqref{eq:coordinate_changed_Weil} proportional to $\zeta$ or $\omega$. Such coordinate changes are general forms of the field redefinitions discussed in~\cite{Fischer:2021glc}. 
    
    \begin{remark}[Alternative versions of equations]\label{rem:AlternCompEq}
        Making use of the notion of exterior covariant derivatives of Lie algebroid connections $\nabla^{\rm bas}$, denoted by $\rmd^{\nabla^{\rm bas}}$, one can rewrite the second equation also as 
        \begin{equation}
            R_\nabla+\rmd^{\nabla^{\rm bas}}\zeta=0~.
        \end{equation}
        As pointed out in~\cite{Fischer:2020lri, Fischer:2021glc}, this means that the first two equations are of a cohomological type; integrated in~\cite{Fischer:2022sus} (in the case of Lie group bundles as structure). However, this requires introducing exterior covariant derivatives of Lie algebroid connections and a double-degree version of forms.
        
        The third equation can also be written without coordinates as
        \begin{equation}
            \rmd^{\nabla^\zeta} \zeta = 0~,
        \end{equation}
        where $\nabla^\zeta$ is a vector bundle connection on $\frg$ defined by
        \begin{align*}
            \nabla^\zeta_V \nu
            &\coloneqq
            \nabla_V \nu
            - \zeta \bigl( V, \rho(\nu) \bigr)
        \end{align*}
        for all $V \in \frX(M)$ and $\nu \in \Gamma(\frg)$. A geometric meaning of this may appear in~\cite{future:2024ac}.
    \end{remark}

    \subsection{Adjusted local Lie-algebroid-valued connections}\label{ssec:local_adjusted_connections}
    
    Let us now formalize the results of the above perspective, requiring condition~\eqref{eq:condition_1}. The extension of the notion of adjustment~\cite[Def.~4.2]{Saemann:2019dsl} to Lie algebroids reads as follows.
    \begin{definition}[Adjustment]\label{def:local_adjustment}
        Given a Lie algebroid $\frg=(\frg\rightarrow M)$, we call a connection $\nabla$ on $\frg$ an adjustment (or plain adjustment) if its basic curvature vanishes, 
        \begin{subequations}\label{eq:local_adjustment_conditions}
            \begin{equation}
                R^{\rm bas}_\nabla=0
            \end{equation}
            A covariant adjustment is an adjustment together with a primitive $\zeta$ of $\nabla$, so that 
            \begin{equation}
                R_\nabla+\nabla^{\rm bas}\zeta=0~.
            \end{equation}
            A strict covariant adjustment is a covariant adjustment for which we additional have
            \begin{equation}
                \left(\rmd^\nabla\zeta\right)^\alpha_{abc}- \zeta^\alpha_{[a|d}\ttr^d_\beta \zeta^\beta_{|bc]}=0~.
            \end{equation}
        \end{subequations}
    \end{definition}
    
    \begin{remark}
        The notions of covariant and strict covariant adjustment are motivated by physical considerations and, in particular, the desire to write down an action principle. Mathematically, they are slightly less natural, as they depend on a coordinate choice in the Weil algebra, while a plain adjustment simply requires the choice of a connection $\nabla$ on $\frg$.
    \end{remark}

    \begin{remark}\label{rem:ReferencesForAdjustment}
        See the introduction for an extended history of these conditions; as major references there are~\cite[general summary of adjusted algebroid theories; but curvature condition phrased differently, not as exactness condition]{Kotov:2015nuz} and~\cite[the exactness condition of the curvature first appeared here]{Fischer:2021glc}.
        
        In the context of these works, gauge invariance of the Lagrangian also requires metric compatibilities regarding the Cartan connection. That is, an adjustment in the context of curved gauge theory has two metric compatibilities of $\nabla^{\mathrm{bas}}$ with metrics on $\frg$ and $\rmT M$. We will gloss over these in this work, until we discuss this in~\cite{future:2024ac}.
    \end{remark}
    
    \begin{remark}[Integrated adjustment]
        From an integrated point of view and as we discussed earlier, the vanishing of the basic curvature gives rise to the notion of Cartan connections $\caH$ on the Lie groupoid $\scG$ whose connection 1-form we denote by $\vartheta^{\mathrm{tot}}$; as pointed out in \cite{Fischer:2022sus} the curvature condition integrates to\footnote{For the sake of readability we simplified the notation regarding the base points.}
        \begin{equation}
            \left.\left(\mathrm{d}^{\sfs^*\nabla} \vartheta^{\mathrm{tot}}
            - \frac{1}{2} \left(\sfs^*t_{\nabla_\rho}\right)\left( \vartheta^{\mathrm{tot}} \stackrel{\wedge}{,} \vartheta^{\mathrm{tot}} \right) \right) \right|_{g}
            =
            \mathrm{Ad}_{\caH}\left( g^{-1}, \left.\sft^! \zeta\right|_g\right)
            - \left.\sfs^! \zeta\right|_g
        \end{equation}
        for all $g \in \scG$, where the exclamation marks denote the pullback of forms, $t_{\nabla_\rho}$ is the torsion of $\nabla_\rho$, and
        \begin{equation}
            \left(\left(\sfs^*t_{\nabla_\rho}\right)\left( \vartheta^{\mathrm{tot}} \stackrel{\wedge}{,} \vartheta^{\mathrm{tot}} \right)\right)(V, W)
            \coloneqq
            2 \left(\sfs^*t_{\nabla_\rho}\right)\left( \vartheta^{\mathrm{tot}}(V), \vartheta^{\mathrm{tot}}(W) \right)
        \end{equation}
        for all $V, W \in \mathfrak{X}(\scG)$. It might be surprising to see the torsion in the curvature; this is due to covariantising the theory, see \cite{Kotov:2015nuz, Fischer:2021glc}. In fact, in Eq.~\eqref{eq:flat_connections_curvatures} we also pointed out that the components of the torsion are appearing.
    \end{remark}
    
    \noindent We note that a Lie algebroid with vanishing basic curvature is called a \emph{Cartan algebroid} in~\cite{Blaom:0404313}, and hence Lie algebroid adjustments are Cartan algebroids.
    
    \noindent Using an adjustment, we can then define adjusted local Lie-algebroid-valued connections. We note that all of the following definitions extend in a straightforward manner to covariant and strict covariant adjustments.
    \begin{definition}[Local adjusted $\frg$-valued connection]\label{def:local_adjusted_connections}
        Given a Lie algebroid $\frg=(\frg\rightarrow M)$ with adjustment $\nabla$, a local adjusted $\frg$-valued connection $\caA$ is a local flat $\rmT[1]\frg$-valued connection, cf.~\eqref{eq:flat_local_Tg_connections}. 
        
        The components $(\phi, A) \coloneqq (\phi,A_\frg)$ and $(E,F)\coloneqq (A_M,B)$ of~\eqref{eq:kinematical_data}, defined with respect to the adjustment $\nabla$, are called the connection components and the curvature components, respectively. 
        
        Infinitesimal gauge transformations of $\caA$ are described by the adjusted BRST complex $\frbrst^{\rm adj}(U_X,\frg[1])$ which is the BRST complex $\frbrst(U_X,\rmT[1]\frg[1])$ of~\ref{def:BRST_Tg}, divided by the differential ideal spanned by $\uline c_M$, $\uline \lambda$, and $\uline \chi$ in the coordinates fixed by the adjustment $\nabla$.
    \end{definition}
    
    \begin{remark}\label{def:adjusted_BRST}
        The adjusted BRST complex $\frbrst^{\rm adj}(U_X,\frg[1])$ can also be defined as
        \begin{equation}
            \frbrst^{\rm adj}(U_X,\frg[1])=\ihom^{\rm red}_{\nabla}(\rmT[1]U_X,\rmT[1]\frg[1])~,
        \end{equation}
        where the reduced inner homomorphisms are given by 
        \begin{equation}\label{eq:ihom_red}
            \ihom^{\rm red}_{\nabla}(\caN,\rmT[1]\frg[1])\coloneqq \{\caF\in\ihom(\caN,\rmT[1]\frg[1])~|~\rmdeg(\sfp_{\rm curv}\circ \caF)=0\}
        \end{equation}
        for $\caN\in \CatNQMfd$. Here, $\sfp_{\rm curv}$ is the projection~\eqref{eq:p_curv}, and $\rmdeg(-)$ denotes the degree of the enclosed morphism.
    \end{remark}
    
    \begin{remark} 
        The space $\frbrst^{\rm adj}(U_X,\frg[1])$ defined above exists as a dg-manifold if the basic curvature $R_\nabla^{\rm bas}$ vanishes. This space is generated by (see \ref{sec:Non-flat connections as higher flat connections}) the coordinate functions on field space $(\underline{\phi},\underline{A}_\frg)$ in degree zero, the coordinate functions on gauge parameter space $(\underline{c}_\frg,\underline{c}_M,\underline{\lambda})$ in degree one and the coordinate functions on higher gauge parameter space $\underline{\chi}$ in degree two, quotient by the ideal generated by the equations
        \begin{equation}
            \uline c^a_M=0~,~~\uline \lambda^\alpha=0~,~~\uline \chi^\alpha=0~,~~    \tfrac12 \left(R_\nabla^{\rm bas}\right)^\alpha_{\beta\gamma a} \uline c_\frg^\beta \uline c_\frg^\gamma \uline A_M^a=0~.
        \end{equation}
        The first three equations are linear in generators, and the quotient is straight-forward. The last one, however, is not linear and the quotient by the ideal it generates does not lead to a free algebra. In other words, the adjusted BRST complex does not form a dg-manifold\footnote{Here we ignore subtleties related to working with infinite dimensions.}.
    \end{remark}   
    
    Let us spell out this data in detail. Over a patch $U_X$ of some manifold $X$, a $\frg$-valued connection, covariantly adjusted with respect to $(\nabla,\zeta)$, is given by the connection data
    \begin{subequations}\label{eq:local_adjusted_connection}
        \begin{equation}
            \phi \in C^\infty(U_X,M)~,~~~A\in \Omega^1(U_X,\phi^*\frg)~,
        \end{equation}
        and in the coordinates used in the description of the Weil algebra $\sfW_{(\nabla,\zeta)}(\frg)$ in \ref{ssec:Weil_algebra}, the curvature forms $(E,F)\in \Omega^1(U_X,\phi^*TM)\oplus \Omega^2(U_X,\phi^*\frg)$ are given by 
        \begin{equation}
            \begin{aligned}
                E^a &\coloneqq (\rmd \phi)^a-\ttr^a_\alpha A^\alpha~,
                \\
                F^\alpha&\coloneqq \rmd A^\alpha+\tfrac12 \ttf^\alpha_{\beta\gamma}A^\beta A^\gamma+
                \omega_{a\beta}^\alpha E^a A^\beta+\tfrac12 \zeta^\alpha_{ab} E^a E^b~.
            \end{aligned}
        \end{equation}
    \end{subequations}    
    Gauge transformations are parameterized by an element
    \begin{subequations}
        \begin{equation}
            c\in \Omega^0(U_X,\phi^*\frg)
        \end{equation}
        and act according to
        \begin{equation}
            \begin{aligned}
                (\delta \phi)^a &=\ttr^a_\alpha c^\alpha~,~~~&\delta E^a&=-\ttr^a_\alpha 
                \omega_{a\beta}^\alpha
                E^a  c^\beta+\frac{\dpar\ttr^a_\alpha}{\dpar m^b} c^\alpha  E^b~,
                \\
                \delta A^\alpha &=\rmd c^\alpha+\ttf^\alpha_{\beta\gamma}A^\beta c^\gamma+
                \omega_{a\beta}^\alpha E^a c^\beta~,~~~&
                \delta F^\alpha &=-\left(\ttf^\alpha_{\beta\gamma}+\ttr^a_\gamma \omega^\alpha_{a\beta}\right) c^\beta F^\gamma~.
            \end{aligned}
        \end{equation}
    \end{subequations}    
    The Bianchi identities read as 
    \begin{equation}
        \begin{aligned}
            0&=\rmd E^a+\ttr^a_\alpha \left(F^\alpha-
            \omega_{b\beta}^\alpha  E^b  A^\beta-\tfrac12 \zeta^\alpha_{bc}  E^b  E^c\right)-\frac{\dpar\ttr^a_\alpha}{\dpar m^b}  A^\alpha  E^b~,
            \\
            0&=\rmd F^\alpha +\left(\ttf^\alpha_{\beta\gamma}+\ttr^a_\gamma \omega^\alpha_{a\beta}\right) A^\beta F^\gamma+\left(\omega^\alpha_{a\beta}-\zeta^\alpha_{ab}\ttr^b_\beta\right) E^a F^\beta
            \\
            &\hspace{1cm}-\left(\tfrac16\left(\rmd^\nabla\zeta\right)^\alpha_{abc}-\tfrac12 \zeta^\alpha_{ad}\ttr^d_\beta \zeta^\beta_{bc}\right)E^a E^b E^c~.
        \end{aligned}
    \end{equation}
    The expression for a plain adjustment are obtained by putting $\zeta=0$ in the above formulas.
    
    Let us close the discussion of the local and infinitesimal description by linking our definition of local adjusted $\frg$-valued connections to the literature.
    
    First of all, we note that this definition of local adjusted $\frg$-valued connection is the evident extension of local adjusted connection with values in higher Lie algebras~\cite[Def.~4.2]{Saemann:2019dsl} to Lie algebroids.
    
    If the Lie algebroid $\frg$ admits a trivial covariant adjustment, i.e.~$\nabla$ flat and $\zeta=0$, then our $\frg$-valued connections restrict to the kinematical data of Lie algebroid Yang--Mills--Higgs theory (of ``type I''), as defined in~\cite{Strobl:2004im}. The more general case with arbitrary adjustment is precisely the kinematical data in the discussion of curved Yang--Mills--Higgs gauge theories of~\cite{Kotov:2015nuz, Fischer:2021glc}. The gauge transformations given there are equivalent to ours.
    
    The case $R_\nabla=0$ and $\zeta=0$ was called ``classical'' and the case $R_\nabla=0$ but $\zeta\neq 0$ was called ``pre-classical'' in~\cite{Fischer:2021glc}.
    
    \section{Adjustment groupoids}
    
    In this section, we extend Ševera's perspective on differentiation to define two types of action groupoids: the inner action groupoid $\scA(\scG)$ as well as a generically non-unique adjustment groupoid $\scA^\sfW$. The former will lead to an immediate description of principal groupoid bundles with flat connections, while the latter will extend this picture to principal groupoid bundles with general adjusted connections.

    \subsection{Inner action groupoids of dg-Lie groupoids}
    
    Because the internal homomorphisms in \ref{def:dg-Lie-algebroid} are functors, these naturally extend to an internal groupoid\footnote{The further extension to an internal category will not be useful to us.}.
    \begin{definition}[Inner action groupoid]
        The inner action groupoid $\scA(\scG)$ of a dg-Lie groupoid $\scG$ is the internal homomorphism groupoid\footnote{Recall $\sigma_\Theta(*)$ from \ref{def:tangent_submersion}.} $  \underline{\catdgGrpd}(\check \scC(\sigma_\Theta(*)),\scG)$ with objects $\ihom(\check \scC(\sigma_\Theta(*)),\scG)$ and morphisms the natural isomorphisms between the functors corresponding to $\ihom(\check \scC(\sigma_\Theta(*)),\scG)$ in $\catdgMfd$. 
    \end{definition}
    
    As a useful example, let us compute $\scA(\scG)$ of some (ordinary, i.e.~all differentials are trivial) Lie groupoid $\scG$, continuing the computation of the Lie algebroid of \ref{ssec:Severa_differentiation}. 
    The objects of $\scA(\scG)$ are by definition $\frg[1]$, and a straightforward generalization of the discussion in \ref{ssec:Severa_differentiation} shows that they amount to functors $\Phi$ given by dg-maps
    \begin{subequations}
        \begin{equation}\label{eq:inner_hom_objects_general_form}
            F_1:\caX\times \Theta\times \Theta \rightarrow \sfG
            \eand 
            F_0:\caX\times \Theta \rightarrow M
        \end{equation}
        such that 
        \begin{equation}
            \begin{gathered}
                F_1(x,\theta_1,\theta_3)=F_1(x,\theta_1,\theta_2)F_1(x,\theta_2,\theta_3)~,
                \\
                \sft(F_1(x,\theta_1,\theta_2))=F_0(x,\theta_1)~,\eand \sfs(F_1(x,\theta_1,\theta_2))=F_0(x,\theta_2)~,
            \end{gathered}
        \end{equation}
        and $F_1$ is hence of the form
        \begin{equation}
            F_1(x,\theta_1,\theta_2)=\sfe_{m(x)}+\theta_2 \nu(x)+\theta_1(\cdots)~,
        \end{equation}
    \end{subequations}        
    where $m(x) \coloneqq F_0(x, 0) \in M$ and $\nu(x)\in \rmT[1]_{\sfe_{m(x)}}\sfG$ with $\sfd\sft(\nu(x))=0$. To compute the inner action groupoid, we now need to consider invertible morphisms between functors $\Phi,\tilde \Phi$ of the type~\eqref{eq:inner_hom_objects_general_form}, which are given by natural isomorphisms $\Phi\Rightarrow \tilde \Phi$. These, in turn, are defined by dg-maps $\alpha\colon \caX\times \Theta\rightarrow \sfG$ satisfying\footnote{We note that a similar computation for deloopings of 1-group and 2-groups was performed in~\cite{Jurco:2014mva} in order to derive the form of gauge transformations.}
    \begin{equation}\label{eq:natural_transformation}
        \tilde F_1(x,\theta_1,\theta_2)=\alpha(x,\theta_1)^{-1}F_1(x,\theta_1,\theta_2)\alpha(x,\theta_2)~.
    \end{equation}
    Such maps $\alpha$ can be expanded in suggestive notation as 
    \begin{equation}
        \alpha(x,\theta)=g(x)+\theta \gamma(x)
    \end{equation}
    with $g(x) \coloneqq \alpha(x, 0)$ and $\gamma(x)\in \rmT[1]_{g(x)}\sfG$, and~\eqref{eq:natural_transformation} requires that
    \begin{equation}
        \rmd\sft(g(x),\gamma(x))=\rmd \sfs\left(\sfe_{m(x)},\nu(x)\right)\in \rmT[1]M~.
    \end{equation}
    The differential reads as $\sfd_\caX g(x)=\gamma(x)$ and induces the de~Rham differential on $\sfG$. 
    
    \begin{definition}[{Right action of $\rmT[1]\sfG$ on $\frg[1]$}]\label{def:black_raction}
        Relation~\eqref{eq:natural_transformation} induces a dg-map
        \begin{equation}
            \ractonB: \frg[1]\fibtimes{\rho}{\rmd \sft}\rmT[1]\sfG\rightarrow \frg[1]
        \end{equation}
        with
        \begin{equation}
            (m,\nu)\times (g,\gamma)\mapsto (\tilde m,\tilde \nu) = (m, v) \ractonB (g, \gamma)
        \end{equation}
        for $\rho(m,\nu)=\rmd \sft(g,\gamma)$ with $\tilde m=\sfs(g)$ and $\tilde \nu$ read off $\tilde F_1(x,\theta_1,\theta_2)$ from~\eqref{eq:natural_transformation} according to 
        \begin{equation}
            \tilde{F_1}(x,\theta_1,\theta_2)=\sfe_{\tilde{m}}+\theta_2 \tilde \nu(x)+\theta_1(\cdots)~.
        \end{equation}
    \end{definition}
    \noindent This dg-map is the obvious linearization of the differential of the adjoint groupoid action, and more explicitly, we may write
    \begin{equation}
        (\sfe_{\tilde m},\tilde \nu)=(g,0)^{-1}\circ (\sfe_{m},\nu)\circ (g,\gamma)~,
    \end{equation}
    where $\circ$ here denotes groupoid composition in $\rmT[1]\sfG$.
    
    In summary, the Lie groupoid $\scA(\scG)$ looks as follows.        
    \begin{proposition}\label{prop:inner_action_groupoid}
        Consider a Lie groupoid $\scG=(\sfG\rightrightarrows M)$ with corresponding Lie algebroid given by the dg-manifold $\frg[1]$.
        The inner action groupoid $\scA(\scG)$ is the action dg-Lie groupoid given by
        \begin{subequations}
            \begin{equation}
                \scA(\scG)=\Big(~\frg[1]\fibtimes{\rho}{\rmd \sft} \rmT[1]\sfG \rightrightarrows \frg[1]~\Big)
            \end{equation}
            with structure maps
            \begin{equation}\label{eq:inner_action_groupoid_structure_maps}
                \begin{gathered}
                    \sft(m,\nu;g,\gamma)=(m,\nu)~,~~~\sfs(m,\nu;g,\gamma)=(m,\nu)\ractonB(g,\gamma)~,
                    \\
                    (m,\nu;g_1,\gamma_1)\circ \bigl((m,\nu)\ractonB(g_1,\gamma_1);g_2,\gamma_2\bigr)=\bigl(m,\nu;(g_1,\gamma_1)\circ (g_2,\gamma_2)\bigr)~,
                    \\
                    \sfe(m,\nu)=(m,\nu;\sfe_m,0)~,~~~(m,\nu;g,\gamma)^{-1}=\left((m,\nu)\ractonB(g,\gamma);(g,\gamma)^{-1}\right)
                \end{gathered}
            \end{equation}
            for all $(m,\nu)\in \frg[1]$ and $(g,\gamma)\in \rmT[1]\sfG$ with $\rmd \sft(g,\gamma)=\rho(m,\nu)$. The differential on objects is given by~\eqref{eq:differential}, and the differential on morphisms is given by the sum of~\eqref{eq:differential} with the de~Rham differential on $\sfG$.
        \end{subequations}
    \end{proposition}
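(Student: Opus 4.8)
The plan is to compute the internal homomorphism groupoid $\underline{\catdgGrpd}(\check\scC(\sigma_\Theta(*)),\scG)$ by hand: first its objects, then its morphisms, and then its structure maps, after which the groupoid axioms and the dg-compatibility come for free. For the objects nothing new is needed: $\ihom(\check\scC(\sigma_\Theta(*)),\scG)$ is represented by $\frg[1]$ by \ref{prop:Severa_diff}, and by the discussion in \ref{ssec:Severa_differentiation} a generalized point $x\mapsto(m(x),\nu(x))$ of $\frg[1]$ corresponds to a dg-functor $\Phi$ determined by $F_1(x,\theta_1,\theta_2)=\sfe_{m(x)}+\theta_2\nu(x)+\theta_1(\cdots)$ with $\rmd\sft(\nu(x))=0$, carrying the differential \eqref{eq:differential}.

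The \emph{key step} is to identify the morphisms, i.e.\ the natural isomorphisms $\alpha\colon\Phi\Rightarrow\tilde\Phi$. As in the computation preceding the statement, such an $\alpha$ is a dg-map $\caX\times\Theta\to\sfG$, and by the same representability argument used for the objects it is given by a generalized point $(g,\gamma)\in\rmT[1]\sfG$ via $\alpha(x,\theta)=g(x)+\theta\gamma(x)$ with $g(x)=\alpha(x,0)$; the naturality square \eqref{eq:natural_transformation}, read at lowest order in $\theta_{1,2}$, forces $\rmd\sft(g,\gamma)=\rmd\sfs(\sfe_m,\nu)=\rho(m,\nu)$. Thus the pair $\bigl((m,\nu),(g,\gamma)\bigr)$ lies precisely in $\frg[1]\fibtimes{\rho}{\rmd\sft}\rmT[1]\sfG$, and conversely any such pair yields, via $\tilde F_1(x,\theta_1,\theta_2)\coloneqq\alpha(x,\theta_1)^{-1}F_1(x,\theta_1,\theta_2)\alpha(x,\theta_2)$, a dg-functor $\tilde\Phi$ again of the required form, the constraint being exactly what makes its target and source identities hold; its underlying object is, by definition of the map $\ractonB$, the point $(m,\nu)\ractonB(g,\gamma)\in\frg[1]$, which in $\rmT[1]\sfG$ is the composite $(\sfe_{\tilde m},\tilde\nu)=(g,0)^{-1}\circ(\sfe_m,\nu)\circ(g,\gamma)$. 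Hence the space of morphisms is $\frg[1]\fibtimes{\rho}{\rmd\sft}\rmT[1]\sfG$, with target the object $(m,\nu)$ underlying $\Phi$ and source the object $(m,\nu)\ractonB(g,\gamma)$ underlying $\tilde\Phi$.

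It remains to read off composition, units and inverses, which are simply vertical composition of natural transformations. Since $(\alpha_1\alpha_2)^{-1}F_1(\alpha_1\alpha_2)=\alpha_2^{-1}(\alpha_1^{-1}F_1\alpha_1)\alpha_2$, composing $\alpha_1\colon\Phi\Rightarrow\tilde\Phi$ with $\alpha_2\colon\tilde\Phi\Rightarrow\tilde{\tilde\Phi}$ is pointwise multiplication in $\sfG$, which on representing points is the composition $(g_1,\gamma_1)\circ(g_2,\gamma_2)$ in the tangent groupoid $\rmT[1]\sfG$ of \ref{ex:tangent_groupoid} over the matching condition $(m,\nu)\ractonB(g_1,\gamma_1)=$ target of $\alpha_2$; the identity of $\Phi$ is $\alpha\equiv\sfe_{m(x)}$, represented by $(\sfe_m,0)$; and the inverse of $\alpha$ is its pointwise inverse, represented by $(g,\gamma)^{-1}$ with target and source interchanged. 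This is exactly \eqref{eq:inner_action_groupoid_structure_maps}, and it displays $\scA(\scG)$ as the action groupoid of the right $\rmT[1]\sfG$-action $\ractonB$ along $\rho$ in the sense of \ref{def:right_action}: the three action axioms follow respectively from the source identity of $\tilde F_1$, from conjugation by the unit being trivial, and from associativity of conjugation together with associativity of $\circ$ in $\rmT[1]\sfG$; and the groupoid axioms of $\scA(\scG)$ hold because vertical composition of natural transformations always forms a groupoid, equivalently because $\rmT[1]\sfG$ is a dg-Lie groupoid. The dg-structures are as claimed: on objects the differential \eqref{eq:differential} by definition of $\ihom$ in $\catdgMfd$, and on morphisms the sum of the internal $\ihom(\Theta,\Theta)$-contribution, acting on the $\Theta$-slot again by \eqref{eq:differential}, and the de~Rham differential on $\sfG$ encoded by $\sfd_\caX g=\gamma$; the structure maps are dg-maps because $\sft$ is the evident projection, $\sfs=\ractonB$ is the dg-linearization of the adjoint action written as the composite $(g,0)^{-1}\circ(\sfe_m,\nu)\circ(g,\gamma)$ of dg-maps, and composition, unit and inverse are those of the dg-Lie groupoid $\rmT[1]\sfG$.

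The main obstacle is the morphism analysis: one must check, order by order in the odd parameters, that a dg natural transformation is forced into the representable form $(g,\gamma)\in\rmT[1]\sfG$ subject to precisely the fiber-product constraint $\rmd\sft(g,\gamma)=\rho(m,\nu)$, and — this is what makes ``action groupoid'' the right statement — that conjugating a functor of the form $F_1(x,\theta_1,\theta_2)=\sfe_{m(x)}+\theta_2\nu(x)+\theta_1(\cdots)$ by such an $\alpha$ lands back among functors of that form and is governed by the single well-defined dg-map $\ractonB$. Once this is established, the identification of the remaining structure maps and the verification of the groupoid and dg axioms are formal consequences of working inside $\CatdgGrpd$ and of the dg-Lie groupoid structure of the tangent prolongation $\rmT[1]\sfG$.
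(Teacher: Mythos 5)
Your proposal is correct and follows essentially the same route as the paper, which establishes this proposition through the computation immediately preceding it: objects via Ševera differentiation, morphisms as natural isomorphisms $\alpha=g+\theta\gamma$ represented by points of $\rmT[1]\sfG$ subject to $\rmd\sft(g,\gamma)=\rmd\sfs(\sfe_m,\nu)=\rho(m,\nu)$, the source map given by the conjugation formula defining $\ractonB$, and the remaining structure maps and differentials read off from vertical composition and $\sfd_\caX g=\gamma$. No substantive difference in method or level of detail.
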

    
    We close this section with a number of remarks.
    \begin{remark}
        The inner action groupoid $\scA(\scG)$ can also be seen as a representation of the 2-functor 
        \begin{equation}
            \catdgGrpd(-\times \check \scC(\sigma_\Theta(*)),\scG):\catdgGrpd^{\rm op}\rightarrow \CatCat~.
        \end{equation}
        Furthermore, $\scA(\scG)$ naturally fibers over the dg-Lie groupoid $\rmT[1]\scG=(\rmT[1]\sfG\rightrightarrows \rmT[1]M)$, and we denote the corresponding functor by $\varrho:\scA(\scG)\rightarrow \rmT[1]\scG$.
    \end{remark}
    
    \begin{remark}\label{eq:explicit_form_action}
        In the special case that $M=*$, the action $\ractonB$ reduces to
        \begin{equation}
            (\sfe, \nu\ractonB(g,\gamma))=(g^{-1},0)\circ(\sfe, \nu)\circ (g,0)+(g^{-1},0)\circ(g,\gamma)~,
        \end{equation}
        a formula that for $\gamma=\rmd g$ reproduces the familiar form of gauge transformations of $\frg$-valued connection 1-forms.
        
        In the general case, one can compute the following formula: 
        \begin{equation}\label{eq:action_before_Weilification}
            (m, \nu)\ractonB(g,\gamma)=\sfp_s\Big(\rmAd_{\caH} \left(g^{-1};m,\nu\right)+\vartheta^{\rm tot}(g,\gamma)\Big)~,
        \end{equation}
        where $\rmAd_{\caH}$ is the adjoint action of \ref{def:adjoint_action}, and $\sfp_s$ is the projection $\sfs^*\frg\rightarrow \frg$. Note that both summands depend on a choice of connection, but this dependence drops out in the sum.
    \end{remark}
    
    \begin{remark}
        The action $\ractonB$ that appeared naturally from our construction is the same as the adjoint action discussed in~\cite[Section 3.2]{Laurent-Gengoux:1507.01051}.
    \end{remark}    
    
    \subsection{Adjustment groupoids of Lie groupoids}
    
    Let us motivate the following definition in a bit more detail. As we shall see later, the inner action groupoid $\scA(\scG)$ can be used to define flat connections on principal $\scG$-bundles, and the action of $\rmT[1]\sfG$ on $\frg[1]$ encodes the gauge transformations. To describe non-flat connections, we want to replace $\frg[1]$ by $\rmT[1]\frg[1]$, as explained in \ref{sec:local_description}, which suggests working with $\scA(\rmT[1]\scG)$. This however, also doubles the space of gauge transformations, which we need to restrict, just as in the local case. Altogether, we are looking for a dg-Lie groupoid $\scA^\sfW$ sitting in the commutative diagram
    \begin{equation}
        \begin{tikzcd}
            \scA^\sfW \arrow[r]& \scA(\rmT[1]\scG)
            \\
            \scA(\scG)\arrow[ur,"\Upsilon"']\arrow[u,"\Psi"]
        \end{tikzcd}
    \end{equation}
    where $\Upsilon$ is the natural embedding of $\scA(\scG)$ into $\scA(\rmT[1]\scG)$ as zero sections, which is an extension of the dual to the projection of the Weil algebra of $\frg$ to its Chevalley--Eilenberg algebra~\eqref{eq:Weil_to_CE_projection}. 
    
    More precisely, we give the following definition.
    \begin{definition}[Adjustment groupoid]\label{def:adjustment_of_groupoid}
        Consider a Lie groupoid $\scG$ with Lie algebroid $\frg[1]=\sfLie(\scG)[1]$. Then an adjustment groupoid of $\scG$ is a dg-Lie groupoid $\scA^\sfW=(\scA^\sfW_1\rightrightarrows \rmT[1]\frg[1])$ 
        together with a functor $\Psi:\scA(\scG)\rightarrow \scA^\sfW$ such that we have the following pullback diagram\footnote{Here and in the following, $(\rmid,0)$ denotes the embedding of a (graded) manifold into its (grade-shifted) tangent bundle as zero section.}:
        \begin{equation}\label{diag:preadjustment_groupoid}
            \begin{tikzcd}[column sep=2cm]
                \frg[1]\fibtimes{\rho}{\rmd\sft}\rmT[1]\sfG \arrow[d,"\sft"]\arrow[r,"\Psi"] & \scA^\sfW_1\arrow[d,"\sft"]
                \\
                \underbrace{~~~~~~\frg[1]~~~~~~}_{\scA(\scG)} \arrow[r,"{(\rmid,0)}"']& \underbrace{\rmT[1]\frg[1]}_{\scA^\sfW}
            \end{tikzcd}
        \end{equation}
    \end{definition}
    \noindent Note that in the above definition, we could have equivalently demanded a pullback diagram involving the source maps. In the description of principal $\scG$-connections, the functor $\Psi$ will ensure that flat connections are correctly included in our description of general connections, and the pullback diagram will ensure that we have the correct amount of gauge freedom.

    \noindent We will construct a concrete example of an adjustment groupoid for a general Lie groupoid $\scG$ shortly.

    \subsection{Adjustment groupoids and Cartan connections}\label{ssec:construction_adjustment_groupoid}
    
    In this section, we show that adjustments of Lie groupoids correspond precisely to Cartan connections.
    \begin{theorem}\label{The:Adjustment_groupoid_classsification}
        Adjustment groupoids of a Lie groupoid $\scG$ (\ref{def:adjustment_of_groupoid}), are classified by Cartan connections on $\scG$ (\ref{def:Cartan_connection}). 
    \end{theorem}
    \begin{proof}
        Note that any weak equivalence of adjustment groupoids (via bibundles) is given by strict isomorphisms, i.e. functors. Because of the choice $\scA^\sfW_{0}=\rmT[1]\frg[1]$, for any weak equivalence of adjustment groupoids $\phi\colon \scA^\sfW\cong \scA^{\sfW\prime}$, there is a weakly commutative diagram 
        
        \begin{equation}
            \begin{tikzcd}
                & \scA^\sfW_{0}=\rmT[1]\frg[1]  \arrow[r] \arrow[d,"="]
                & \scA^\sfW_{0}   \arrow[d,"\phi"]\\
                & \scA^{\sfW\prime}_{0}=\rmT[1]\frg[1]  \arrow[r]
                & \scA^{\sfW\prime}~.
            \end{tikzcd}
        \end{equation}
        By \cite[Lem. 2.29]{Behrend:0605694}, $\phi$ can be represented as a functor $\phi$ which is identity on the space of objects $\rmT[1]\frg[1]$. This allows us to split the proof into two arguments presented further below. 
        
        Given a Cartan connection, there is a right action of $\rmT[1]\sfG$ on $\rmT[1]\frg[1]$ in the sense of \ref{def:right_action}, as we will show in \ref{lem:T[1]G_action}, and the corresponding action groupoid $\scA_\caH^\sfW$ in the sense of \ref{def:action_groupoid}, is an adjustment groupoid, as we will show in \ref{def:adjustemnt_groupoid_for_cartan_connection}. 
        
        Conversely, given an adjustment groupoid, we will construct a Cartan connection in \ref{prop:Cartan_connection_from_adjustment}. From the constructions, it is clear that the adjustment defined by the Cartan connection on $\scG$ derived from an adjustment groupoid $\scA^\sfW$ yields again $\scA_\caH^\sfW=\scA^\sfW$.
    \end{proof}
    
    We start with the construction of the action mentioned in the above proof. In the following, let $\scG=(\sfG\rightrightarrows M)$ be a general Lie groupoid and $\frg[1]$ its Lie algebroid, regarded as a grade-shifted vector bundle. Furthermore, let $\caH$ be a Cartan connection on $\scG$. 
    
    The dg-right action of $\rmT[1]\scG$ on $\rmT[1]\frg[1]$ is defined along a dg-map $\psi\colon\rmT[1]\frg[1]\rightarrow \rmT[1]M$, and the relevant map here is the unique map arising from the lift of\footnote{As before, $\sfp_M$ denotes a projection onto the base $M$ of a fiber bundle $E\rightarrow M$.} $\psi_\flat=\sfp_{M}\circ\sfp_{\frg[1]}$ to a dg-map by \ref{rem:graded-map-to-differential-map}. In addition, we have to construct a dg-map
    \begin{equation}
        -\racton-\colon\rmT[1]\frg[1]\fibtimes{\psi}{\rmd\sft}\rmT[1]\sfG\rightarrow \rmT[1]\frg[1]
    \end{equation}
    satisfying the relations in~\eqref{eq:relations_right_action}. Using again \ref{rem:graded-map-to-differential-map}, it is sufficient to construct a map 
    \begin{equation}
        -\racton_\flat-\colon\rmT[1]\frg[1]\fibtimes{\psi}{\rmd\sft}\rmT[1]\sfG\rightarrow \frg[1]~.
    \end{equation}
    We now note that there is a pullback diagram of graded manifolds
    \begin{equation}\label{eq:pullback_diagram}
        \begin{tikzcd}
            \sft^* \frg[1]\oplus \rmT[1]\sfG \dar[" \sfp_\sft\circ\sfp_1"'] \rar["\sfp_2"] & \rmT[1]\sfG \dar["\sfp_M \circ \rmd\sft"] \\
            \frg[1] \rar["\sfp_M \circ\rho"'] & M,
        \end{tikzcd}
    \end{equation}
    where $\sfp_{1,2}$ are the projections on the first and second summand in the Whitney sum bundle $\sft^* \frg[1]\oplus \rmT[1]\sfG$, and $\sfp_\sft \colon \sft^*\frg[1]\rightarrow \frg[1]$ is the evident projection. We can then define a map\footnote{See \ref{app:graded_manifolds} for further details on the notation.}
    \begin{equation}\label{eq:def_Weilified_action_2}
        - \prec - : \sft^* \frg[1]\oplus \rmT[1]\sfG \rightarrow \frg[1]
    \end{equation}
    as 
    \begin{equation}
        \begin{aligned}
            |\prec|&:&\sfG&\rightarrow M~,
            \\
            &&g &\mapsto \sfs(g)~,
            \\
            \prec^\sharp&:&\Gamma(\frg[1]^*)&\rightarrow C^\infty(\sft^* \frg[1]\oplus \rmT[1]\sfG)~,
            \\
            && \xi&\mapsto \prec^\sharp(\xi)
        \end{aligned}
    \end{equation}
    with
    \begin{equation}\label{eq:def_Weilified_action}
        \left(\prec^\sharp(\xi)\right)(\hat\nu;g,\gamma)\coloneqq (\xi\circ \sfp_\sfs)\left(\rmAd_{\caH}\left(g^{-1},\hat\nu\right)+\vartheta^{\rm tot}(g,\gamma)\right)
    \end{equation}
    for $\hat\nu\in\sft^*\frg[1]$ and $(g,\gamma)\in \rmT[1]\sfG$, where $\sfp_\sfs:\sfs^*\frg[1]\rightarrow \frg[1]$ is the evident projection.
    
    The formula for the action~\eqref{eq:def_Weilified_action} looks identical to the formula~\eqref{eq:action_before_Weilification}. Note, however, that the domain in~\eqref{eq:def_Weilified_action} is larger than in~\eqref{eq:action_before_Weilification}. In the former case, we only have the condition $(\sfp_M\circ \rho \circ \sfp_\sft)(\hat\nu)=(\sfp_M\circ\rmd \sft)(g,\gamma)$, while in the latter case, we have $(\rho \circ \sfp_\sft)(\hat\nu)=\rmd \sft(g,\gamma)$. Thus,~\eqref{eq:def_Weilified_action} is a generalization of~\eqref{eq:action_before_Weilification}, as one would expect.  
    
    The universality of the diagram~\eqref{eq:pullback_diagram} for the evident projections
    \begin{equation}
        \frg[1]~~ \leftarrow~~ \rmT[1]\frg[1]\fibtimes{\psi}{\rmd\sft}\rmT[1]\sfG~~\rightarrow~~ \rmT[1]\sfG
    \end{equation}
    ensures the existence of a unique map $\wp$, which extends to the map $\racton_\flat$ as follows:
    \begin{equation}\label{eq:def_racton_0}
        -\racton_\flat-\colon\rmT[1]\frg[1]\fibtimes{\psi}{\rmd\sft}\rmT[1]\sfG\xrightarrow{~\wp~} \sft^* \frg[1]\oplus \rmT[1]\sfG \xrightarrow{~\prec~} \frg[1]~.
    \end{equation}
    In particular, the map $\wp$ sends $(x, \bar x, \alpha, \bar \alpha;g, \gamma)$ to $(x, \alpha;g, \gamma)$, where $(x,\bar x)$ belongs to $\rmT[1] M$,  $(x, \alpha)$ belongs to $\frg[1]$, and $(x,\bar \alpha)$ belongs to $\frg[2]$. This gives us what we need:
    \begin{lemma}\label{lem:T[1]G_action}
        Consider the map 
        \begin{equation}
            -\racton-:\rmT[1]\frg[1]\fibtimes{\psi}{\rmd\sft}\rmT[1]\sfG\rightarrow \rmT[1]\frg[1]
        \end{equation}
        defined as the unique lift of $\racton_\flat$ from~\eqref{eq:def_racton_0} by \ref{rem:graded-map-to-differential-map}. This map defines a right action of $\rmT[1]\scG$ on $\rmT[1]\frg[1]$.
    \end{lemma}
    \begin{proof}
        The map $\racton$ is a dg-map my definition. Because of~\ref{rem:graded-map-to-differential-map}, the first relation in~\eqref{eq:right_action_explicit_relations} amounts to verifying that the maps 
        \begin{equation}
            (\psi\circ \racton)_\flat: \rmT[1]\frg[1]\fibtimes{\psi}{\rmd\sft}\rmT[1]\sfG\rightarrow M\eand (\rmd \sfs)_\flat:\rmT[1]\sfG\rightarrow M
        \end{equation}
        agree. Note that both $(\psi\circ \racton)_\flat$ and $(\rmd \sfs)_\flat$ are fully defined by $|(\psi\circ \racton)_\flat|=\sfs:\sfG\rightarrow M$ and $|(\rmd \sfs)_\flat|=\sfs:\sfG\rightarrow M$, and hence they agree.
        
        Unitality, i.e.~the second relation in~\eqref{eq:right_action_explicit_relations}, follows immediately from the unitality of the Cartan connection, cf.~\ref{lem:connection_one_form_properties}, which implies that $\prec^\sharp$ is trivial for identities, which, because of universality, then continuous through $\racton_\flat$ to $\racton$.
        
        The third relation in~\eqref{eq:right_action_explicit_relations}, i.e.~consecutive actions amount to groupoid composition, is implied by the fact that this relation holds for the map $\prec^\sharp$. This is seen from
        \begin{equation}
            \begin{aligned}
                &\rmAd_{\caH}\left(g^{-1}_2,\rmAd_{\caH}\left(g^{-1}_1,\hat\nu\right)+\vartheta^{\rm tot}(g_1,\gamma_1)\right)+\vartheta^{\rm tot}(g_2,\gamma_2)
                \\&\hspace{4cm}
                =\rmAd_{\caH}\left((g_1\circ g_2)^{-1},\hat\nu\right)+\vartheta^{\rm tot}\bigl((g_1,\gamma_1)\circ (g_2,\gamma_2)\bigr)
            \end{aligned}
        \end{equation}
        for all $(g_1,\gamma_1)\in \rmT[1]\sfG$ and $(g_2,\gamma_2)\in \rmT[1]\sfG$ with $(g_1,\gamma_1)\circ (g_2,\gamma_2)$ defined and $\hat \nu\in \frg_{\sft(g_1)}$, which is true for Cartan connections $\caH$. That is, $\vartheta^{\rm tot}$ is a multiplicative 1-form with respect to $\rm Ad_\caH$, whose horizontal distribution is complementary to $\sfs^*\frg$, which implies
        \begin{equation}\label{eq:multiplicativity}
            \rmAd_{\caH}\left(g^{-1}_2,\vartheta^{\rm tot}(g_1,\gamma_1)\right)+\vartheta^{\rm tot}(g_2,\gamma_2)
            =
            \vartheta^{\rm tot}\bigl((g_1,\gamma_1)\circ (g_2,\gamma_2)\bigr)~,
        \end{equation}
        see~\cite[discussions around Def.\ 2.1, Lemma 3.6 \& 3.7, and Def.\ 3.9]{Crainic:1210.2277} for specific details and proofs.
    \end{proof}
    
    We can now define an adjustment groupoid as the action groupoid of $\racton$, cf.~\ref{def:action_groupoid}.
    \begin{lemma}\label{def:adjustemnt_groupoid_for_cartan_connection}
        Consider the action dg-groupoid $\scA^\sfW_{\caH}$ given by 
        \begin{equation}
            \scA^\sfW_{\caH}\coloneqq \Big(~\rmT[1]\frg[1]\fibtimes{\psi}{\rmd\sft}\rmT[1]\sfG~\rightrightarrows~\rmT[1]\frg[1]~\Big)
        \end{equation}
        with structure maps
        \begin{equation}
            \begin{gathered}
                \sft(V,\Gamma_1)=V~,~~~\sfs(V,\Gamma_1)=V\racton \Gamma_1~,~~~(V,\Gamma_1)\circ (V\racton \Gamma_1,\Gamma_2)=(V,\Gamma_1\circ \Gamma_2)~,
                \\
                \sfe_V=\left(V,\sfe_{\psi(V)}\right)~,~~~(V,\Gamma_1)^{-1}=\left(V\racton \Gamma_1,\Gamma_1^{-1}\right)
            \end{gathered}
        \end{equation}
        for all $V\in\rmT[1]\frg[1]$ and $\Gamma_{1,2}\in \rmT[1]\sfG$ with $\psi(V)=\rmd \sft(\Gamma_1)$ and such that all compositions and actions are defined. Moreover, the dg-map $\psi\colon\rmT[1]\frg[1]\rightarrow \rmT[1]M$ is the unique map arising from the lift of $\psi_\flat=\sfp_{M}\circ\sfp_{\frg[1]}:\rmT[1]\frg[1]\rightarrow M$ to a dg-map by \ref{rem:graded-map-to-differential-map}
        
        This dg-Lie groupoid is an adjustment groupoid in the sense of~\ref{def:adjustment_of_groupoid}.
    \end{lemma}
    \begin{proof}
        First, we define the functor $\Psi:\scA(\scG)\rightarrow \scA^\sfW_{\caH}$ as 
        \begin{equation}
            \begin{aligned}
                \Psi_1&\colon&\frg[1]\fibtimes{\rho}{\rmd \sft} \rmT[1]\sfG&\rightarrow \rmT[1]\frg[1]\fibtimes{\psi}{\rmd\sft}\rmT[1]\sfG~,~~~&\Psi_1&=(\rmid,0)\times \rmid~,
                \\
                \Psi_0&\colon&\frg[1]&\rightarrow \rmT[1]\frg[1]~,~~~&\Psi_0&=(\rmid,0)~,
            \end{aligned}
        \end{equation}
        where both $(\rmid,0):\frg[1]\rightarrow \rmT[1]\frg[1]$ and $\rmid:\rmT[1]\sfG\rightarrow \rmT[1]\sfG$ are evidently dg-maps. For consistency, we also need 
        \begin{equation}\label{eq:property_rho}
            \rho=\psi\circ(\rmid,0)~,
        \end{equation}
        which is clear from making use of~\ref{rem:graded-map-to-differential-map}: both maps are maps from $\frg[1]$ to $\rmT[1]M$, which are fully defined by dg-maps $\frg[1]\rightarrow M$, which, for degree reasons, are given by the identity map $M\xrightarrow{~\rmid~} M$ on the underlying bodies.
        
        Functoriality of $\Psi$ amounts to showing that $\Psi_0$ is equivariant with respect to the $\rmT[1]\sfG$ actions. Using again \ref{rem:graded-map-to-differential-map}, we have to show that\footnote{Recall $\ractonB$ from \ref{def:black_raction}.}
        \begin{equation}
            \bigl(\racton_\flat\circ (\Psi_0\times \rmid)\bigr)(\nu;g,\gamma)=\ractonB (\nu;g,\gamma)
        \end{equation}
        for all $(\nu; g, \gamma) \in \frg[1]\fibtimes{\rho}{\rmd \sft} \rmT[1]\sfG$,
        and both sides of the equation evaluate to 
        \begin{equation}
            \sfp_\sfs\left(\rmAd_{\caH}\left(g^{-1},\hat \nu\right)+\vartheta^{\rm tot}(g,\gamma)\right)~,
        \end{equation}
        cf.~\eqref{eq:action_before_Weilification}.
        
        Next, we have to show that the morphisms of $\scA(\scG)$ are obtained as a pullback of $\scA^\sfW_{\caH}$ along $\Psi_0=(\rmid,0)$, and we consider the cone
        \begin{equation}
            \begin{tikzcd}[column sep=2cm]
                \caC\arrow[drr,"\beta_1",bend left=20] \arrow[ddr,"\beta_0"',bend right=20] \arrow[dr,dashed,"\beta_2"]
                \\
                & \frg[1]\fibtimes{\rho}{\rmd\sft}\rmT[1]\sfG \arrow[d,"\sfp_1"]\arrow[r,"\Psi_1"] & \rmT[1]\frg[1]\fibtimes{\psi}{\rmd\sft}\rmT[1]\sfG\arrow[d,"\sfp_1"]
                \\
                & \frg[1] \arrow[r,"{(\rmid,0)}"']& \rmT[1]\frg[1]
            \end{tikzcd}
        \end{equation}
        where $\caC$ is a dg-manifold, and $\beta_0$ and $\beta_1$ are dg-maps such that
        \begin{equation}\label{eq:commutativity_1}
            (\rmid,0)\circ \beta_0=\sfp_1\circ \beta_1~.
        \end{equation}
        The map $\beta_2$ is uniquely defined as $\beta_2\coloneqq\beta_0\times (\sfp_2\circ\beta_1)$. Applying $\psi$ to both sides of the above equation, we have
        \begin{equation}
            \rho\circ\beta_0=\rmd\sft\circ\sfp_2\circ\beta_1~,
        \end{equation}
        cf.~also~\eqref{eq:property_rho}.
    \end{proof}

    In the special case of a strict covariant adjustment $(\nabla,\zeta)$ based on the form of the Weil algebra $\sfW_{(\nabla,\zeta)}$, cf.~\eqref{eq:coordinate_changed_Weil}, we can give more explicit formulas that use the isomorphism
    \begin{equation}\label{eq:isomorphismOfTGonTg}
        (\rmT[1]M\oplus \frg[1]\oplus \frg[2])\fibtimes{\psi}{\rmd \sft} \rmT[1]\sfG~\cong~\sft^*\frg[1]\oplus \sft^*\frg[2]\oplus \rmT[1]\sfG~.
    \end{equation}
    
    If $\caH$ integrates such a covariant adjustment $(\nabla, \zeta)$, then we will write $\scA_{(\caH,\zeta)}^\sfW$ for the corresponding adjustment groupoid.
    
    \begin{proposition}
        Consider an adjustment groupoid $\scA_{(\caH,\zeta)}^\sfW$ of a Lie groupoid $\scG=(\sfG\rightrightarrows M)$ with Lie algebroid $\frg=(\frg\rightarrow M)$ and a strict covariant  adjustment $(\nabla,\zeta)$ as in~\ref{def:local_adjustment}. Then the right action $\racton$ is given by 
        \begin{equation}
            \racton: \sft^*\frg[1]\oplus \sft^*\frg[2]\oplus \rmT[1]\sfG\rightarrow \rmT[1]M\oplus \frg[1]\oplus \frg[2]
        \end{equation}
        with
        \begin{equation}
            |\racton|\colon\sfG\rightarrow M~,~~~|\racton|=\sfs
        \end{equation}
        and
        \begin{equation}
            \begin{aligned}
                \racton^\sharp\colon&& \ourodot^\bullet\Gamma\Big(\rmT[1]^*M\oplus\frg[1]^*\oplus\frg[2]^*\Big)&\rightarrow \ourodot^\bullet\Gamma\Big(\sft^*\frg[1]^*\oplus \sft^*\frg[2]^*\oplus \rmT[1]^*\sfG\Big)~,
                \\
                &&\racton^\sharp(\bar m)(\hat{\nu},\bar{\hat{\nu}};g,\gamma)&\coloneqq\sfs^*\bar m(\gamma)-\racton^\sharp(\rho^*\bar m)(\hat{\nu},\bar{\hat{\nu}};g,\gamma)~,
                \\
                &&\racton^\sharp(\xi)(\hat{\nu},\bar{\hat{\nu}};g,\gamma)&\coloneqq\xi\big(\sfp_\sfs(\rmAd_{\caH}(g^{-1},\hat{\nu})+\vartheta^{\rm tot}(g,\gamma)))~,
                \\
                &&\racton^\sharp(\tilde{\bar{\xi}})(\hat{\nu},\bar{\hat{\nu}};g,\gamma)&\coloneqq\Big(\sfd_\scA (\racton^\sharp([1]\tilde{\bar{\xi}}))-\racton^\sharp(\sfd_{\rm c} [1]\tilde{\bar{\xi}})\Big)(\hat{\nu},\bar{\hat{\nu}};g,\gamma)~,
            \end{aligned}
        \end{equation}
        where $\bar m\in \Gamma(\rmT[1]^*M)$, $\xi\in \Gamma(\frg[1]^*)$, and $\tilde{\bar{\xi}}\in \Gamma(\frg[2]^*)$. Moreover, $\sfd_\scA$ is the differential on the morphisms of $\scA^{\sfW}_{(\caH,\zeta)}$, $\sfd_{\rm c}=\sfd_{\rmT [1]M\oplus \frg[1]}$ is the differential on $\rmT [1]M\oplus \frg[1]$ (for whose existence strictness is assumed), and $[1]$ is the shift isomorphism $\Gamma(\frg[2]^*)\rightarrow \Gamma(\frg[1]^*)$.
    \end{proposition}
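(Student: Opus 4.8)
\emph{Sketch of the argument.}
The proposition only makes the abstract map $\racton$ of \ref{def:right_action_adjustment_groupoid} explicit in the split coordinates of the strict Weil algebra $\sfW_{(\nabla,\zeta)}(\frg)$ of \ref{ssec:Weil_algebra}; recall that $\racton$ was defined, via \ref{rem:graded-map-to-differential-map}, as the unique dg-lift of the graded map $\racton_\flat=\prec\circ\wp$ from \eqref{eq:def_racton_0}. The plan is to first record the identification
\[
    (\rmT[1]M\oplus\frg[1]\oplus\frg[2])\fibtimes{\psi}{\rmd\sft}\rmT[1]\sfG\;\cong\;\sft^*\frg[1]\oplus\sft^*\frg[2]\oplus\rmT[1]\sfG~,
\]
which holds because $\psi$ is the dg-lift of $\psi_\flat=\sfp_M\circ\sfp_{\frg[1]}$ and hence, in the split coordinates, has identity body map and restricts on fibre directions to the projection onto the $\rmT[1]M$-summand: fibring over $\rmT[1]M$ along $\rmd\sft$ then absorbs that summand and pulls the remaining two summands back along $\sft$. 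After this one treats the three classes of generators of the target $\rmT[1]\frg[1]$ — the functions $m^a$ on $M$, the Chevalley--Eilenberg generators $\xi\in\Gamma(\frg[1]^*)$, and the tangent generators $\bar m\in\Gamma(\rmT[1]^*M)$ and $\tilde{\bar\xi}\in\Gamma(\frg[2]^*)$ — in turn.

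The first two classes are purely definitional. Since $\rmT[1]\frg[1]$ and $\frg[1]$ have the same body $M$ and the projection $\rmT[1]\frg[1]\to\frg[1]$ is the identity there, $|\racton|=|\racton_\flat|=|\prec\circ\wp|=|\prec|=\sfs$ (as $|\wp|$ is the forgetful identification on bodies), so $\racton^\sharp$ restricts to $\sfs^*$ on $C^\infty(M)$. On the Chevalley--Eilenberg generators $\xi$, which are pulled back from $\frg[1]$ along $\rmT[1]\frg[1]\to\frg[1]$, the dg-lift leaves $\racton^\sharp$ equal to $\racton_\flat^\sharp$; unwinding $\racton_\flat=\prec\circ\wp$ — where $\wp$ discards the $\frg[2]$-component — together with the defining formula \eqref{eq:def_Weilified_action} for $\prec$ yields exactly $\racton^\sharp(\xi)(\hat\nu,\bar{\hat\nu};g,\gamma)=\xi\big(\sfp_\sfs(\rmAd(g^{-1},\hat\nu)+\vartheta^{\rm tot}(g,\gamma))\big)$, cf.\ also \eqref{eq:action_before_Weilification}.

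For the tangent generators the key is that $\racton$ is a dg-map, $\racton^\sharp\circ\sfd_\sfW=\sfd_\scA\circ\racton^\sharp$, combined with the explicit Weil differential \eqref{eq:coordinate_changed_Weil}. From $\sfd_\sfW m^a=\ttr^a_\alpha\xi^\alpha+\bar m^a$ one reads $\bar m^a=\sfd_\sfW m^a-\rho^*(\bar m^a)$ (identifying $\rho^*\bar m^a$ with $\ttr^a_\alpha\xi^\alpha$), so $\racton^\sharp(\bar m)=\sfd_\scA(\sfs^*\bar m)-\racton^\sharp(\rho^*\bar m)$; evaluating $\sfd_\scA(\sfs^*\bar m)$ at $(g,\gamma)$ — only the de~Rham part on the $\sfG$-factor contributes — gives the first term $\sfs^*\bar m(\gamma)$. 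Similarly $\sfd_\sfW\xi^\alpha$ decomposes as the part lying in the sub-Lie algebroid $\rmT[1]M\oplus\frg[1]$ plus the degree-$2$ generator; applying the shift $[1]\colon\Gamma(\frg[2]^*)\to\Gamma(\frg[1]^*)$ this says that the degree-$2$ generator equals $\sfd_\sfW([1]\tilde{\bar\xi})-\sfd_{\rm c}([1]\tilde{\bar\xi})$, whence $\racton^\sharp(\tilde{\bar\xi})=\sfd_\scA(\racton^\sharp([1]\tilde{\bar\xi}))-\racton^\sharp(\sfd_{\rm c}[1]\tilde{\bar\xi})$. The hypothesis that $(\nabla,\zeta)$ is a \emph{strict} covariant adjustment enters here twice: it guarantees that $\rmT[1]M\oplus\frg[1]$ carries a genuine Lie algebroid differential $\sfd_{\rm c}$, so that the right-hand side only involves generators on which $\racton^\sharp$ has already been fixed, and it makes the cubic terms in $\sfd_\sfW\bar m^a$ and $\sfd_\sfW\bar\xi^\alpha$ vanish; compatibility of the displayed formulas with these remaining Weil relations then needs no separate check, since $\racton$ is already known to be a dg-map by \ref{thm:explicit_adjustment_groupoid}.

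I expect the main obstacle to be bookkeeping rather than anything conceptual: carefully tracking the $\zeta$-dependent coordinate change relating the splitting coordinates $\bar\xi^\alpha$ of $\sfW_\nabla$ to the $\tilde{\bar\xi}^\alpha$ used here, pinning down $\sfd_\scA$ on the fibre product $\rmT[1]\frg[1]\fibtimes{\psi}{\rmd\sft}\rmT[1]\sfG$ as the sum of the de~Rham differential on the $\sfG$-factor and the differential induced by $\sfd_\sfW$, and writing out the identification $\rho^*\bar m=\ttr^a_\alpha\xi^\alpha$ and the fibre-product isomorphism of the first paragraph in coordinates. Once these identifications are fixed, each of the four displayed formulas follows by the routine computation indicated above.
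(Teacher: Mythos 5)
Your proposal is correct and follows essentially the same route as the paper: the body map and the formula on $\Gamma(\frg[1]^*)$ are read off from the definition of $\racton_\flat=\prec\circ\wp$ via \ref{rem:graded-map-to-differential-map}, and the formulas for $\racton^\sharp(\bar m)$ and $\racton^\sharp(\tilde{\bar\xi})$ are then extracted from the dg-map property $\sfd_\scA\circ\racton^\sharp=\racton^\sharp\circ\sfd_{\sfW_{(\nabla,\zeta)}}$ applied to $h\in C^\infty(M)$ and to $\xi$, using the split form of the Weil differential and strictness to have $\sfd_{\rm c}$ available. Your additional remarks on the fibre-product identification and the role of strictness match the discussion surrounding the proposition.
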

    \begin{proof}
        By \ref{rem:graded-map-to-differential-map}, the first and third equations are a uniquely determined by $\racton$ being a dg-map. Consider now an element $h\in C^\infty(M)$. Then
        \begin{equation}
            \sfd_\scA \racton^\sharp(h)=\sfd_\scA(h\circ \sfs)=\sfs^* \rmd h= \racton^\sharp(\rmd_{\sfW_{(\nabla,\zeta)}}h)= \racton^\sharp(\rmd h+\rmd h\circ \rho)=\racton^\sharp(\rmd h)+\racton^\sharp(\rho^*\rmd h)~,
        \end{equation}
        and as a result, $\racton^\sharp(\rmd h)=\sfs^* \rmd h- \racton^\sharp(\rho^*\rmd h)$. Similarly,
        \begin{equation}
            \sfd_\scA \racton^\sharp(\xi)=\racton^\sharp(\rmd_c\xi+ \tilde{\bar{\xi}}))~,
        \end{equation}
        which leads to the equation $\racton^\sharp(\tilde{\bar{\xi}})=\Big(\sfd_\scA (\racton^\sharp([1]\tilde{\bar{\xi}}))-\racton^\sharp(\sfd_{\rm c} [1]\tilde{\bar{\xi}})\Big)~$.
    \end{proof}

    \begin{remark} 
        Note that Cartan connections on Lie groupoids can be linearized to Cartan connections on Lie algebroids. Also, the primitive $\zeta$ of $\nabla$ for covariant adjustments can be seen as originating from a primitive on the groupoid. This follows from the perspective of connections being closed and their curvatures being exact forms in a particular cohomology (a cohomology as developed in~\cite[Sec.~3.4]{Crainic:1210.2277}), see also~\cite{Fischer:2021glc} and~\cite[Rem.~6.88]{Fischer:2022sus}. In the latter paper, this fact is discussed in more detail for the special case in which $\scG$ is a Lie group bundle, see also~\ref{ssec:Lie_group_bundles}. This shows how the global theory leads to the local theory.
    \end{remark}
    
    \begin{proposition}\label{prop:Cartan_connection_from_adjustment}
        An adjustment groupoid $\scA^\sfW$ of a Lie groupoid $\scG$ induces a corresponding Cartan connection on $\scG$.
    \end{proposition}
    \begin{proof}
        Consider the dg-Lie groupoid $\scA^\sfW$ satisfying the axioms of \ref{def:adjustment_of_groupoid}. Using the fact that $\sft\colon\scA^\sfW_1 \to \rmT[1]\frg[1]$ is a surjective submersion\footnote{This implies that it has local sections.} and the diagram \eqref{diag:preadjustment_groupoid}, one can show that $\scA^\sfW_1$ is locally isomorphic to $\rmT[1]\frg[1]\fibtimes{\psi}{\rmd\sft}\rmT[1]\sfG$. On the other hand, 
        \begin{equation}
            \big|\scA^\sfW_1\big|=\big|\rmT[1]\frg[1]\fibtimes{\psi}{\rmd\sft}\rmT[1]\sfG\big|=\sfG~,
        \end{equation}
        and altogether $\scA^\sfW_1$ and $\rmT[1]\frg[1]\fibtimes{\psi}{\rmd\sft}\rmT[1]\sfG$ are isomorphic. 
        
        Consider the source map $\sfs\colon\rmT[1]\frg[1]\fibtimes{\psi}{\rmd\sft}\rmT[1]\sfG \to \rmT[1]\frg[1]$. As this map is a dg-map, it is given by a graded map $\sfs_\flat\colon\rmT[1]\frg[1]\fibtimes{\psi}{\rmd\sft}\rmT[1]\sfG \to \frg[1]$. Furthermore, the body $|\sfs|$ of the map $\sfs$ must coincide with the source function $\sfs\colon\sfG\to M$. 
        
        Recall that the sheaf of functions $\caO_{\frg[1]}$ is generated by the space $\Gamma(\frg[1]^*)$. Therefore, the map of algebras $\sfs^\sharp\colon\caO_{\frg[1]}\to \caO_{\rmT[1]\frg[1]\fibtimes{\psi}{\rmd\sft}\rmT[1]\sfG}$ is given by the map of modules $\sfs^\sharp_1\colon\Gamma(\frg[1]^*)\to \left(\caO_{\rmT[1]\frg[1]\fibtimes{\psi}{\rmd\sft}\rmT[1]\sfG}\right)_1$. Moreover, one can show that
        \begin{equation}
            \left(\caO_{\rmT[1]\frg[1]\fibtimes{\psi}{\rmd\sft}\rmT[1]\sfG}\right)_1
            \cong
            \left(\caO_{(\rmT[1]M\oplus\frg[1])\times_{\rmT[1]M} \rmT[1]\sfG}\right)_1 
            \cong
            \left(\caO_{\sft^*\frg[1] \oplus \rmT[1]\sfG}\right)_1~,
        \end{equation}
        where $(-)_1$ denotes the truncation to the degree~1 elements. We conclude that the map $\sfs^\sharp_1$ is given by a map of vector bundles $\left(\sfs^\sharp_1\right)^\star\colon\sft^* \frg\oplus \rmT\sfG\to \frg$. 
        
        The functor $\Psi$ in \ref{def:adjustment_of_groupoid} shows that the diagram 
        \begin{equation}
            \begin{tikzcd}
                &\frg\fibtimes{\rho}{\rmd\sft}\rmT \sfG \arrow[r, hook] \arrow[rr,"\ractonB", bend right=30]& \sft^*\frg\oplus \rmT\sfG \arrow[r,"\left(\sfs^\sharp_1\right)^\star"] 
                &\frg 
            \end{tikzcd}
        \end{equation}
        is commutative.
        
        Define the vector bundle map $\omega\colon\rmT \sfG \to \sft^*\frg\oplus \rmT \sfG\to \frg$, where the first map is the obvious inclusion and the second map is $\left(\sfs^\sharp_1\right)^\star$. The diagram above shows that $\omega^*\colon\rmT \sfG\to \sfs^* \frg$ defines a splitting of the first exact sequence in \eqref{eq:ShortSequenceForHorLifts}. In other words, $\omega^*$ is a connection 1-form and the projection $\scL\circ \omega^*$ to $\ker(\rmd \sft)$ defines a distribution $\scH$.  Note that the map $\omega^*$ as splitting of the exact sequence is equivalent to the horizontal lift $\sigma_{\caH}\colon\sft^*\rmT M \to \rmT\sfG$, and we have $\omega^*\circ\sigma_{\caH}=0$. 
        
        Define the map $\rmAd^{-1}\colon \sft^*\frg \to \sft^*\frg\oplus \rmT\sfG \to \frg$, where the first map is the obvious inclusion and the second one is  $\left(\sfs^\sharp_1\right)^\star$. By linearity of the map $\left(\sfs^\sharp_1\right)^\star$ and the diagram above, we have
        \begin{equation}
            \begin{split}
                \ractonB (\sft(g),\nu; \sigma_{\caH}(g;\sft(g),\rho(\nu)))
                &= \rmAd^{-1}(\sft(g),\nu;g)+\omega^*\bigl( \sigma_{\caH}(g;\sft(g),\rho(\nu))\bigr)
                \\
                &= 
                \rmAd^{-1}(\sft(g),\nu;g)
                \\
                &=
                \rmd L_{g^{-1}}(\sfe_{\sft(g)},\nu)\circ \sigma_{\caH}(g;\sft(g),\rho(\nu))
                \\
                &= 
                \rmAd_{\caH}\left(g^{-1};\sft(g), \nu\right)~,
            \end{split}
        \end{equation}
        where we have used \ref{eq:adjoint_simplified} and the identity $\omega^*\circ\sigma_{\caH}=0$, and $\rmAd_{\caH}$ is the adjoint action defined in \ref{def:adjoint_action}.
        
        By linearity of $\left(\sfs^\sharp_1\right)^\star$, it follows that this map coincides with the one in \ref{eq:def_Weilified_action_2}. The connection $\caH$ has to be Cartan as $\scA^\sfW$ forms a groupoid.        
    \end{proof}
    
    \section{A new classifying space for principal bundles with connections}
    
    We now come to our new perspective on the cocycles underlying an ordinary principal bundle with connection, which allows for an elegant lift to the case of principal groupoid bundles with connection.
    
    \subsection{Atiyah algebroids from inner action groupoids}
    
    Consider a principal $\sfG$-bundle $P$ over some manifold $X$, where $\sfG$ is a Lie group with Lie algebra $\frg$. Recall that a connection on $P$ is equivalently a splitting of the short exact sequence of vector bundles~\cite{Atiyah:1957}
    \begin{equation}
        0\xrightarrow{~~~}P\times_\sfG\frg\xrightarrow{~~~}\frat(P)\xrightarrow{~\rho~} \rmT X\xrightarrow{~~~} 0~,
    \end{equation}
    where $P\times_\sfG\frg$ is the adjoint bundle of $P$, and $\frat(P)=\rmT P/\sfG$ together with the Lie bracket induced from $\rmT P$ and the anchor map $\rho$ is called the \uline{Atiyah algebroid}.
    
    We will generalize this picture in two ways. First of all we note that for the discussion of groupoid bundles with connections\footnote{as well as for a discussion of higher principal bundles with connections, cf.~\cite{future:2024aa}} a splitting as vector bundles unfortunately does not reproduce the gauge transformations. Instead, one has to regard the vector bundles in the sequence as Lie algebroids (e.g.~$\rmT X$ is here the tangent algebroid) and split in this category. This, in turn, only leads to flat connections, and we will have to extend this picture. Just as in the local description of~\ref{sec:local_description}, we will have to construct general connections as restricted higher flat connections. This construction will make use of adjustment Lie groupoids.
    
    Second, for performing concrete computations, e.g.~in a (physical) field theory having groupoid connections as kinematical data, it is much more convenient to switch to a local perspective in terms of cocycles and local connection forms. In this picture, we replace the manifold $X$ by the Morita-equivalent Čech groupoid $\check \scC(\sigma)$ of a surjective submersion $\sigma:Y\rightarrow X$. 
    
    Consider now a principal $\sfG$-bundle $P$ and a submersion $\sigma$ fine enough to represent $P$ by a cocycle (i.e.\ $P$ is subordinate to $\sigma$). A Čech cocycle, or transition functions, is then a morphism $g:\check \scC(\sigma)\rightarrow \sfB\sfG$, where $\sfB\sfG=(\sfG\rightrightarrows *)$ is the delooping of $\sfG$, i.e.~the one-object groupoid with $\sfG$ as morphisms. The functor $g$ is non-trivial only on morphisms $Y^{[2]}$, and we write $g_{y_1y_2}=g(y_1,y_2)$.
    
    In this situation, we can now replace $P$ by the Lie groupoid
    \begin{equation}
        \scR_\sigma(P)=\left(Y^{[2]}\times \sfG\rightrightarrows Y\times \sfG\right)
    \end{equation}
    with structure maps
    \begin{equation}
        \begin{gathered}
            \sft(y_1,y_2,h)=(y_1,h)~,~~~\sfs(y_1,y_2,h)=\left(y_2,g^{-1}_{y_1y_2}h\right)~,
            \\
            (y_1,y_2,h)\circ (y_2,y_3,\tilde h)=(y_1,y_3,h)~,
            \\
            \sfe(y,h)=(y,y,h)~,~~~(y_1,y_2,h)^{-1}=\left(y_2,y_1,g^{-1}_{y_1y_2}h\right)
        \end{gathered}
    \end{equation}
    for all $y\in Y$, $(y_1,y_2),(y_2,y_3)\in Y^{[2]}$ and $h,\tilde h\in \sfG$ with $\tilde h = g^{-1}_{y_1y_2} h$. Note that the groupoid $\scR_\sigma(P)$ is evidently Morita-equivalent to $P$, trivially regarded as a Lie groupoid.
    
    In this picture, the Atiyah algebroid is replaced by dg-Lie groupoid
    \begin{equation}
        \scR_\sigma(\frat(P))=\left(\rmT[1]Y^{[2]}\times \frg[1]\rightrightarrows \rmT[1]Y\times \frg[1]\right)
    \end{equation}
    with structure maps 
    \begin{equation}
        \begin{gathered}
            \sft(\hat y_1,\hat y_2,\nu)=(\hat y_1,\nu)~,~~~
            \sfs(\hat y_1,\hat y_2,\nu)=\left(\hat y_2,g^{-1}_{y_1y_2}\nu g_{y_1y_2}+g^{-1}_{y_1y_2}\rmd g_{y_1y_2}\right)~,
            \\
            (\hat y_1,\hat y_2,\nu)\circ (\hat y_2,\hat y_3,\tilde \nu)=(\hat y_1,\hat y_3,\nu)~,
            \\
            \sfe(\hat y,\nu)=(\hat y,\hat y,\nu)~,~~~
            (\hat y_1,\hat y_2,\nu)^{-1}=
            (\hat y_2,\hat y_1,g^{-1}_{y_1y_2}\nu g_{y_1y_2}+g^{-1}_{y_1y_2}\rmd g_{y_1y_2})
        \end{gathered}
    \end{equation}
    for all $\hat y\in \rmT[1]Y$, $(\hat y_1,\hat y_2),(\hat y_2,\hat y_3)\in \rmT[1]Y^{[2]}$, $y_i$ the projection onto the base of $\hat y_i$, and $\nu\in \frg[1]$. The differentials on morphisms and objects are given by 
    \begin{equation}
        \begin{aligned}
            \sfd_{\rmT[1]Y^{[2]}\times \frg[1]}&=\rmd_{Y^{[2]}}\otimes \rmid +\rmid\otimes \sfd_{\sfCE(\frg)}~,
            \\
            \sfd_{\rmT[1]Y\times \frg[1]}&=\rmd_{Y}\otimes \rmid +\rmid\otimes \sfd_{\sfCE(\frg)}~,
        \end{aligned}
    \end{equation}
    where $\rmd_X$ is the de~Rham differential on $X$ and $\sfd_{\sfCE(\frg)}$ is the Chevalley--Eilenberg differential of $\frg$, and they act on elements of $\Omega^\bullet(Y^{[2]})\otimes C^\infty(\frg[1])$ and $\Omega^\bullet(Y)\otimes C^\infty(\frg[1])$, respectively. We call $\scR_\sigma(\frat(P))$ the \uline{resolved Atiyah algebroid}, and $\scR_\sigma(\frat(P))$ is Morita-equivalent to the dg-manifold $\frat(P)[1]$, trivially regarded as a dg-Lie groupoid.
    
    This resolved Atiyah algebroid can now be constructed as a pullback of the inner action groupoid $\scA(\sfB\sfG)$. Let us first make the formulas from \ref{prop:inner_action_groupoid} explicit.
    \begin{lemma}\label{lem:inner_action_for_BG}
        The inner action groupoid $\scA(\sfB\sfG)$ for $\sfG$ a Lie group is the dg-Lie groupoid
        \begin{equation}
            \scA(\sfB\sfG)=\Big(~\frg[1]\times \rmT[1]\sfG~\rightrightarrows~\frg[1]~\Big)
        \end{equation}
        with structure maps
        \begin{equation}
            \begin{gathered}
                \sft(\nu;g,\gamma)=\nu~,~~~\sfs(\nu;g,\gamma)=g^{-1}\nu g+g^{-1}\gamma~,
                \\
                (\nu;g_1,\gamma_1)\circ\left(g_1^{-1}\nu g_1+g_1^{-1}\gamma_1; g_2, \gamma_2\right)=(\nu;g_1g_2,g_1\gamma_2+\gamma_1g_2)
                \\
                \sfe(\nu)=(\nu;\unit,0)~,~~~(\nu;g,\gamma)^{-1}=\left(g^{-1}\nu g+g^{-1}\gamma;g^{-1},-g^{-1}\gamma g^{-1}\right)~,
            \end{gathered}
        \end{equation}
        for $\nu\in \frg[1]$, $g\in \sfG$ and $\gamma\in \rmT[1]_g\sfG$, where $\unit$ is the unit element of $\sfG$, and the differential is given by the Chevalley--Eilenberg differential on $\frg[1]$ and the de~Rham differential on $\sfG$.
    \end{lemma}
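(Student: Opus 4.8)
The lemma is the evaluation of \ref{prop:inner_action_groupoid} at the one-object Lie groupoid $\scG=\sfB\sfG=(\sfG\rightrightarrows *)$, so the plan is simply to make each structure map explicit. Since the base is a point, the Lie algebroid $\sfLie(\sfB\sfG)$ is the Lie algebra $\frg$, the anchor $\rho$ is the zero map $\frg[1]\to\rmT[1]*=*$, and $\rmd\sft\colon\rmT[1]\sfG\to\rmT[1]*=*$ is trivial; hence the fibered product $\frg[1]\fibtimes{\rho}{\rmd\sft}\rmT[1]\sfG$ appearing in \ref{prop:inner_action_groupoid} collapses to the direct product $\frg[1]\times\rmT[1]\sfG$, which already gives the stated object and morphism spaces. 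Substituting $\sfe_*=\unit$ into the formula $\sfe(m,\nu)=(m,\nu;\sfe_m,0)$ of \eqref{eq:inner_action_groupoid_structure_maps} yields the unit $\sfe(\nu)=(\nu;\unit,0)$, and the target map is immediate.

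The one nontrivial ingredient is the action $\ractonB$. Here I would use the explicit formula \eqref{eq:action_before_Weilification} from \ref{eq:explicit_form_action}: over a point the Cartan connection is (uniquely) trivial, the adjoint action $\rmAd(g^{-1};\nu)$ of \ref{def:adjoint_action} reduces to the ordinary adjoint action $g^{-1}\nu g$ regardless of the chosen connection, and the term $\vartheta^{\rm tot}(g,\gamma)$ reduces to the Maurer--Cartan contribution $g^{-1}\gamma$; after the projection $\sfp_s$ this gives $\nu\ractonB(g,\gamma)=g^{-1}\nu g+g^{-1}\gamma$. Feeding this into the source and inversion formulas of \eqref{eq:inner_action_groupoid_structure_maps} produces $\sfs(\nu;g,\gamma)=g^{-1}\nu g+g^{-1}\gamma$ and the first slot of the inverse. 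Equivalently, one can read $\ractonB$ off the $M=*$ specialization recorded in \ref{eq:explicit_form_action}, namely $(\sfe,\nu\ractonB(g,\gamma))=(g^{-1},0)\circ(\sfe,\nu)\circ(g,0)+(g^{-1},0)\circ(g,\gamma)$, and evaluate the compositions in $\rmT[1]\sfB\sfG$.

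What remains is the tangent-prolongation groupoid structure of $\rmT[1]\sfB\sfG=\rmT[1]\sfG$, cf.~\ref{ex:tangent_groupoid}: composition is the differential of group multiplication, which by the Leibniz rule reads $(g_1,\gamma_1)\circ(g_2,\gamma_2)=(g_1g_2,\,g_1\gamma_2+\gamma_1g_2)$, and the groupoid inverse is the differential of $g\mapsto g^{-1}$, namely $(g,\gamma)^{-1}=(g^{-1},\,-g^{-1}\gamma g^{-1})$. Substituting these together with $\ractonB$ into the composition law $(m,\nu;g_1,\gamma_1)\circ((m,\nu)\ractonB(g_1,\gamma_1);g_2,\gamma_2)=(m,\nu;(g_1,\gamma_1)\circ(g_2,\gamma_2))$ and into the inversion formula of \eqref{eq:inner_action_groupoid_structure_maps} reproduces exactly the composition and inverse claimed in the lemma. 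Finally, the differential on objects is \eqref{eq:differential}, which for $\sfB\sfG$ is the Chevalley--Eilenberg differential of $\frg$; by \eqref{eq:local_CE_algebra} its anchor term vanishes over a point, leaving $\sfd\xi^\alpha=-\tfrac12\ttf^\alpha_{\beta\gamma}\xi^\beta\xi^\gamma$, and on morphisms it is this together with the de~Rham differential on $\sfG$, as asserted. The only genuine care needed is bookkeeping: fixing one (say, left-) trivialization of $\rmT\sfG$ so that the abstract compositions in $\rmT[1]\sfB\sfG$ and the abstract action $\ractonB$ simultaneously reproduce the familiar tangent-group formulas; once this is pinned down, the whole lemma is an immediate substitution into \eqref{eq:inner_action_groupoid_structure_maps}.
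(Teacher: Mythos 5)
Your proposal is correct and is essentially the paper's own argument: the paper proves the lemma by specializing \ref{prop:inner_action_groupoid} to $\scG=\sfB\sfG$ (citing the analogous computation in the literature for gauge transformations), which is exactly what you do, only with the collapse of the fibered product, the $M=*$ form of $\ractonB$ from \ref{eq:explicit_form_action}, and the tangent-group formulas spelled out explicitly.
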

    \begin{proof}
        This follows by specializing \ref{prop:inner_action_groupoid}; see also~\cite[Section 4.2]{Jurco:2014mva}. 
    \end{proof}
    
    \begin{theorem}\label{thm:resolved_Atiyah_from_pullback}
        Let $P$ be a principal $\sfG$-bundle over $X$ for $\sfG$ a Lie group with transition functions $g:Y^{[2]}\rightarrow \sfG$ subordinate to a surjective submersion $\sigma:Y\rightarrow X$. The resolved Atiyah algebroid $\scR_\sigma(\frat(P))$ is the pullback of the inner action groupoid $\scA(\sfB\sfG)$ along the functor $\rmd g:\rmT[1]\check \scC(\sigma)\rightarrow \rmT[1]\sfB\sfG$: 
        \begin{equation}\label{eq:pullback_resolved_Atiyah}
            \begin{tikzcd}
                \scR_\sigma(\frat(P)) \dar["\rho"] \rar["\hat g"] & \scA(\sfB\sfG) \dar["\varrho"] \\
                \rmT[1]\check \scC(\sigma) \rar["\rmd g"] & \rmT[1]\sfB\sfG
            \end{tikzcd}
        \end{equation}
    \end{theorem}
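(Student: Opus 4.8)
The plan is to check that the strict pullback in $\catdgGrpd$ along $\varrho\colon\scA(\sfB\sfG)\to\rmT[1]\sfB\sfG$ exists and is computed degreewise, and then to match the two resulting fibre products, together with all structure maps and differentials, with the object and morphism dg-manifolds of $\scR_\sigma(\frat(P))$ spelled out above. First I would note, using \ref{lem:inner_action_for_BG} and $\rmT[1]\sfB\sfG=(\rmT[1]\sfG\rightrightarrows *)$, that $\varrho$ is the terminal map $\frg[1]\to *$ on objects and the projection $\frg[1]\times\rmT[1]\sfG\to\rmT[1]\sfG$ on morphisms; in particular it is a surjective submersion in each degree, so the required fibre products of dg-manifolds exist, the strict $1$-categorical pullback of internal groupoids is obtained simply as the pair of pullbacks of object and morphism dg-manifolds, and this strict pullback is moreover a homotopy pullback. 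Concretely, the pullback of objects is $\rmT[1]Y\times_{*}\frg[1]=\rmT[1]Y\times\frg[1]$, and the pullback of morphisms is $\rmT[1]Y^{[2]}\fibtimes{\rmd g}{\sfp}\bigl(\frg[1]\times\rmT[1]\sfG\bigr)$, with $\sfp$ the projection to $\rmT[1]\sfG$; writing $\rmd g$ in the shorthand of \ref{ssec:Severa_differentiation} as $(\hat y_1,\hat y_2)\mapsto(g_{y_1y_2},\rmd g_{y_1y_2})$, an element of this fibre product is uniquely determined by $(\hat y_1,\hat y_2,\nu)\in\rmT[1]Y^{[2]}\times\frg[1]$, the $\rmT[1]\sfG$-component being forced to equal $(g_{y_1y_2},\rmd g_{y_1y_2})$, so the morphism dg-manifold of the pullback is canonically $\rmT[1]Y^{[2]}\times\frg[1]$. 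The product differentials on $\rmT[1]Y^{[2]}\times(\frg[1]\times\rmT[1]\sfG)$ and on $\rmT[1]Y\times\frg[1]$ restrict under these identifications to $\rmd_{Y^{[2]}}\otimes\rmid+\rmid\otimes\sfd_{\sfCE(\frg)}$ and $\rmd_Y\otimes\rmid+\rmid\otimes\sfd_{\sfCE(\frg)}$, the stated differentials; here one uses that $\rmd g$ is a dg-functor, which holds precisely because $g$ is a functor.

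Next I would transport each structure map of $\scA(\sfB\sfG)$ from \ref{lem:inner_action_for_BG} through these identifications and compare with the explicit description of $\scR_\sigma(\frat(P))$. For example the source map sends $(\hat y_1,\hat y_2,\nu)$ to $\bigl(\hat y_2,\,g_{y_1y_2}^{-1}\nu g_{y_1y_2}+g_{y_1y_2}^{-1}\rmd g_{y_1y_2}\bigr)$, the unit sends $(\hat y,\nu)$ to $(\hat y,\hat y,\nu)$, and the inverse sends $(\hat y_1,\hat y_2,\nu)$ to $(\hat y_2,\hat y_1,g_{y_1y_2}^{-1}\nu g_{y_1y_2}+g_{y_1y_2}^{-1}\rmd g_{y_1y_2})$, exactly as in the definition; for composition one evaluates $(\nu;g_1,\gamma_1)\circ(\,\cdot\,;g_2,\gamma_2)=(\nu;g_1g_2,g_1\gamma_2+\gamma_1g_2)$ on $(g_i,\gamma_i)=(g_{y_iy_{i+1}},\rmd g_{y_iy_{i+1}})$ and uses $g_{y_1y_2}g_{y_2y_3}=g_{y_1y_3}$ together with the Leibniz rule for $\rmd$ to obtain $(\hat y_1,\hat y_3,\nu)$. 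The legs of the pullback cone are then the evident dg-functors: $\hat g$ is $(\hat y_1,\hat y_2,\nu)\mapsto(\nu;g_{y_1y_2},\rmd g_{y_1y_2})$ on morphisms and $(\hat y,\nu)\mapsto\nu$ on objects, and $\rho$ is the underlying tangent-groupoid projection $\scR_\sigma(\frat(P))\to\rmT[1]\check\scC(\sigma)$; commutativity of \eqref{eq:pullback_resolved_Atiyah} is then immediate, and the universal property of the square is inherited degreewise from that of the two fibre products of dg-manifolds.

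I do not expect a deep obstacle: the content is that the square exists and is levelwise, which is why I begin by recording that $\varrho$ is a surjective submersion on objects and on morphisms, and afterwards the argument is a direct comparison of the formulas in \ref{lem:inner_action_for_BG} with the explicit presentation of $\scR_\sigma(\frat(P))$. The only place needing a little care is the composition map, where the interplay of the de~Rham differential with the multiplication $\sfG\times\rmT[1]\sfG\to\rmT[1]\sfG$ has to be threaded through the Leibniz rule, and keeping the ``$\sfe_m+\theta\nu$'' shorthand of \ref{ssec:Severa_differentiation} consistent when presenting $\rmd g$ as a map into $\rmT[1]\sfG$; both points are already handled in \ref{ssec:Severa_differentiation} and \ref{lem:inner_action_for_BG}.
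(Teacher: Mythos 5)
Your proposal is correct, but it proves the statement by a different (dual) route from the paper. The paper verifies the universal property directly: for an arbitrary cone $\caC=(\caC_1\rightrightarrows\caC_0)$ with legs $\Psi$ into $\rmT[1]\check\scC(\sigma)$ and $\Phi$ into $\scA(\sfB\sfG)$, it simply writes down the unique mediating functor $\Upsilon_0=\Psi_0\times\Phi_0$, $\Upsilon_1=\Psi_1\times(\sfpr_\frg\circ\Phi_1)$ into $\scR_\sigma(\frat(P))$, taking the explicit structure maps of $\scR_\sigma(\frat(P))$ as given. You instead first argue that the strict pullback in $\catdgGrpd$ exists and is computed levelwise (using that $\varrho$ is a surjective submersion on objects and on morphisms), compute the two fibre products, identify the morphism dg-manifold with $\rmT[1]Y^{[2]}\times\frg[1]$ via the graph of $\rmd g$, and then match all structure maps and differentials against the definition of $\scR_\sigma(\frat(P))$. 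Both arguments hinge on the same key observation — that the $\rmT[1]\sfG$-component of a morphism over $\rmd g$ is forced to be $(g_{y_1y_2},\rmd g_{y_1y_2})$, i.e.\ the $\frg[1]$-projection retains all the freedom — but they package it differently. Your version has the merit of \emph{deriving} the structure maps of $\scR_\sigma(\frat(P))$ (the conjugation-plus-Maurer--Cartan twist in the source map, the cocycle-plus-Leibniz computation for composition) from \ref{lem:inner_action_for_BG}, and of making the existence of the pullback in $\catdgGrpd$ explicit; the paper's version is shorter because it bypasses the general fact that levelwise fibre products compute pullbacks of internal groupoids, at the price of presupposing the groupoid $\scR_\sigma(\frat(P))$ with exactly the right structure maps. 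Your aside that the square is also a homotopy pullback is not needed for the statement and is left unproven (it would follow from $\varrho$ being a fibration, e.g.\ since $(\sft,\varrho_1)$ is an isomorphism here), so either justify it briefly or drop it.
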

    \begin{proof}
        Consider a general cone $\caC=(\caC_1\rightrightarrows \caC_0)$, and fit it into the diagram 
        \begin{equation}
            \begin{tikzcd}
                &\caC  \arrow[ddr,bend right=30, "\Psi",swap] \arrow [drr,bend left=30, "\Phi"] \arrow[dr, dotted, "\exists !"]
                &
                &\\
                &
                &  \scR_\sigma(\frat(P)) \dar["\rho"] \rar["\hat g"] & \scA(\sfB\sfG) \dar["\varrho"] \\
                &
                & \rmT[1]\check \scC(\sigma)  \rar["\rmd g"] & \rmT[1]\sfB\sfG
            \end{tikzcd}
        \end{equation}
        Denote by $\Psi_0,\Psi_1$ and $\Phi_0,\Phi_1$ the maps in the functors $\Psi$ and $\Phi$ on objects and morphisms, respectively. The unique functor $\Upsilon:\caC\rightarrow\scR_\sigma(\frat(P))$ is then given by 
        \begin{equation}
            \begin{aligned}
                \Upsilon_0&=\Psi_0 \times \Phi_0:&\caC_0&\rightarrow \rmT[1]Y \times \frg[1]~,
                \\
                \Upsilon_1&=\Psi_1\times(\sfpr_\frg\circ \Phi_{1}):& \caC_1&\rightarrow \rmT[1] Y^{[2]}\times \frg[1]~,
            \end{aligned}
        \end{equation}
        where we used the evident projection $\sfpr_\frg:\frg[1]\times \rmT[1]\sfG\rightarrow \frg[1]$. Note that compatibility with the differential is evident for these maps.
    \end{proof}
    \begin{remark}
        In the diagram \eqref{eq:pullback_resolved_Atiyah}, the weak pullback coincide with the strict pullback which can be proved abstractly. However, we omit this not to complicate the discussion.
    \end{remark}
    \noindent We note that the construction as pullback produces the dg-Lie groupoid morphism $\rho$, which plays the role of the anchor functor. 
    
    \subsection{Principal bundles with connections as dg-Lie groupoid morphisms}
    
    We then have the following, expected result. 
    \begin{proposition}\label{prop:flat_connections_Atiyah}
        Consider again the situation of \ref{thm:resolved_Atiyah_from_pullback}. A section of the dg-Lie groupoid morphism $\rho:\scR_\sigma(\frat(P))\rightarrow \rmT[1]\check \scC(\sigma)$ describes a flat connection on $P$.
    \end{proposition}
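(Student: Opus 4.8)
The plan is to unpack a section of $\rho$ using the pullback square \eqref{eq:pullback_resolved_Atiyah} together with the explicit description of $\scA(\sfB\sfG)$ in \ref{lem:inner_action_for_BG}. Since $\scR_\sigma(\frat(P))$ is, by \ref{thm:resolved_Atiyah_from_pullback}, the pullback of $\varrho\colon\scA(\sfB\sfG)\to\rmT[1]\sfB\sfG$ along $\rmd g$, a section $s$ of $\rho$ is the same datum as a dg-functor $v\colon\rmT[1]\check\scC(\sigma)\to\scA(\sfB\sfG)$ with $\varrho\circ v=\rmd g$. On objects $v$ is a dg-map $A\colon\rmT[1]Y\to\frg[1]$ (here $\varrho$ fibers over $\rmT[1]*$, so imposes no further constraint); on morphisms, $\varrho\circ v=\rmd g$ forces the $\rmT[1]\sfG$-slot of $v_1$ to be $\rmd g_{y_1y_2}$, so $v_1(\hat y_1,\hat y_2)=\bigl(B(\hat y_1,\hat y_2);g_{y_1y_2},\rmd g_{y_1y_2}\bigr)$ for some dg-map $B\colon\rmT[1]Y^{[2]}\to\frg[1]$.

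First I would identify $A$: dualising and using \ref{rem:graded-map-to-differential-map}, a dg-map $\rmT[1]Y\to\frg[1]$ is a dg-algebra morphism $\sfCE(\frg)\to(\Omega^\bullet(Y),\rmd)$, which — as $\sfG$ is a Lie group and $\frg$ a Lie algebra over a point — is precisely a $\frg$-valued one-form $A\in\Omega^1(Y,\frg)$ with $\rmd A+\tfrac12[A,A]=0$, i.e.\ a flat local connection form, cf.\ \eqref{eq:flat_connections_curvatures} with $M=*$ and the computation of \ref{sec:local_description}. Next I would impose functoriality of $v$ for the structure maps of \ref{lem:inner_action_for_BG}: compatibility with $\sft(\nu;g,\gamma)=\nu$ gives $B=\sft^*A$ (that is, $B$ is $A$ pulled back along the first projection $Y^{[2]}\to Y$), and compatibility with $\sfs(\nu;g,\gamma)=g^{-1}\nu g+g^{-1}\gamma$ then yields on $\rmT[1]Y^{[2]}$
\[
A|_{y_2}=g^{-1}_{y_1y_2}\,A|_{y_1}\,g_{y_1y_2}+g^{-1}_{y_1y_2}\,\rmd g_{y_1y_2}~,
\]
the standard gluing law for the local connection forms of a connection on $P$ with transition functions $g$. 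Compatibility with composition and identities is then automatic: composability of $v_1(\hat y_1,\hat y_2)$ with $v_1(\hat y_2,\hat y_3)$ is precisely this gluing law, the Leibniz identity $\rmd g_{y_1y_3}=\rmd(g_{y_1y_2}g_{y_2y_3})$ makes $v_1$ functorial, and $v_1(\sfe\hat y)=\sfe\,v_0(\hat y)$ since $g_{yy}=\unit$. Thus a section of $\rho$ is exactly a $\frg$-valued one-form on $Y$ which is Maurer--Cartan and glues by $g$, i.e.\ a flat connection on $P$ subordinate to $\sigma$; conversely each such flat connection defines a section by these formulas, and the two assignments are mutually inverse.

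I expect the only delicate point to be foundational bookkeeping rather than a genuine obstacle: faithfully translating dg-functor and dg-map statements into classical differential geometry through \ref{rem:graded-map-to-differential-map}, reading $g^{-1}\rmd g$ as the pullback of the left Maurer--Cartan form, and matching each functoriality condition with the equation it produces; the key identification of flat $\frg$-valued one-forms with dg-maps $\rmT[1]Y\to\frg[1]$ was already established in \ref{sec:local_description}. Should one wish to phrase the result at the level of the groupoid of sections, one further checks — using the morphisms of $\scA(\sfB\sfG)$ — that natural isomorphisms between sections reproduce the usual gauge transformations of flat connections; I would add this as a brief remark.
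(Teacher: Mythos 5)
Your proposal is correct and is essentially the paper's own argument: the same identifications appear — on objects a dg-map $\rmT[1]Y\to\frg[1]$, i.e.\ a Maurer--Cartan $\frg$-valued one-form $A$, and from compatibility with source/target the gluing law $A_{y_2}=g^{-1}_{y_1y_2}A_{y_1}g_{y_1y_2}+g^{-1}_{y_1y_2}\rmd g_{y_1y_2}$. The only cosmetic difference is that you first transport the section through the pullback square into $\scA(\sfB\sfG)$ and read the conditions off its structure maps, whereas the paper reads them directly off $\scR_\sigma(\frat(P))$ (whose structure maps are defined by the same formulas); your packaging is exactly the postcomposition-with-$\hat g$ viewpoint the paper itself adopts immediately afterwards in \ref{thm:cocycle_bundles_flat_connection}, including your closing remark on gauge transformations as natural isomorphisms.
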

    \begin{proof}
        On the objects of the dg-Lie groupoid, such a section $\caA$ is a morphism of dg-graded manifolds \(\caA:\rmT[1]Y\to\rmT[1]Y\times\frg[1]\), which restricts to the identity on $\rmT[1]Y$. The remaining freedom is a map $\rmT[1]Y\rightarrow \frg[1]$, which amounts the map $\caA^\sharp:\frg[1]^*\rightarrow \Omega^1(Y)$. That is, $\caA$ is of the form
        \begin{equation}
            (y,\bar y)\mapsto \bigl(y,\bar y,A_y(\bar y)\bigr)
        \end{equation}
        for all $y\in Y$ and $\bar y\in T_yX$, where $A\in \Omega^1(Y)\otimes \frg[1]$. On morphisms, the compatibility forces $\caA$ to be of the form
        \begin{equation}
            (y_1,y_2,\bar y_1,\bar y_2)\mapsto \bigl(y_1,y_2,\bar y_1,\bar y_2,A_{y_1}(\bar y_1)\bigr)
        \end{equation}
        with 
        \begin{equation}
            A_{y_2}=g_{y_1y_2}^{-1}A_{y_1}g_{y_1y_2}+g_{y_1y_2}^{-1}\rmd g_{y_1y_2}
        \end{equation}
        for all $(y_1,y_2)\in Y^{[2]}$ and $\bar y_{1,2}\in \rmT[1]_{y_{1,2}}Y$. Because $\caA$ is a dg-functor, we require
        \begin{equation}
            \sfd_{\rmT[1]\check \scC(\sigma)}\circ\caA^*-\caA^*\circ\sfd_{\scR_\sigma(\frat(P))}=0~,
        \end{equation}
        which amounts to
        \begin{equation}
            F_y\coloneqq \rmd A_y+\tfrac12[A_y,A_y]=0
        \end{equation}
        for all $y\in Y$. Altogether, $\caA$ encodes the differential refinement of $g$ by a local flat connection one-form.
    \end{proof}
    
    Diagram~\eqref{eq:pullback_resolved_Atiyah} makes it evident that the resolved Atiyah algebroid in the above construction is somewhat superfluous, as any section of $\rho$ can be postcomposed by $\hat g$ to a dg-Lie groupoid morphism from $T[1]\check \scC(\sigma)$ to $\scA(\sfB\sfG)$, and $\scA(\sfB\sfG)$ can be seen as the classifying space for principal $\sfG$-bundles with flat connections.
    \begin{theorem}\label{thm:cocycle_bundles_flat_connection}
        Consider a manifold $X$ together with a surjective submersion $\sigma\colon Y\rightarrow X$. The differential cocycle of a principal $\sfG$-bundle with flat connection subordinate to $\sigma$ is given by a dg-Lie groupoid functor from $\rmT[1]\check\scC(\sigma)$ to $\scA(\sfB\sfG)$. Isomorphisms between two such principal $\sfG$-bundles with flat connections, also known as gauge transformations, amount to natural isomorphisms between the corresponding functors.
    \end{theorem}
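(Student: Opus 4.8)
The plan is to unwind what a dg-Lie groupoid functor $F\colon\rmT[1]\check\scC(\sigma)\to\scA(\sfB\sfG)$ actually is and to match its data against the cocycle data of a principal bundle with flat connection, using \ref{thm:resolved_Atiyah_from_pullback} and \ref{prop:flat_connections_Atiyah} to do the real work. First I would post-compose $F$ with the projection $\varrho\colon\scA(\sfB\sfG)\to\rmT[1]\sfB\sfG$ of \ref{lem:inner_action_for_BG}. The composite $\varrho\circ F\colon\rmT[1]\check\scC(\sigma)\to\rmT[1]\sfB\sfG$ is a dg-functor between tangent prolongations, and by the rigidity of dg-maps into grade-shifted tangent bundles (\ref{rem:graded-map-to-differential-map}, \ref{def:grade-shifted_tangent_bundle}) it is the differential refinement $\rmd g$ of an ordinary functor $g\colon\check\scC(\sigma)\to\sfB\sfG$, i.e.\ of a $\sfG$-valued Čech cocycle $g\colon Y^{[2]}\to\sfG$; this cocycle classifies a principal $\sfG$-bundle $P$ subordinate to $\sigma$.

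Next I would observe that $(F,\rmid_{\rmT[1]\check\scC(\sigma)})$ is a cone over the pullback square \eqref{eq:pullback_resolved_Atiyah} associated with this $g$, since $\varrho\circ F=\rmd g=\rmd g\circ\rmid$ by construction. Hence $F=\hat g\circ s$ for a unique dg-functor $s\colon\rmT[1]\check\scC(\sigma)\to\scR_\sigma(\frat(P))$ with $\rho\circ s=\rmid$, that is, for a section $s$ of the anchor $\rho$ of the resolved Atiyah algebroid. By \ref{prop:flat_connections_Atiyah} such a section is exactly a flat connection on $P$, encoded by local connection forms $A\in\Omega^1(Y)\otimes\frg$ with $\rmd A+\tfrac12[A,A]=0$ and $A_{y_2}=g_{y_1y_2}^{-1}A_{y_1}g_{y_1y_2}+g_{y_1y_2}^{-1}\rmd g_{y_1y_2}$. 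Conversely, a principal $\sfG$-bundle with flat connection subordinate to $\sigma$ is presented by precisely this data $(g,A)$, hence yields a section $s$ of $\rho$ and a functor $F=\hat g\circ s$; the two assignments are mutually inverse, which proves the first claim.

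For the statement about isomorphisms, I would unwind a natural isomorphism $\eta\colon F\Rightarrow F'$. On objects it assigns to $\rmT[1]Y$ a dg-map into the morphism object $\frg[1]\times\rmT[1]\sfG$ of $\scA(\sfB\sfG)$; the source and target constraints force its $\frg[1]$-component to equal the common connection form, and rigidity (\ref{rem:graded-map-to-differential-map}) forces its $\rmT[1]\sfG$-component to be the differential refinement of a smooth map $h\colon Y\to\sfG$. The naturality squares over the morphisms $Y^{[2]}$ of $\check\scC(\sigma)$ then translate, via the structure maps of $\scA(\sfB\sfG)$ in \ref{lem:inner_action_for_BG}, into $g'_{y_1y_2}=h_{y_1}^{-1}g_{y_1y_2}h_{y_2}$ together with $A'_{y}=h_y^{-1}A_yh_y+h_y^{-1}\rmd h_y$, which is exactly an isomorphism of principal $\sfG$-bundles with connection, i.e.\ a gauge transformation; composition of natural isomorphisms manifestly matches composition of gauge transformations.

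The only genuine subtlety, and what I expect to be the main obstacle, is the careful application of \ref{rem:graded-map-to-differential-map}: one must verify that the dg-structures on $\rmT[1]\check\scC(\sigma)$ and $\scA(\sfB\sfG)$ really do rigidify the components of $F$ and $\eta$, so that the $\rmT[1]\sfG$-parts are honest differential refinements ($\rmd g$, $\rmd h$) and so that flatness $\rmd A+\tfrac12[A,A]=0$ is precisely the obstruction to $s$ being a dg-section. Once this is in hand, the remainder follows formally from the pullback property \eqref{eq:pullback_resolved_Atiyah} and from the bookkeeping already carried out in the proof of \ref{prop:flat_connections_Atiyah}; beyond this I anticipate no difficulty, though some care is needed to phrase the correspondence as an equivalence of the functor groupoid $[\rmT[1]\check\scC(\sigma),\scA(\sfB\sfG)]$ with the groupoid of principal $\sfG$-bundles-with-flat-connection subordinate to $\sigma$, rather than merely as a bijection on isomorphism classes.
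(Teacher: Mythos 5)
Your argument is correct, and its skeleton matches the paper's: extract the underlying cocycle $g$ from the $\rmT[1]\sfB\sfG$-valued part of the functor via the rigidity of \ref{rem:graded-map-to-differential-map} (degree reasons force it to be $\rmd g$ for an honest functor $g\colon\check\scC(\sigma)\to\sfB\sfG$), and treat gauge transformations by unpacking natural isomorphisms exactly as the paper does. Where you genuinely diverge is the middle step: the paper never invokes the universal property of \eqref{eq:pullback_resolved_Atiyah} here; it simply reads the object component $\Psi_0\colon\rmT[1]Y\to\frg[1]$ off the functor as a local $\frg$-valued $1$-form, with the dg-condition giving flatness and functoriality giving the gluing, referring back to the computation in \ref{prop:flat_connections_Atiyah}. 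You instead observe that $(\rmid,F)$ is a cone over the pullback square for the bundle $P$ classified by $g$, factor $F=\hat g\circ s$ through $\scR_\sigma(\frat(P))$ with $\rho\circ s=\rmid$, and then quote \ref{prop:flat_connections_Atiyah} verbatim; uniqueness of the factorization gives you the bijectivity for free. This buys a cleaner, more formal argument in the Lie-group case, at the cost of needing the resolved Atiyah algebroid as an intermediary — which is precisely what the paper's direct reading avoids, and why the direct version is the one that gets promoted to a definition for groupoid bundles in Section 5, where no Atiyah-type object is constructed. Two small imprecisions worth fixing: the projection $\varrho$ is introduced in the remark following \ref{prop:inner_action_groupoid} (and appears in \ref{thm:resolved_Atiyah_from_pullback}), not in \ref{lem:inner_action_for_BG}; and in the gauge-transformation step the source/target conditions on the components force the $\frg[1]$-part of $\eta_y$ to equal the \emph{target} functor's connection form $A_y$ (not a ``common'' form), with the relation $A'_y=h_y^{-1}A_yh_y+h_y^{-1}\rmd h_y$ coming from the source map of $\scA(\sfB\sfG)$ applied to that component, while the naturality square over $Y^{[2]}$ yields $g'_{y_1y_2}=h_{y_1}^{-1}g_{y_1y_2}h_{y_2}$.
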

    \begin{proof}
        A dg-functor $\Psi\colon\rmT[1]\check\scC(\sigma)\rightarrow \scA(\sfB\sfG)$ contains a dg-functor $\psi\colon\rmT[1]\check\scC(\sigma)\rightarrow \rmT[1]\sfB\sfG$. By~\ref{rem:graded-map-to-differential-map}, this functor is defined by a functor $\rmT[1]\check\scC(\sigma)\rightarrow \sfB\sfG$, which, for degree reasons, is given by a functor $g\colon\check\scC(\sigma)\rightarrow \sfB\sfG$, with the functor $\psi$ the differential $\psi=\rmd g$. As discussed above, $g$ encodes the Čech cocycle of a principal $\sfG$ bundle subordinate to $\sigma$.
        
        As in~\Cref{prop:flat_connections_Atiyah}, the dg-map on objects $\Psi_{0}:\rmT[1]Y\rightarrow \frg[1]$ encodes a local connection 1-form, which is flat. Compatibility with the structure maps in both groupoids then glues together the local connection one forms as in~\Cref{prop:flat_connections_Atiyah}.
        
        A natural isomorphism between two such dg-Lie functors $\Psi$ and $\tilde \Psi$ is a dg-morphism $\alpha:\rmT[1]Y\rightarrow \frg[1]\times \rmT[1]\sfG$, which is necessarily of the form $0\times \rmd p$ for some map $p:Y\rightarrow \sfG$. The fact that this is a natural isomorphism then translates to 
        \begin{equation}
            \tilde g_{y_1y_2}=p_{y_1}^{-1} g_{y_1y_2}p_{y_2}\eand 
            \tilde A_y=p_y^{-1}A_y p_y+p_y^{-1}\rmd p_y~,
        \end{equation}
        where we used a similar notation as in the proof of \Cref{prop:flat_connections_Atiyah}.
        Altogether, we recover the usual bundle isomorphisms in terms of differential cocycles.
    \end{proof}
    
    As done in the local description~\ref{sec:local_description}, we now obtain general connections on principal bundles as higher connections with restricted gauge transformations. The relevant classifying space here is precisely the adjustment groupoid. We start by specializing the adjustment groupoid of \ref{ssec:construction_adjustment_groupoid} to the case $\scG=\sfB\sfG$. Note that the connection on $\sfB\sfG$ is necessarily trivial.
    \begin{lemma}
        An adjustment groupoid of $\sfB\sfG$ for $\sfG$ a Lie group is the dg-Lie groupoid
        \begin{equation}
            \scA^\sfW_\sfG=\Big(~\rmT[1]\frg[1]\times \rmT[1]\sfG\rightrightarrows \rmT[1]\frg[1]~\Big)
        \end{equation}
        with structure maps
        \begin{equation}
            \begin{gathered}
                \sft(\nu,\bar \nu;g,\gamma)=(\nu,\bar \nu)~,~~~
                \sfs(\nu,\bar \nu;g,\gamma)=\left(g^{-1}\nu g+g^{-1}\gamma,
                g^{-1}\bar \nu  g\right)~,
                \\
                (\nu ,\bar \nu ;g_1,\gamma_1)\circ \left(g_1^{-1}\nu g_1+g_1^{-1}\gamma_1,
                g^{-1}_1\bar \nu  g_1;g_2,\gamma_2\right)=(\nu ,\bar \nu ;g_1g_2,g_1\gamma_2+\gamma_1g_2)~,
                \\
                \sfe(\nu ,\bar \nu )=(\nu ,\bar \nu ;\unit,0)~,~~~(\nu ,\bar \nu ;g,\gamma)^{-1}=\left(g^{-1}\nu g+g^{-1}\gamma,
                g^{-1}\bar \nu  g;g^{-1},-g^{-1}\gamma g^{-1}\right)
            \end{gathered}
        \end{equation}
        for $\nu \in \frg[1]$, $\bar \nu \in \rmT[1]_\nu \frg[1]$, $g,g_{1,2}\in \sfG$, $\gamma,\gamma_{1,2}\in \rmT[1]_{g,g_{1,2}}\sfG$ and differentials on morphisms and objects
        \begin{equation}
            \begin{aligned}
                \sfd_{\rmT[1]\frg[1]\times \rmT[1]\sfG}&=\sfd_{\sfW(\frg)}\otimes \rmid +\rmid\otimes \rmd_\sfG~,
                \\
                \sfd_{\rmT[1]\frg[1]}&=\sfd_{\sfW(\frg)}~,
            \end{aligned}
        \end{equation}
        where $\sfd_{\sfW(\frg)}$ is the differential in the Weil algebra of $\frg$ and $\rmd_\sfG$ is the de~Rham differential on $\sfG$, and they act on elements of $\sfW(\frg)\otimes\Omega^\bullet(\sfG)$. This form is unique up to isomorphisms preserving the embedding of $\frg[1]\rightarrow \rmT[1]\frg[1]\times \rmT[1]\sfG$.
    \end{lemma}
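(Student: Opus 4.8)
The plan is to read off this lemma as the specialization of the general construction of the adjustment groupoid $\scA^\sfW_\caH$ from \ref{ssec:construction_adjustment_groupoid} to $\scG=\sfB\sfG=(\sfG\rightrightarrows *)$, supplemented by \ref{thm:explicit_adjustment_groupoid} for the adjustment property and the uniqueness claim. Since the base is a point, the Lie algebroid $\frg$ is simply the Lie algebra $\frg$, the map $\psi\colon\rmT[1]\frg[1]\to\rmT[1]M$ is the unique map to $\rmT[1]*$, and hence the fibered product $\rmT[1]\frg[1]\fibtimes{\psi}{\rmd\sft}\rmT[1]\sfG$ collapses to the ordinary product $\rmT[1]\frg[1]\times\rmT[1]\sfG$; this already fixes the space of morphisms. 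The Cartan connection $\caH$ on $\sfB\sfG$ is necessarily trivial, so the associated Lie-algebroid connection $\nabla$ of \ref{prop:associated_Lie_algebroid_connection} vanishes identically; in particular $R^{\rm bas}_\nabla=0$ (indeed vacuously, since $\frX(*)=0$), so $\nabla$ is a (plain) adjustment, and $\scA^\sfW_\sfG\coloneqq\scA^\sfW_\caH$ is an adjustment groupoid by \ref{thm:explicit_adjustment_groupoid}. With $\nabla$ trivial, the Weil differential $\sfd_{\sfW(\frg)}$ on the objects $\rmT[1]\frg[1]$ is exactly the Weil differential of the Lie algebra $\frg$ (equations \eqref{eq:coordinate_changed_Weil} with vanishing anchor, $\omega$, and $\zeta$), and the differential on morphisms is $\sfd_{\sfW(\frg)}\otimes\rmid+\rmid\otimes\rmd_\sfG$ on $\sfW(\frg)\otimes\Omega^\bullet(\sfG)$, as stated.

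Next I would make the right action $\racton$ explicit from \eqref{eq:def_Weilified_action}. For $M=*$ the adjoint action $\rmAd$ of \ref{def:adjoint_action} reduces to the ordinary adjoint representation of $\sfG$ on $\frg$, and $\vartheta^{\rm tot}$ reduces to left translation $\gamma\mapsto g^{-1}\gamma$ of $\gamma\in\rmT[1]_g\sfG$, exactly as in \ref{lem:inner_action_for_BG}; on the degree-$1$ (connection) generators $\nu$ this gives $\nu\mapsto g^{-1}\nu g+g^{-1}\gamma$. On the degree-$2$ (curvature) generators $\bar\nu$, recall that the map $\wp$ of \eqref{eq:def_racton_0} discards the $\rmT[1]M$- and $\frg[2]$-data and feeds only the $\sft^*\frg[1]\oplus\rmT[1]\sfG$-part into $\prec$; the action on the $\bar\nu$-component is then pinned down by the dg-lift \ref{rem:graded-map-to-differential-map}, i.e.\ by demanding that $\racton$ be a dg-map, which in the trivial frame forces the $\frg[2]^*$-generator to be sent to $(\sfd_{\sfW(\frg)}+\rmd_\sfG)$ applied to the image of the $\frg[1]^*$-generator minus $\racton^\sharp\circ\sfd_{\sfCE}$ of it. Since $\omega$ and $\zeta$ vanish here, this produces precisely $\bar\nu\mapsto g^{-1}\bar\nu g$, the adjoint action on $\frg[2]$. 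Plugging these two components of $\racton$ into the action-groupoid structure maps of the pre-adjustment groupoid of \ref{ssec:construction_adjustment_groupoid} then yields the displayed source map, composition, unit, and inverse; associativity, unitality, and the inverse identity are then the routine defining identities of an action groupoid, using the multiplicativity \eqref{eq:multiplicativity} (here trivial) of $\vartheta^{\rm tot}$.

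For the uniqueness clause I would argue that, by \ref{def:preadjustment_groupoid}, \ref{eq:adjustment_of_groupoid}, and \ref{thm:explicit_adjustment_groupoid}, an adjustment groupoid of $\scG$ is determined, up to the action-groupoid construction, by the choice of adjustment datum on $\rmT[1]\frg[1]$, namely a connection $\nabla$ on $\frg$ together with the splitting $\rmT[1]\frg[1]\cong\rmT[1]M\oplus\frg[1]\oplus\frg[2]$ of \eqref{eq:bundle_isomorphism} it induces. Over a point, $\nabla$ is necessarily trivial, so every choice yields the same differential on objects; the only remaining freedom is a reparameterization of the splitting fixing the zero-section embedding $\frg[1]\hookrightarrow\rmT[1]\frg[1]$, which is exactly an isomorphism preserving the embedding $\Psi\colon\scA(\sfB\sfG)\to\scA^\sfW_\sfG$, $(\nu;g,\gamma)\mapsto(\nu,0;g,\gamma)$.

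The step I expect to be the main obstacle is the explicit determination of $\racton^\sharp$ on the curvature generators $\bar\nu$: one has to unwind the universal property of \eqref{eq:pullback_diagram} and the dg-lift \ref{rem:graded-map-to-differential-map} carefully enough to be sure that, in the frame fixed by the trivial connection, the induced action carries \emph{no} correction terms beyond the plain adjoint action, so that the differential on morphisms is the genuine tensor-product differential on $\sfW(\frg)\otimes\Omega^\bullet(\sfG)$ and not a twisted variant. The rest of the argument is bookkeeping on top of the already-proved \ref{thm:explicit_adjustment_groupoid} and \ref{lem:inner_action_for_BG}.
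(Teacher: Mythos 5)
Your first two paragraphs are fine and follow the paper's route: the paper's proof of the explicit form is exactly ``specialize the construction of \ref{ssec:construction_adjustment_groupoid} and the action $\racton$ of \ref{def:right_action_adjustment_groupoid} to $M=*$'' (which it declares obvious), and your determination of $\racton$ on the connection and curvature generators via \ref{lem:inner_action_for_BG} and the dg-lift of \ref{rem:graded-map-to-differential-map} is a correct unwinding of that.

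The gap is in the uniqueness clause. You assert that ``by \ref{def:preadjustment_groupoid}, \ref{eq:adjustment_of_groupoid}, and \ref{thm:explicit_adjustment_groupoid}, an adjustment groupoid of $\scG$ is determined, up to the action-groupoid construction, by the choice of adjustment datum on $\rmT[1]\frg[1]$,'' but none of these results says that. \ref{def:preadjustment_groupoid} and \ref{eq:adjustment_of_groupoid} only require a dg-Lie groupoid with objects $\rmT[1]\frg[1]$, a functor $\Psi$ fitting into the stated pullback square, and the presheaf condition; they do not require the morphisms to be an action groupoid of the form constructed in \ref{ssec:construction_adjustment_groupoid}, and \ref{thm:explicit_adjustment_groupoid} only verifies that the constructed $\scA^\sfW_{\caH}$ \emph{is} an adjustment groupoid, not that every adjustment groupoid arises this way. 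Since the uniqueness claim is precisely that an arbitrary adjustment groupoid of $\sfB\sfG$ has morphism dg-manifold $\rmT[1]\frg[1]\times\rmT[1]\sfG$ up to an isomorphism preserving the embedding of $\frg[1]$, your argument assumes the conclusion; discussing only the (trivial) freedom in $\nabla$ and the splitting over a point does not touch the actual content. The paper closes this differently: using that $\sft$ is a submersion of graded manifolds, it writes the morphism object of a general adjustment groupoid of $\sfB\sfG$ as $\scA_1\cong\rmT[1]\frg[1]\times\caR$ for some dg-manifold $\caR$, and then the pre-adjustment pullback condition
\begin{equation}
    \frg[1]\times \rmT[1]\sfG~\cong~\frg[1]\fibtimes{(\rmid,0)}{\sft}\bigl(\rmT[1]\frg[1]\times\caR\bigr)
\end{equation}
forces $\caR\cong\rmT[1]\sfG$, the residual freedom being exactly an isomorphism of $\scA_1$ preserving the embedding of $\frg[1]$. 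You would need an argument of this kind (constraining an \emph{arbitrary} adjustment groupoid via the pullback condition) for the uniqueness statement to be proved.
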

    
    \begin{proof}
        This is the specialization of \cref{The:Adjustment_groupoid_classsification}. Once it is noted that there is a unique Cartan connection (which is trivial) on a Lie group, the proof is complete.
        
    \end{proof}
    
    The groupoid $\scA^\sfW_\sfG$ can now indeed be regarded as the classifying space for principal $\sfG$-bundles with connection.
    \begin{theorem}\label{thm:cocycle_bundles_general_connection}
        The differential cocycles of a principal $\sfG$-bundle with connection over a manifold $X$ subordinate to a cover $\sigma\colon Y\rightarrow X$ are given by morphisms of dg-Lie groupoids
        \begin{equation}
            \Phi\colon\rmT[1]\check \scC(\sigma)\rightarrow \scA^\sfW_\sfG~.
        \end{equation}
        Isomorphisms between two such principal $\sfG$-bundles with connections, commonly known as gauge transformations, amount to natural isomorphisms between the corresponding functors.
    \end{theorem}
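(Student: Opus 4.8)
The plan is to follow the proof of \ref{thm:cocycle_bundles_flat_connection} essentially verbatim, with $\scA(\sfB\sfG)$ replaced by the adjustment groupoid $\scA^\sfW_\sfG$ and the target $\frg[1]$ replaced by $\rmT[1]\frg[1]$, exactly as the local picture of \ref{sec:local_description} extends flat $\frg$-valued connections to adjusted ones. First I would unpack a dg-Lie groupoid morphism $\Phi\colon\rmT[1]\check\scC(\sigma)\to\scA^\sfW_\sfG$ into a dg-map on objects $\Phi_0\colon\rmT[1]Y\to\rmT[1]\frg[1]$ and a dg-map on morphisms $\Phi_1\colon\rmT[1]Y^{[2]}\to\rmT[1]\frg[1]\times\rmT[1]\sfG$. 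Composing with the evident projection $\scA^\sfW_\sfG\to\rmT[1]\sfB\sfG$ onto the $\rmT[1]\sfG$-factor yields a dg-functor $\rmT[1]\check\scC(\sigma)\to\rmT[1]\sfB\sfG$, which by \ref{rem:graded-map-to-differential-map} and degree reasons is $\rmd g$ for a functor $g\colon\check\scC(\sigma)\to\sfB\sfG$, i.e.\ the \v{C}ech cocycle of a principal $\sfG$-bundle $P$ subordinate to $\sigma$; so the $\rmT[1]\sfG$-component of $\Phi_1$ over $(\hat y_1,\hat y_2)$ is $\rmd g_{y_1y_2}$.

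Next I would analyse $\Phi_0$. Since $M=\ast$, the splitting of \ref{ssec:Weil_algebra} gives $\rmT[1]\frg[1]\cong\frg[1]\oplus\frg[2]$ with $\nabla$ and $\zeta$ necessarily trivial, so that $\Phi_0$ is equivalently a pair $(A,B)\in(\Omega^1(Y)\oplus\Omega^2(Y))\otimes\frg$, and the condition that $\Phi_0$ intertwine $\rmd_Y$ with $\sfd_{\sfW(\frg)}$ reads, on the generators $\xi^\alpha$ and $\bar\xi^\alpha$, as $B^\alpha=\rmd A^\alpha+\tfrac12\ttf^\alpha_{\beta\gamma}A^\beta A^\gamma$ together with its differential consequence $\rmd B^\alpha+\ttf^\alpha_{\beta\gamma}A^\beta B^\gamma=0$; the first equation identifies $B$ with the curvature $F_A$, while the second is the Bianchi identity and hence automatic. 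Thus $A$ is unconstrained --- this is precisely where non-flat connections appear, in contrast to \ref{prop:flat_connections_Atiyah}. Target-compatibility forces the $\rmT[1]\frg[1]$-component of $\Phi_1$ over $(\hat y_1,\hat y_2)$ to be $\Phi_0(\hat y_1)$, and source-compatibility with the structure maps of $\scA^\sfW_\sfG$ then yields $A_{y_2}=g_{y_1y_2}^{-1}A_{y_1}g_{y_1y_2}+g_{y_1y_2}^{-1}\rmd g_{y_1y_2}$ and $B_{y_2}=g_{y_1y_2}^{-1}B_{y_1}g_{y_1y_2}$ --- the usual gluing of local connection one-forms and curvatures --- with composition-compatibility automatic from the cocycle identity for $g$. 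Conversely, any such data $(g,A)$ assembles into a dg-functor by declaring $\Phi_0^\sharp\colon\xi^\alpha\mapsto A^\alpha$, $\bar\xi^\alpha\mapsto F_A^\alpha$ and $\Phi_1=(\Phi_0\circ\sft)\times\rmd g$, and checking (routinely, since the differential on $\scA^\sfW_{\sfG,1}$ is the sum $\sfd_{\sfW(\frg)}\otimes\rmid+\rmid\otimes\rmd_\sfG$ with no mixing) that this respects all structure maps and both differentials.

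For the second claim I would describe a natural isomorphism $\alpha\colon\Phi\Rightarrow\tilde\Phi$ as a dg-map $\alpha\colon\rmT[1]Y\to\rmT[1]\frg[1]\times\rmT[1]\sfG$ into the morphisms of $\scA^\sfW_\sfG$; compatibility with the target map forces the $\rmT[1]\frg[1]$-component to be $\Phi_0$, so $\alpha=\Phi_0\times\rmd p$ for a smooth $p\colon Y\to\sfG$ by \ref{rem:graded-map-to-differential-map}, compatibility with the source map gives $\tilde A_y=p_y^{-1}A_yp_y+p_y^{-1}\rmd p_y$ and $\tilde F_y=p_y^{-1}F_yp_y$, and the naturality square over $Y^{[2]}$ gives $\tilde g_{y_1y_2}=p_{y_1}^{-1}g_{y_1y_2}p_{y_2}$ --- precisely the isomorphisms of principal bundles with connection in differential-cocycle form, and conversely every such $p$ defines a natural isomorphism.

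I expect the main obstacle to be bookkeeping rather than anything conceptual: one must distinguish the connection-generators from the curvature-generators through the Weil-algebra splitting rather than through the bare graded manifold $\rmT[1]\frg[1]$, and one must verify that the dg-condition on the morphism map imposes nothing beyond the expected gluing. The latter reduces to the compatibility of the Weil differential with conjugation by $g$ and translation by $g^{-1}\rmd g$, i.e.\ the Lie-group shadow of the multiplicativity identity \eqref{eq:multiplicativity} for $\vartheta^{\rm tot}$ used in \ref{thm:explicit_adjustment_groupoid}; here it is just the standard behaviour of the Maurer--Cartan form, which is why the present case of $\scG=\sfB\sfG$ needs no genuine adjustment and why the statement comes out as cleanly as in the flat case.
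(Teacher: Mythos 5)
Your proposal is correct and follows essentially the same route as the paper's proof: extract the Čech cocycle $g$ from the $\rmT[1]\sfB\sfG$-part as in the flat case, read off $A$ and $F$ dually as a morphism $\sfW(\frg)\to\Omega^\bullet(Y)$ with the dg-condition forcing $F=\rmd A+\tfrac12[A,A]$ (and Bianchi automatic), let functoriality produce the gluing relations, and identify natural isomorphisms with the usual gauge transformations. Your treatment of the morphism-level components and of the converse direction is somewhat more explicit than the paper's, but it is the same argument.
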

    \begin{proof}
        Just as for flat connections, such a morphism $\Phi$ contains the differential of a Čech cocycle $g:\check \scC(\sigma)\rightarrow \sfB\sfG$, cf.~the proof of \ref{thm:cocycle_bundles_flat_connection}. The remaining information in $\Phi$ is a map from $\rmT[1]Y$ to $\rmT[1]\frg$ or, dually, a morphism of commutative differential algebras $\caA:\sfW(\frg)\rightarrow \Omega^\bullet(Y)$. Such a map is defined by its image on generators, and we introduce $A\in \Omega^1(Y)\otimes \frg$ and $F\in \Omega^2(Y)\otimes \frg$ by
        \begin{equation}
            A(\xi[1])\coloneqq\caA(\xi)\in \Omega^1(Y)\eand F(\bar \xi[2])\coloneqq \caA(\bar \xi)\in \Omega^2(Y)
        \end{equation}
        with $\xi\in \frg[1]^*$ and $\bar \xi\in \frg[2]^*$. Compatibility with the differential leads to the relation
        \begin{equation}
            F_y=\rmd A_y+\tfrac12[A_y,A_y]~
        \end{equation}
        for all $y \in Y$.
        Functoriality of $\Phi$ then implies that this data is correctly glued together, that is
        \begin{equation}
            A_{y_2}=g_{y_1y_2}^{-1}A_{y_1}g_{y_1y_2}+g_{y_1y_2}^{-1}\rmd g_{y_1y_2}
            \eand
            F_{y_2}=g_{y_1y_2}^{-1}F_{y_1}g_{y_1y_2}~
        \end{equation}
        for all $(y_1, y_2) \in \check \scC(\sigma)$,
        and there are no further conditions.
        
        The fact that bundle isomorphisms amount to natural transformations follows then fully analogously to \ref{thm:cocycle_bundles_flat_connection}.
    \end{proof}

    \section{Principal groupoid bundles with connections}\label{sec:principal_groupoid_bundles}
    
    We now have everything at hand to generalize the discussion of the previous section to arbitrary principal groupoid bundles.
    
    \subsection{Principal groupoid bundles}
    
    Let us briefly review principal groupoid bundles, both from the total space and the Čech cocyle perspective. Given a Lie groupoid $\scG=(\sfG\rightrightarrows M)$, a \uline{principal $\scG$-bundle} over a manifold $X$ is a manifold $P$ carrying a principal fiberwise action of $\scG$ along the \uline{lifted Higgs field}, which is a map $\hat \phi:P\rightarrow M$, cf.~\cite[Section 5.7]{Moerdijk:2003bb}. 
    
    For our purposes, the description in terms of differential cocycles is sufficient:
    \begin{definition}[Čech cocycles and coboundaries of a principal groupoid bundle]\label{def:principal_groupoid_bundles_cocycles}
        Given a Lie groupoid $\scG=(\sfG\rightrightarrows M)$, the Čech cocycles of a principal $\scG$-bundle over a manifold $X$ subordinate to a surjective submersion $\sigma:Y\rightarrow X$ are functors $\Phi:\check \scC(\sigma)\rightarrow \scG$. The Čech coboundaries between two such Čech cocycles $\Phi$ and $\tilde \Phi$ are natural isomorphisms $\Phi \Rightarrow \tilde\Phi$. We call the groupoid $\scG$ the structure groupoid of the principal $\scG$-bundle.
    \end{definition}
    
    Explicitly, such a functor corresponds to maps
    \begin{subequations}\label{eq:cech_cocycles_groupoidbundle}
        \begin{equation}
            g:Y^{[2]}\rightarrow \sfG\eand m: Y\rightarrow M
        \end{equation}
        such that
        \begin{equation}
            g_{y_1y_3}=g_{y_1y_2}g_{y_2y_3}~,~~ m_{y_1}=\sft(g_{y_1y_2})~,~~m_{y_2}=\sfs(g_{y_1y_2})~.
        \end{equation}
    \end{subequations}    
    We will call the function $m$ the \uline{Higgs field}. 
    
    As for principal bundles, one can go back and forth between the total space description and the cocycle description by introducing a local trivialization of the groupoid bundle. 
    
    Note that a principal groupoid bundle for the groupoid $\sfB\sfG=(\sfG\rightrightarrows *)$ is the same as a principal bundle with structure group $\sfG$. Also, we can define the trivial principal groupoid bundle for a global Higgs field $\phi\colon X\rightarrow M$ by the cocycle  $m=\phi\circ \sigma$ and $g_{y_1y_2}=\sfe_{m_{y_2}}$ for all $(y_1, y_2) \in \check \scC(\sigma)$; $P$ as bundle is then defined as the pullback $\phi^*\sfG$ of $\sft\colon \sfG \to M$ along $\phi$, equipped with the evident structure, in particular, $\hat \phi$ is defined as the evident projection $\phi^* \sfG \to \sfG$ followed by $\sfs$.
    
    \begin{remark}
        For ordinary principal bundles, we usually use the terms ``gauge group'' and ``structure group'' interchangeably. For principal groupoid bundles, however, we exclusively reserve the term ``structure groupoid'' for the groupoid $\scG$ because the term ``gauge groupoid'' is already in use for the transport or Atiyah groupoid of a principal bundle $P$, which differentiates to the Atiyah algebroid of $P$. 
    \end{remark}
    
    \subsection{Flat and adjusted connections on principal groupoid bundles}
    
    We now come to the key result of this paper, the global definition of flat and in particular adjusted connections for principal groupoid bundles. 
    
    We start with flat connections. Here, we can turn the obvious generalization of \ref{thm:cocycle_bundles_flat_connection} into a definition:
    \begin{definition}[Differential cocycles for flat principal groupoid bundles]
        Consider a manifold $X$ together with a surjective submersion $\sigma:Y\rightarrow X$ as well as a groupoid $\scG=(\sfG\rightrightarrows M)$ with corresponding inner action groupoid $\scA(\scG)$. A differential cocycle of a flat principal $\scG$-bundle subordinate to $\sigma$ is a dg-Lie groupoid functor from $\rmT[1]\check\scC(\sigma)$ to $\scA(\scG)$. Isomorphisms between two such principal $\scG$-bundles with flat connections, also known as gauge transformations, are natural isomorphisms between the corresponding functors.
    \end{definition}
    
    Similarly, we can define non-flat, adjusted connections, turning the obvious generalization of \ref{thm:cocycle_bundles_general_connection} into a definition.
    \begin{definition}[Differential cocycles for principal groupoid bundles]\label{def:cocycles_groupoid_bundles_adjusted_connection}
        Consider a manifold $X$ together with a surjective submersion $\sigma:Y\rightarrow X$ as well as a groupoid $\scG=(\sfG\rightrightarrows M)$. Let $\scA^\sfW$ be an adjustment groupoid of $\scG$. A differential cocycle of a principal $\scG$-bundle with $\scA^\sfW$-adjusted connection subordinate to $\sigma$ is a dg-Lie groupoid functor from $\rmT[1]\check\scC(\sigma)$ to $\scA^\sfW$. Isomorphisms between two such principal $\scG$-bundles with adjusted connections, also known as gauge transformations, are natural isomorphisms between the corresponding functors.
    \end{definition}
    
    The fact that the above definitions generalize \ref{thm:cocycle_bundles_flat_connection} and \ref{thm:cocycle_bundles_general_connection} immediately implies that they correctly reproduce ordinary principal bundles with flat and general connections for $\scG=\sfB\sfG$ with $\sfG$ some Lie group.
    
    Let us verify that we indeed reproduce local connections and their gauge transformations. It is sufficient to consider the adjusted case, as the flat case is then automatically induced.
    \begin{proposition}
        Consider the situation of \ref{def:cocycles_groupoid_bundles_adjusted_connection}. Over a patch $Y$ and in terms of the coordinates introduced in \ref{ssec:Weil_algebra}, an $\scA^\sfW$-adjusted connection is described by the data 
        \begin{equation}\label{eq:kin_data}
            \phi \in C^\infty(Y,M)~,~~~A\in \Omega^1(Y,\phi^*\frg)~.
        \end{equation}
        Infinitesimal gauge transformations act as
        \begin{equation}
            \delta \phi^a =\ttr^a_\alpha c_{\frg}^\alpha~,~~~\delta A^\alpha =\rmd c_\frg^\alpha+\ttf^\alpha_{\beta\gamma}A^\beta c_\frg^\gamma+
            \omega_{a\beta}^\alpha E^a c_\frg^\beta
        \end{equation}
        with $c_\frg\in \Omega^0(Y,\phi^*\frg)$ and $E$ the covariant derivative of $\phi$, cf.~\eqref{eq:local_adjusted_connection}.
    \end{proposition}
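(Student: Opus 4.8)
The plan is to reduce the statement to the local and infinitesimal computations already carried out in \ref{ssec:local_adjusted_connections} and \ref{thm:explicit_adjustment_groupoid}, by restricting the defining dg-functor to a single patch. First I would take $\sigma\colon Y\rightarrow X$ to be a diffeomorphism onto the patch, so that $\check\scC(\sigma)$ is the trivial Lie groupoid $(Y\rightrightarrows Y)$ and $\rmT[1]\check\scC(\sigma)=(\rmT[1]Y\rightrightarrows\rmT[1]Y)$ is the trivial dg-Lie groupoid. A dg-Lie groupoid functor $\Phi\colon\rmT[1]\check\scC(\sigma)\rightarrow\scA^\sfW$ then necessarily sends all (identity) morphisms to identities of $\scA^\sfW$ — which is consistent precisely because the Cartan connection $\caH$ is unital, \ref{lem:connection_one_form_properties} — so $\Phi$ is the same datum as its object component, a dg-map $\Phi_0\colon\rmT[1]Y\rightarrow\rmT[1]\frg[1]$, equivalently a morphism of dg-commutative algebras $\caA\colon\sfW(\frg)\rightarrow\Omega^\bullet(Y)$, i.e.\ an algebroid morphism $\rmT[1]Y\rightarrow\rmT[1]\frg[1]$.

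Next I would unpack this algebroid morphism against the splitting of the Weil algebra from \ref{ssec:Weil_algebra}. By \ref{def:local_adjusted_connections} it is precisely a local adjusted $\frg$-valued connection, hence determined by its restriction to the Chevalley--Eilenberg generators $m^a$ and $\xi^\alpha$: these give the body map $\phi\in C^\infty(Y,M)$ and the connection form $A\in\Omega^1(Y,\phi^*\frg)$, which is the data \eqref{eq:kin_data}. The values of $\caA$ on the remaining generators $\bar m^a$ and $\bar\xi^\alpha$ are then forced by the chain-map condition to be the curvature components $E=A_M$ and $F=B$ of \eqref{eq:local_adjusted_connection} (with $\zeta=0$, since the adjustment underlying $\scA^\sfW_{\caH}$ is plain), and no further constraints arise because the relations \eqref{eq:flat_T1g1-connection} are automatically solved once $E$ and $F$ are defined this way.

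For the gauge transformations I would argue as follows. Over the patch, an isomorphism of such cocycles is a natural isomorphism of functors into $\scA^\sfW_{\caH}$, and its infinitesimal form is, by definition, governed by the Lie algebroid of the internal gauge groupoid $\catdgGrpd(\rmT[1]Y\rightrightarrows\rmT[1]Y,\scA^\sfW_{\caH})$. But \ref{thm:explicit_adjustment_groupoid} identifies the presheaf $\caN\mapsto\sfLie\bigl(\catdgGrpd(\caN\rightrightarrows\caN,\scA^\sfW_{\caH})\bigr)$ with $\caN\mapsto\ihom^{\rm red}_\nabla(\caN,\rmT[1]\frg[1])$, which evaluated on $\caN=\rmT[1]Y$ is $\frbrst^{\rm adj}(Y,\frg[1])$; hence the infinitesimal gauge transformations of $(\phi,A)$ are the action of the differential of $\frbrst^{\rm adj}(Y,\frg[1])$ on the fields $\phi$ and $A$. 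Reading this off from \eqref{eq:flat_T1g1-connection_gauge_transformations} after imposing the adjustment truncation $c_M=\lambda=\chi=0$ and $\zeta=0$, with $\nabla$ the connection induced by $\caH$ through \ref{prop:associated_Lie_algebroid_connection}, reproduces exactly $\delta\phi^a=\ttr^a_\alpha c_\frg^\alpha$ and $\delta A^\alpha=\rmd c_\frg^\alpha+\ttf^\alpha_{\beta\gamma}A^\beta c_\frg^\gamma+\omega^\alpha_{a\beta}E^a c_\frg^\beta$ with $c_\frg\in\Omega^0(Y,\phi^*\frg)$ and $E^a=(\rmd\phi)^a-\ttr^a_\alpha A^\alpha$, as claimed.

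The first two paragraphs are routine unpacking; the one genuinely substantive ingredient is the passage from the global natural-isomorphism gauge transformations to the BRST-derived infinitesimal ones, but this is exactly the assertion of \ref{thm:explicit_adjustment_groupoid}, so the remaining work amounts to bookkeeping on top of that theorem.
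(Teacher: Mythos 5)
Your proposal is correct and follows essentially the same route as the paper: the object component of the dg-functor is a dg-map $\rmT[1]Y\rightarrow\rmT[1]\frg[1]$ determined by its underlying graded map to $\frg[1]$ (your chain-map argument on the curvature generators is exactly the content of \ref{rem:graded-map-to-differential-map}), yielding the data $(\phi,A)$, and the gauge transformations are obtained by invoking \ref{thm:explicit_adjustment_groupoid} to identify the infinitesimal gauge symmetries with the adjusted BRST complex and then reading off the truncated formulas from \ref{ssec:local_adjusted_connections}. The only cosmetic difference is your explicit trivialization of the Čech groupoid over the patch, which the paper leaves implicit.
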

    \begin{proof}
        The kinematical data is evident, because on objects, the dg-Lie groupoid functor $\Psi:\rmT[1]\check\scC(\sigma)\rightarrow \scA^\sfW$ reduces to a map $\caA:\rmT[1]Y\rightarrow \rmT[1]\frg[1]$, which by~\ref{rem:graded-map-to-differential-map} is fully determined by a map of graded manifolds $\caA_0:\rmT[1]Y\rightarrow \frg[1]$. The data specifying this map is precisely~\eqref{eq:kin_data}.
        
        The fact that the gauge transformations are of this form follows directly from \ref{eq:adjustment_of_groupoid} and \cref{def:adjusted_BRST}, together with the derivation of adjusted gauge transformations in \ref{ssec:local_adjusted_connections}.
    \end{proof}
    
    \begin{lemma}[Adjustment groupoid]\label{eq:adjustment_of_groupoid}
        For any Cartan Connection $\caH$ on a Lie groupoid $\scG$, there is an isomorphism of presheaves consist of dg-maps
        \begin{equation}\label{eq:adjustment_condition}
            \caN\mapsto \ihom^{\rm red}_{\nabla_{\caH}}(\caN,\rmT[1]\frg[1])\eand \caN\mapsto \sfLie\left(\catdgGrpd\left(\caN\rightrightarrows \caN,\scA^\sfW_{\caH}\right)\right)~,
        \end{equation}
        where $\nabla_{\caH}$ is the Cartan connection on the Lie algebroid $\frg$ induced by $\caH$. 
    \end{lemma}
    
    \begin{proof}
        This requires a short characterization of the first presheaf and a rather lengthy computation for the second presheaf.
        
        \textit{Step 1.}	
        For $\caX$ an N$Q$-manifold, each element $f$ of $\sfHom(\caX,\ihom^{\rm red}_{\nabla}(\caN,\rmT[1]\frg[1]))$ is a dg-map $f:\caX\times \caN\to \rmT[1]\frg[1]$ such that $(\sfp_{\rm curv}\circ f)^\sharp$ takes values in $C^\infty(\caX)_0\otimes C^\infty(\caN)$, where a subscript restricts the ring of functions to the subset with elements of a particular degree. By \ref{rem:graded-map-to-differential-map}, the dg-map $f$ is given by the lift of a graded map $f_{\flat}:\caX\times \caN\to \frg[1]$. As $C^\infty(\frg[1])$ is generated by $C^\infty(M)$ and $\Gamma(\frg[1]^*)$, such a map is characterized by
        \begin{itemize}
            \item a topological map
            \begin{equation}
                |f_{\flat}|:|\caX|\times |\caN|\rightarrow M
            \end{equation}
            \item and two maps of modules over $ C^\infty(M)$,
            \begin{equation}
                \begin{aligned}
                    f^\sharp_{\flat 0}:&& \Gamma(\frg[1]^*)&\rightarrow |f|_*\left(C^\infty(\caX)_0\otimes C^\infty(\caN)_1\right)~,
                    \\
                    f^\sharp_{\flat 1}:&& \Gamma(\frg[1]^*)&\rightarrow |f|_*\left(C^\infty(\caX)_1\otimes C^\infty(\caN)_0\right)~,
                \end{aligned}
            \end{equation}
            subject to conditions explained below and listed in~\eqref{eq:brstproof1}.
        \end{itemize}
        
        The additional conditions ensure that $f^\sharp \circ \sfp_{\rm curv}^\sharp$ only takes values in  $C^\infty(\caX)_0\otimes C^\infty(\caN)$. The lift $f$ can be explicitly calculated by imposing that $f$ is a dg-map. In the local coordinates $(m^a,\bar m^a,\xi^\alpha,\bar \xi^\alpha)$ and the structure functions introduced in~\eqref{eq:coordinate_changed_Weil}, $f$ is given by $|f|=|f_\flat|$ and
        \begin{equation}\label{eq:brstproof0}
            \begin{split}
                f^\sharp(m^a)&=f^\sharp_{\flat }(m^a)
                \\
                &=   |f_{\flat}|^*(m^a)~,
                \\
                f^\sharp(\xi^\alpha)&=   f^\sharp_{\flat }(\xi^\alpha)
                \\
                &=   f^\sharp_{\flat 0}(\xi^\alpha) +  f^\sharp_{\flat 1}(\xi^\alpha)~,
                \\
                f^\sharp(\bar m^a)&=(\rmd_{\caX}+\rmd_{\caN})    f^\sharp_{\flat}(m^a)-     f^\sharp_{\flat}(\rmd_{\frg[1]}(m^a))
                \\
                &=
                (\rmd_{\caX}+\rmd_{\caN})   f^\sharp_{\flat}(m^a)- |f_{\flat}|^*\ttr^a_\alpha \left(   \tilde f^\sharp_{\flat 0}(\xi^\alpha) +   f^\sharp_{\flat 1}(\xi^\alpha)\right)~,
                \\
                f^\sharp(\bar \xi^\alpha)&=(\rmd_{\caX}+\rmd_{\caN})   f^\sharp_{\flat}(\xi^\alpha)-     f^\sharp_{\flat}(\rmd_{\frg[1]}(\xi^\alpha))~.
            \end{split}
        \end{equation}
        
        For $f^\sharp \circ \sfp_{\rm curv}^\sharp$ to only take values in $C^\infty(\caX)_0\otimes C^\infty(\caN)$, we need to impose the following relations:
        \begin{equation}\label{eq:brstproof1}
            \begin{gathered}
                \sfd_{\caX} |f|^*h= \left( f^\sharp_1\circ \rho^*\right)(\rmd h)~,
                \\
                \sfd_{\caX}f_0^\sharp\left(\xi^\alpha\right) + |f|^*\left(\ttf_{\beta\gamma}^\alpha\right) f^\sharp_1\left(\xi^\beta\right) f^\sharp_0\left(\xi^\gamma\right)+|f|^*\left(\omega_{a\beta}^\alpha\right) f^\sharp\left(\bar m^a\right)\tilde f^\sharp_1\left(\xi^\beta\right)+ \sfd_{\caN} f^\sharp_1\left(\xi^\alpha\right)=0~,
                \\
                \sfd_{\caX} f^\sharp_1\left(\xi^\alpha\right)=-\tfrac{1}{2}|f|^*\left(\sff_{\beta\gamma}^\alpha\right) f^\sharp_1\left(\xi^\beta\right) f^\sharp_1\left(\xi^\gamma\right)
            \end{gathered}
        \end{equation}
        for all $h\in C^\infty(M)$.

        \textit{Step 2.} On the other hand, $\sfHom(\caX,\sfLie(\catdgGrpd(\caN\rightrightarrows \caN,\scA^\sfW_{\caH})))$ is given by\footnote{A computation similar to the following one is found in~\cite[Sec.~8.4.3.2]{Li:2014}.}
        \begin{equation}
            \sfHom\left(\caX,\ihom_{\catdgGrpd}\left(\scP\mathsf{air}(\Theta),\catdgGrpd\left(\caN\rightrightarrows \caN,\scA^\sfW_{\caH}\right)\right)\right)~.
        \end{equation}
        To compute this presheaf, we have to perform a computation very similar to the one below~\ref{prop:Severa_diff}, but lifted to differential graded manifolds, and we will be more precise in the following.
        
        Note that we cannot quite use inverse currying, since the space $\catdgGrpd(\caN\rightrightarrows \caN,\scA^\sfW_{\caH})$ is only the degree~0 part of the inner hom. Therefore, the above inner hom is given by dg-functors 
        \begin{equation}\label{eq:fun_F}
            F:\Big(\caX\times \Theta \times \Theta \times \caN \rightrightarrows \caX\times \Theta \times \caN\Big)~~\rightarrow~~\scA^\sfW_{\caH}
        \end{equation}
        that factor through another groupoid:
        \begin{equation}\label{diag:degree_0_condition_groupoid}
            \begin{tikzcd}[column sep=3cm]
                \caX\times \Theta \times \Theta \times \caN \rightrightarrows \caX\times \Theta \times \caN \arrow[r, "F"] \arrow[d, "\sfp_{0}\times \rmid"']
                &\scA^\sfW_{\caH}      
                \\
                (\caX\times \Theta \times \Theta )_0 \times \caN \rightrightarrows (\caX\times \Theta)_0 \times \caN  \arrow[ru, "\check F"']
                &
            \end{tikzcd}
        \end{equation}
        where the topological part $|\sfp_0|$ of $\sfp_0$ is identity, $\sfp_0^\sharp$ is the evident inclusion of degree~$0$ elements in the local ring of functions, and $\check F$ is some morphism of graded manifolds. In the following, we will show that such functors are given by 
        \begin{itemize}
            \item a topological map
            \begin{equation}
                |F|:|\caX|\times |\caN| \to M
            \end{equation}
            \item and two maps of modules over $C^\infty(M)$ 
            \begin{equation}
                \begin{aligned}
                    (\bar F_0)^\sharp:&&\Gamma(\frg[1])^* &\to |F|_*\left(C^\infty(\caX)_0\otimes C^\infty(\caN)_1\right)~,
                    \\
                    (\bar F_1)^\sharp:&&\Gamma(\frg[1])^* &\to |F|_*\left(C^\infty(\caX)_1\otimes C^\infty(\caN)_0\right)~,
                \end{aligned}
            \end{equation}
            subject to further constraints which are the same as~\eqref{eq:brstproof1} after the identifications $|F|=|f|$, $(\bar F_1)^\sharp=f_1^\sharp$ and $(\bar F_0)^\sharp=f_0^\sharp$.
        \end{itemize}
        
        A general functor $F$ as in~\eqref{eq:fun_F} is given by a pair of dg-maps between morphisms and objects,
        \begin{equation}
            \begin{aligned}
                F_1:&&\caX\times \Theta \times \Theta \times \caN &\to \rmT[1]\frg[1]\fibtimes{\psi}{\rmd\sft}\rmT[1]\sfG~,
                \\
                F_0:&&\caX\times \Theta \times \caN &\to \rmT[1]\frg[1]~,
            \end{aligned}
        \end{equation}
        satisfying functoriality conditions. The dg-map $F_0:\caX\times \Theta\times \caN\rightarrow \rmT[1]\frg[1]$ is determined by uniquely lifting a graded map $F_{\flat 0}:\caX\times \Theta\times \caN\rightarrow \frg[1]$ which is given by a topological map $|F_{\flat 0}|=|F|$ as well as maps
        \begin{equation}
            \begin{aligned}
                \left(F_{\flat 0}^\sharp\right)_0:&&C^\infty(M) &\to |F|_*\left(C^\infty(\caX)_0\otimes C^\infty(\caN)_0~\oplus~ \theta C^\infty(\caX)_1\otimes C^\infty(\caN)_0\right)~,
                \\
                \left(F_{\flat 0}^\sharp\right)_1:&&\Gamma(\frg[1])^* &\to |F|_*\left(C^\infty(\caX)_0\otimes C^\infty(\caN)_1 ~\oplus~\theta C^\infty(\caX)_1\otimes C^\infty(\caN)_1\right)~,
            \end{aligned}
        \end{equation}
        where $(F_{\flat 0}^\sharp)_0$ is a ring homeomorphism, $(F_{\flat 0}^\sharp)_1$ is a morphism of modules over $C^\infty(M)$ and $\theta$ is the generator of $C^\infty(\Theta)$ with $\rmd_{\Theta}\theta=1$. The map $(F_{\flat 0}^\sharp)_0$ is calculated similar to~\eqref{eq:brstproof0}. Because the images of $F_{\flat 0}^\sharp$ on the ``curvature generators'' $\bar m^a$ and $\bar \xi^\alpha$ do not belong to 
        \begin{equation}
            (C^\infty(\caX)_0\otimes C^\infty(\caN)) \oplus \theta( C^\infty(\caX)_1\otimes C^\infty(\caN))=\big(C^\infty(\Theta \times \caX)\big)_0\otimes C^\infty(\caN)~,
        \end{equation}
        condition~\eqref{diag:degree_0_condition_groupoid} translates to $(\rmd_{\caX}+\rmd_{\Theta}) (F_{\flat 0}^\sharp)_0=0$ and  $(\rmd_{\caX}+\rmd_{\Theta}) (F_{\flat 0}^\sharp)_1=0$. Therefore, we have
        \begin{equation}\label{eq:brstproof2}
            \begin{aligned}
                \left(F_{\flat 0}^\sharp\right)_0(h)&= |F|^*h- \theta \sfd_{\caX}|F|^*h~,
                \\
                \left(F_{\flat 0}^\sharp\right)_1(s)&= (\bar F_0)^\sharp (|F|^* s)- \theta \sfd_{\caX}(\bar F_0)^\sharp( |F|^*s)
            \end{aligned}
        \end{equation}
        for all $h\in C^\infty(M)$ and $s\in \Gamma(\frg[1]^*)$.
        
        Next, we discuss the dg-map $F_{1}$. For $F_1$ to respect the target maps, $F_1$ must be of the form $F_1=(F_0\circ\sft)\times F_1^\prime$ for some dg-map $F_1^\prime:\caX\times \Theta \times \Theta \times \caN \to \rmT[1]\sfG$ with
        \begin{equation}\label{eq:target_compatibility}
            \rmd \sft \circ F_1^\prime= \psi\circ F_0\circ \sft~.
        \end{equation}
        Compatibility with the source maps then reads as
        \begin{equation}\label{eq:source_compatibility}
            \sfs\circ\left((F_0\circ\sft)\times F_1^\prime\right)=F_0\circ \sfs~,
        \end{equation}
        and $F_1^\prime$ must also respect composition of morphisms, which we will address later.
        
        By our usual argument from \ref{rem:graded-map-to-differential-map}, the map $F_1^\prime$ is given by a lift of a graded map $F_{1\flat}^\prime:\caX\times \Theta \times \Theta \times \caN \to \sfG$ and~\eqref{eq:target_compatibility} translates to $\sft \circ F_{1\flat}^\prime=\sfp_M\circ \psi\circ F_0\circ \sft$, where $\sfp_M$ is again the projection to the underlying manifold $M$ of $\rmT[1]M$. Furthermore, as $|\scP\mathsf{air}(\Theta)|$ is a trivial Lie groupoid and $|\scA^\sfW_{\caH}|=(\sfG\rightrightarrows M)$, the condition $\sfe\circ F_0=F_1 \circ \sfe$ implies
        \begin{equation}\label{eq:brstproof3}
            |F_1^\prime|=|F_1|=\sfe\circ |F_0|=\sfe\circ |F|~.
        \end{equation}
        
        Because the dg-Lie groupoid $\scP\mathsf{air}(\Theta)$ has contractible objects and morphisms, we can characterize the map $F_1$ by simpler maps. Consider the graded maps $\eps_{1,2}: \Theta \to \Theta \times \Theta $ given by $\eps_{1,2}^\sharp(r_0+r_1\theta_1+r_2 \theta_2+ r_{12}\theta_1\theta_2)= r_0+r_{1,2}\theta$, for real numbers $r_s$, and $\eps_0:\Theta\to \Theta$ given by $\eps_0^\sharp(r_0+r_1\theta)=r_0$. These maps fit into the commutative diagram
        \begin{equation}
            \begin{tikzcd}[column sep=2cm]
                & \Theta \times \Theta \arrow[r, "\eps_2\times \eps_1"] \arrow[rr, bend left=25,"\sfid"]
                &( \Theta \times \Theta)\times_{\Theta} (\Theta \times \Theta  )    \arrow[r, "\circ"]
                & \Theta \times \Theta
            \end{tikzcd}
        \end{equation}
        and satisfy the identities 
        \begin{equation}
            \begin{gathered}
                \eps_1=\sfinv\circ\eps_2~,~~\sfs\circ\eps_1=\sfid_{\Theta}~,\\
                \sft\circ\eps_1=\eps_0~,~~\eps_1\circ\eps_0=\sfe\circ\eps_0~.
            \end{gathered}
        \end{equation}

        Let us denote $F^\prime_{1\flat }\circ(\sfid_{\caX}\times\eps_1\times\sfid_{\caN})$ by $F_{1\flat }^{\prime\prime}$.  Then $F_{1\flat }^{\prime\prime}$ must satisfy 
        \begin{equation}\label{eq:functoriality_conditions_expanded}
            \begin{gathered}
                \sft \circ F_{1\flat }^{\prime\prime}=\sfp_M\circ\psi \circ F_0\circ (\sfid_{\caX}\times\eps_0\times\sfid_{\caN})
                \eand
                |F_{1\flat }^{\prime\prime}|= \sfe \circ|F|~.
            \end{gathered}
        \end{equation}
        The first condition is due to~\eqref{eq:target_compatibility} while the second equation is a consequence of~\eqref{eq:brstproof3}. Conversely,
        the map $F_{1\flat }^\prime$ is given in terms of $F_{1\flat }^{\prime\prime}$ by
        \begin{equation}\label{diag:morphism_map_definition}
            \begin{tikzcd}[column sep=1cm]
                &\caX \times \Theta \times \Theta \times \caN \arrow[r, "\sft \times \sfs"] \arrow[d,"F_1^\prime"]
                & (\caX \times \Theta  \times \caN)\times_{ (\caX \times \caN)} (\caX \times \Theta  \times \caN) \arrow[d,"(\sfinv\circ F_{1\flat}^{\prime\prime})\times F_{1\flat}^{\prime\prime}"]
                \\
                &  \sfG
                & \sfG\times_{M} \sfG    \arrow[l, "\circ"]~,
            \end{tikzcd}
        \end{equation}
        which can be shown by using compatibility of $F_{1\flat}^{\prime}$ with composition of morphisms. Hence, we conclude that $F_{1\flat}^{\prime\prime}$ fully fixes $F_{1}^{\prime}$.
        
        The map $F_{1\flat}^{\prime\prime}$ is now defined by $|F^{\prime\prime}_{1\flat}|=|F|$ and a derivation 
        \begin{equation}
            H\colon C^\infty(G)\to \left(\sfe\circ|F|\right)_*\left(C^\infty(\caX)_1\otimes C^\infty(\caN)_0\right)
        \end{equation}
        according to 
        \begin{equation}
            \begin{gathered}
                (F^{\prime\prime}_{1\flat})^\sharp(g)= \left(\sfe\circ|F|\right)^*g-\theta  H(g)~\in~C^\infty(|\caX|\times |\caN|)~\oplus~\theta C^\infty(\caX)_1\otimes C^\infty(\caN)_0
            \end{gathered}
        \end{equation}
        for $g \in   C^\infty(\sfG)$. As argued in~\cite[Prop. 8.21]{Li:2014}, any such derivation is given by the de~Rham differential followed by a morphism of modules 
        \begin{equation}
            H^\prime\colon\Omega^1(\sfG) \to \left(\sfe\circ|F|\right)_*\left( C^\infty(\caX)_1\otimes C^\infty(\caN)_0\right)~,
        \end{equation}
        which we now characterize further. The first equation in~\eqref{eq:functoriality_conditions_expanded} translates to 
        \begin{equation}
            H^\prime\left(\sft^* \varpi\right)=0
        \end{equation}
        for $\varpi\in\Omega^1(M)$, which implies that $H^\prime$ has to be of the form $H^{\prime\prime} \circ \sfi^*$, where 
        \begin{equation}
            H^{\prime\prime}\colon \Gamma(\ker(\rmd \sft))^* \to  \left(\sfe\circ|F|\right)_*\left(C^\infty(\caX)_1\otimes C^\infty(\caN)_0\right)
        \end{equation}
        and $\sfi^*$ is induced by $\sfi\colon\mathsf{Ker}(\rmd \sft)\to \rmT \sfG$. Finally, $H^{\prime\prime}$ is a map of sheaves, and therefore it factors uniquely through pullbacks if the topological part does. $H^{\prime\prime}$ is uniquely determined by 
        \begin{equation}
            H^{\prime\prime\prime}\colon  \sfe^*\Gamma(\ker(\rmd \sft))^*= \Gamma(\frg[1])^* \to  |F|_\star\left(C^\infty(\caX)_1\otimes C^\infty(\caN)_0\right)~.
        \end{equation}
        Therefore, we can identify $H^{\prime\prime\prime}$ with $(\bar F_1)^\sharp$. 
        
        Using the Baker--Campbell--Hausdorff formula for Lie groupoids~\cite{Ramazan:0001149}, see also the computation in~\cite[Sec.~8.4.3.1]{Li:2014}, one finds that $F_{1\flat}^\prime$ is given by 
        \begin{equation}
            \begin{gathered}
                |F^{\prime}_{1\flat}|: |\caX|\times |\caN| \to \sfG
                \ewith 
                |F^{\prime}_{1\flat}|=\sfe\circ |F|~,
                \\
                (F^{\prime}_{1\flat})^\sharp(g)=|F|^*(g)-\theta_2(\bar F_1)^\sharp (\rho^*(\sfe^*\rmd g))+(\theta_2-\theta_1) (\bar F_1)^\sharp(\sfe^*\sfi^* \rmd g)
                \\
                -\theta_2\theta_1\frac{1}{2} [ (\bar F_1)^\sharp(\sfe^*\sfi^* \rmd g), (\bar F_1)^\sharp(\sfe^*\sfi^* \rmd g)]~,
            \end{gathered}
        \end{equation}
        and the lift of $F_{1\flat}^\prime$ to $F_{1}^\prime$ is simply        
        \begin{equation}
            \begin{gathered}
                |F_{1\flat}^\prime|=|F_{1}^\prime|~,~~(F_{1}^\prime)^\sharp(g)= F_{1\flat}^\prime (g)~,\\
                (F_{1}^\prime)^\sharp(\rmd g)= \rmd_{(\caX\times \Theta\times \caN)}F_{1\flat}^\prime (g)= ( \rmd_{\caX}+\rmd_{ \Theta}+\rmd_{ \caN})F_{1\flat}^\prime (g)~.
            \end{gathered}
        \end{equation}
        Condition~\eqref{diag:degree_0_condition_groupoid} implies $( \rmd_{\caX}+\rmd_{ \Theta})F_{1\flat}^\prime (g)=0$, and therefore $(\bar F_1)^\sharp$ must satisfy the first and third equations in~\eqref{eq:brstproof1} with the identifications $|F|=|f|$ and $(\bar F_1)^\sharp=f_1^\sharp$. As a result $(F_{1}^\prime)^\sharp(\rmd g)=\rmd_{\caN}(F_{1}^\prime)^\sharp(\rmd g)$.
        We still need to implement condition~\eqref{eq:source_compatibility}. Precomposing this equation with $\sfid_{\caX}\times\sfe_1\times\sfid_{\caN}$ results in        
        \begin{equation}\label{eq:brstproof5}
            \sfs\circ((F_0\circ(\sfid_{\caX}\times\sfe_0\times\sfid_{\caN}))\times F_1^{\prime}\circ (\sfid_{\caX}\times\sfe_1\times\sfid_{\caN}))=F_0~.
        \end{equation}
        
        Let us again choose local coordinates $(m_1^a,\bar m_1^a,  \xi^\alpha,\bar  \xi^\alpha)$ for $\rmT[1]\frg[1]$ in a neighborhood $U\subset M$. The embedding $\sfe:M\rightarrow \sfG$ then allows us to identify $\sfe(U)\subset \sfG$ with $U\times V$, on which we use local coordinates $(m_2^a, \bar m_2^a, y^\alpha, \bar y^\alpha)$. In the fibered product $\rmT[1]\frg[1]\fibtimes{\psi}{\rmd \sft} \rmT[1]\sfG$ we need to identify $\bar m_1^a+ \ttr^a_\alpha \xi^\alpha$ with $\bar m_2^a$ and $m_1$ with $m_2$.  We will show later that
        \begin{equation}\label{eq:Infinitesimal_action}
            \racton^\sharp(\xi^\alpha)=\xi^\alpha+ \bar y^\alpha+ \omega^\alpha_{a\beta}\bar m_1^a y^\beta -\ttf_{\beta\gamma}^\alpha y^\beta \xi^\gamma+ O_2(y)~,
        \end{equation}
        where $O_2$ represents functions of order at least 2 in its argument.

        Let us assume this for now, then a straightforward calculation shows that the left hand side of~\eqref{eq:brstproof5} becomes
        \begin{equation}
            \begin{split}
                (\bar F_0)^\sharp(\xi^\alpha)+\theta \rmd_{\caN}(\bar F^1)^\sharp(\xi^\alpha)+\theta|F|^*( \omega^\alpha_{a\beta})F_0^\sharp(\bar m_1^a) (\bar F_1)^\sharp(\xi^\alpha) +\theta|F|^*(\ttf_{\beta\gamma}^\alpha)(\bar F_1)^\sharp( \xi^\beta)(\bar F_0)^\sharp( \xi^\gamma)~,
            \end{split}
        \end{equation}
        where we used the identity $\sfe^*\sfi^*\rmd y^\alpha=\xi^\alpha$, a consequence of the definition $\Gamma(\frg)^*=\Gamma(\ker(\rmd \sft))^*$. Comparing this with $F_0$, we find the second equation of~\eqref{eq:brstproof1} with the identifications $|F|=|f|$, $(\bar F_1)^\sharp=f_1^\sharp$ and $(\bar F_0)^\sharp=f_0^\sharp$. Hence, $\frbrst^{\rm adj}(\caN,\frg[1]) \cong  \sfLie(\catdgGrpd(\caN\rightrightarrows \caN,\scA^\sfW))$, if~\eqref{eq:Infinitesimal_action} holds. 
        
        It remains to prove~\eqref{eq:Infinitesimal_action}. 
        Recall that the Lie algebroid connection corresponding to the Lie groupoid connection is given by~\cite[Thm.\ 3.11]{Crainic:1210.2277} (see also \Cref{prop:associated_Lie_algebroid_connection})
        \begin{equation}
            \begin{split}
                (\nabla_ V \gamma)(x)
                &=
                [\mathsf{hor}_\caH(V), \gamma_l](\sfe_x)
                \\
                &=\left.-\frac{\rmd}{\rmd \epsilon} \rmd R_{\mathsf{exp}(-\epsilon \gamma )} \left(\sigma_\caH\bigl(\mathsf{exp}(\epsilon \gamma_x), V_x\bigr)\right)\right|_{\epsilon=0}
            \end{split}
        \end{equation} 
        for all $\gamma\in \Gamma(\frg)$ and $ V\in \frX(M)$, 
        where $ \gamma_l$ is the corresponding left invariant vector field, and where we expressed the Lie bracket of vector fields as a Lie derivative along $\gamma_l$ whose flow is given as right-translation with the exponential of Lie groupoids, that is, the flow of $\gamma_l$ through $q \in \sfG$ is given by $R_{\mathsf{exp}(\epsilon \gamma)}(q) \coloneqq q ~ \mathsf{exp}\left(\epsilon \gamma_{\sfs(q)}\right)$.\footnote{With some caveats, the exponential works similar as for Lie groups for our purpose, see~\cite[Prop.\ 3.6.1]{0521499283} for more details; recall that we define $\frg$ via the vertical structure of $\sft$, in particular, $x \mapsto \mathsf{exp}(-\epsilon \gamma_x)$ is a (local) section of $\sft$.}
        
        Firstly, we show that 
        \begin{equation}
            \begin{aligned}
                \sigma_\caH \colon &&\sft^* \rmT[1]M &\to \rmT[1]\sfG~,
                \\
                &&(g,\bar m)&\mapsto(m,\bar m,y,- \omega^\alpha_{a \beta}\bar m^a y^\beta \bar y_\alpha+O_2(y))~,
            \end{aligned}
        \end{equation}
        where $g=(m^a,y^\beta)$ and $\bar m=\bar{m}^a \bar{m}_a \in \rmT_{\sft(g)}M$ and $\bar{m}_a$ is a set of generators, which, in particular, dualizes to $(\bar m_2^a, \bar y^\alpha) \mapsto (\bar m^a- \omega^\alpha_{b \beta}\bar m^a y^\beta +O(y^2))$. To see this, suppose 
        \begin{equation}
            \sigma_\caH(m,\bar m,y)= (m, \bar m,y, U(m, y)_a^\alpha\bar m^a \partial_\alpha)~.
        \end{equation}
        Let us define $ \bar U(m, y)_{a\beta}^\alpha \coloneqq \left.\frac{\rmd}{\rmd y^\beta} U(m, y)_a^\alpha\right|_{y=0}$, then, by Hadamard's lemma, we have 
        \begin{equation}
            \sigma_\caH(m, \bar m,y)= (m ,\bar m, y,\bar U(m, y)_{a\beta}^\alpha\bar m^a y^\beta \partial_\alpha+ O_2(y))~.
        \end{equation}
        Using the Baker--Campbell--Hausdorff formula, one can see that
        \begin{equation}
            \sigma_\caH(m,\bar m,y) \circ \left(\sfs(m, y), \ttr^i_\alpha  U(m, y)_a^\alpha\bar m^a \bar m_i,-y, 0\right)
            = 
            \left(m,\bar m,0, \bar U(m, y)_{a\beta}^\alpha\bar m^a y^\beta \partial_\alpha+ O_2(y)\right)~.
        \end{equation}
        Note that the left hand side is nothing but $ \rmd R_{\mathsf{exp}(-\epsilon \gamma )} \sigma_\caH\bigl(\mathsf{exp}(\epsilon \gamma_x), V_x\bigr)$ with $\epsilon\gamma=y^\alpha \bar y _\alpha$ and $V=\bar m$, which makes it clear that $\bar U(m, y)_{a\beta}^\alpha= -\omega _{a\beta}^\alpha$. 
        
        Recall that in \ref{eq:adjoint_simplified} we showed that $\rmAd(g^{-1},\gamma)=(g^{-1},0)\circ(\sfe_{\sft(g)}, \gamma)\circ \sigma_\caH(g, \rho(\gamma))$. Using the Baker--Campbell--Hausdorff formula for Lie groupoids, one sees the dual of the map $ \sfp_{\frg[1]}\circ \rmAd\circ\sfinv^\star :\sft^*\frg[1]\to \frg[1]$ takes the form 
        \begin{equation}\label{eq:Infinitesimal_action_1}
            \xi^\alpha\mapsto \xi^\alpha+\ttf^\alpha_{\gamma\beta}\xi^\gamma y^\beta- \omega^\alpha_{a \beta}\ttr^a_\gamma \xi^\gamma y^\beta +O_2(y)~.
        \end{equation}
        By the identity $ \sfp^{\rm vert} + \sigma_\caH \circ \rmd \sft = \sfid_{\rmT G}$, and the definition $\vartheta^{\rm tot}(g,\gamma)=\left(\scL^{-1}\circ\sfp^{\rm vert}\right)(g,\gamma)$, in the local coordinates, we have
        \begin{align}
            \vartheta^{\rm tot}(m,\bar m, y, \bar y)
            &= 
            (m,0, y, 0)^{-1}\circ \bigl((m,\bar m, y, \bar y)- \sigma_\caH (m,y; \rmd \sft (m,\bar m, y, \bar y))\bigr)
            \\
            &=
            \left(m, 0, -y + O_2(y), 0\right)\circ \bigl((m,\bar m, y, \bar y)-\sigma_\caH(m, \bar m, y,0)\bigr)~.
        \end{align}
        With this, it is easy to see that the above map dualizes to
        \begin{equation}\label{eq:Infinitesimal_action_2}
            \xi^\alpha\mapsto  \bar y^\alpha + \omega^\alpha_{a \beta}\bar m_2^a  y^\beta +O_2(y)~.
        \end{equation}
        Summing~\eqref{eq:Infinitesimal_action_1} and~\eqref{eq:Infinitesimal_action_2}, together with the identification $\bar m_1^a+ \ttr^a_\alpha \xi^\alpha$ with $\bar m_2^a$, results in~\eqref{eq:Infinitesimal_action}. This completes the proof.
    \end{proof}
    
    \subsection{General statements on existence of adjustments and connections}
    
    Before coming to concrete examples, let us collect a number of statements that are readily derived from the results of previous literature. 
    
    We start with a statement that is evident because ordinary principal bundles are special cases of principal groupoid bundles.
    \begin{proposition}
        Not every principal groupoid bundle admits a flat connection.
    \end{proposition}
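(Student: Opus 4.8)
The plan is to exploit the observation recorded just above \ref{def:cocycles_groupoid_bundles_adjusted_connection} that, for $\scG=\sfB\sfG=(\sfG\rightrightarrows *)$ with $\sfG$ a Lie group, a principal $\scG$-bundle is precisely an ordinary principal $\sfG$-bundle, and that under this identification a flat connection in the sense of \ref{def:cocycles_groupoid_bundles_adjusted_connection} --- a dg-Lie groupoid functor $\rmT[1]\check\scC(\sigma)\rightarrow\scA(\sfB\sfG)$ --- reduces to the classical notion of a flat connection, as established in \ref{prop:flat_connections_Atiyah} and \ref{thm:cocycle_bundles_flat_connection}. Hence it suffices to exhibit a single ordinary principal bundle that carries no flat connection.

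First I would take $X=S^2$, $\sfG=\sfU(1)$, and let $P$ be the Hopf bundle $S^3\rightarrow S^2$, whose first Chern number satisfies $\int_{S^2}c_1(P)=1$. Then I would argue by Chern--Weil theory: the curvature $F$ of any connection on $P$ de~Rham-represents a fixed nonzero multiple of $c_1(P)$ in $H^2_{\rm dR}(S^2)$, so if $P$ admitted a flat connection --- one with $F$ vanishing identically --- the class $[c_1(P)]$ would be zero, forcing $\int_{S^2}c_1(P)=0$ and contradicting the computation above. Therefore $P$ carries no flat connection, and consequently the corresponding principal $\sfB\sfU(1)$-bundle over $S^2$, viewed as a principal groupoid bundle, carries none either.

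The argument has no real obstacle: all of the content is classical, and the only point requiring attention is the bookkeeping identification of a ``flat connection on the groupoid bundle'' with a ``flat connection on the underlying principal bundle'', which is exactly what \ref{prop:flat_connections_Atiyah} and \ref{thm:cocycle_bundles_flat_connection} supply. As an alternative to the Chern--Weil step one could instead use holonomy: a flat connection on a principal bundle over the simply connected base $S^2$ has trivial holonomy and hence trivializes the bundle, which again contradicts non-triviality of the Hopf bundle.
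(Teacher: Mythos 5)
Your proposal is correct and follows the same route as the paper, which gives no separate argument but simply notes the statement is evident because ordinary principal bundles (the case $\scG=\sfB\sfG$) are special cases of principal groupoid bundles, with flatness matching the classical notion via \ref{thm:cocycle_bundles_flat_connection}. You merely make the reduction explicit by exhibiting a concrete witness (the Hopf bundle over $S^2$, ruled out by Chern--Weil or by the holonomy argument), which is a fine elaboration of the same idea.
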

    
    In the case of higher gauge theories, the existence of adjustments is not guaranteed, and there are higher Lie groups that do not admit an adjustment, see e.g.~the case of the 2-group $\sfT\sfB_n$ discussed in~\cite{Kim:2023hqx}. Below are the analogous statements for Lie algebroids and Lie groupoids.
    \begin{proposition}
        Not every Lie algebroid admits an adjustment.
    \end{proposition}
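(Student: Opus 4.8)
The plan is to produce an explicit Lie algebroid on which no connection has vanishing basic curvature. By~\eqref{eq:basic_curvature} and the discussion after \ref{def:local_adjustment}, a plain adjustment on $\frg=(\frg\rightarrow M)$ is exactly a vector bundle connection $\nabla$ with $R^{\rm bas}_\nabla=0$, i.e.\ a Cartan algebroid structure in the sense of~\cite{Blaom:0404313}, so it suffices to find a Lie algebroid that is not a Cartan algebroid. I would search among the totally intransitive Lie algebroids, that is, the bundles of Lie algebras $\frg\rightarrow M$ with vanishing anchor $\rho=0$, since there the obstruction is transparent.

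First I would simplify $R^{\rm bas}_\nabla$ for such a $\frg$. As $\rho=0$ we have $\nabla^{\rm bas}_\nu V=\rho(\nabla_V\nu)+[\rho(\nu),V]=0$, so the last two terms of~\eqref{eq:basic_curvature} drop out and
\begin{equation*}
  R^{\rm bas}_\nabla(\nu_1,\nu_2)(V)=\nabla_V[\nu_1,\nu_2]-[\nabla_V\nu_1,\nu_2]-[\nu_1,\nabla_V\nu_2].
\end{equation*}
This vanishes identically iff every $\nabla_V$ is a derivation of the fibrewise bracket, i.e.\ iff $\nabla$ is parallel with respect to the structure tensor $\mu\in\Gamma(\Lambda^2\frg^*\otimes\frg)$ given by $\mu(\nu_1,\nu_2)=[\nu_1,\nu_2]$. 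But a connection that parallelizes a tensor field has parallel transport preserving that tensor; hence an adjustment would force parallel transport of $\frg$ along any path to be an isomorphism of Lie algebras between the fibres. Consequently, over a connected $M$, if $\frg$ has two fibres that are non-isomorphic as Lie algebras, then $\frg$ admits no adjustment.

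To conclude I would write down such a $\frg$. Over $M=\IR$ with coordinate $t$, take $\frg=\IR\times\IR^3$ with fibre basis $e_1,e_2,e_3$ and the $t$-dependent bracket $[e_1,e_2]_t=t\,e_3$, all other brackets zero; the Jacobiator vanishes identically for every $t$, so this is a genuine Lie algebroid (with $\rho=0$). Its fibre over $t=0$ is abelian, while for $t\neq0$ it is isomorphic to the Heisenberg algebra $\frh_3$, and these have centres of dimension $3$ and $1$ respectively, hence are not isomorphic. By the previous paragraph this $\frg$ has no adjustment, which proves the statement.

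I do not expect a genuine technical obstacle here; the only points needing care are the reduction of~\eqref{eq:basic_curvature} at $\rho=0$ and the elementary fact that $\nabla$-parallel tensors are related by parallel transport — both routine. If a locally trivial (e.g.\ transitive) example is desired instead, one can note that for $\frg=\rmT M$ and torsion-free $\nabla$ the algebraic Bianchi identity rewrites~\eqref{eq:basic_curvature} as $R^{\rm bas}_\nabla(\nu_1,\nu_2)(V)=-R_\nabla(\nu_1,\nu_2)V$, so that no torsion-free adjustment exists unless $M$ is affinely flat; together with the analysis of Cartan algebroid structures on tangent algebroids in~\cite{Blaom:0404313}, this exhibits tangent algebroids with no adjustment at all.
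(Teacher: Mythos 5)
Your proof is correct and follows the same route as the paper: both obtain the counterexample from totally intransitive Lie algebroids (bundles of Lie algebras with $\rho=0$) that fail to be Lie algebra bundles. The only difference is that the paper cites Mackenzie's theorem (a totally intransitive Lie algebroid admits a Cartan connection iff it is a Lie algebra bundle), whereas you prove the needed direction directly --- at $\rho=0$ the basic curvature vanishes iff $\nabla$ parallelizes the bracket tensor, so parallel transport forces all fibres over a connected base to be isomorphic Lie algebras --- and you exhibit an explicit witness (the bundle over $\IR$ degenerating from the Heisenberg algebra to the abelian one), which makes the argument self-contained; your computations, including the side remark that $R^{\rm bas}_\nabla=-R_\nabla$ for torsion-free connections on $\rmT M$, check out.
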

    \begin{proof}
        Consider the class of totally intransitive Lie algebroids, i.e.~Lie algebroids where the anchor map vanishes, which are also called bundles of Lie algebras (BLA). Only a proper subclass of these are what one calls Lie algebra bundles (LAB) as defined in~\cite[Def.~3.3.8~ff.]{0521499283}, that is, in general BLA $\neq$ LAB. However, a totally intransitive Lie algebroid admits a Cartan connection if and only if it is a Lie algebra bundle~\cite[Thm.\ 6.4.5]{0521499283}, see also~\cite[Prop.\ 2.13]{Abad:0901.0319}.
    \end{proof}
    
    This directly implies that not every Lie groupoid admits an adjustment. Note that a brief discussion of literature about the existence of Cartan connections on non-transitive examples can be found in the introduction of~\cite{Fernandes:2204.08507}.
    
    \begin{proposition}
        Not every Lie algebroid admitting a Cartan connection admits also a covariant adjustment.
    \end{proposition}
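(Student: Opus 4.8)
The plan is to settle the statement with one clean counterexample, built from a bundle of Lie algebras with \emph{abelian} fibres. Let $X=S^2$ and let $E\rightarrow X$ be the tangent bundle $\rmT S^2$, regarded purely as a real rank-$2$ vector bundle. Since $X$ is simply connected and $E$ is non-trivial, $E$ admits no flat connection: a flat connection would exhibit $E$ as associated to a representation of $\pi_1(X)=1$, hence as a trivial bundle. Now equip $E$ with the totally intransitive Lie algebroid structure $\frg=(E,\rho=0,[-,-]=0)$. This $\frg$ is trivially a Lie algebra bundle (local triviality as a bundle of Lie algebras is just local triviality of $E$), so by \cite[Thm.~6.4.5]{0521499283} it admits a Cartan connection. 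Concretely, inspecting the basic curvature \eqref{eq:basic_curvature} one sees that every term contains either the bracket or the anchor, so $R^{\rm bas}_\nabla=0$ for \emph{every} vector bundle connection $\nabla$ on $E$; thus every connection on $E$ is a (plain) adjustment of $\frg$ in the sense of \ref{def:local_adjustment}.

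Next I would show that $\frg$ admits no covariant adjustment. Suppose $(\nabla,\zeta)$ were one, with $\zeta\in\Omega^2(X,\frg)$ and $R_\nabla+\nabla^{\rm bas}\zeta=0$. Reading off the structure functions from the coordinate description of the Weil differential in \cref{ssec:Weil_algebra}, the components $(R_\nabla)^\alpha_{ab\beta}=\partial_a\omega^\alpha_{b\beta}-\partial_b\omega^\alpha_{a\beta}+\omega^\alpha_{a\gamma}\omega^\gamma_{b\beta}-\omega^\alpha_{b\gamma}\omega^\gamma_{a\beta}$ are exactly the curvature of $\nabla$ viewed as an ordinary vector bundle connection on $E$, whereas every term of $(\nabla^{\rm bas}\zeta)^\alpha_{ab\beta}$ is proportional to a structure function of the anchor $\rho$ or of the Lie bracket $[-,-]$. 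For $\frg$ both of these vanish, so $\nabla^{\rm bas}\zeta=0$ identically, and the covariant adjustment equation collapses to $R_\nabla=0$. Hence a covariant adjustment of $\frg$ would supply a flat connection on $E$, contradicting the choice of $E$. This proves the proposition.

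There is no genuinely difficult step here; the only point that requires a little care is the identity $\nabla^{\rm bas}\zeta=0$ for an abelian bundle of Lie algebras, i.e.\ the fact that the $\zeta$-freedom is useless in this case. I would justify it both through the coordinate expression above and, invariantly, by noting that the basic $\frg$-connection on $\frg$ is $\nabla^{\rm bas}_{\nu_2}\nu_1=\nabla_{\rho(\nu_1)}\nu_2+[\nu_2,\nu_1]=0$ and the basic $\frg$-connection on $\rmT X$ is $\nabla^{\rm bas}_\nu V=\rho(\nabla_V\nu)+[\rho(\nu),V]=0$, so the first-order operator $\rmd^{\nabla^{\rm bas}}$ of \cref{rem:AlternCompEq} annihilates $\zeta$. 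If desired I would add, as a supplementary remark, a tangent-algebroid example ($\rmT X$ with any torsion-free affine connection of non-vanishing curvature) to indicate that the obstruction is not an artefact of abelian fibres, but the abelian bundle of Lie algebras already settles the statement.
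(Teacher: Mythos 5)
Your proof is correct, but it takes a genuinely different route from the paper. The paper argues abstractly: it invokes Mackenzie's characterization (a primitive $\zeta$ forces the Cartan connection to be a Lie derivation law covering a coupling) and, alternatively, the cohomological reading of the adjustment conditions — the curvature of a Cartan connection is always closed in the relevant cohomology, a covariant adjustment is precisely exactness of that curvature, and one then appeals to the cited literature for Cartan connections whose curvature class is non-exact. You instead exhibit a concrete, self-contained counterexample: the abelian bundle of Lie algebras $\frg=(\rmT S^2,\rho=0,[-,-]=0)$. Your two key computations are sound: with vanishing anchor and bracket, every term of the basic curvature~\eqref{eq:basic_curvature} dies, so \emph{every} vector bundle connection is a plain adjustment (Cartan), and every term of $(\nabla^{\rm bas}\zeta)^\alpha_{ab\beta}$ in~\eqref{eq:coordinate_changed_Weil} is proportional to $\ttr$ or $\ttf$, so $\nabla^{\rm bas}\zeta\equiv 0$ for any $\zeta\in\Omega^2(S^2,\frg)$; hence the covariant condition $R_\nabla+\nabla^{\rm bas}\zeta=0$ collapses to $R_\nabla=0$, which is impossible since a flat connection over the simply connected base $S^2$ would trivialize the nontrivial bundle $\rmT S^2$. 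Note your example even proves a slightly stronger statement than the weak reading of the proposition: the algebroid admits Cartan connections in abundance, yet \emph{no} pair $(\nabla,\zeta)$ whatsoever gives a covariant adjustment. What each approach buys: yours is explicit, elementary, and independent of the external references; the paper's is more general and structural, placing the failure in the closed-versus-exact (Mackenzie obstruction class) framework, which also covers non-abelian situations where the $\zeta$-freedom is genuinely nonzero but still insufficient — in your abelian example that freedom is literally the zero map, which is what makes the argument so clean. Your invariant justification of $\nabla^{\rm bas}\zeta=0$ via $\rmd^{\nabla^{\rm bas}}$ is a bit informal given the double-degree subtleties mentioned in the paper's \cref{rem:AlternCompEq}, but the coordinate computation you give is airtight, so nothing is missing.
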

    \begin{proof}
        As an example, consider again the case of a Lie algebroid bundle $\frg$. An existence of a primitive $\zeta$ requires that the Cartan connection is a Lie derivation law covering a Lie algebroid morphism $\Xi\colon \rmT M \to \operatorname{Out}\left(\scD_{\operatorname{Der}}(\frg)\right)$, see the discussion in~\cite[\S 7.2]{0521499283} after Def.~7.2.2.
        
        Alternatively, a Cartan connection is just a certain type of closed connection (\cite{Crainic:1210.2277, Abad:0911.2859}, see also~\cite{Fernandes:2204.08507}). The curvature of a Cartan connection is in fact also closed, see\footnote{This result was also independently found by A.\ Kotov (unpublished).}~\cite[Thm.\ 4.8.1]{Fischer:2021glc} as well as~\cite[Prop.\ 2.22]{Fernandes:2204.08507} (for multiplicative Ehresmann connections). As pointed out in~\cite{Fischer:2020lri, Fischer:2021glc, Fischer:2022sus},\footnote{A.\ Kotov and T.\ Strobl found an alternative proof and motivation of this statement (unpublished), independently from and after S.-R.\ Fischer's proof.} an adjustment is such a connection whose curvature is actually exact, i.e.~admitting a primitive. Any Cartan connection whose curvature is not exact hence does not admit an adjustment.
    \end{proof}
    
    \begin{proposition}
        An adjustment of a Lie groupoid is generally not unique. 
    \end{proposition}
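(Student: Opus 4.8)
The plan is to produce a single Lie groupoid carrying two adjustments that are not related by the isomorphisms allowed in \ref{eq:adjustment_of_groupoid} (those preserving the zero-section embedding $\frg[1]\hookrightarrow\rmT[1]\frg[1]$). The cleanest source of examples is \ref{thm:explicit_adjustment_groupoid}: every Cartan connection $\caH$ on $\scG$ yields an adjustment groupoid $\scA^\sfW_\caH$, and its underlying Lie-algebroid connection $\nabla$ on $\frg$ — the one induced from $\caH$ by \ref{prop:associated_Lie_algebroid_connection} — is an invariant of $\scA^\sfW_\caH$, since $\nabla$ is exactly the connection datum entering the Weil differential \eqref{eq:coordinate_changed_Weil} on the object dg-manifold $\rmT[1]\frg[1]$, which no admissible isomorphism can alter. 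Thus it suffices to exhibit two Cartan connections on one Lie groupoid that induce genuinely different (e.g.\ differently-torsioned) connections on its Lie algebroid.

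First I would treat the pair groupoid $\scP\mathsf{air}(M)$ of \ref{ex:pair_groupoid} (the setting of \ref{ex:PairGroupoidsEx}), whose Lie algebroid is the tangent algebroid $\rmT M$. I would observe that a multiplicative distribution complementary to $\ker(\rmd\sft)$ amounts precisely to a smoothly varying family of linear isomorphisms $\Phi_{m_1m_2}\colon\rmT_{m_2}M\to\rmT_{m_1}M$ with $\Phi_{mm}=\rmid$ and $\Phi_{m_1m_2}\Phi_{m_2m_3}=\Phi_{m_1m_3}$, i.e.\ a trivialization of $\rmT M$, with $\nabla$ the connection for which the chosen frame is $\nabla$-parallel. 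Taking $M=\sfG$ a non-abelian Lie group and comparing the left- and right-invariant parallelizations gives two Cartan connections whose induced connections $\nabla^{\mathrm L},\nabla^{\mathrm R}$ on $\rmT\sfG$ have, on invariant vector fields, torsions $\mp[-,-]\neq 0$; hence $\nabla^{\mathrm L}\neq\nabla^{\mathrm R}$ and the two adjustment groupoids are inequivalent. In parallel I would record the analogous, even more elementary, phenomenon for a trivial Lie group bundle $M\times\sfG\rightrightarrows M$ with $\dim M\ge 1$ (cf.\ \ref{ssec:Lie_group_bundles} and \cite{Fischer:2022sus}): a plain adjustment is a bracket-preserving connection on $M\times\frg$, and $\nabla=\rmd+\rmad_A$ is bracket-preserving for every $A\in\Omega^1(M,\frg)$ by the Jacobi identity, yielding an affine family of pairwise distinct adjustments.

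The step I expect to be the main obstacle is not the construction of the two candidates but the verification that they are truly inequivalent adjustment groupoids rather than two presentations of the same one; this is the reason for routing the argument through the invariant $\nabla$ and for choosing examples in which the competing connections are separated by a pointwise tensor (the torsion) rather than by something gauge-like. A secondary, purely bookkeeping point is to confirm that the basic-curvature condition $R^{\mathrm{bas}}_\nabla=0$ of \ref{def:local_adjustment} holds for each candidate — immediate for $\nabla=\rmd+\rmad_A$ on a Lie algebra bundle, since the anchor vanishes and the condition reduces to the bracket being $\nabla$-parallel, which is the Jacobi identity, and, for the pair groupoid, automatic because a connection on $\frg$ induced from a Cartan connection on $\scG$ is Cartan by \ref{prop:associated_Lie_algebroid_connection}. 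Assembling these observations shows that a Lie groupoid generically admits more than one adjustment.
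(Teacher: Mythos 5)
Your proposal is correct in substance but takes a genuinely different route from the paper. The paper's proof is a one-line citation: for Lie algebra bundles, a \emph{fixed} Cartan connection admits a whole family of different primitives $\zeta$ (citing Fischer's thesis), so non-uniqueness is located in the covariant-adjustment datum. You instead vary the plain adjustment itself: two distinct Cartan connections on one groupoid (left- vs.\ right-invariant parallelizations of $\scP\mathsf{air}(\sfG)$ for $\sfG$ non-abelian, or the family $\nabla=\rmd+\rmad_A$ on a trivial Lie group bundle, where you should additionally require $\sfG$ non-abelian, and $A-A'$ not centrally valued, for the family to be genuinely pairwise distinct) induce distinct connections $\nabla$ with $R^{\rm bas}_\nabla=0$ via \ref{prop:associated_Lie_algebroid_connection}, hence distinct adjustment data in the sense of \ref{thm:explicit_adjustment_groupoid}. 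This is self-contained and explicit where the paper defers to a reference, which is a genuine gain. The one shaky step is your invariance claim: the Weil differential on the object dg-manifold $\rmT[1]\frg[1]$ is canonical and does \emph{not} depend on $\nabla$; the connection enters only through the choice of splitting \eqref{eq:bundle_isomorphism}, i.e.\ through $\sfp_{\rm curv}$ and the coordinates in \eqref{eq:coordinate_changed_Weil}, so "no admissible isomorphism can alter $\nabla$" is not established -- whether different splittings (field redefinitions) yield equivalent adjustments is precisely the question the paper explicitly postpones to future work, immediately after this proposition. Since the paper's own proof operates at the same informal level (it exhibits different data without a notion of equivalence of adjusted groupoids), your argument suffices for the statement as written, but the claim that $\nabla$ is an invariant of $\scA^\sfW_{\caH}$ should be weakened to the observation that the defining data (Cartan connection, induced $\nabla$, and hence the action $\racton$ built from $\vartheta^{\rm tot}$ and $\rmAd$) genuinely differ.
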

    \begin{proof}
        This is easily seen from the case of Lie algebra bundles, cf.~\cite[Cor.\ 5.1.39 and Thm.\ 5.1.38]{Fischer:2021glc}, for which the Cartan connection comes with a family of different primitives.
    \end{proof}
    \noindent We note that a precise characterization of uniqueness involves the definition of isomorphisms of groupoids with adjustment. This can be done by spans of adjusted groupoids, cf.~\cite{Rist:2022hci} for the analogous case of Lie 2-groups. We plan to address this in~\cite{future:2024ac}. 
    
    We can say a few more things about adjustments originating from flat Cartan connections.
    \begin{proposition}[{\cite[Cor.~3.12]{Crainic:1210.2277}}]\label{prop:localIsomToActionAlg}
        If $\scG$ is a $\sft$-simply connected Lie groupoid over a compact and simply connected manifold $M$, then $\scG$ admits a flat Cartan connection $\caH$ if and only if $\scG$ is isomorphic to an action Lie groupoid associated to a Lie group $\sfH$ acting on $M$.
    \end{proposition}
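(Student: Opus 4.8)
The plan is to obtain this as the specialization of \cite[Cor.~3.12]{Crainic:1210.2277} to Lie groupoids, so I would only sketch the skeleton. For the ``if'' direction, suppose $\scG\cong M\rtimes\sfH$ is an action Lie groupoid, with $\sft(m,h)=m$ and $\sfs(m,h)=m\cdot h$. I would exhibit the canonical flat Cartan connection directly: with respect to the product decomposition $\rmT(M\times\sfH)\cong\rmT M\oplus\rmT\sfH$ set
\begin{equation}
    \caH_{(m,h)}\coloneqq\rmT_m M\oplus 0~.
\end{equation}
This is complementary to $\ker(\rmd\sft)=0\oplus\rmT\sfH$ by construction, it is integrable with leaves $M\times\{h\}$ and hence flat, and it is multiplicative: a short computation of composability and of the composite in the tangent prolongation $\rmT(M\rtimes\sfH)$ shows that a composition of two horizontal vectors is again horizontal and that $\rmd\sfe(\rmT M)\subseteq\caH$.

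For the ``only if'' direction, let $\caH$ be a flat Cartan connection. First I would integrate $\caH$ to a foliation $\scF$ of $\sfG$; since $\rmd\sft$ restricts to an isomorphism $\caH\xrightarrow{\ \sim\ }\sft^*\rmT M$, the target map restricts to an \'etale map on each leaf $L$. The crucial point is that multiplicativity forces the horizontal lift to be implemented by groupoid operations (via $\sigma_\caH$), so that, together with compactness of $M$, the connection is complete; hence each $\sft|_L\colon L\to M$ is a covering map, and since $M$ is simply connected it is a diffeomorphism. Consequently the $\sft$-fibre $\sfH\coloneqq\sfG^{x_0}$ over a base point $x_0\in M$ meets every leaf in exactly one point, giving a diffeomorphism $\sfG\cong M\times\sfH$ under which $\sft=\sfp_M$ and the leaves of $\scF$ are the slices $M\times\{h\}$; moreover $\sfH$ is simply connected because $\scG$ is $\sft$-simply connected.

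It then remains to transport the groupoid structure across this diffeomorphism and recognise it as an action groupoid. The unit section $\sfe(M)$ is an integral manifold of $\caH$ (multiplicativity makes $\caH$ contain the identities, cf.~\ref{lem:connection_one_form_properties}), so it lies in a single leaf and $\sfe_m=(m,e_0)$ for a fixed $e_0\in\sfH$. For the composition $c$, multiplicativity says that $c$ sends $\scF$-leaves to $\scF$-leaves; since $(M\times\{h_1\})\fibtimes{\sfs}{\sft}(M\times\{h_2\})\cong M$ is connected, its image under $c$ lies in one leaf $M\times\{\mu(h_1,h_2)\}$, so the $\sfH$-component $\mu(h_1,h_2)$ of the composite is independent of the base point. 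Associativity of $c$ makes $\mu$ associative with two-sided unit $e_0$, and the groupoid inverse provides inverses, so $(\sfH,\mu)$ is a Lie group; finally $m\cdot h\coloneqq\sfs(m,h)$ defines a right $\sfH$-action on $M$ (the action identity being $\sfs$ applied to a composition), and the diffeomorphism $\sfG\cong M\times\sfH$ becomes an isomorphism of Lie groupoids $\scG\cong M\rtimes\sfH$.

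The main obstacle is the completeness step: showing that the flat horizontal foliation $\scF$ is a ``product'' foliation, i.e.\ that $\sft$ restricted to each leaf is a covering of $M$. This is where compactness of $M$ genuinely enters (a flat connection on a non-proper submersion need not be complete) and where the multiplicative structure of $\caH$ is used essentially; the remaining steps are routine transports of structure. For the full details I would refer to \cite[Sec.~3]{Crainic:1210.2277}.
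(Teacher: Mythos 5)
The paper does not actually prove this proposition: it is quoted verbatim from~\cite[Cor.~3.12]{Crainic:1210.2277}, so there is no in-paper argument to compare yours with, and your sketch has to stand on its own. Your ``if'' direction is fine: the product distribution $\rmT M\oplus 0$ on an action groupoid is indeed the canonical flat Cartan connection, and no further hypotheses are needed there. For the ``only if'' direction you take a genuinely different, global route from the proof that the paper's surrounding results suggest. The natural argument in the paper's own toolkit is infinitesimal: differentiate $\caH$ to a flat Lie algebroid connection $\nabla$ with vanishing basic curvature (\ref{prop:associated_Lie_algebroid_connection}), note that vanishing basic curvature makes the $\nabla$-parallel sections closed under the bracket, so over the simply connected $M$ one gets $\frg\cong \frh\ltimes M$ as in \ref{prop:FlatCartanAndActionAlg}; compactness of $M$ then enters only to guarantee that the anchor vector fields are complete, so the infinitesimal $\frh$-action integrates to an action of the $1$-connected group $\sfH$, and $\sft$-simple connectedness of $\scG$ is used, via Lie~II, to identify $\scG$ with the action groupoid $M\rtimes\sfH$. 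In that proof every hypothesis has a visible job.

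In your version the entire content is concentrated in the step you do not prove: that $\sft|_L\colon L\to M$ is a covering for every leaf $L$ of the horizontal foliation. Compactness of the base alone does not make a flat Ehresmann connection on a surjective submersion complete, so ``compactness of $M$ genuinely enters'' is not an argument; everything hinges on how multiplicativity of $\caH$ forces completeness, and that is exactly what is missing. Note that the most tempting mechanism does not work as stated: the projectable horizontal lift $\mathsf{hor}_\caH(V)$ of a complete $V\in\frX(M)$ is $\sft$-projectable but is in general \emph{not} a multiplicative vector field (its image under $\rmd\sfs$ need not descend to a vector field on $M$), so one cannot directly argue that its flow is governed by groupoid translations and hence defined for all time. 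A further warning sign is that your argument never uses $\sft$-simple connectedness except to remark that $\sfH$ is $1$-connected, which the statement does not require; in the standard proof that hypothesis is essential (Lie~II), so its disappearance suggests the unproven completeness step is silently carrying it. The remaining transport-of-structure steps (the unit section sitting in one leaf, products of leaves landing in a single leaf via connectedness of the graph-like fibered product, the induced group structure on the leaf space $\sfH=\sft^{-1}(x_0)$, and the action $m\cdot h=\sfs(m,h)$) are indeed routine once completeness is granted. So either supply the completeness argument in detail --- that is where the real content of the cited corollary lies --- or reroute the proof through the infinitesimal statement \ref{prop:FlatCartanAndActionAlg} as above, where compactness and $\sft$-simple connectedness are used in places where their roles are standard.
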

    
    Infinitesimally, we have the following, analogous statement:
    \begin{proposition}[{\cite[Thm.~A]{Blaom:0404313},~\cite[Prop.\ 2.12]{Abad:0901.0319}}]\label{prop:FlatCartanAndActionAlg}
        Let $\frg$ be a Lie algebroid over a simply connected manifold $M$. Then $\frg$ admits a flat Cartan connection if and only if $\frg$ is isomorphic to an action algebroid associated to a Lie algebra action on $M$.
    \end{proposition}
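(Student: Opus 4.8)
The plan is to prove the two implications separately, in both cases reducing to the trivial connection on a trivial bundle. For the ``if'' direction, suppose $\frg = M\rtimes\frh$ is the action algebroid of a Lie algebra homomorphism $\frh\to\frX(M)$, so that $\frg = M\times\frh$ as a vector bundle. I would equip it with the trivial connection $\nabla^0$, whose parallel sections are exactly the constant ones; flatness of $\nabla^0$ is immediate. To check that $\nabla^0$ is Cartan, I would use that the basic curvature $R^{\rm bas}_\nabla$ of \eqref{eq:basic_curvature} is $C^\infty(M)$-linear in each of its three arguments (standard, and checkable directly from \eqref{eq:basic_curvature} via the Leibniz identities), so it suffices to evaluate it on the global frame of constant sections $\nu_1,\nu_2\in\frh$ and an arbitrary $V\in\frX(M)$: since $\nabla^0_V\nu_i=0$, since $[\nu_1,\nu_2]\in\frh$ is again constant, and since $\nabla^0_W\nu_i=0$ for every vector field $W$, all five terms in \eqref{eq:basic_curvature} vanish, so $R^{\rm bas}_{\nabla^0}=0$ and $\nabla^0$ is a flat Cartan connection.

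For the ``only if'' direction, let $\nabla$ be a flat Cartan connection on $\frg$. First I would invoke simple connectedness of $M$: a flat connection has path-independent parallel transport, so fixing $m_0\in M$ and setting $\frh\coloneqq\frg_{m_0}$, parallel transport defines a smooth vector bundle isomorphism $\Psi\colon\frg\xrightarrow{\sim}M\times\frh$ carrying $\nabla$ to the trivial connection and parallel sections to constant sections. Next I would feed parallel sections $\nu_1,\nu_2$ into $R^{\rm bas}_\nabla=0$: exactly as above, four of the five terms of \eqref{eq:basic_curvature} vanish because $\nu_1,\nu_2$ are parallel, leaving $\nabla_V[\nu_1,\nu_2]=0$ for all $V$, i.e.\ $[\nu_1,\nu_2]$ is again parallel. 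Hence the bracket restricts to an antisymmetric bilinear map $\frh\times\frh\to\frh$ which inherits the Jacobi identity from the Lie algebroid bracket, making $\frh$ a Lie algebra; and since the anchor $\rho$ is always a bracket morphism, its restriction to constant sections is a Lie algebra homomorphism $\frh\to\frX(M)$, i.e.\ an infinitesimal action on $M$. Finally, since the constant sections form a global frame and any Lie algebroid bracket is fixed by its values on a frame together with the anchor through the Leibniz identity, the structure transported by $\Psi$ coincides with that of the action algebroid $M\rtimes\frh$; the anchors agree by $C^\infty(M)$-linearity. Thus $\Psi$ is an isomorphism of Lie algebroids.

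The hard part will be the bookkeeping in the ``only if'' direction: one has to check carefully that $\Psi$ intertwines \emph{all} of the structure and that the step ``flat connection over a simply connected base $\Rightarrow$ trivial bundle with connection'' is applied correctly, in particular the smoothness and global well-definedness of $\Psi$. The tensoriality of $R^{\rm bas}_\nabla$ in its three slots and the frame-plus-Leibniz reconstruction of the bracket and anchor are routine but should be stated cleanly. Alternatively one could simply cite \cite{Blaom:0404313} and \cite{Abad:0901.0319}; but since the argument is this short it is worth writing out in full.
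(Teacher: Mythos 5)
Your proof is correct. Note, however, that the paper does not prove this proposition at all: it is imported verbatim from the literature (Blaom's Thm.~A and Abad--Crainic's Prop.~2.12), so what you have written is a self-contained rendition of the standard argument behind those citations rather than an alternative to anything in the paper. Both halves are sound in the paper's conventions, where a flat Cartan connection on $\frg$ means a vector bundle connection $\nabla$ with $R_\nabla=0$ and $R^{\rm bas}_\nabla=0$ as in \eqref{eq:basic_curvature}: the tensoriality of $R^{\rm bas}_\nabla$ in all three slots is indeed the key point that lets you test it only on the constant (resp.\ parallel) frame, and in the converse direction the vanishing of $R^{\rm bas}_\nabla$ on parallel $\nu_1,\nu_2$ collapses to $\nabla_V[\nu_1,\nu_2]=0$, which is exactly the closure statement you need; the frame-plus-Leibniz reconstruction of bracket and anchor then identifies $\frg$ with the action algebroid. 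Two small points worth making explicit: ``simply connected'' must be read as including connectedness (otherwise the fibre $\frh=\frg_{m_0}$ and the trivialization are only defined per component), and in the forward direction the trivial connection you construct is precisely the ``canonical flat connection'' referred to in the remark following the proposition, so your argument also substantiates that remark. With those caveats spelled out, the write-up is complete; the only alternative the paper offers is to cite \cite{Blaom:0404313} and \cite{Abad:0901.0319}, which buys brevity at the cost of self-containedness.
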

    \noindent For more details on this point, see also the explicit discussion in~\cite[near~eqn.~(9)]{Kotov:2015nuz}; see also~\cite{Blaom:1304.7838, Blaom:1304.7838} for further discussions.
    
    \begin{remark}
        The flat Cartan connection (in both propositions) can be identified with the canonical flat connection under this isomorphism.
    \end{remark}
    
    There are a couple of further remaining evident questions, such as e.g.~the general existence of an adjusted connection on a principal $\scG$-bundle for $\scG$ a groupoid with connection, but we prefer to answer them in future work on the total space description of principal groupoid bundles like~\cite{future:2024ac}. Note that in the case of $\frg$ being a Lie algebra bundle, a total space description of adjusted connections is already provided in~\cite{Fischer:2022sus}. A discussion of Ehresmann connections on principal bundles equipped with Lie group bundle actions also appears in~\cite{Castrillon:2201.07088}, however, without discussing adjustments, merely assuming a Cartan connection, and without discussing gauge invariance; in particular, the condition on $R_\nabla$ as in \Cref{def:local_adjustment} is not observed.
    
    \section{Examples}\label{sec:examples}
    
    In the following, we will go through a useful list of classes of Lie groupoids and comment in each case on the properties of the adjustments and the arising connections. The cases that appear in physical field theories are reproduced as special cases of action groupoids, see \ref{ssec:action_groupoids}.
    
    We immediately note that Čech groupoids $\check \scC(\sigma)$ for surjective submersions $\sigma:Y\rightarrow M$, cf.~\ref{ex:Cech_groupoid}, are uninteresting: if $Y=\sqcup_a U_a$ for $\sqcup_a U_a$ an atlas of a manifold, the Lie algebroid of $\check \scC(\sigma)$ is simply $Y$ itself, because the kernel of the map $\rmd \sigma:\rmT Y\rightarrow \rmT M$ is trivial in the fibers. Moreover, recall that $\check \scC(\sigma)$ is Morita-equivalent to the groupoid $(M\rightrightarrows M)$, which produces a trivial special case of an action groupoid, see~\ref{ssec:action_groupoids}.\footnote{We note, however, that Morita-equivalence of the gauge Lie groupoids does not have to imply physical equivalence, see e.g.~\cite[Sec.~4.4]{Borsten:2024gox} for a counterexample in the context of higher gauge theory.}
    
    Similarly, we do not consider the Lie groupoids underlying categorified or Lie 2-groups separately, as any 2-group can be brought into the form $\sfG\times \sfH\rightrightarrows \sfG$, and the underlying Lie groupoid is an action groupoid, describing an action of $\sfH$ onto $\sfG$.
    
    \begin{remark}[Disclaimer]\label{rem:Disclaimer}
        As proven in~\cite{Fischer:2021glc}, the following types of examples always admit a flat Cartan connection locally, i.e.~over a patch of the base space. Moreover, in the context of curved gauge theory, there is an equivalence relation, preserving dynamics and kinematics, so that every such example is locally equivalent to a gauge theory with flat connection on the structure. Recall \cref{prop:localIsomToActionAlg}: if the primitive is also zero, then the associated gauge theory is locally equivalent to an ordinary gauge theory. This equivalence relation is called \uline{field redefinition}.
        
        However, as also argued in~\cite{Fischer:2020lri, Fischer:2021glc}, the primitive may not be equivalent to a vanishing one, even locally. 
        
        Field redefinitions are outside of the scope of this paper, and we plan to include a full discussion in~\cite{future:2024ac}; nevertheless, we will comment after each example on the possibility of field redefinitions.
    \end{remark}

    \subsection{Fundamental groupoid}\label{sec:PairGroupoids}
    
    Recall that the pair groupoid $\scP\mathsf{air}(M)=(M\times M\rightrightarrows M)$ of \ref{ex:pair_groupoid} differentiates to the tangent algebroid $\rmT M$. As for Lie groups, however, a Lie groupoid integrating an algebroid is, if it exists, not unique. The unique Lie groupoid with connected and simply connected $\sft$-fibers integrating $\rmT M$ is the \emph{fundamental} or \emph{homotopy groupoid} $\Pi(M)$. This groupoid consists of homotopy classes $[\gamma]$ of curves $\gamma \colon [0, 1] \to M$ with 
    \begin{equation}
        \begin{gathered}
            \sft ([\gamma])= \gamma(1)~,~~~\sfs([\gamma])=\gamma(0)~,~~~[\gamma] \circ [\gamma']=[\gamma * \gamma']~,
            \\
            \sfe_m=[m]~,~~~[\gamma]^{-1}=\left[ \gamma^- \right]
        \end{gathered}
    \end{equation}
    for all curves $\gamma, \gamma' \colon [0, 1] \to M$ such that $\gamma(0) = \gamma'(1)$, where $[m]$ is the homotopy class of the constant curve at $m$, $\gamma * \gamma'$ is the concatenation\footnote{The concatenation may not be smooth, however this technical issue can be avoided with the concept of sitting instances which will not be important here; see e.g.~\cite{Mackaay:2000ac,Schreiber:0705.0452} for more details.} of $\gamma$ and $\gamma'$ (first following $\gamma'$, then $\gamma$), and $\gamma^-$ denotes $\gamma$ traversed backwards. The fundamental groupoid $\Pi(M)$ is a transitive groupoid. In fact, it is the Atiyah groupoid of the universal cover of $M$ with isotropy group bundle given by $\pi_1(M)$, the (first) fundamental group bundle of $M$. Thus, we have the following short exact sequence of Lie groupoids
    \begin{center}
        \begin{tikzcd}
            \pi_1(M) \arrow[hook]{r}
            &
            \Pi(M) \arrow[two heads]{r}
            &
            \scP\mathsf{air}(M)~.
        \end{tikzcd}
    \end{center}
    
    \begin{proposition}[Cartan connections, {\cite[consequence of Def.\ 2.11 \& Prop.\ 2.15]{Abad:0911.2859}}]\label{prop:CartanOnPair}
        Cartan connections on $\Pi(M)$ are in one-to-one correspondence to $\Pi(M)$-representations on $\rmT M \to M$.
    \end{proposition}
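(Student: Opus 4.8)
The plan is to construct the correspondence by hand, exploiting the fact that $\Pi(M)$ is locally the pair groupoid, and then to match it with the adjoint-representation picture of \cite{Abad:0911.2859} that underlies the attribution. Write $\Pi(M)=(\sfG\rightrightarrows M)$ for the manifold of arrows $\sfG$. I would first record two structural facts. Since the canonical projection $\Pi(M)\to\scP\mathsf{air}(M)$ is a covering of Lie groupoids, one has $\ker(\rmd\sft)_g\cap\ker(\rmd\sfs)_g=0$ and, by a dimension count, $\rmd\sfs\oplus\rmd\sft\colon\rmT_g\sfG\to\rmT_{\sfs(g)}M\oplus\rmT_{\sft(g)}M$ is a linear isomorphism for every $g\in\sfG$. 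Secondly, any multiplicative distribution $\caH$ complementary to $\ker(\rmd\sft)$ is automatically complementary to $\ker(\rmd\sfs)$: being a wide subgroupoid, $(\caH\rightrightarrows\rmT M)$ is closed under the inversion $\rmd\,\sfinv$ of $\rmT\Pi(M)$, and since $\sfs\circ\sfinv=\sft$ the map $\rmd\,\sfinv$ interchanges $\ker(\rmd\sft)$ and $\ker(\rmd\sfs)$, whence $\caH_g\cap\ker(\rmd\sfs)_g=0$; compare \cite[Rem.~2.10]{Abad:0911.2859}. Combining the two, for each $g$ with $x=\sfs(g)$ and $y=\sft(g)$, the subspace $\caH_g$ is the graph of a unique linear isomorphism $\lambda^\caH_g\colon\rmT_xM\to\rmT_yM$.

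I would then verify that $g\mapsto\lambda^\caH_g$ is a representation of $\Pi(M)$ on $\rmT M$. Unitality $\caH_{\sfe_x}=\rmd\sfe(\rmT_xM)$ (the identity fibre of a wide subgroupoid) gives $\lambda^\caH_{\sfe_x}=\rmid$, since $\rmd\sfs$ and $\rmd\sft$ both retract $\rmd\sfe$; and closure of $\caH$ under the composition of $\rmT\Pi(M)$ translates, after pairing composable vectors $\xi_1\in\caH_{g_1}$, $\xi_2\in\caH_{g_2}$ and following $\rmd\sfs,\rmd\sft$ through the tangent-groupoid multiplication, into $\lambda^\caH_{g_1g_2}=\lambda^\caH_{g_1}\circ\lambda^\caH_{g_2}$; smoothness in $g$ is inherited from $\caH$. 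This produces the map $\caH\mapsto\lambda^\caH$ from Cartan connections on $\Pi(M)$ to $\Pi(M)$-representations on $\rmT M$. I would also note that, up to the customary $g\leftrightarrow g^{-1}$ flip of variance, $\lambda^\caH$ is precisely the adjoint representation $\rmAd$ of \ref{def:adjoint_action} attached to $\caH$ --- which by \ref{prop:associated_Lie_algebroid_connection} integrates the basic connection $\nabla^{\rm bas}$ on $\frg=\rmT M$ --- so that this construction is the one implicit in \cite[Def.~2.11 \& Prop.~2.15]{Abad:0911.2859}.

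For the inverse, given a representation $\lambda$ of $\Pi(M)$ on $\rmT M$ I would set $\caH^\lambda_g\coloneqq(\rmd\sfs\oplus\rmd\sft)^{-1}\bigl\{(v,\lambda_g v)\mid v\in\rmT_{\sfs(g)}M\bigr\}$, the graph of $\lambda_g$ inside $\rmT_g\sfG$. By construction this is a distribution of rank $\dim M$ complementary to $\ker(\rmd\sft)$, smooth because $\lambda$ is, and it contains the identities since $\lambda_{\sfe_x}=\rmid$; running the previous computation backwards, functoriality of $\lambda$ makes $(\caH^\lambda\rightrightarrows\rmT M)$ closed under composition and inversion in $\rmT\Pi(M)$, hence a subgroupoid, so that $\caH^\lambda$ is a Cartan connection. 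The assignments $\caH\mapsto\lambda^\caH$ and $\lambda\mapsto\caH^\lambda$ are then manifestly mutually inverse, which proves \ref{prop:CartanOnPair}.

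The one step genuinely special to $\Pi(M)$ --- and where the argument would break for a general Lie groupoid, for which a Cartan connection carries strictly more information than a representation on $\frg$ --- is the first structural fact, that $\rmd\sfs\oplus\rmd\sft$ is a vector-bundle isomorphism over all of $\sfG$; this rests on $\Pi(M)$ being a covering of $\scP\mathsf{air}(M)$. The remaining work --- smoothness and multiplicativity of $\caH^\lambda$, functoriality of $\lambda^\caH$, and reconciling the left/right and covariant/contravariant conventions so that ``$\Pi(M)$-representation'' in the statement agrees both with its use elsewhere in the paper and with the adjoint-representation formulation of \cite{Abad:0911.2859} --- is a sequence of routine diagram chases in the tangent groupoid $\rmT\Pi(M)$.
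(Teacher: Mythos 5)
Your proof is correct, but it takes a genuinely different route from the paper. You construct the bijection directly at the groupoid level: since $(\sft,\sfs)\colon\Pi(M)\to M\times M$ is a covering, $\rmd\sfs\oplus\rmd\sft$ is a fibrewise isomorphism on $\rmT\Pi(M)$, so a multiplicative distribution $\caH$ complementary to $\ker(\rmd\sft)$ (and hence, by your inversion argument, also to $\ker(\rmd\sfs)$) is exactly the graph of a smooth family of isomorphisms $\lambda_g\colon\rmT_{\sfs(g)}M\to\rmT_{\sft(g)}M$, and closure of $\caH$ under the tangent-groupoid operations is literally functoriality of $\lambda$; the deferred checks are indeed routine, and the only point needing care in the converse---that the graph of $\lambda_g$ is transverse to $\ker(\rmd\sft)$---holds because groupoid representations act by invertible maps, which you use. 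The paper instead argues infinitesimally: simple connectedness of the $\sft$-fibres gives the correspondence between Cartan connections on $\Pi(M)$ and Cartan connections $\nabla$ on the algebroid $\rmT M$ (an integration statement), \ref{rem:CartanConnOnTM} trades these for flat connections $\bar\nabla=\nabla^{\mathrm{bas}}$, and flat connections correspond to $\Pi(M)$-representations via parallel transport. The paper's route buys the explicit identification of the representation as the holonomy of the flat basic connection, which is what is used immediately afterwards (e.g.\ in \ref{prop:PrimitivesForFundGroupi}, where the primitive is the torsion of $\nabla$ and $\nabla^{\mathrm{bas}}$-constant tensors parameterize the freedom); in your construction that identification is an extra, short step via \ref{prop:associated_Lie_algebroid_connection}, as you indicate. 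Your route buys elementarity and slightly greater generality: it avoids both the integration theorem for multiplicative distributions and the flat-connection intermediary, using only that the anchor $(\sft,\sfs)$ is a local diffeomorphism, so the same argument applies verbatim to any Lie groupoid covering $\scP\mathsf{air}(M)$, whereas---as you correctly note---it does not extend to general Lie groupoids, where a Cartan connection is strictly more data than a representation.
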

    
    \begin{remark}\label{rem:CartanConnOnTM}
        This recovers the well-known infinitesimal situation, see for example~\cite{Blaom:0404313, Abad:0901.0319, Fischer:2021glc}. Infinitesimally, a Cartan connection $\nabla$ on $\rmT M$ is in one-to-one correspondence to flat vector bundle connections $\bar\nabla$ on $\rmT M$ via
        \begin{equation}
            \bar{\nabla}_V W
            =
            \nabla_W V
            + [V, W]
        \end{equation}
        for all $V, W \in \mathfrak{X}(M)$. Observe that in this case we identify $\bar \nabla$ with $\nabla^{\mathrm{bas}}$, and in fact, by construction, the $\Pi(M)$-representation in \cref{prop:CartanOnPair} will be the adjoint action of $\Pi(M)$ on $\rmT M$ given as the parallel transport of $\bar \nabla$.
    \end{remark}
    
    \begin{proof}[Proof of \cref{prop:CartanOnPair}]
        Due to the fact that $\sft$-fibers of $\Pi(M)$ are simply connected, there is a one-to-one correspondence of Cartan connections on $\rmT M$ and $\Pi(M)$, so that the existence of a Cartan connection is equivalent to the existence of a flat connection $\bar \nabla$ on $\rmT M$ (recall \cref{rem:CartanConnOnTM}). Flat connections are in one-to-one correspondence to $\Pi(M)$-representations on $\rmT M \to M$. This finishes the proof.
    \end{proof}
    
    Cartan connections on $\Pi(M)$ always allow for a primitive, which is simply given by the torsion tensor of the connection; compare also with~\cite[Cor.~3.6.6 and Thm.~4.8.4]{Fischer:2021glc}.
    
    \begin{proposition}[Covariant adjustments always exist, {\cite[special case of first Prop.\ in \S 4.6]{Blaom:0404313}}]\label{prop:PrimitivesForFundGroupi}
        Every Cartan connection on $\Pi(M)$ has a primitive of the form 
        \begin{equation}
            \zeta
            =
            t_\nabla
            + H~,
        \end{equation}
        where $t_\nabla$ is the torsion of $\nabla$, and $H \in \Omega^2(M; \rmT M)$ with $\nabla^{\mathrm{bas}}H = 0$, that is,
        \begin{equation*}
            R_\nabla
            =
            - \nabla^{\mathrm{bas}} \zeta~.
        \end{equation*}
    \end{proposition}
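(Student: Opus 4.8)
The plan is to reduce the statement to a purely infinitesimal, tensorial identity on $M$ and then to verify that identity in a conveniently chosen local frame. By \cref{prop:CartanOnPair} and \cref{rem:CartanConnOnTM}, a Cartan connection on $\Pi(M)$ corresponds to an adjustment $\nabla$ on the tangent algebroid $\rmT M$ (in the sense of \cref{def:local_adjustment}) whose associated basic connection is flat; since here $\rho=\sfid$, this basic connection is $\nabla^{\rm bas}_V W=\nabla_W V+[V,W]$ for $V,W\in\frX(M)$, and inverting gives $\nabla_V W=\nabla^{\rm bas}_W V+[V,W]$. A primitive of $\nabla$ is by definition a $\zeta\in\Omega^2(M,\rmT M)$ with $R_\nabla+\nabla^{\rm bas}\zeta=0$, and this equation is affine in $\zeta$; hence once a single primitive is exhibited, the whole set of primitives is obtained by adding an arbitrary $H\in\Omega^2(M,\rmT M)$ with $\nabla^{\rm bas}H=0$. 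It therefore suffices to prove the single identity
\begin{equation}
R_\nabla+\nabla^{\rm bas}t_\nabla=0~,
\end{equation}
i.e.\ that the torsion $t_\nabla$ is itself a primitive.

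Both sides of this identity are tensorial, so I would check it in a local frame of my choosing. As $\nabla^{\rm bas}$ is flat, I would work in a local $\nabla^{\rm bas}$-parallel frame $\{e_i\}$ of $\rmT M$ and write $[e_i,e_j]=c_{ij}{}^k e_k$ for the (generally non-constant) structure functions. The inverted formula above then gives $\nabla_{e_i}e_j=[e_i,e_j]$ in this frame, whence $t_\nabla(e_i,e_j)=[e_i,e_j]$, and parallelity of the frame yields $(\nabla^{\rm bas}t_\nabla)(e_i,e_j)e_k=e_k(c_{ij}{}^m)\,e_m$. The computational heart is then to expand $R_\nabla(e_i,e_j)e_k$ in terms of the $c_{ij}{}^k$ and their derivatives along the $e_\ell$, and to eliminate the derivative terms using the Jacobi identity for the Lie bracket of vector fields, which in this frame reads
\begin{equation}
\sum_{\mathrm{cyc}}\Bigl(c_{ij}{}^\ell\,c_{\ell k}{}^m-e_k(c_{ij}{}^m)\Bigr)=0~,
\end{equation}
the sum running over cyclic permutations of $i,j,k$. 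Substituting this into the expansion, the terms quadratic in the structure functions cancel in pairs (using $c_{ij}{}^k=-c_{ji}{}^k$), leaving $R_\nabla(e_i,e_j)e_k=-e_k(c_{ij}{}^m)\,e_m$, which is exactly $-(\nabla^{\rm bas}t_\nabla)(e_i,e_j)e_k$. Conceptually the content is simply that, for the tangent algebroid, the curvature of an adjustment is the basic-covariant derivative of its torsion.

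Assembling the pieces: $\zeta=t_\nabla$ is a primitive, and by affineness every primitive is of the form $\zeta=t_\nabla+H$ with $H\in\Omega^2(M,\rmT M)$ and $\nabla^{\rm bas}H=0$, which is the claim. The one point I expect to be delicate is the index bookkeeping in the middle paragraph: one must keep the placement of the ``form'' index and the $\frg$-direction index of $\nabla^{\rm bas}\zeta$ consistent with \eqref{eq:coordinate_changed_pre_Weil} (so that $R_\nabla$ and $\nabla^{\rm bas}\zeta$ really are compared as $\mathrm{End}(\rmT M)$-valued $2$-forms), and one must track the overall sign so that it matches the normalization in \cref{def:local_adjustment}. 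Alternatively, the whole statement also follows by translating our hypothesis (flat $\nabla^{\rm bas}$ on $\rmT M$) into Blaom's Cartan-algebroid datum and invoking \cite[\S4.6]{Blaom:0404313}, after noting that his canonical primitive is precisely the torsion of the adjustment.
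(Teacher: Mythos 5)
Your proof is correct, but it takes a different route from the paper: the paper offers no proof of this proposition at all, deferring entirely to the cited proposition in Blaom's \S 4.6, whereas you give a self-contained argument. Your reduction is sound: by \cref{prop:CartanOnPair} and \cref{rem:CartanConnOnTM} the Cartan connection on $\Pi(M)$ corresponds to $\nabla$ on $\rmT M$ with $\nabla^{\rm bas}$ flat, the primitive equation of \cref{def:local_adjustment} is affine in $\zeta$, so everything rests on $R_\nabla+\nabla^{\rm bas}t_\nabla=0$, and your frame computation does verify this with the paper's conventions (in a $\nabla^{\rm bas}$-parallel frame one indeed gets $\nabla_{e_i}e_j=[e_i,e_j]$, $t_\nabla(e_i,e_j)=c_{ij}{}^k e_k$, and $R_\nabla(e_i,e_j)e_k=-e_k(c_{ij}{}^l)e_l$, where the $\frg$-slot $e_k$ is the direction of differentiation in $\nabla^{\rm bas}\zeta$, consistent with \eqref{eq:coordinate_changed_pre_Weil}). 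One small imprecision in your description of the mechanism: the quadratic terms do not ``cancel in pairs by antisymmetry''; rather, the Jacobi identity trades the cyclic sum $c_{ij}{}^m c_{mk}{}^l+c_{jk}{}^m c_{mi}{}^l+c_{ki}{}^m c_{mj}{}^l$ for the cyclic sum of derivative terms, after which only $-e_k(c_{ij}{}^l)$ survives — the end result you state is right. Your computation also dovetails nicely with \cref{rem:StrictnessPiM}, where $t_\nabla=-t_{\nabla^{\rm bas}}$ and the first Bianchi identity of the flat $\nabla^{\rm bas}$ are used for strictness; an equivalent invariant phrasing of your step is to write $\nabla=\nabla^{\rm bas}+t_\nabla$ and expand $R_\nabla$ using $R_{\nabla^{\rm bas}}=0$. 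What your route buys is an explicit verification independent of Blaom's formalism; what the paper's citation buys is brevity and the identification of the statement as a special case of a known structural result on Cartan algebroids.
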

    
    Thence, a covariant adjustment exists if and only if there is a $\Pi(M)$-representation on $\rmT M \to M$, if and only if there is a flat connection on $\rmT M$. If the Cartan connection is also flat, then we achieve a refinement of \cref{prop:FlatCartanAndActionAlg}:
    
    \begin{proposition}[Lie group structure on $M$, {\cite[\S 3.1 and references therein]{Blaom:0404313}} and {\cite[Comment after Proposition 2.12]{Abad:0901.0319}}]\label{prop:FlatCartanAndLieGroups}
        Let $M$ be a smooth, compact, and simply connected manifold, and assume we have a flat Cartan connection on $\mathrm{T}M$. Then $M$ is diffeomorphic to a Lie group.
    \end{proposition}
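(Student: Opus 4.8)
The plan is to reduce the statement to \cref{prop:localIsomToActionAlg} applied to the fundamental groupoid $\Pi(M)$, and then to exploit the rigidity of the pair groupoid. First I would observe that, since $M$ is path-connected and simply connected, there is a unique homotopy class of paths between any ordered pair of points, so the fundamental groupoid collapses to the pair groupoid, $\Pi(M)\cong\scP\mathsf{air}(M)=(M\times M\rightrightarrows M)$; moreover the $\sft$-fibre of $\Pi(M)$ over a point is the universal cover of $M$, which here equals $M$ and is therefore simply connected, so $\Pi(M)$ is an $\sft$-simply connected Lie groupoid over the compact, simply connected manifold $M$. Next, using the one-to-one correspondence between Cartan connections on $\rmT M$ and Cartan connections on $\Pi(M)$ that underlies \cref{prop:CartanOnPair} (this correspondence rests precisely on $\sft$-simple connectedness), the given flat Cartan connection on $\rmT M$ produces a flat Cartan connection $\caH$ on $\Pi(M)\cong\scP\mathsf{air}(M)$.

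Now I would invoke \cref{prop:localIsomToActionAlg}: the existence of $\caH$ on the $\sft$-simply connected groupoid $\scP\mathsf{air}(M)$ over the compact simply connected $M$ forces an isomorphism of Lie groupoids $\scP\mathsf{air}(M)\cong M\rtimes\sfH$, where $\sfH$ is a Lie group acting smoothly on $M$. The following step is to read off what this isomorphism says about the action. Any such isomorphism covers a diffeomorphism of objects and therefore carries the isotropy group of $\scP\mathsf{air}(M)$ at a point $m$ — which is trivial, since the pair groupoid has exactly one arrow between any two points — isomorphically onto the stabiliser $\sfH_{m'}$ of the corresponding point $m'$; likewise it carries transitivity of $\scP\mathsf{air}(M)$ to transitivity of the orbit. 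Hence the $\sfH$-action on $M$ is free and transitive. This can be regarded as a global, group-level refinement of \cref{prop:FlatCartanAndActionAlg}, where the Lie \emph{algebra} action is integrated to a genuine Lie group action.

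Finally, fixing a basepoint $m_0\in M$, the orbit map $\sfH\to M$, $h\mapsto h\cdot m_0$, is a smooth bijection; since the action is transitive and free it descends through $\sfH/\sfH_{m_0}=\sfH$ to a diffeomorphism onto $M$ by the standard orbit–stabiliser theorem for Lie group actions (equivalently, $M\to M/\sfH=\{\ast\}$ is a principal $\sfH$-bundle over a point). Transporting the group structure of $\sfH$ along this diffeomorphism equips $M$ with the structure of a Lie group, which is the assertion.

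The genuinely load-bearing input is \cref{prop:localIsomToActionAlg}, which we are entitled to assume; given it, the remaining work is bookkeeping, and I expect the only real care to be needed there: one must check that the correspondences cited above respect the paper's convention of using $\ker(\rmd\sft)$ rather than $\ker(\rmd\sfs)$, and one must verify that the groupoid isomorphism $\scP\mathsf{air}(M)\cong M\rtimes\sfH$ indeed matches isotropy subgroups with stabilisers at the appropriate (possibly relabelled) points. No further difficulty is anticipated, as the passage from a free, transitive Lie group action to a diffeomorphism with the acting group is routine.
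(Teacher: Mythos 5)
Your argument is sound, and it is worth noting that the paper itself offers no proof of this proposition: it is quoted from the literature (Blaom \S 3.1 and the comment after Prop.~2.12 of Abad--Crainic), where the standard argument is the \emph{infinitesimal} one, essentially \ref{prop:FlatCartanAndActionAlg} plus an integration step: a flat Cartan connection on $\rmT M$ exhibits $\rmT M$ as an action algebroid whose anchor is an isomorphism, compactness of $M$ guarantees completeness so the Lie algebra action integrates to an action of the simply connected group $\sfH$, the orbit map $\sfH\to M$ is then a covering, and simple connectedness of $M$ upgrades it to a diffeomorphism. You instead stay at the global level: identify $\Pi(M)\cong\scP\mathsf{air}(M)$ (correct, since $M$ is simply connected, and its $\sft$-fibres are copies of $M$, so the groupoid is $\sft$-simply connected), transfer the flat Cartan connection from $\rmT M$ to the groupoid, invoke \ref{prop:localIsomToActionAlg} to get $\scP\mathsf{air}(M)\cong M\rtimes\sfH$, and read off freeness and transitivity from triviality of the isotropy and transitivity of the pair groupoid; the orbit--stabiliser theorem then gives the diffeomorphism $M\cong\sfH$. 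What your route buys is that compactness is only used through the hypotheses of \ref{prop:localIsomToActionAlg} and no separate completeness/integration argument is needed; what it costs is one extra input you should make explicit: the bijection between Cartan connections on $\rmT M$ and on $\Pi(M)$ invoked in the proof of \ref{prop:CartanOnPair} must be known to match \emph{flat} Cartan connections with \emph{flat} (involutive) multiplicative distributions. This is true in the Crainic--Salazar--Struchiner/Blaom framework the paper relies on (and is implicitly assumed when the paper presents \ref{prop:FlatCartanAndActionAlg} as the infinitesimal counterpart of \ref{prop:localIsomToActionAlg}), but as stated in the paper \ref{prop:CartanOnPair} records only the correspondence of Cartan connections, not of their curvatures, so a one-line justification of this compatibility would close the only loose end in your write-up; the remaining steps (isotropy matching, transitivity, orbit map a diffeomorphism) are routine, as you say.
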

    
    Henceforth, we have the canonical examples:
    
    \begin{example}[Pair groupoids with curved Cartan connections]\label{ex:PairGroupoidsEx}
        Every parallelizable, compact, and simply connected smooth manifold $M$ which does not admit a Lie group structure can be equipped with an adjustment for which the Cartan connection is not flat. In particular, there is the example $M=S^7$, the seven-dimensional sphere regarded as the unit octonions, as provided in~\cite[\S 5.2.3]{Fischer:2021glc}.
    \end{example}
    
    \begin{remark}[Field redefinitions]
        \cref{ex:PairGroupoidsEx} is stable under the field redefinitions \cref{rem:Disclaimer}. That is, there is no field redefinition towards a structure with flat Cartan connection, otherwise we would immediately have a contradiction via \cref{prop:FlatCartanAndLieGroups}.
        
        Locally, there is always a field redefinition towards a flat adjustment. 
    \end{remark}
    
    \begin{remark}[Strictness]\label{rem:StrictnessPiM}
        If a covariant adjustment exists and one chooses $\zeta = t_\nabla$, then the adjustment is strict. This is also stable under the field redefinitions: by \cref{prop:PrimitivesForFundGroupi}, every primitive differs from $t_\nabla$ by tensors constant w.r.t.\ $\nabla^{\rm bas} = \bar \nabla$, where $\bar \nabla$ is the flat connection of \cref{rem:CartanConnOnTM}.
        
        A straightforward calculation making use of the first Bianchi identity of $\bar \nabla= \nabla^{\rm bas}$ and the fact that $\nabla^{\rm bas}$ is flat shows that $\zeta = t_\nabla = - t_{\nabla^{\rm bas}}$ is a strict adjustment. Observe that strictness requires that (\cref{rem:AlternCompEq})
        \begin{equation}
            \rmd^{\nabla^\zeta} \zeta = 0~,
        \end{equation}
        where $\nabla^\zeta$ is a connection on $\rmT M$ defined by $\nabla^\zeta_V W \coloneqq \nabla_V W - \zeta(V, W)$ for all $V, W \in \frX(M)$. Thus, choosing $\zeta = t_\nabla = -t_{\nabla^{\rm bas}}$ leads to the following: by assumption we have
        \begin{equation}
            \nabla^{\zeta}_{V} W
            =
            \nabla_{V} W
            - t_{\nabla}(V, W)
            =
            \nabla_{W} V
            + [V, W]_\frg
            =
            \nabla^{\mathrm{bas}}_V W~.
        \end{equation}
        Using this and $\zeta = - t_{\nabla^{\rm bas}}$ one has
        \begin{equation}
            \mathrm{d}^{\nabla^{\zeta}} \zeta
            =
            - \mathrm{d}^{\nabla^{\mathrm{bas}}} t_{\nabla^{\mathrm{bas}}}~.
        \end{equation}
        Recall the first Bianchi identity of $\nabla^{\rm bas}$, that is,
        \begin{equation}
            \left(\mathrm{d}^{\nabla^{\mathrm{bas}}} t_{\nabla^{\mathrm{bas}}}\right)(V_1, V_2, V_3)
            =
            R_{\nabla^{\mathrm{bas}}}(V_1, V_2) V_3 + R_{\nabla^{\mathrm{bas}}}(V_2, V_3) V_1 + R_{\nabla^{\mathrm{bas}}}(V_3, V_1) V_2 
        \end{equation}
        for all $V_1, V_2, V_3 \in \frX(M)$. We know by assumption that $\nabla^{\rm bas}$ is flat, thus, combining everything
        \begin{equation}
            \mathrm{d}^{\nabla^{\zeta}} \zeta
            =
            0~.
        \end{equation}
    \end{remark}
    
    \subsection{Lie group bundles}\label{ssec:Lie_group_bundles}
    
    Another case which is essentially understood is the case of Lie group bundles~\cite{Fischer:2022sus,Fischer:2401.05966}. We therefore keep our discussion brief, and restrict ourselves to covariant adjustments.
    
    Recall that Lie group and Lie algebra bundles are fibrations with the typical fibers being Lie groups and Lie algebras so that the local trivializations are Lie group and Lie algebra isomorphisms when projected to the typical fiber. Furthermore, a Lie group bundle $\pi_\scH:\scH\rightarrow M$ can be regarded as a Lie groupoid $\scH$ with source and target maps equal $\sfs=\sft=\pi_\scH$ and the units given by the unit element over the base points.
    
    Consider now a covariant adjustment $\caH$ on a Lie group bundle $\pi_\scH:\scH\rightarrow M$ with typical fiber the Lie group $\sfH$, and assume for simplicity that $M$ is connected. In this case an adjusted connection is also called \emph{(multiplicative) Yang--Mills connection},~\cite{Fischer:2022sus, Fischer:2401.05966}. The equations~\eqref{eq:local_adjustment_conditions} defining an adjustment now simplify. For the connection 1-form $\vartheta^{\rm tot}$ on $\pi_\scH \colon \scH \to M$ we have (in simplified notation):
    \begin{equation}
        \begin{aligned}
            \vartheta^{\rm tot}_{gq}\left( V \cdot W  \right)
            &=
            \mathrm{Ad}_{q^{-1}}\Bigl( {\vartheta_{g}^{\rm tot}}(V) \Bigr)
            + \vartheta^{\rm tot}_{q}(W)~,
            \\
            \left.\left(\mathrm{d}^{\pi_\scH^*\nabla} \vartheta^{\rm tot}
            + \frac{1}{2} \left[ \vartheta^{\rm tot} \stackrel{\wedge}{,} \vartheta^{\rm tot}\right]_{\pi_\scH^*\frh} \right)\right|_g
            &=
            \mathrm{Ad}_{g^{-1}} \circ \left.\pi_\scH^!\zeta\right|_g
            - \left.\pi_\scH^!\zeta\right|_g
        \end{aligned}
    \end{equation}
    for all $g, q \in \scH_x$ ($x \in M$), $V \in \rmT_g \scH$, and $W \in \rmT_q \scH$. Here, $(V, W) \mapsto V \cdot W$ denotes the canonical group structure on $\rmT \scH$, $\mathrm{Ad}$ is now just the usual fiber-wise defined adjoint representation of $\scH$ on its Lie algebra bundle $\frh$, and $\zeta$ some element of $\Omega^2(M;\frh)$, the \emph{primitive} of $\vartheta^{\rm tot}_\scH$; $\pi_\scH^!\zeta$ denotes its pullback as form. The first equation implies that $\vartheta^{\rm tot}$ is multiplicative, in particular its parallel transport is an isomorphism of the group structure. The second equation implies that the curvature of $\vartheta^{\rm tot}_\scH$ is essentially encoded by $\zeta$. Furthermore, $\nabla$ is the induced connection on the corresponding Lie algebroid $\frh$ (a Lie algebra bundle), which now satisfies
    \begin{equation}
        \begin{aligned}
            \nabla \left( \left[ \cdot, \cdot \right]_{\frh} \right)
            &=
            0~,
            \\
            R_{\nabla}
            &=
            \mathrm{ad} \circ \zeta~.
        \end{aligned}
    \end{equation}
    
    There is a natural class of adjusted connections with non-trivial covariant adjustment. In order to understand this, we follow similar arguments as in the construction recipe I of~\cite[see Ex.\ 1.12]{Fischer:2401.05966}: Assume an ordinary principal $\sfH$-bundle $P$, where $\sfH$ is a Lie group. Also assume that we have a manifold $\sfT$ on which $\sfH$ acts (from the left).\footnote{For \cref{ssec:Lie_group_bundles}, $\sfT$ and its related constructions and assumptions are only needed to provide an extra geometric interpretation as in \cref{tab:ComparingTheGeometryGaugeTheoryWithSingularFoliations}; from \cref{ex:Hopf} to \cref{ex:OrdinaryLGBSetting} one does not need this extra data.} Then we have two associated bundles: 
    \begin{equation}
        \begin{aligned}
            c_\sfH(P) &\coloneqq (P \times \sfH) \Big/ \sfH~,
            \\
            \scT &\coloneqq (P \times \sfT) \Big/ \sfH~,
        \end{aligned}
    \end{equation}
    where $\sfH$ acts on itself by conjugation so that the former bundle is the well-known inner group bundle, the kernel of the anchor on the Atiyah groupoid, whose Lie algebra bundle is given by the adjoint bundle $\mathrm{ad}(P) \coloneqq (P \times \frh) / \sfH$. We have a canonical left-action, $c_\sfH(P) \curvearrowright \scT$. An Ehresmann connection on $P$ (in the usual sense) induces associated connections on $c_\sfH(P)$ and $\scT$. The one on $c_\sfH(P)$ is a covariant adjustment on $c_\sfH(P)$ whose primitive is given by the corresponding curvature on $P$, and the one on $\scT$ is a connection that has been called \emph{compatible Yang--Mills connection}~\cite{Fischer:2401.05966}. That is, this is an Ehresmann connection on $\scT$ w.r.t.~the $c_\sfH(P)$-action and the adjustment on $c_\sfH(P)$, which satisfies a certain curvature equation.
    
    The following remark provides further motivation for considering Lie group bundles like $c_\sfH(P)$ beyond the existence of natural covariant adjustments.
    
    \begin{remark}\label{rem:JustifyingExample}
        For similar reasons as in ordinary gauge theory, it is useful to assume that the structural Lie group bundle has a compact Lie group $\sfH$ as structural Lie group. It is then natural to study abelian and semi-simple $\sfH$ separately, ignoring possible mixed terms appearing in the adjustments as a first step. The abelian part requires flatness of the adjustment, and we are therefore led to focusing on semi-simple situations in order to find curved examples. As proven in~\cite[Prop.\ 7.3.6, Cor.\ 7.3.9 and the comment afterwards, Cor.\ 8.3.7]{0521499283} every Lie group bundle with semi-simple structure group is the inner group bundle of some principal $\sfH$-bundle, unique up to principal bundle automorphisms.
    \end{remark}
    
    \begin{remark}[Existence, strictness, and field redefinitions]\label{rem:LGBStrictness}
        As mentioned earlier and as argued in~\cite[\S 5.1]{Fischer:2021glc} and~\cite{Fischer:2020lri}, there is a strong relation to the discussion in~\cite[\S 7.2 ff.]{0521499283}: a covariant adjustment on a Lie algebra bundle is a \uline{Lie derivation law covering a coupling} in~\cite{0521499283}. This relation implies statements about the existence of covariant adjustments and interpretations of field redefinitions (\ref{rem:Disclaimer}); in particular such covariant adjustments are always strict because \uline{Mackenzie's obstruction class} is by construction trivial for our chosen example, and this is stable under the field redefinitions. The interested reader can find more details in the mentioned references.
    \end{remark}
    
    Later we also need the following result of the discussion in~\cite{0521499283}, extending \ref{rem:JustifyingExample}:
    
    \begin{lemma}[Semi-simple Lie group bundles]\label{lem:SemiSimpleLGBAdjust}
        If $\sfH$ is semi-simple, then all covariant adjustments on $c_\sfH(P)$ are associated to an Ehresmann connection on $P$, and their primitive is the corresponding curvature on $P$.
    \end{lemma}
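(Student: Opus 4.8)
The plan is to identify a covariant adjustment on $c_\sfH(P)$ with an ordinary principal connection on $P$: first extract its infinitesimal datum on the adjoint bundle, recognize it as a connection on a Lie-algebra frame bundle, and then use semisimplicity of $\frh$ to descend this connection to $P$.

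First I would unpack the data. Writing $\frh$ for the Lie algebra of $\sfH$ and $\operatorname{ad}(P)=(P\times\frh)/\sfH$ for the Lie algebra bundle of $c_\sfH(P)$, a covariant adjustment $(\caH,\zeta)$ induces, via \ref{prop:associated_Lie_algebroid_connection}, a Lie algebroid connection $\nabla$ on $\operatorname{ad}(P)$, and the equations displayed just before the lemma (equivalently \ref{def:local_adjustment}) read
\[
\nabla\bigl([\cdot,\cdot]_\frh\bigr)=0,\qquad R_\nabla=\rmad\circ\zeta,
\]
with $\zeta\in\Omega^2(M;\operatorname{ad}(P))$. As recorded in \ref{rem:LGBStrictness}, for Lie group bundles the multiplicative connection $\caH$ is itself recovered from $\nabla$ (cf.\ \cite{Fischer:2022sus} and \cite[\S\,7.2]{0521499283}), so it is enough to produce a principal connection $\omega$ on $P$ whose associated connection on $\operatorname{ad}(P)$ is $\nabla$ and whose curvature $2$-form $F_\omega\in\Omega^2(M;\operatorname{ad}(P))$ equals $\zeta$.

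Next I would bring in semisimplicity. Since $\frh$ is semisimple, $\rmad\colon\frh\to\operatorname{Der}(\frh)$ is an isomorphism, $\operatorname{Inn}(\frh)=\operatorname{Aut}(\frh)^{0}$ (all derivations are inner and $\operatorname{Inn}(\frh)$ is connected), and $Z(\sfH)=\ker(\rmAd)$ is discrete. The condition $\nabla([\cdot,\cdot]_\frh)=0$ says precisely that $\nabla$ is a principal connection on the bundle $\operatorname{Fr}_{\mathrm{Lie}}(\operatorname{ad}(P))$ of Lie-algebra frames of $\operatorname{ad}(P)$, a principal $\operatorname{Aut}(\frh)$-bundle; this bundle carries the canonical $\operatorname{Inn}(\frh)\cong\sfH/Z(\sfH)$-reduction $[p]\mapsto(v\mapsto[p,v])$ coming from $P/Z(\sfH)$, since $\rmAd$ factors through $\sfH/Z(\sfH)$. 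Because $\operatorname{Inn}(\frh)$ and $\operatorname{Aut}(\frh)$ share the Lie algebra $\operatorname{Der}(\frh)$, the connection $\nabla$ restricts to a principal connection $\bar\omega$ on $P/Z(\sfH)$, and pulling $\bar\omega$ back along the discrete covering $P\to P/Z(\sfH)$ yields a unique principal connection $\omega$ on $P$. By naturality of associated connections, $\omega$ induces $\nabla$ on $\operatorname{ad}(P)=P\times_{\rmAd}\frh$. For the primitive, the curvature of the connection induced on $\operatorname{ad}(P)$ by $\omega$ is $\rmad\circ F_\omega$ (as $\rmAd$ differentiates to $\rmad$); comparing with $R_\nabla=\rmad\circ\zeta$ and using that $\rmad$ is injective gives $\zeta=F_\omega$. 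Together with the uniqueness of $P$ up to isomorphism from $c_\sfH(P)$ noted in \ref{rem:JustifyingExample}, this is the assertion.

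The hard part will be the descent of $\nabla$ to $P$: the structure-group reduction from $\operatorname{Aut}(\frh)$ to $\operatorname{Inn}(\frh)$ together with the lift through the discrete—but possibly nontrivial—center covering $P\to P/Z(\sfH)$, and, in parallel, the claim that the \emph{entire} covariant adjustment (and not merely $\nabla$) is reconstructed from $\omega$. It is precisely this last point where one leans on the Lie-group-bundle theory of \cite{Fischer:2022sus} and of Mackenzie \cite[\S\,7.2]{0521499283}—equivalently, on the vanishing of Mackenzie's obstruction class for semisimple fibers recalled in \ref{rem:LGBStrictness}—which also furnishes the alternative, purely algebraic route via Lie derivation laws covering a coupling.
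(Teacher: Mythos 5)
Your proposal is correct and follows essentially the route the paper itself takes: the paper states this lemma without a self-contained argument, deferring to Mackenzie's theory of semisimple Lie algebra bundles and Lie derivation laws (\cite[\S 7.2--7.3]{0521499283}) via \ref{rem:JustifyingExample} and \ref{rem:LGBStrictness}, and your argument --- bracket-preserving $\nabla$ as a principal connection on the Lie-algebra frame bundle, restriction to the open subgroup $\rmAd(\sfH)\cong\sfH/Z(\sfH)$, lift through the discrete center covering to $P$, and injectivity of $\rmad$ forcing $\zeta=F_\omega$ --- is exactly the standard reasoning behind those citations. The step you flag as remaining, namely that the multiplicative connection $\caH$ itself (not only $\nabla$) is recovered from $\omega$, is covered by the same references (uniqueness of multiplicative structures over the connected $\sft$-fibers), so your treatment is at the same level of rigor as the paper's.
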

    
    Semi-simplicity also implies that the correspondence in \cref{lem:SemiSimpleLGBAdjust} is one-to-one. Slightly generalized, assuming that $\sfH$ has a trivial center implies that the associated connection on $c_\sfH(P)$ is in one-to-one correspondence with the Ehresmann connection on $P$. Thus, let us now assume that $\sfH$ and hence $c_\sfH(P)$ have a trivial center; furthermore assume that $\sfT = \mathbb{R}^d$, and that $\sfH$ acts faithfully on $\sfT$ with 0 as fixed point. Then there is a canonical singular foliation $\scF$ on $\scT$ with $M$ as a leaf generated by vector fields of the form
    \begin{equation}\label{anchor}
        \chi_\scT(V) + \overline{\nu}
    \end{equation}
    for all $V \in \mathfrak{X}(M)$ and $\nu \in \Gamma(\mathrm{ad}(P))$, where $\chi_\scT$ is the projectable horizontal lift of the associated connection on $\scT$, and $\nu \mapsto \overline{\nu}$ denotes the inherited Lie algebra bundle action. The connection on $\scT$ is then also what one calls an $\scF$-connection.
    
    Because the action is faithful, there is the following one-to-one correspondence between:
    \begin{itemize}
        \item the covariant adjustment on $c_\sfH(P)$~;
        \item the Ehresmann connection on $P$~;
        \item the $\scF$-connection on $\scT$~.
    \end{itemize}
    The major result of~\cite{Fischer:2401.05966} implies that (in a formal setting, and in a neighborhood around a leaf) every foliation can be modeled like this; however $P$ is then in general infinite-dimensional.\footnote{In our context we simply assume a finite-dimensional $P$ so that the arguments of~\cite{Fischer:2401.05966} carry over to a smooth setting.} Altogether, we can study connections with non-trivial adjustment on such $c_\sfH(P)$ as structure groupoid by looking at $\scF$ and $P$. In particular, $c_\sfH(P)$ admits a flat adjustment if and only if $\scF$ admits a flat $\scF$-connection, which is the case if and only if $P$ is flat. Furthermore,~\cite{Fischer:2401.05966} provides a clear geometric interpretation of the field redefinitions, extending the discussion of~\cite{Fischer:2020lri} and~\cite[\S 5.1]{Fischer:2021glc} (recall \cref{rem:Disclaimer}).
    
    \begin{table*}[htbp]
        \centering
        \begin{tabular}{l|l|l}
            Lie group bundle $c_\sfH(P)$ & Singular foliations $\scF$ & principal $\sfH$-bundle $P$\\ \hline
            covariant adjustment $\vartheta^{\rm tot}$ & $\scF$-connection & Ehresmann connection \\
            $c_\sfH(P)$ admits flat $\vartheta^{\rm tot}$ & $\scF$ is flat & $P$ is flat \\
            field redefinition of $\vartheta^{\rm tot}$ & change of $\scF$-connection & change of Ehresmann connection
        \end{tabular}
        \caption{Comparing the geometry of non-trivially covariantly adjusted connections with the geometry of singular foliations.}
        \label{tab:ComparingTheGeometryGaugeTheoryWithSingularFoliations}
    \end{table*}
    
    Thus, in order to find non-trivial covariant adjustments which cannot be flattened by field redefinitions one only needs to apply the previous construction on principal bundles $P$ which do not admit flat connections; alternatively search for foliations $\scF$ which do not admit flat $\scF$-connections. There is plenty of literature on both cases.
    
    \begin{example}\label{ex:Hopf}
        Using a non-trivial principal bundle $P$ over a simply connected base manifold, such as the Hopf fibration $S^7 \to S^4$, leads to curved covariant adjustments on $c_\sfH(P)$ in the above construction.
    \end{example}
    
    \begin{remark}[Field redefinitions] 
        By construction, the examples constructed as above are stable under field redefinitions.
        
        Locally, however, (and this holds for all Lie group bundles $\scH$, so, also for $\sfH$ with non-trivial center):
        \begin{itemize}
            \item There is always a field redefinition towards a flat adjustment.
            \item However, there are examples which are locally not equivalent to an adjustment with vanishing primitive $\zeta$.
        \end{itemize}
    \end{remark}
    
    \begin{example}[{\cite[Thm.\ 5.17]{Fischer:2020lri}, \cite[Thm.\ 5.1.34]{Fischer:2021glc}}]\label{rem:strictness}
        Consider a Lie group bundle $\scH$ with a covariant adjustment which is not strict. Then there is no field redefinition so that the transformed adjustment is flat. However, as pointed out by Alexei Kotov (private communication), the metric compatibility conditions arising in Lagrangian gauge theories probably imply that a covariant adjustment compatible with the metric, if one exists, is always strict.
    \end{example}
    As a final example, we can answer the conjecture in~\cite[Conjecture 7.8]{Fischer:2022sus} in the semi-simple situation:
    
    \begin{example}[The ordinary setting]\label{ex:OrdinaryLGBSetting}
        By ``ordinary setting,'' we mean a trivial Lie group bundle equipped with its canonical flat connection as adjustment, in particular we also require $\zeta = 0$ as the primitive; while we just speak of an ``adjusted setting'' when speaking of a Lie group bundle with a (non-trivial) adjustment.
        
        Assume that the structural fiber $\sfH$ of a Lie group bundle is semi-simple. By \cref{rem:JustifyingExample} and \cref{lem:SemiSimpleLGBAdjust}, there is a unique principal $\sfH$-bundle $P$ so that the Lie group bundle is of the form $c_\sfH(P)$, and its adjustments are the associated connections. In particular, $c_\sfH(P)$ is trivial if and only if $P$ is trivial. In this case, $P$ is flat, and its canonical flat connection induces a canonical flat connection on $c_\sfH(P)$ so that a field redefinition flattening a given adjustment always exists. Because of the non-trivial center, the only primitive is then $\zeta \equiv 0$. Thus, a semi-simple Lie group bundle is an ordinary setting (up to field redefinitions) if and only if the group bundle is trivial, if and only if $P$ is trivial.
    \end{example}
    
    \subsection{Action groupoids}\label{ssec:action_groupoids}
    
    \paragraph{Generalities.} A particularly important class of examples is formed by action groupoids. This situation is less well understood, but it is the source of many examples that are relevant in physical applications.    
    
    In the following we consider ordinary action groupoids, i.e.~action groupoids in the sense of \ref{def:action_groupoid}, but with trivial differential and objects and morphisms concentrated in degrees $0$.
    
    We have the following statement, which allows us to construct natural connections on action groupoids.
    \begin{proposition}[Induced connection on action groupoid]\label{prop:InducedCartConnOnActionGr}
        Consider a Lie groupoid $\scG$ with an action $(\racton,\psi)$ on $N$. A Cartan connection $\caH$ on $\scG$ induces a Cartan connection on the corresponding action groupoid.
    \end{proposition}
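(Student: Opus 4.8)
Write $\scK=(\,N\fibtimes{\psi}{\sft}\sfG\rightrightarrows N\,)$ for the right action groupoid, with target $\sft^\scK(n,g)=n$ and source $\sfs^\scK(n,g)=n\racton g$; its tangent groupoid $\rmT\scK$ has structure maps the differentials of those of $\scK$, cf.\ \ref{ex:tangent_groupoid}. The plan is to exhibit an explicit multiplicative distribution $\caH^\scK\subset\rmT\scK$ complementary to $\ker(\rmd\sft^\scK)$, built directly from the given Cartan connection $\caH\subset\rmT\sfG$. The natural candidate is $\caH^\scK\coloneqq\{(X,Y)\in\rmT\scK\mid Y\in\caH\}$, the preimage of $\caH$ under the differential of the projection $\scK\to\sfG$; unpacking the fibered product, $\caH^\scK_{(n,g)}=\{(X,Y)\in\rmT_nN\times\caH_g\mid\rmd\psi(X)=\rmd\sft(Y)\}$. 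Since $\caH$ is complementary to $\ker(\rmd\sft)$, the restriction $\rmd\sft|_{\caH_g}\colon\caH_g\to\rmT_{\sft(g)}M$ is a fibrewise isomorphism, so $X$ determines $Y$ uniquely; this exhibits $\caH^\scK$ as the image of a smooth injective bundle map $(\sft^\scK)^*\rmT N\to\rmT\scK$, hence as a smooth subbundle of constant rank $\dim N$, and at the same time gives a ``horizontal lift'' description of $\caH^\scK$.

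First I would check complementarity, which is pointwise linear algebra: at $(n,g)$ one has $\ker(\rmd\sft^\scK)_{(n,g)}=\{0\}\times\ker(\rmd\sft)_g$, so the intersection with $\caH^\scK_{(n,g)}$ lies in $\{0\}\times\bigl(\caH_g\cap\ker(\rmd\sft)_g\bigr)=0$, while splitting $Y=Y_\caH+Y_{\mathrm{vert}}$ according to $\rmT_g\sfG=\caH_g\oplus\ker(\rmd\sft)_g$ writes a general $(X,Y)\in\rmT_{(n,g)}\scK$ as $(X,Y_\caH)+(0,Y_{\mathrm{vert}})$; a dimension count, $\dim N+(\dim\sfG-\dim M)=\dim\scK$, then gives $\rmT\scK=\caH^\scK\oplus\ker(\rmd\sft^\scK)$.

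The substantive step is to show that $(\caH^\scK\rightrightarrows\rmT N)$ is a subgroupoid of $\rmT\scK$, i.e.\ that $\caH^\scK$ is multiplicative. The key observation is that $\rmT\scK\cong\rmT N\fibtimes{\rmd\psi}{\rmd\sft}\rmT\sfG$ is itself the action groupoid of the differentiated action $\rmT\scG\curvearrowright\rmT N$ along $\rmd\psi$ (using that $\sft$ is a submersion, so all fibered products are transverse). By definition, $\caH$ multiplicative means exactly that $(\caH\rightrightarrows\rmT M)$ is a subgroupoid of $\rmT\scG$; restricting the $\rmT\scG$-action on $\rmT N$ to $\caH$ is legitimate because the source and target maps of $\caH$ still land in $\rmT M$, and the corresponding action groupoid of $\caH$ on $\rmT N$ is a subgroupoid of $\rmT\scK$ whose arrow manifold is precisely $\caH^\scK$. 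Spelling this out is routine: the units $\rmd\sfe^\scK(X)=(X,\rmd\sfe(\rmd\psi(X)))$ lie in $\caH^\scK$ since $\rmd\sfe(\rmT M)\subseteq\caH$ (this is \ref{lem:connection_one_form_properties}); two arrows $(X_1,Y_1),(X_2,Y_2)$ of $\caH^\scK$ are $\scK$-composable iff $\rmd\sfs(Y_1)=\rmd\sft(Y_2)$, so that $Y_1\cdot Y_2\in\caH$ by multiplicativity of $\caH$ and the composite $(X_1,Y_1\cdot Y_2)$ lies in $\caH^\scK$; and the $\sfG$-component of $(X,Y)^{-1}$ is $\rmd\sfinv(Y)\in\caH$. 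Combined with complementarity, this shows $\caH^\scK$ is a Cartan connection on $\scK$, and the left-action case is entirely analogous.

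The main obstacle I anticipate is precisely the bookkeeping in this last step: one must keep track of which map is $\sft^\scK$ versus $\sfs^\scK$ and of their differentials, and verify that the composability condition in $\caH^\scK$ genuinely reduces to the composability condition in $\caH$, which is where the action axiom $\psi\circ\racton=\sfs\circ\sfpr_\sfG$ (equivalently $\psi(n\racton g)=\sfs(g)$ from \ref{def:right_action}) is used. Everything else is immediate. For the later examples it is also worth noting, although not needed for the statement, that the horizontal lift of $\caH^\scK$ is obtained by pulling back $\sigma_\caH$ along $\scK\to\scG$, so that the induced vector bundle connection on the action Lie algebroid $\sfLie(\scK)$ (in the sense of \ref{prop:associated_Lie_algebroid_connection}) is in turn induced from the connection on $\frg=\sfLie(\scG)$.
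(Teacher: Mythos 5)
Your proposal is correct and takes essentially the same approach as the paper, which defines the induced connection as the pullback of the vertical projection $\sfp_{\caH}$ (equivalently, of $\caH$ itself) along the canonical projection $\scK\to\sfG$ and observes that multiplicativity follows because composition in $\scK$, and hence in $\rmT\scK$, is defined via composition in $\sfG$ while $\caH\rightrightarrows\rmT M$ is a subgroupoid of $\rmT\scG$; your version merely spells out the complementarity and subgroupoid checks the paper leaves to inspection. One nitpick: composability of $(X_1,Y_1)$ and $(X_2,Y_2)$ in $\rmT\scK$ means $X_2=\rmd\racton(X_1,Y_1)$, which implies but is not equivalent to $\rmd\sfs(Y_1)=\rmd\sft(Y_2)$; since only that implication is needed for closure of $\caH^{\scK}$ under composition, your argument is unaffected.
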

    \begin{proof}
        Recall that $\caH$ equivalently defines a projection $\sfp_{\caH}\colon \rmT\sfG\rightarrow\ker(\rmd \sft)$, and we have the following cone over a pullback diagram:
        \begin{equation}
            \begin{tikzcd}
                (\rmd \psi)^*\rmT\sfG \arrow[rd,dashed,"\sfp_{\hat\caH}"] \arrow[rrr] \arrow[ddr,bend right, swap,"\rmd \sft_\scK"] & & & \rmT \sfG \arrow[dl, swap,"\sfp_{\caH}"] \arrow[ddl, bend left, "\rmd\sft"]
                \\
                & (\rmd \psi)^*(\ker(\rmd\sft)) \arrow[r] \arrow[d] & \ker(\rmd\sft) \arrow[d] &
                \\
                & \rmT N\arrow[r,"\rmd\psi"] & \rmT M &
            \end{tikzcd}
        \end{equation}
        with $\rmT \left(N\fibtimes{\psi}{\sft}\sfG\right)=(\rmd \psi)^*\rmT\sfG$. The morphism $\sfp_{\hat\caH}$ is uniquely given as the form-pullback of $\sfp_{\caH}$ along the canonical projection $N\fibtimes{\psi}{\sft}\sfG \to \sfG$, and it defines the connection $\hat \caH$ on the action groupoid $\scK$ as a map from $\rmT (N\fibtimes{\psi}{\sft}\sfG)$ to $\ker(\rmd \sft_\scK)=(\rmd \psi)^*(\ker(\rmd\sft))$. 
        
        Because composition in $\scK$ (and hence in $\rmT \scK$) is defined in terms of composition in $\scG$, and $\caH\rightrightarrows M$ is a subgroupoid of $\rmT \scG$, multiplicativity then follows immediately by inspection.
    \end{proof}
    
    We also need the corresponding statement for the adjustment datum.
    \begin{proposition}[Canonical adjusted connection on action groupoids and algebroids]\label{prop:CanonicalCompConnOnActionGroupoid}
        Consider a groupoid $\scG$ with an action $(\racton,\psi)$ on a manifold $N$ along a submersion $\psi$ and denote the corresponding action groupoid by $\scK$. Let $\caH$ be a Cartan connection on $\scG$ and let $(\nabla,\zeta)$ be a covariant adjustment of the Lie algebroid $\frg$ of $\scG$ such that $\nabla$ is the vector bundle connection induced by $\caH$ on $\frg$. Then $\caH$ induces a Cartan connection on $\scK$ with Lie algebroid connection $\nabla_\frk$ on the Lie algebroid $\frk$ of $\scK$, and $(\nabla_\frk,\zeta_\frk)$ forms a covariant adjustment of $\frk$, where $\zeta_\frk$ is the pullback of $\zeta$ (regarded as a 2-form) along $\psi$. 
    \end{proposition}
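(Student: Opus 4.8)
The plan is to organise everything around the canonical Lie algebroid morphism $\Psi\colon\frk\to\frg$ over $\psi$ and to show that the adjustment datum on $\frk$ is the $\psi$-pullback of the one on $\frg$, so that the covariant-adjustment equation on $\frk$ becomes the $\psi$-pullback of the one on $\frg$. By \ref{prop:InducedCartConnOnActionGr} the Cartan connection $\caH$ already induces a Cartan connection $\hat\caH$ on $\scK$, and by \ref{prop:associated_Lie_algebroid_connection} the associated Lie algebroid connection $\nabla_\frk$ on $\frk$ automatically has vanishing basic curvature; hence $\nabla_\frk$ is a plain adjustment and the only thing left to check is the curvature equation $R_{\nabla_\frk}+\nabla^{\rm bas}\zeta_\frk=0$ with $\zeta_\frk\coloneqq\psi^*\zeta\in\Omega^2(N,\frk)$ (recall that $\frk=\psi^*\frg$ as a vector bundle).

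\emph{Step 1: identify $\nabla_\frk$.} I would first recall that $\sfLie(\scK)=\frk$ is the action (pullback) Lie algebroid, with $[\psi^*\mu,\psi^*\nu]_\frk=\psi^*[\mu,\nu]$, anchor the infinitesimal $\frg$-action, and that $\sfpr\colon\scK\to\sfG$, $(n,g)\mapsto g$, is a groupoid morphism over $\psi$ with $\Psi=\sfLie(\sfpr)\colon(n,\nu)\mapsto\nu$; cf.~\cite[\S 1.6]{0521499283}. From the construction of $\hat\caH$ in \ref{prop:InducedCartConnOnActionGr} (its vertical projection is the form-pullback of that of $\caH$ along $\sfpr$), the projectable horizontal field $\mathsf{hor}_{\hat\caH}(V)$ of $V\in\frX(N)$ is $\sfpr$-related to $\mathsf{hor}_{\caH}(W)$ whenever $V$ is $\psi$-related to $W\in\frX(M)$, and the left-invariant field $(\psi^*\mu)_l$ on $\scK$ is $\sfpr$-related to $\mu_l$ on $\sfG$. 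Since $\psi$ is a submersion every tangent vector of $N$ extends locally to such a $\psi$-related $V$; applying $\rmd\sfpr$ to the bracket $[\mathsf{hor}_{\hat\caH}(V),(\psi^*\mu)_l]|_{\sfe_n}$ (which by \ref{prop:associated_Lie_algebroid_connection} computes $\nabla_\frk$) and using the identification $\ker(\rmd\sft_\scK)|_{\sfe(N)}\cong\psi^*\sfe^*\ker(\rmd\sft)\cong\psi^*\frg$ then yields $\nabla_\frk=\psi^*\nabla$, the pullback vector bundle connection. Consequently the basic connections on $\frk$ and $\rmT N$ are $\psi$-pullbacks of those on $\frg$ and $\rmT M$: one has $\nabla^{\rm bas}_{\psi^*\mu}(\psi^*s)=\psi^*(\nabla^{\rm bas}_\mu s)$ for $s\in\Gamma(\frg)$, and $\nabla^{\rm bas}_{\psi^*\mu}V$ is $\psi$-related to $\nabla^{\rm bas}_\mu W$ whenever $V\sim W$, both being consequences of $\nabla_\frk=\psi^*\nabla$, of $\frk$ carrying the pullback bracket, and of $\rho_\frk\circ\psi^*$ being $\psi$-related to $\rho$.

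\emph{Step 2: pull back the equation.} Both $R_{\nabla_\frk}$ and $\nabla^{\rm bas}\zeta_\frk$ — the latter in the coordinate-free form of \ref{rem:AlternCompEq}, namely $(\mu;V_1,V_2)\mapsto\nabla^{\rm bas}_\mu\bigl(\zeta_\frk(V_1,V_2)\bigr)-\zeta_\frk(\nabla^{\rm bas}_\mu V_1,V_2)-\zeta_\frk(V_1,\nabla^{\rm bas}_\mu V_2)$ with $\nabla^{\rm bas}$ acting on $\frk$ for the value and on $\rmT N$ for the form arguments — are $C^\infty(N)$-multilinear in their arguments, hence tensors, so it suffices to evaluate the equation at a single point $n$ on arbitrary $X_1,X_2\in\rmT_nN$ and $\bar\mu_0\in\frk_n=\frg_{\psi(n)}$. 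Choosing $V_i\in\frX(N)$ near $n$ with $(V_i)_n=X_i$ and $V_i$ $\psi$-related to $W_i\in\frX(M)$, and $\mu\in\Gamma(\frg)$ with $\mu_{\psi(n)}=\bar\mu_0$, the identities of Step 1 give $R_{\nabla_\frk}(V_1,V_2)(\psi^*\mu)=\psi^*\bigl(R_\nabla(W_1,W_2)\mu\bigr)$ and $(\nabla^{\rm bas}_{\psi^*\mu}\zeta_\frk)(V_1,V_2)=\psi^*\bigl((\nabla^{\rm bas}_\mu\zeta)(W_1,W_2)\bigr)$. Adding the two and evaluating at $n$ produces the value at $\psi(n)$ of the left-hand side of the covariant-adjustment equation $R_\nabla+\nabla^{\rm bas}\zeta=0$ of $(\nabla,\zeta)$ on $\scG$, which vanishes by hypothesis; as $n$, $X_1$, $X_2$, $\bar\mu_0$ were arbitrary, $R_{\nabla_\frk}+\nabla^{\rm bas}\zeta_\frk=0$ identically on $N$, so $(\nabla_\frk,\zeta_\frk)$ is a covariant adjustment of $\frk$.

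The main obstacle is Step 1: carefully unwinding the construction of $\hat\caH$ and verifying the $\sfpr$-relatedness of the horizontal and left-invariant fields entering \ref{prop:associated_Lie_algebroid_connection}, together with the bookkeeping of the canonical identifications between $\ker(\rmd\sft_\scK)|_{\sfe(N)}$ and $\psi^*\frg$. Once $\nabla_\frk=\psi^*\nabla$ and $\zeta_\frk=\psi^*\zeta$ are in place the remainder is a routine naturality argument; it can equivalently be phrased as functoriality of the adjusted Weil algebra, $\sfW_{(\nabla,\zeta)}(\frg)\to\sfW_{(\nabla_\frk,\zeta_\frk)}(\frk)$, under $\Psi$, the curvature obstruction being precisely the coefficient of $\bar m^a\bar m^b\xi^\beta$ in $\sfd_\sfW\bar\xi^\alpha$ in the notation of \eqref{eq:coordinate_changed_Weil}, which $\Psi$ carries to the corresponding coefficient on $\frk$.
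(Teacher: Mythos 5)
Your proposal is correct and follows essentially the same route as the paper: identify $\nabla_\frk$ as the pullback connection $\psi^*\nabla$ induced by the Cartan connection of \ref{prop:InducedCartConnOnActionGr} (hence with vanishing basic curvature), note that the basic connections on $\frk$ and $\rmT N$ are the $\psi$-pullbacks of those on $\frg$ and $\rmT M$, and then verify $R_{\nabla_\frk}+\nabla^{\rm bas}\zeta_\frk=0$ on pulled-back sections and projectable vector fields, which suffices because $\psi$ is a submersion. The only difference is cosmetic: you derive $\nabla_\frk=\psi^*\nabla$ explicitly from the construction of $\hat\caH$ and invoke tensoriality to reduce to projectable generators, whereas the paper asserts the former and works directly with the canonical generators.
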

    \begin{proof}
        We note that $\nabla_\frk$ is the pullback connection of $\nabla$ along $\psi$. By construction, it is the infinitesimal version of the Cartan connection on $\scK$ as constructed in \cref{prop:InducedCartConnOnActionGr}, and thus it has a vanishing basic curvature.\footnote{With the following arguments this could also be shown directly, even if integrability would not be given.} Due to the pullback structure in $\frk$ it is useful to show compatibility conditions w.r.t.\ the canonical generators given by pullbacks of sections of $\frg$ and \emph{projectable vector fields} $W \in \mathfrak{X}(N)$, that is, $\rmd \psi (W) = \psi^* V$ for some vector field $V \in \mathfrak{X}(M)$; we then just write $W = V'$ for simplicity, even though $V'$ is not unique for a given $V$. First, recall the following property of algebroid actions:
        \begin{equation}\label{eq:psiAnchorMorph}
            \rmd\psi\bigl(\rho_{\frk}\left(\psi^*\nu\right) \bigr)
            =
            \psi^*\bigl( \rho_\frg (\nu) \bigr)
        \end{equation}
        for all $\nu \in \Gamma(\frg)$, i.e.\ $\rho_{\frk}\left(\psi^*\nu\right) =\left( \rho_\frg (\nu) \right)' $, and therefore we have
        \begin{equation}
            \begin{aligned}
                \bigl(\nabla_\frk\bigr)^{\mathrm{bas}}_{\psi^*\nu}\left( \psi^*\mu \right)
                &=
                \psi^*\left(
                \left[ \nu, \mu \right]_\frg
                + \nabla_{\rho_\frg(\mu)} \nu
                \right)
                =
                \psi^*\left(
                \nabla^{\mathrm{bas}}_\nu \mu
                \right)~,
                \\
                \bigl(\nabla_\frk\bigr)^{\mathrm{bas}}_{\psi^*\nu}\left( V' \right)
                &=
                \left[  \left( \rho_\frg (\nu) \right)', V' \right]
                + \bigl( \rho_\frg\left( \nabla_V \nu \right) \bigr)'
                =
                \left( \nabla^{\mathrm{bas}}_\nu V \right)'
            \end{aligned}
        \end{equation}
        for all $\mu, \nu \in \Gamma(\frg)$ and $V \in \mathfrak{X}(M)$. Thus, it is straightforward to check that we have
        \begin{equation}
            \begin{aligned}
                R_{\nabla_\frk}(V', W')\psi^*\nu
                &=
                \psi^*\left( \left(\nabla^{\mathrm{bas}}_\nu \zeta\right)(V, W) \right)
                =
                \left(\bigl(\nabla_\frk\bigr)^{\mathrm{bas}}_{\psi^*\nu}\left(\psi^!\zeta\right)\right)(V', W')
            \end{aligned}
        \end{equation}
        for all $\mu, \nu \in \Gamma(\frg)$ and $V, W \in \mathfrak{X}(M)$, where $\psi^!\zeta$ denotes the pullback of $\zeta$ as a form, not as a section. In particular,
        \begin{equation*}
            \left( \psi^!\zeta \right)(V', W')
            =
            \psi^*\bigl( \zeta(V, W) \bigr)~.
        \end{equation*}
        This finishes the proof.
    \end{proof}
    
    \begin{remark}\label{rem:ConditionAboutProjectab}
        We assume that $\psi$ is a submersion so that $\frX(N)$ is generated by projectable vector fields. The proof thus canonically extends to all $\psi$ with this property on $\frX(N)$, for example, the proof also works for constant $\psi$. Thus, assuming the existence of projectable generators can certainly be relaxed to a more general assumption: observe that the existence of an action already implies that $\psi$ is a submersion by Eq.~\eqref{eq:psiAnchorMorph}, if $\scG$ is a transitive groupoid. Even more general, the proof works for $\psi$ admitting generators $\caV$ of $\frX(N)$ such that 
        \begin{equation*}
            \bigl(\nabla_\frk\bigr)^{\mathrm{bas}}_{\psi^*\nu}\left( \caV \right)
            =
            \left( \left(\psi^*\nabla^{\rm{bas}}\right)_{\psi^*\nu} \bigl(\rmd\psi(\caV)\bigr) \right)'
        \end{equation*}
        for all $\nu \in \Gamma(\frg)$, where one views $\rmd\psi(\caV)$ as a section of $\psi^*\rmT M$, and $\psi^*\nabla^{\rm bas}$ is the induced $\frk=\psi^*\frg$-connection on $\psi^*\rmT M$, given as the pullback of $\nabla^{\rm bas}$ as a $\frg$-connection on $\rmT M$ along the evident projection $\psi^*\frg \to \frg$; such a pullback exists here due to the fact that $\psi^*\frg \to \frg$ is a morphism of anchored vector bundles by Eq.~\eqref{eq:psiAnchorMorph}.\footnote{If the reader is unfamiliar with such pullbacks then see~\cite{Fischer:2021glc, Fischer:2021yoy}.} To do so, one only has to check whether $\left[ \rho_{\frk}\left(\psi^*\nu\right), \caV \right]$ projects ``nicely'' because the other summand in $\bigl(\nabla_\frk\bigr)^{\mathrm{bas}}_{\psi^*\nu}\left( \caV \right)$ already behaves properly due to the fact that we define $\nabla_\frk$ as $\psi^*\nabla$.
        
        The proof is left to the reader; in our corresponding examples $\psi$ will be a (surjective) submersion, so that we decided to keep the proof simpler.
    \end{remark}
    
    We can combine the previous results to the following corollary (assuming that $\psi$ is a submersion or alternatively satisfies what we discussed in \cref{rem:ConditionAboutProjectab}).
    
    \begin{corollary}[Canonical covariant adjustments on action groupoids]\label{cor:induced_adjustments}
        (Covariant) adjustments of groupoids acting on manifolds induce (covariant) adjustments on the corresponding action groupoids.
    \end{corollary}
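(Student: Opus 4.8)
The plan is to assemble the corollary directly from the three preceding results, since every piece of data entering an adjustment of the action groupoid has already been produced individually. Recall that, by \ref{thm:explicit_adjustment_groupoid}, an adjustment of a Lie groupoid amounts to a Cartan connection together with an adjustment of its Lie algebroid which is \emph{induced} by that Cartan connection in the sense of \ref{prop:associated_Lie_algebroid_connection}; in the covariant case one additionally records a primitive. So, starting from an adjustment $(\caH,\nabla)$ of $\scG$ (respectively a covariant adjustment $(\caH,\nabla,\zeta)$) and an action $(\racton,\psi)$ of $\scG$ on $N$ with $\psi$ a submersion, I would proceed in three steps. First, invoke \ref{prop:InducedCartConnOnActionGr} to obtain the induced Cartan connection $\hat\caH$ on the action groupoid $\scK$. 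Second, invoke \ref{prop:CanonicalCompConnOnActionGroupoid} — whose hypothesis is precisely that $\nabla$ is the connection induced by $\caH$ on $\frg$, which holds by assumption — to conclude simultaneously that $\hat\caH$ induces on the Lie algebroid $\frk$ of $\scK$ exactly the pullback connection $\nabla_\frk=\psi^*\nabla$, and that $\nabla_\frk$ is an adjustment of $\frk$, with primitive $\zeta_\frk=\psi^!\zeta$ in the covariant case (and $\zeta_\frk=0$ when $\zeta=0$, which gives the plain statement).

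Third, I would feed this back into \ref{thm:explicit_adjustment_groupoid}: since $\nabla_\frk$ is the Lie algebroid connection induced by the Cartan connection $\hat\caH$ on $\scK$, the pre-adjustment groupoid $\scA^\sfW_{\hat\caH}$ supplied by \ref{thm:explicit_adjustment_groupoid} is genuinely an adjustment groupoid of $\scK$, which is the assertion of the corollary; the covariant refinement is then $\scA^\sfW_{(\hat\caH,\zeta_\frk)}$. In this way the corollary is simply the composite of the groupoid-level statement \ref{prop:InducedCartConnOnActionGr}, the algebroid-level statement \ref{prop:CanonicalCompConnOnActionGroupoid}, and the ``pre-adjustment $\Rightarrow$ adjustment'' criterion \ref{thm:explicit_adjustment_groupoid}.

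The one point deserving care — and it is already discharged inside the proof of \ref{prop:CanonicalCompConnOnActionGroupoid}, together with \ref{rem:ConditionAboutProjectab} — is the compatibility of the two natural connections on $\frk$: the ``top-down'' one $\nabla_\frk$ coming from $\hat\caH$ via \ref{prop:associated_Lie_algebroid_connection}, and the ``pullback'' one $\psi^*\nabla$. These agree because composition in $\scK$, and hence horizontal lifts and left-invariant vector fields in $\rmT\scK$, are defined fiberwise through composition in $\scG$, so that the brackets computing $\nabla_\frk$ are governed by the corresponding data on $\scG$; this is the projectability argument of \ref{prop:CanonicalCompConnOnActionGroupoid}, which for submersive $\psi$ (or the weaker condition of \ref{rem:ConditionAboutProjectab}) ensures $\frX(N)$ is generated by projectable vector fields. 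Once this identification is in hand, nothing further is required, so I expect no genuine obstacle: the corollary is a bookkeeping consequence of the results it cites, and the proof can be written in a few lines.
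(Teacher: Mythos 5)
Your proposal is correct and follows essentially the same route as the paper, which states the corollary precisely as the combination of \ref{prop:InducedCartConnOnActionGr} and \ref{prop:CanonicalCompConnOnActionGroupoid} (under the submersion hypothesis on $\psi$ or the weakening of \ref{rem:ConditionAboutProjectab}), with no separate proof given. Your additional third step, packaging the result as an adjustment groupoid via \ref{thm:explicit_adjustment_groupoid}, and your remark that the compatibility of $\nabla_\frk$ with $\psi^*\nabla$ is already discharged inside the proof of \ref{prop:CanonicalCompConnOnActionGroupoid}, are faithful elaborations of what the paper leaves implicit.
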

    
    Observe that the previous results are pure pullback arguments. In particular:
    
    \begin{corollary}[Canonical strict covariant adjustments on action groupoids]\label{cor:induced_strictadjustments}
        Strict covariant adjustments of groupoids acting on manifolds induce strict adjustments on the corresponding action groupoids.
    \end{corollary}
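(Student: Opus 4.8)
The plan is to reuse the pullback structure already established in the proof of \ref{prop:CanonicalCompConnOnActionGroupoid}. There it was shown that the Lie algebroid connection $\nabla_\frk$ on the action algebroid $\frk$ of the action groupoid $\scK$ is the pullback connection $\psi^*\nabla$ and that the primitive $\zeta_\frk$ is the form-pullback $\psi^!\zeta$; by \ref{cor:induced_adjustments} these already satisfy the covariant adjustment conditions $R^{\rm bas}_{\nabla_\frk}=0$ and $R_{\nabla_\frk}+\nabla^{\rm bas}_\frk\zeta_\frk=0$. It therefore remains only to verify the third, strictness condition, which by \ref{rem:AlternCompEq} can be written as $\rmd^{\nabla^{\zeta_\frk}_\frk}\zeta_\frk=0$, where $\nabla^{\zeta_\frk}_\frk$ is the connection on $\frk$ given by $(\nabla_\frk)^{\zeta_\frk}_W\mu=(\nabla_\frk)_W\mu-\zeta_\frk(W,\rho_\frk(\mu))$.

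First I would check that $\nabla^{\zeta_\frk}_\frk$ is itself a pullback, namely $\nabla^{\zeta_\frk}_\frk=\psi^*(\nabla^\zeta)$. This follows on the canonical generators exactly as in the proof of \ref{prop:CanonicalCompConnOnActionGroupoid}: for a projectable vector field $W=V'$ on $N$ with $\rmd\psi(V')=\psi^*V$ and a pullback section $\psi^*\mu$ of $\frk$, one has $\rho_\frk(\psi^*\mu)=(\rho_\frg(\mu))'$, so that $\zeta_\frk(V',\rho_\frk(\psi^*\mu))=(\psi^!\zeta)(V',(\rho_\frg(\mu))')=\psi^*(\zeta(V,\rho_\frg(\mu)))$; combining with $(\nabla_\frk)_{V'}(\psi^*\mu)=\psi^*(\nabla_V\mu)$ gives $(\nabla_\frk)^{\zeta_\frk}_{V'}(\psi^*\mu)=\psi^*(\nabla^\zeta_V\mu)$, and the Leibniz rule propagates this to all of $\frX(N)$ and $\Gamma(\frk)$. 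Next I would use that the exterior covariant derivative commutes with pullback of both the connection and the form, i.e.\ $\rmd^{\psi^*(\nabla^\zeta)}(\psi^!\zeta)=\psi^!(\rmd^{\nabla^\zeta}\zeta)$, again a check on projectable generators using the explicit formula for $\rmd^\nabla$ on $\frg$-valued $2$-forms recorded in \ref{ssec:local_adjusted_connections}. Since $(\nabla,\zeta)$ is a strict covariant adjustment by hypothesis, $\rmd^{\nabla^\zeta}\zeta=0$, and hence $\rmd^{\nabla^{\zeta_\frk}_\frk}\zeta_\frk=\psi^!(\rmd^{\nabla^\zeta}\zeta)=0$, which is precisely the strictness condition for $(\nabla_\frk,\zeta_\frk)$. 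Equivalently, in the coordinate form of \ref{def:local_adjustment} the identity $(\rmd^\nabla\zeta)^\alpha_{abc}-3\zeta^\alpha_{ad}\ttr^d_\beta\zeta^\beta_{bc}=0$ pulls back verbatim, because both the anchor structure functions $\ttr$ and the components $\zeta^\alpha_{ab}$ pull back and the index contraction is compatible with $\psi^*$.

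As in \ref{rem:ConditionAboutProjectab}, the whole argument only needs $\frX(N)$ to be generated by vector fields $W$ with $[\rho_\frk(\psi^*\nu),W]$ projectable, so the corollary holds under the same mild hypothesis on $\psi$ used for \ref{prop:CanonicalCompConnOnActionGroupoid}. I expect no genuine obstacle here, as everything reduces to the naturality of the pullback of connections, forms, and exterior covariant derivatives, exactly parallel to the covariant case; the only mildly delicate point is to confirm that the nonlinear term $\zeta^\alpha_{ad}\ttr^d_\beta\zeta^\beta_{bc}$ is genuinely local over the base and hence respected by $\psi^*$, which it is.
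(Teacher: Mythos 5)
Your proposal is correct and follows essentially the same route as the paper, which disposes of this corollary with the single observation that the preceding results are ``pure pullback arguments''; you have simply made that observation explicit by checking that $\nabla^{\zeta_\frk}_\frk=\psi^*(\nabla^\zeta)$ on projectable generators and that $\rmd^{\nabla^\zeta}\zeta=0$ pulls back along $\psi$.
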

    
    \paragraph{Recovering physically known examples.} Let us briefly go through a number of situations that commonly appear in field theory within physics. 
    
    The first case is that of the kinematical data for a non-linear sigma-model, which can be regarded as a principal $\scG$-bundle for the extreme case of $\scG$ the trivial groupoid $\scG=(*\rightrightarrows *)$ and $N$ the target space of scalar (or Higgs) fields. The resulting action groupoid is discrete: $\scK=(N\rightrightarrows N)$, and there is a unique, trivial groupoid connection and hence adjustment.
    
    The other extreme case is that of the kinematical data for a gauge theory, i.e.\ a connection on a principal fiber bundle with structure Lie group $\sfG$. This can be regarded as a principal $\scG$-bundle with connection for the extreme case of a single object groupoid $\scG=(\sfG\rightrightarrows *)$ acting on the trivial manifold $N=*$. Again, the groupoid connections and adjustments are evidently trivial and known as \emph{Maurer--Cartan form}.
    
    The evident combination of both cases is that of a gauged sigma model, where the kinematical data is given by a manifold $M$, in which scalar fields (sometimes called ``Higgs fields'') take value and on which a symmetry group $\sfG$ acts. This data is described by a principal $\scG$-bundle with connection, where $\scG=(\sfG\rightrightarrows *)$ acts non-trivially on a target space manifold $N$, leading to the action groupoid $\scK=(N\rtimes \sfG\rightrightarrows N)$. The adjustments on $\scK$ induced according to \ref{cor:induced_adjustments} are again trivial.
    
    A specialization of the previous situation is when $N$ is a vector space. A principal $\scG$-bundle is then a principal $\sfG$-bundle, together with an associated vector bundle for the representation given by the action of $\sfG$ on $N$. This describes the kinematical data for gauge--matter theories.\footnote{The usual nomenclature is that connections on principal fiber bundles are regarded as gauge fields, while sections of associated vector bundles, i.e.~objects which involve the choice of a representation of the structure group, are regarded as matter fields.} 
    
    \paragraph{More general action groupoids.}
    We now extend the constructions of \ref{ssec:Lie_group_bundles} to action groupoids. Canonically, the Atiyah groupoid $\scG \coloneqq (P \times P) / \sfH$ of $P$ acts on $\scT = \left(P \times \mathbb{R}^d\right) / \sfH$ via 
    \begin{equation}
        [p, q] \cdot [q, y]
        \coloneqq
        [p, y]
    \end{equation}
    for all $p, q \in P$ and $y \in \mathbb{R}^d$, where $[\cdot, \cdot]$ denotes the corresponding equivalence classes, cf.~\cite[Ex.\ 1.6.4]{0521499283}. The orbits of the anchor of the corresponding action\footnote{In our case w.r.t.\ the canonically induced right-action.} groupoid $\scK$ are given by $\scF$. Its Lie algebroid is canonically the action algebroid $\frk$ induced by the corresponding action of the Atiyah algebroid $\frg$ of $P$.  
    Then observe that $M$ being a leaf of $\scF$ implies that
    \begin{equation}\label{eq:AlgebroidRestrictionOfActionAlg}
        \left.\scK\right|_{M}
        \cong
        \scG
        \eand
        \left.\frk\right|_{M}
        \cong
        \frg
    \end{equation}
    as Lie groupoids and Lie algebroids, where the restriction is the usual restriction of Lie groupoids and Lie algebroids along an orbit.
    
    \paragraph{Action groupoids generating transverse foliation.}
    Every foliation $\scF$ of $\scT$ has a canonical \emph{transverse foliation $\tau$}, which is here just the vertical part of $\scF$ in $\scT$. By construction, $\tau$ is given as the image of the $c_\sfH(P)$-action on $\scT$, i.e.\ generated by vector fields of the form $\overline{\nu}$ for $\nu \in \Gamma(\mathrm{ad}(P))$. In other words, we have a subgroupoid of $\scK$ given by the action algebroid induced by the $c_\sfH(P)$-action, which we denote by $\scK^{\rm t}$. Its action Lie algebroid $\frk^{\rm t}$ is a subalgebroid of $\frk$, induced by the $\mathrm{ad}(P)$-action on $\scT$, and the image of its anchor is $\tau$. By construction, equations~\eqref{eq:AlgebroidRestrictionOfActionAlg} restrict to
    \begin{equation}\label{eq:AlgebroidRRestrictionOfActionAlg}
        \left.\scK^{\rm t}\right|_{M}
        \cong
        c_\sfH(P)
        \eand
        \left.\frk^{\rm t}\right|_{M}
        \cong
        \mathrm{ad}(P)~.
    \end{equation}
    
    \begin{corollary}[Covariant adjustment on $\scK^t$]\label{cor:InducedConnViaPullback}
        Every (covariant) adjustment on $\scK^t$ induces a (covariant) adjustment on $c_\sfH(P)$ by restriction; vice versa, every (covariant) adjustment on $c_\sfH(P)$ induces a (covariant) adjustment on $\scK^{\rm t}$ via pullback along the canonical projection $\scK^{\rm t} \to c_\sfH(P)$. 
    \end{corollary}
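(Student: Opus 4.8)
The statement packages two constructions that are essentially inverse to one another, and each one reduces to results already in hand. The plan is to dispose of the pullback direction first, since it is immediate, and then to handle the restriction direction by a leafwise argument.

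\textbf{Pullback direction.} Here I would simply invoke \cref{cor:induced_adjustments} and \cref{cor:induced_strictadjustments}. The Lie group bundle $c_\sfH(P)$ acts on $\scT$ along the bundle projection $\scT\to M$, and $\scK^{\rm t}$ is, by its very definition, the corresponding action groupoid; the canonical projection $\scK^{\rm t}\to c_\sfH(P)$ is the action-groupoid functor sending $(n,g)\mapsto g$. Since $c_\sfH(P)$ is a Lie group bundle, \cref{rem:ConditionAboutProjectab} removes any need for the moment map to be a submersion, so a (plain, covariant, or strict covariant) adjustment of $c_\sfH(P)$ — a Cartan connection $\caH$ whose induced Lie algebroid connection $\nabla$ has vanishing basic curvature, together with a primitive $\zeta$ in the (strict) covariant case — pulls back along $\scK^{\rm t}\to c_\sfH(P)$ to the datum $(\hat\caH,\psi^*\nabla,\psi^!\zeta)$ exactly as produced in \cref{prop:InducedCartConnOnActionGr} and \cref{prop:CanonicalCompConnOnActionGroupoid}. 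Nothing further is required here.

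\textbf{Restriction direction.} Given a (covariant) adjustment on $\scK^{\rm t}$ — a Cartan connection $\caH$ on $\scK^{\rm t}$, its induced connection $\nabla$ on $\frk^{\rm t}$ with vanishing basic curvature, and, in the covariant case, a primitive $\zeta\in\Omega^2(\scT;\frk^{\rm t})$ — I would restrict every component of this datum along the leaf $M\subset\scT$ and use the identifications $\scK^{\rm t}|_M\cong c_\sfH(P)$ and $\frk^{\rm t}|_M\cong\mathrm{ad}(P)$ of \cref{eq:AlgebroidRRestrictionOfActionAlg}. The first point is that $\caH$ restricts to a Cartan connection on $\scK^{\rm t}|_M$: because $M$ is a leaf, the restricted morphism manifold satisfies $\sft^{-1}(M)=\sfs^{-1}(M)$, the anchor of $\frk^{\rm t}$ already takes values in $\rmT M$ over $M$ so that $\ker(\rmd\sft)$ is unchanged upon restriction, and for any vector $w$ tangent to the restricted morphism manifold the splitting $w=h+k$ into $\caH$- and $\ker(\rmd\sft)$-parts again has both summands tangent to it (since $\rmd\sft(h)=\rmd\sft(w)$ is tangent to $M$); multiplicativity and unitality of $\caH\cap\rmT(\scK^{\rm t}|_M)$ follow from those of $\caH$ together with $\scK^{\rm t}|_M$ being an embedded Lie subgroupoid and \cref{lem:connection_one_form_properties}. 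The second point is that, by the naturality of the construction of \cref{prop:associated_Lie_algebroid_connection} under this restriction, the Lie algebroid connection induced on $\frk^{\rm t}|_M\cong\mathrm{ad}(P)$ by the restricted Cartan connection is the restriction of $\nabla$; and the basic curvature, the curvature, the primitive, and (in the strict case) the relation $\rmd^{\nabla^\zeta}\zeta=0$ all restrict, as each is assembled only from the Lie bracket, the anchor, $\nabla$, and $\zeta$, every one of which restricts to the corresponding object of $\mathrm{ad}(P)\to M$. Hence the restricted datum meets the conditions of \cref{def:local_adjustment} and defines a (covariant, resp.\ strict covariant) adjustment of $c_\sfH(P)$.

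\textbf{Main obstacle.} The only part that takes genuine care is the leafwise restriction in the second direction, i.e.\ verifying cleanly that the horizontal distribution of a Cartan connection restricts to a Cartan connection on the restriction to the leaf $M$ and that, simultaneously, every defining equation of an adjustment survives the restriction. I expect this to be routine once one records the identity $\sfG_M=\sft^{-1}(M)=\sfs^{-1}(M)$ for an orbit and the observation that the anchor constraint makes the vertical bundle restrict without change; the remaining verifications are then direct consequences of the functoriality of the constructions in \cref{prop:InducedCartConnOnActionGr}, \cref{prop:CanonicalCompConnOnActionGroupoid}, and \cref{prop:associated_Lie_algebroid_connection}, and of the fact — built into the setup of \cref{prop:CanonicalCompConnOnActionGroupoid} — that the connection $\nabla$ appearing in an adjustment is the one induced by the Cartan connection.
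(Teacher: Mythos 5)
Your proposal is correct and follows essentially the same route as the paper: the pullback direction is exactly the application of \cref{prop:CanonicalCompConnOnActionGroupoid} (together with \cref{cor:induced_adjustments}, \cref{cor:induced_strictadjustments}), and the restriction direction is the identification \eqref{eq:AlgebroidRRestrictionOfActionAlg}, which the paper dismisses as trivial and which you merely spell out in more detail (leafwise restriction of $\caH$, $\nabla$, $\zeta$). No gaps; your extra care with $\rmT\,\sft^{-1}(M)=(\rmd\sft)^{-1}(\rmT M)$ and the vanishing anchor along $M$ is sound but not a different argument.
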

    
    \begin{proof}[Proof of \ref{cor:InducedConnViaPullback}]
        The first part is trivial (via equation~\eqref{eq:AlgebroidRRestrictionOfActionAlg}), while the second one follows by \ref{prop:CanonicalCompConnOnActionGroupoid}.
    \end{proof}
    
    \begin{remark}[Strictness]\label{rem:StricTransverse}
        By \cref{cor:induced_strictadjustments} and \cref{rem:LGBStrictness}, an associated connection on $c_\sfH(P)$ induces a strict covariant adjustment on $\scK^{\rm t}$.
    \end{remark}
    
    We now want to extend the arguments of \ref{ssec:Lie_group_bundles} to $\scK^{\rm t}$, claiming that $\scK^{\rm t}$ admits a flat adjustment if and only if $\scF$ admits a flat $\scF$-connection. This is the case if and only if $P$ is flat. However, we can only conclude this by assuming \emph{semi-simplicity}. This is reasonable because of \cref{rem:JustifyingExample}, together with the fact that we assume a faithful $\sfH$-action in the whole construction.
    
    \begin{theorem}[Curved action groupoids]\label{thm:CurvedGroupoidEx} Let $\sfH$ be a semi-simple Lie group. Then $\scK^{\rm t}$ admits a flat adjustment if and only if $P$ admits a flat Ehresmann connection (in the ordinary sense).
    \end{theorem}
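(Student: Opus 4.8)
The plan is to reduce the statement to the Lie group bundle case of \ref{ssec:Lie_group_bundles}, where the equivalence ``$c_\sfH(P)$ admits a flat adjustment $\Leftrightarrow$ $P$ is flat'' is already available (for semi-simple $\sfH$). The bridge between $\scK^{\rm t}$ and $c_\sfH(P)$ is provided by \ref{cor:InducedConnViaPullback} together with the restriction formula~\eqref{eq:AlgebroidRRestrictionOfActionAlg}, $\left.\scK^{\rm t}\right|_M\cong c_\sfH(P)$. The key geometric observation I would record first is that the canonical projection $\pi\colon\scK^{\rm t}\to c_\sfH(P)$ restricts over the leaf $M$ precisely to this isomorphism: since $0\in\mathbb R^d$ is a fixed point of the $\sfH$-action, the zero section $M\subset\scT$ is preserved by the $c_\sfH(P)$-action, so $\left.\scK^{\rm t}\right|_M = M\fibtimes{}{}c_\sfH(P)$ and $\pi|_M$ is an isomorphism. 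Consequently, pulling back an adjustment from $c_\sfH(P)$ to $\scK^{\rm t}$ along $\pi$ and then restricting it back to $M$ recovers the original adjustment, which will make the two implications fit together cleanly.

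For the implication ``$\Leftarrow$'', I would start from a flat Ehresmann connection $\Gamma$ on $P$ and pass to the associated covariant adjustment on $c_\sfH(P)$ as in \ref{ssec:Lie_group_bundles}; its primitive $\zeta$ is the curvature of $\Gamma$, hence $\zeta=0$ and the adjustment is flat. Pulling this back along $\pi\colon\scK^{\rm t}\to c_\sfH(P)$ via \ref{cor:InducedConnViaPullback} produces a covariant adjustment on $\scK^{\rm t}$, and by the construction in \ref{prop:InducedCartConnOnActionGr} and \ref{prop:CanonicalCompConnOnActionGroupoid} the induced Cartan connection on $\scK^{\rm t}$ is the form-pullback of the one on $c_\sfH(P)$; its curvature is therefore the pullback of a vanishing form and $\scK^{\rm t}$ carries a flat adjustment. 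For ``$\Rightarrow$'', I would take a flat adjustment on $\scK^{\rm t}$, restrict it along $\left.\scK^{\rm t}\right|_M\hookrightarrow\scK^{\rm t}$ using the first part of \ref{cor:InducedConnViaPullback} to obtain an adjustment on $c_\sfH(P)$, and argue that flatness of the Cartan connection is preserved by this restriction (the curvature being pointwise along the leaf). Then the discussion in \ref{ssec:Lie_group_bundles}, which uses semi-simplicity of $\sfH$ through \ref{lem:SemiSimpleLGBAdjust} and the fact that $c_\sfH(P)$ is trivial iff $P$ is, forces $P$ to be flat, i.e.\ to admit a flat Ehresmann connection.

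I expect the main obstacle to be the functoriality of flatness under the two transfer operations, namely that the curvature of the induced Cartan connection on $\scK^{\rm t}$ equals the $\psi$-pullback of the curvature on $c_\sfH(P)$, and that restriction to the leaf $M$ sends flat to flat. This requires unwinding the pullback construction of \ref{prop:InducedCartConnOnActionGr} while keeping careful track of the singular foliation $\scF$ and of the non-injective anchor of the action groupoid $\scK^{\rm t}$; a cleaner alternative I would keep in reserve is to run the entire argument in the equivalent language of $\scF$- and $\tau$-connections on $\scT$, where flatness transfers transparently along the leaf inclusion by~\eqref{eq:AlgebroidRRestrictionOfActionAlg} and the one-to-one correspondence summarised in \ref{tab:ComparingTheGeometryGaugeTheoryWithSingularFoliations}. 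A secondary subtlety to address is that a flat adjustment obtained by restriction need not a priori be covariant; here I would invoke that for semi-simple $\sfH$ the relevant (covariant) adjustments on $c_\sfH(P)$ are exhausted by associated connections on $P$ (\ref{lem:SemiSimpleLGBAdjust}), so that the stated equivalence with flatness of $P$ still applies.
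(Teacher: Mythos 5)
Your proposal is correct and follows essentially the same route as the paper: the direction ``$\Leftarrow$'' passes from a flat Ehresmann connection on $P$ to the flat associated adjustment on $c_\sfH(P)$ and pulls it back via \ref{cor:InducedConnViaPullback}, while ``$\Rightarrow$'' restricts along $\left.\scK^{\rm t}\right|_M\cong c_\sfH(P)$ and invokes \ref{lem:SemiSimpleLGBAdjust} together with the correspondence of \ref{tab:ComparingTheGeometryGaugeTheoryWithSingularFoliations} to force a flat connection on $P$. Your added care about flatness being preserved under pullback and restriction, and about covariance of the restricted adjustment, only makes explicit steps the paper treats as immediate.
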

    
    \begin{remark}[Field redefinitions]
        In particular, there is no field redefinition flattening the adjustment on $\scK^{\rm t}$, if $P$ is not flat. However, this is not a simple consequence of \ref{ssec:Lie_group_bundles} (recall \ref{tab:ComparingTheGeometryGaugeTheoryWithSingularFoliations}) because field redefinitions in $\scK^{\rm t}$ are richer than the ones in $c_\sfH(P)$; this is due to possibly non-trivial transverse directions of $\scF$ encoded in $\tau$. As we will see in the proof, the semi-simplicity of $\sfH$ is rather crucial to avoid the problem of "transverse field redefinitions", and in essence avoiding introducing general field redefinitions as a whole.
    \end{remark}
    
    \begin{proof}[Proof of \ref{thm:CurvedGroupoidEx}]
        It is clear that a flat ordinary Ehresmann connection on $P$ induces a flat adjustment on $c_\sfH(P)$ following \ref{ssec:Lie_group_bundles}. By \ref{cor:InducedConnViaPullback}, its pullback gives a flat adjustment on the action groupoid $\scK^{\rm t}$.
        
        It remains to show the other direction.
        Since $\sfH$ is semi-simple, we know by \ref{lem:SemiSimpleLGBAdjust} that all adjustments on $c_\sfH(P)$ are associated to connections on $P$. Furthermore, the semi-simplicity implies a one-to-one correspondence between associated connections on $c_\sfH(P)$ and ordinary Ehresmann connections on $P$, as already implied by the discussion around \ref{tab:ComparingTheGeometryGaugeTheoryWithSingularFoliations}.
        
        Now assume that we have a flat adjustment on $\scK^{\rm t}$. By \ref{cor:InducedConnViaPullback} it induces a flat adjustment on $c_\sfH(P)$. Following the previous paragraph, such a connection is in a one-to-one correspondence to an ordinary flat Ehresmann connection on $P$. This concludes the proof.
    \end{proof}
    
    We have a natural enhancement of \ref{ex:Hopf}, elaborated on the Hopf fibration as canonical example:
    
    \begin{example}\label{ex:Hopf2}
        Let $P$ be the Hopf fibration $S^7 \to S^4$, whose structural Lie group is given by $S^3 \cong \mathrm{SU}(2)$, and define $\scT \coloneqq \left( P \times \mathbb{C}^2\right) / S^3$. The canonical action algebroid $\scK^{\rm t}$ inherited by the $c_\sfH(P)$-action on $\scT$ admits an adjustment by \ref{cor:InducedConnViaPullback} (given by a pullback of an adjustment on $c_\sfH(P)$), but there is no flat adjustment since $P$ would be otherwise trivial due to the fact that $S^4$ is simply connected. In particular, such adjustments are strict by \cref{rem:StricTransverse}.
    \end{example}
    
    \paragraph{Action groupoids generating whole foliations.}
    
    Naturally, we would like to argue similarly for $\scK$ and $\frk$, whose anchor's orbits give rise to the whole foliation $\scF$, not just the transverse one. In the same fashion we would like to use \ref{prop:CanonicalCompConnOnActionGroupoid}. However, even though the Atiyah algebroid $\frg$ is transitive, the existence of a Cartan connection is not necessarily given. Also in our context, \ref{tab:ComparingTheGeometryGaugeTheoryWithSingularFoliations} only gives a relation to the curvature along the transverse groupoid, as we previously argued.
    
    There are several ways to construct a Cartan connection over $\scF$. The orbits of the $\frg$-action on $\scT$ generate $\scF$ in our construction. That is, $\scF$ is induced by the action of the Lie algebra $\Gamma(\frg)$, an infinite-dimensional Lie algebra.\footnote{For general singular foliations there is always locally a free Lie algebra generating it, also infinite-dimensional; in the context of~\cite{Fischer:2401.05966} $\frg$ itself is already infinite-dimensional in general.} Thus, we have an action algebroid over $\scF$ corresponding to this Lie algebra action, implying a canonical (flat) covariant adjustment.
    
    Ideally, we would like to derive a covariant adjustment on the Atiyah groupoid $\scG$ itself. In the following, however, we work only locally because we do not yet know how to easily characterize the situation in which adjustments on $\scG$ exist. This time we want to use \ref{sec:PairGroupoids}. To do so, 
    let us recall some basic facts of splittings of the Atiyah groupoid sequence
    \begin{equation}\label{eq:AtiyahSequenceToFundGr}
        \begin{tikzcd}
            c_\sfH(P) \arrow[hook]{r} &
            \scG \arrow[two heads]{r} &
            \scP\mathsf{air}(M)~.
        \end{tikzcd}
    \end{equation}
    A right section groupoid morphism $\chi \colon \scP\mathsf{air}(M) \to \scG$ of~\eqref{eq:AtiyahSequenceToFundGr}, also called a \uline{splitting}, is in one-to-one correspondence to flat connections with trivial holonomy, inducing trivializations on $P$; $\chi$ is also called a (special type of) multiplicative Ehresmann connection on $\scG \to \scP\mathsf{air}(M)$.\footnote{See~\cite{Fernandes:2204.08507} for a good introduction.} If it exists, $\chi$ induces a $\scP\mathsf{air}(M)$-action on $c_\sfH(P)$ simply given by the associated connection, a strict adjustment as discussed before, and now also the canonical flat connection (Maurer--Cartan form) w.r.t.\ the inherited trivialization of $c_\sfH(P)$.
    The splitting $\chi$ also induces a trivialization of $\scG$ by
    \begin{equation}
        \begin{aligned}
            \sfH \times \scP\mathsf{air}(M)
            &\to
            \scG~,
            \\
            (h, m_2, m_1)
            &\mapsto
            \left[s_{m_2}, h\right] \circ \chi\left(m_2, m_1\right)~,
        \end{aligned}
    \end{equation}
    where $s$ is some global section of $P$ corresponding to a trivialization by $\chi$. We are now able to define a strict covariant adjustment on the action groupoid $\scK$, corresponding to the action of the Atiyah groupoid $\scG$ on the associated bundle $\scT$.
    
    \begin{proposition}[A strict covariant adjustment on $\scK$]\label{prop:AdjustmentOnActionAlgOverF}
        Assume that the principal bundle $P$ over $M$ is trivial, and that $\scP\mathsf{air}(M)$ admits an adjustment. Then the action groupoid $\scK$ admits a strict covariant adjustment restricting to a canonical adjustment on a given splitting $\scK|_M \cong \scG \cong \sfH \times \scP\mathsf{air}(M)$, which is flat if and only if the adjustment on $\scP\mathsf{air}(M)$ is flat.
    \end{proposition}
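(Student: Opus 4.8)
The plan is to reduce the statement to the pair-groupoid results of \ref{sec:PairGroupoids}, the facts about the trivial inner group bundle from \ref{ssec:Lie_group_bundles}, and the pullback corollaries \ref{cor:induced_adjustments} and \ref{cor:induced_strictadjustments}. Since $P$ is trivial, a global section $s$ together with a splitting $\chi$ of the Atiyah sequence \eqref{eq:AtiyahSequenceToFundGr} provides the isomorphism of Lie groupoids $\scG\cong\sfH\times\scP\mathsf{air}(M)$ displayed just before the statement; under it the wide subgroupoid $c_\sfH(P)$ becomes the trivial Lie group bundle $M\times\sfH$ (the morphisms lying over a single point) and $\chi(\scP\mathsf{air}(M))$ is a complementary ``horizontal'' subgroupoid. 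In these terms $\rmT\scG$ splits, and I would define a distribution $\caH_\scG\subset\rmT\scG$ as the ``product'' of the flat, multiplicative Maurer--Cartan distribution on the $\sfH$-factor of $M\times\sfH$ with the Cartan connection $\caH$ underlying the given adjustment on $\scP\mathsf{air}(M)$. That $\caH_\scG$ is a multiplicative distribution complementary to $\ker(\rmd\sft_\scG)$ follows factorwise from the corresponding properties of the two pieces; hence $\caH_\scG$ is a Cartan connection on $\scG$, and by \ref{prop:associated_Lie_algebroid_connection} the induced connection $\nabla_\scG$ on the Atiyah algebroid $\frg\cong\rmT M\oplus(M\times\frh)$ automatically has vanishing basic curvature, i.e.\ it is a plain adjustment. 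By construction $\nabla_\scG$ is the direct sum of the flat connection $\mathrm{d}$ on the trivial Lie algebra bundle $M\times\frh$ with the Lie-algebroid connection $\nabla_M$ of the adjustment on $\scP\mathsf{air}(M)$.

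Next I would upgrade to a strict covariant adjustment. By \ref{prop:PrimitivesForFundGroupi} -- whose content at the level of the Lie algebroid $\rmT M$ is exactly \ref{rem:StrictnessPiM}, hence applies verbatim to $\scP\mathsf{air}(M)$ -- we may assume the adjustment on $\scP\mathsf{air}(M)$ is strict covariant with primitive $\zeta_M=t_{\nabla_M}$. I then put $\zeta_\scG\coloneqq\zeta_M\oplus 0\in\Omega^2(M;\frg)$. Since the anchor of $\frg$ vanishes on the $M\times\frh$-summand and $\zeta_\scG$ has no $\frh$-component, one finds $R_{\nabla_\scG}=R_{\nabla_M}\oplus 0$, while $\nabla_\scG^{\rm bas}\zeta_\scG$ and the cubic expression of \ref{def:local_adjustment} only receive contributions from the $\rmT M$-directions; the two equations characterising a strict covariant adjustment for $(\nabla_\scG,\zeta_\scG)$ therefore reduce to those for $(\nabla_M,\zeta_M)$, which hold by \ref{rem:StrictnessPiM}. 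Thus $(\caH_\scG,\nabla_\scG,\zeta_\scG)$ is a strict covariant adjustment on $\scG$ which, under $\scG\cong\sfH\times\scP\mathsf{air}(M)$, is the ``product'' of the canonical flat (Maurer--Cartan, $\zeta=0$) adjustment on the $\sfH$-factor with the chosen adjustment on $\scP\mathsf{air}(M)$; in particular pulling it back along $\chi$ returns the latter, which is the sense in which the construction ``restricts to a canonical adjustment on the splitting''.

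Finally, the Atiyah groupoid $\scG$ acts on $\scT=(P\times\mathbb{R}^d)/\sfH$ along the bundle projection $\psi\colon\scT\to M$, which is a surjective submersion. Applying \ref{cor:induced_strictadjustments} to this action produces a strict covariant adjustment on the action groupoid $\scK$, with Lie-algebroid connection the pullback $\nabla_\frk=\psi^*\nabla_\scG$ and primitive $\psi^*\zeta_\scG$. Because $\psi$ restricts to the identity over the leaf $M\subset\scT$, the identification $\scK|_M\cong\scG$ of \eqref{eq:AlgebroidRestrictionOfActionAlg} carries this adjustment to $(\caH_\scG,\nabla_\scG,\zeta_\scG)$, so it indeed restricts to the canonical adjustment on the given splitting. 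For the flatness claim, $R_{\nabla_\frk}=\psi^*R_{\nabla_\scG}=\psi^*(R_{\nabla_M}\oplus 0)$, and since $\psi^*$ is injective on forms, the adjustment on $\scK$ is flat precisely when $R_{\nabla_M}=0$, i.e.\ when the adjustment on $\scP\mathsf{air}(M)$ is flat. I expect the main obstacle to be the first step: making the ``product'' Cartan connection $\caH_\scG$ on $\scG$ precise in the trivialisation induced by $s$ and $\chi$ and checking carefully that it is a genuine multiplicative distribution complementary to $\ker(\rmd\sft_\scG)$ -- once this is in place, everything else follows from the established facts about $\scP\mathsf{air}(M)$, the triviality of $c_\sfH(P)$, and \ref{cor:induced_strictadjustments}.
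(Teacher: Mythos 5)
Your construction is essentially the paper's proof: the paper defines the Cartan connection on $\scG$ as $\pi^*\caH^M$ (i.e.\ the distribution $0\oplus\caH^M$ under $\scG\cong\sfH\times\scP\mathsf{air}(M)$, which is the precise version of your ``product'', trivial on the $\sfH$-factor), takes the primitive to be the form-pullback of $\zeta^M$ with strictness supplied by \ref{rem:StrictnessPiM}, and then transfers everything to $\scK$ via \ref{cor:induced_strictadjustments}, exactly as you do. Your extra verifications (multiplicativity and complementarity of the lifted distribution, reduction of the strict-covariance equations to those of $(\nabla_M,\zeta_M)$, the restriction over the leaf $M$, and the flatness equivalence via injectivity of $\psi^*$ on forms) are details the paper leaves implicit, but the route is the same.
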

    
    \begin{proof}
        Assume a given splitting $\scG \cong \sfH \times \scP\mathsf{air}(M)$ inherited by a fixed trivialization of $P$. By assumption, we have an adjustment on $\scP\mathsf{air}(M)$ which is covariant by \Cref{prop:PrimitivesForFundGroupi}, and whose multiplicative and horizontal distribution we denote by $\caH^M$. As a canonical candidate for a covariant adjustment, we define a horizontal distribution $\caH$ on $\scG$ now by 
        \begin{equation}
            \caH
            \coloneqq
            \pi^*\caH^M~,
        \end{equation}
        where $\pi \colon \scG \to \scP\mathsf{air}(M)$ is the canonical projection induced by the fixed trivialization of $P$. This distribution is by construction a Cartan connection and admits a primitive $\zeta \coloneqq 0 \oplus \zeta^M$, where $0$ is the zero element of the Lie algebra of $\sfH$. Furthermore, there is always a strict covariant adjustment due to \cref{rem:StrictnessPiM}. By a further pullback, making use of \cref{cor:induced_strictadjustments}, we have a strict covariant adjustment on the action groupoid $\scK$.
    \end{proof}
    
    \begin{remark}
        Following the proof, every action groupoid $\scK$ induced by an action of a transitive groupoid over $M$ admits locally a (strict) covariant adjustment: transitive groupoids are locally always trivial (cf.~\cite[\S 1.3]{0521499283}) and $\scP\mathsf{air}(M)$ does always have a (strict) covariant adjustment locally, as argued in \ref{sec:PairGroupoids}.
    \end{remark}
    
    \begin{example}
        As canonically curved examples one could define $M$ to be given by the unit octonions $S^7$ (recall \cref{ex:PairGroupoidsEx}), while taking any arbitrary trivial principal bundle $P$ over $S^7$. Following the construction above one finds curved covariant adjustments on $\scK$.
    \end{example}
    
    \begin{remark}[Field redefinitions]
        Here, we could not answer whether there is a field redefinition flattening such covariant adjustments because a field redefinition may not preserve a given trivialization $\scG \cong \sfH \times \scP\mathsf{air}(M)$. Thus, possibly arising mixed terms complicate the discussion, and a field redefinition on $\scK$ may even allow to change the structural group $\sfH$, see~\cite[Rem.\ 5.3.10]{Fischer:2021glc}. However,~\cite[Lemma 5.3.7]{Fischer:2021glc} implies that a field redefinition of a covariant adjustment preserving a splitting still restricts to ${\rm ad}(P)$, so that there is hope for curved covariant adjustments which cannot be flattened. This will be investigated in future works.
    \end{remark}
    
    \appendices
    
    \subsection{Basics on graded manifolds}\label{app:graded_manifolds}
    
    Below, we summarize some facts on (differential) graded manifolds. For further details, see e.g.~\cite{Kostant:1975qe,Cattaneo:2010re,Fairon:1512.02810,Vysoky:2021wox}. While $\IZ_2$-gradings are very popular in the physics literature, in particular in the context of supersymmetry, we work exclusively with $\IZ$-gradings.
    
    \paragraph{Graded vector spaces.} A \uline{graded vector space} $V$ is a vector space together with a decomposition $V=\bigoplus_{k\in \IZ} V_k$ that respects the linear structure on $V$. For an element $v\in V_k$, we say that $v$ has \uline{degree} $k\in \IZ$, and we write $|v|=k$. 
    
    Given a graded vector space $V=\bigoplus_{k\in \IZ} V_k$,   
    the \uline{grade-shifted graded vector space} $V[i]$ is defined as the graded vector space
    \begin{equation}
        V[i]\coloneqq \oplus_{k\in \IZ}V[i]_k~~~\mbox{with}~~~V[i]_k\coloneqq V_{i+k}~.
    \end{equation}
    We say that a graded vector space is \uline{concentrated} in degrees $I=\{i_1,\ldots, i_k\}$ if $V_j=*$ unless $j\in I$.
    
    The \uline{algebra of polynomial functions} $P(V)$ on a graded vector space $V$ is the graded symmetric algebra
    \begin{equation}
        \ourodot^\bullet V^*=~V^0~~\oplus~~V^*~~\oplus~~V^*\odot V^*~~\oplus~~\ldots~,
    \end{equation}
    where $V^0$ denotes the ground field of the vector space and $\odot$ is the symmetric tensor product. The \uline{algebra of smooth functions} $C^\infty(V)$ is obtained by the tensor product with $C^\infty(V_0)$ over $P(V_0)$.
    
    \paragraph{Graded manifolds.} A \uline{graded manifold} is an object that is locally modeled on a graded vector space. That is, a graded manifold is a pair $\caX=(X,\caO_X)$, where $|\caX|\coloneq X$, the \uline{body} of $\caM$, is a topological manifold, and $\caO_X$ is a sheaf of $\IZ$-graded rings on it, such that the stalk $\caO_x=\caO_{X,x}$ is a local ring, and $\caX$ is locally isomorphic to
    \begin{equation}
        \left(U,C^\infty(|\caX|)\otimes \ourodot^\bullet V^*\right)
    \end{equation}
    for $V$ is a graded vector space over a patch $U\subset |\caX|$. If $V$ is \uline{concentrated} in degrees $I=\{i_1,\ldots,i_k\}$, then we say that $X$ is concentrated in degrees $I$ if $X=*$ and $X$ is concentrated in degrees $I\cup\{0\}$ otherwise.\footnote{For some purposes, it is more convenient to introduce $\IZ^*$-graded manifolds, in order to avoid having two types of $\caO_X$-elements of degree~0, cf.~\cite{Kotov:2108.13496,Kotov:2212.05579} for more details.}
    
    The \uline{algebra of smooth functions} on the graded manifold $\caX=(X,\caO_X)$ is the algebra of smooth global sections of $\caO_X$ and denoted by $C^\infty(\caX)$. We will regard all our graded manifolds as smooth.
    
    \paragraph{Morphisms of graded manifolds.} The definition of a morphism of graded manifolds is evident from the definition of graded manifolds: they are morphisms of sheaves of $\IZ$-graded rings. Explicitly, a \uline{morphism between graded manifolds} $\caX=(X,\caO_X)$ and $\caY=(Y,\caO_Y)$,
    \begin{equation}
        f: \caX\rightarrow \caY~,
    \end{equation}
    is given by a pair $f=(|f|,f^\sharp)$ consisting of a continuous map $|f|:X\rightarrow Y$ as well a ring homomorphism $f^\sharp:\caO_Y\rightarrow |f|_*\caO_X$, which is local in the sense that the stalks of $\caO_Y$ at $f(x)$ are mapped to the stalks of $\caO_X$ at $x$. Here, $|f_*|\caO_X$ denotes the direct image sheaf.
    
    If $\caY$ is a graded vector space (e.g.~a local patch of a graded manifold), we can simply write 
    \begin{equation}
        \sfHom(\caX,\caY)=\sfHom(C^\infty(\caY),C^\infty(\caX))=(\caY\otimes C^\infty(\caX))_0~,
    \end{equation}
    where $C^\infty(\caX)$ and $(-)_0$ denote the smooth algebra of functions on $\caX$ and the restriction to maps of degree~$0$, respectively. In the last equality, we used the fact that the contained map $f^\sharp$ in a morphism of graded manifolds $f=(|f|,f^\sharp)$ is a ring homomorphism.
    
    \paragraph{N$\boldsymbol Q$-manifolds.} It will also be useful to  introduce the category $\CatNQMfd$ of differential graded manifolds concentrated in non-positive degrees, also known as \uline{N$Q$-manifolds}. We note that these correspond precisely to higher Lie algebroids.
    
    \paragraph{Internal homs of graded manifolds.} Recall that a presheaf on a category $\scC$ is a functor $\scC^{\rm op}\rightarrow \catSet$, and an interesting example is the presheaf 
    \begin{equation}
        \sfHom(-\times Y,Z): X\mapsto \sfHom(X\times Y,Z)
    \end{equation}
    with $X,Y,Z\in \scC$. We say that a presheaf is \uline{representable} if there is another object $\ihom(Y,Z)\in \scC$, the \uline{inner} or \uline{internal hom}, such that 
    \begin{equation}
        \sfHom(X\times Y,Z)=\sfHom(X,\ihom(Y,Z))
    \end{equation}
    for all $X,Y,Z\in \scC$. 
    
    In particular, in the category of sets, we simply have $\ihom(Y,Z)=\sfHom(Y,Z)$. This also holds in the category of vector spaces, but it fails in the category of graded vector space. For example, let $V$, $U$, and $W$ be graded vector spaces concentrated in positive degrees. We have
    \begin{equation}
        \sfHom(V,W)=\sfHom(V\times *,W)\neq \sfHom(V,\sfHom(*,W))=\sfHom(V,*)~,
    \end{equation}
    where $*$ denotes the single point with structure sheaf $\IR$. However, there is an internal hom consisting of linear maps of arbitrary degree,
    \begin{equation}
        \sfHom(V\times U,W)=\sfHom(V,\ihom(U,W))
        \ewith
        \ihom(U,W)\cong W\otimes C^\infty(U)~.
    \end{equation}
    This gives us also a local description of the internal hom on graded manifolds. 
    
    In certain cases, the internal hom of graded manifolds exists and is readily calculated. Particularly important in our discussion is
    \begin{equation}
        \ihom(\IR[-1],X)=\rmT[1]X~.
    \end{equation}
    For more details on this, see e.g.~\cite{Severa:2006aa,Cattaneo:2010re} as well as~\cite{Sachse:0802.4067,Alldridge:1109.3161,Bonavolonta:1304.0394} for the closely analogous case of supermanifolds.
    
    \paragraph{Differentials on graded manifolds.} A differential on a graded manifold $\caX=(X,\caO_X)$ is a differential on the algebra $C^\infty(\caX)$ of smooth functions on $\caX$. Equivalently, the differential can be seen as a nilquadratic vector field of degree~$1$. A graded manifold endowed with a differential is a \uline{differential graded (dg-)manifold}. Morphisms of dg-manifolds, or \uline{dg-maps} for short, are morphisms of graded manifolds respecting the differential. Together with dg-manifolds, they form the category $\catdgMfd$. 
    
    We are often interested in the internal hom in $\catdgMfd$, and we have, in particular for a manifold $X$
    \begin{equation}
        \ihom_\catdgMfd(\IR[-1],X)=(\rmT[1]X,\rmd)~,
    \end{equation}
    where we identified $C^\infty(\rmT[1]X)$ with the de~Rham complex $\Omega^\bullet(X)$ on $X$ and $\rmd$ is the de~Rham differential, cf.~\ref{def:grade-shifted_tangent_bundle}.
    
    Consider an internal hom $\ihom(\caX,\caY)$ of graded manifolds $\caX$ and $\caY$ which are both endowed with differential $\sfd_\caX$ and $\sfd_\caY$. Note that $\ihom(\caX,\caY)$ naturally extends to an internal hom in $\catdgMfd$, where the differential $\sfd$ given by
    \begin{equation}
        \sfd: f^\sharp \rightarrow f^\sharp\circ \sfd_\caY+\sfd_\caX\circ f^\sharp
    \end{equation}
    for $f=(|f|,f^\sharp)\in \ihom(\caX,\caY)$.
    
    Very useful will be the following observation.
    \begin{remark}\label{rem:graded-map-to-differential-map}
        Consider a dg-manifold $(\rmT[1]\caN,\rmd)$, where the differential $\rmd$ contains the de~Rham differential on $\caN$. Then $\rmT[1]\caN$ is generated by the set
        \begin{equation}
            \{\phi,\rmd \phi~|~\phi\in C^\infty(\caN)\}~.
        \end{equation}
        This further implies that a morphism of dg-manifolds $f:(\caM,\rmd)\rightarrow (\rmT[1]\caN,\rmd)$ is uniquely defined by a morphism of graded manifolds $f_\flat:\caM\rightarrow \caN$: we have $|f|=|f_\flat|$, and $f^\sharp$ is uniquely fixed by 
        \begin{equation}
            f^\sharp(\phi)=f_\flat^\sharp(\phi)\eand f^\sharp(\rmd_\caN \phi)=\rmd_\caM f_\flat^\sharp(\phi)~.
        \end{equation}
    \end{remark}
    
    \section*{Acknowledgments}
    
    S.-R.F.\ wants to thank the National Center for Theoretical Sciences, and he especially thanks Camille Laurent-Gengoux for discussions and for sharing thoughts and ideas. M.J.F.\ was supported by the STFC PhD studentship ST/W507489/1. H.K.\ was partially supported by the Leverhulme Research Project Grant RPG-2021-092. C.S.\ was partially supported by the Leverhulme Research Project Grant RPG-2018-329.
    
    \section*{Data and License Management}
    
    No additional research data beyond the data presented and cited in this work are needed to validate the research findings in this work. For the purposes of open access, the authors have applied the \href{https://creativecommons.org/licenses/by/4.0/}{Creative Commons Attribution 4.0 International (CC~BY~4.0)} license to any author-accepted manuscript version arising from this work.

    \bibliography{bigone}
    
    \bibliographystyle{latexeu2}
    
\end{document}